\documentclass[12pt,a4paper]{amsart}

\usepackage{amsmath,amssymb,amsthm,amscd}
\usepackage{url}

\numberwithin{equation}{section}

\theoremstyle{plain}
\newtheorem{theorem}{Theorem}[section]
\newtheorem{lemma}[theorem]{Lemma}
\newtheorem{proposition}[theorem]{Proposition}

\newtheorem{corollary}[theorem]{Corollary}

\newtheorem{maintheorem}{Theorem}

\newtheoremstyle{neosn}
  {0.5\topsep}
  {0.5\topsep}
  {\normalfont}
  {}
  {\bfseries}
  {.}
  { }
  {\thmname{#1}\thmnumber{ #2}\thmnote{ {\mdseries #3}}}

\theoremstyle{neosn}

\newtheorem{definition}[theorem]{Definition}

\theoremstyle{plain}

\usepackage[hidelinks]{hyperref}
\hypersetup{
  pdftitle={Automorphisms of twisted Chevalley groups over commutative rings},
  pdfauthor={Elena I. Bunina and Deep H. Makadiya},
  pdfsubject={Automorphisms of twisted Chevalley groups},
  pdfkeywords={Twisted Chevalley groups, automorphisms, commutative rings,
    local-global methods, exceptional automorphisms, unitary groups, triality}
}

\begin{document}

\title{Automorphisms of twisted Chevalley groups over commutative rings}

\author{Elena I. Bunina}
\address{Department of Mathematics,
        Bar--Ilan University, Ramat Gan, Israel}
\email{helenbunina@gmail.com}

\author{Deep H. Makadiya}
\address{Department of Mathematics,
        Bar--Ilan University, Ramat Gan, Israel}
\email{deepmakadia25.dm@gmail.com}

\subjclass[2020]{20G35}
\keywords{Twisted Chevalley groups, automorphisms, commutative rings,
local--global methods, exceptional automorphisms, unitary groups, triality}

\begin{abstract}
We classify all abstract automorphisms of the elementary and full
twisted Chevalley groups of types ${}^2A_\ell$ ($\ell\geq3$),
${}^2D_\ell$ ($\ell\geq4$), ${}^2E_6$, and ${}^3D_4$ over
arbitrary commutative rings, for every invariant isogeny type.
The coefficient automorphism $\theta$ has order dividing that of
the diagram automorphism. We assume that $2$ is invertible and,
for triality, that $3$ is invertible as well.

Three features distinguish the answer from the usual inner--diagonal
and coefficient decomposition. The elementary groups of types
${}^2A_3$ and ${}^2A_4$ admit an additive family of exceptional
automorphisms supported at $\theta$-fixed maximal ideals with
residue field $\mathbb F_3$.
In type ${}^2D_4$, diagram automorphisms may vary on idempotent
components, subject to compatibility with the involution and the
isogeny lattice. In triality, the coefficient automorphisms preserve
the trace operator $r\mapsto r+\theta(r)+\theta^2(r)$ but need not
commute with $\theta$.
We determine which elementary automorphisms extend to the full
groups and describe the remaining central factors. A nonzero
exceptional unitary factor extends precisely in the simply connected
type ${}^2A_3$, giving a classification for the hyperbolic group
$\operatorname{SU}_4(R,\theta;Q_4)$. The proof glues uniquely
determined normalization data for a finite root frame and then
recovers the coefficient action from root-subgroup relations.
\end{abstract}

\maketitle

\section*{Introduction}

An abstract automorphism of a group defined over a ring need not
preserve its root subgroups or act coefficientwise in a chosen
representation. The automorphism problem asks how much of the
defining algebraic structure is determined by the group law.
For twisted Chevalley groups, this includes a coefficient
automorphism as well as the ring.

We classify automorphisms of elementary and full twisted Chevalley
groups over arbitrary commutative rings. In most types, every
elementary automorphism is induced by an adjoint group element
and a ring automorphism commuting with the coefficient operator.
Three cases modify this answer: nilpotent exceptional parameters
in ${}^2A_3$ and ${}^2A_4$, componentwise diagram symmetries in
${}^2D_4$, and trace-preserving coefficient actions in ${}^3D_4$.
For full groups, central automorphisms account for the ambiguity
of extension, while the low-rank unitary cases also have an
extension obstruction.

\subsection*{Main results}

Let $R$ be a commutative ring with identity, let $\rho$ be the
chosen diagram automorphism, and let $\theta\in\operatorname{Aut}(R)$
satisfy
$$
        \theta^m=\operatorname{id}_R,
        \qquad m=\operatorname{ord}(\rho)\in\{2,3\}.
$$
We assume $2\in R^\times$ and, when $m=3$, also $3\in R^\times$.
The twisted types under consideration are
$$
        {}^2A_\ell\ (\ell\geq3),\qquad
        {}^2D_\ell\ (\ell\geq4),\qquad
        {}^2E_6,\qquad {}^3D_4.
$$
No locality, Noetherianity, reducedness, or connectedness assumption
is imposed on $R$; the order of $\theta$ may be smaller than $m$.

Let $\Lambda_r\subseteq\Lambda\subseteq\Lambda_w$ be a
$\rho$-invariant lattice between the root and weight lattices.
Put $\sigma=\rho\theta$. For the statements in this introduction,
write
$$
\begin{aligned}
        G_\Lambda(R)&:=G_{\Lambda,\sigma}(\Phi,R)
                     =G_\Lambda(\Phi,R)^{\rho\theta},\\
        E_\Lambda(R)&:=E_{\Lambda,\sigma}(\Phi,R),
\end{aligned}
$$
where $E_\Lambda(R)$ is generated by the relative root subgroups.
Thus $\Lambda=\Lambda_r$ gives the adjoint form and
$\Lambda=\Lambda_w$ the simply connected form.

An element $g\in G_{\mathrm{ad},\sigma}(\Phi,R)$ acts canonically
on both groups; denote this action by $i_g^\Lambda$.
It combines the usual inner and diagonal factors and is conjugation
by a group element when $g$ lifts to $G_\Lambda(R)$.
For $\mu\in\operatorname{Aut}(R,\theta)$, where
$$
        \operatorname{Aut}(R,\theta)
        :=\{\mu\in\operatorname{Aut}(R)\mid\mu\theta=\theta\mu\},
$$
write $\mu_*$ for coefficientwise action.
We describe the three modifications before stating the theorem.

\smallskip
\noindent\emph{Exceptional unitary factors.}
For an involution $\theta$, put
$R^-_\theta:=\{r\in R\mid\theta(r)=-r\}$ and
$$
\mathcal E_{\mathrm{glob}}(R,\theta)
 :=R^-_\theta\cap\operatorname{Ann}_R
   \langle 3,\ r-\theta(r),\ r^3-r\mid r\in R\rangle_R.
$$
For $n=3,4$, every $\delta$ in this module defines an automorphism
$\Theta_{\delta,\Lambda}^{(n)}$ of $E_\Lambda(R)$ in type ${}^2A_n$.
These automorphisms form an additive family; for $\delta\ne0$
they are not compositions of adjoint-induced and coefficient actions.
Every parameter satisfies $\delta^2=3\delta=0$ and vanishes after
localization outside the fixed maximal ideals with residue field
$\mathbb F_3$. The construction is given in
Section~\ref{sec:exceptional-unitary-types} and lifted through
central isogenies in
Section~\ref{sec:arbitrary-isogeny-types-full-groups}.

\smallskip
\noindent\emph{Graph factors in ${}^2D_4$.}
Here $\mathcal D=\operatorname{Aut}(\operatorname{Dyn}(D_4))\simeq S_3$
and $\rho$ is a transposition. Choose a three-cycle $\tau$, so that
$$
        \mathcal D=\langle\tau\rangle\rtimes\langle\rho\rangle,
        \qquad \rho\tau\rho^{-1}=\tau^{-1},
$$
and use $\mathcal R_\rho=\{1,\tau,\tau^2\}$ as representatives
for the left cosets of $\langle\rho\rangle$.
A graph factor is specified by pairwise orthogonal idempotents
$\mathbf e=(e_0,e_1,e_2)$ satisfying
$$
        e_0+e_1+e_2=1,\qquad
        \theta(e_j)=e_{-j},\qquad
        e_j=0\ \text{if }\tau^j(\Lambda)\ne\Lambda,
$$
with indices modulo $3$ and $e_j=e_{\tau^j}$.
Applying the pinned action $\tau^j$ on $e_jR$ defines
$\Gamma_{\mathbf e,\Lambda}$ on the full and elementary twisted
groups. These \emph{reduced compatible graph factors} form an
abelian group of exponent dividing three; see
Section~\ref{sec:graph-automorphisms-2D4}.
Powers of $\rho$ are already coefficient actions and give no
additional factor.

\smallskip
\noindent\emph{Triality coefficient factors.}
When $m=3$, put
$$
\begin{aligned}
        \operatorname{Tr}_\theta(r)&:=r+\theta(r)+\theta^2(r),\\
        \operatorname{Aut}_{\mathrm{tr}}(R,\theta)
        &:=\{\mu\in\operatorname{Aut}(R)\mid
              \mu\operatorname{Tr}_\theta
              =\operatorname{Tr}_\theta\mu\}.
\end{aligned}
$$
Every such $\mu$ induces an automorphism $\Phi_{\mu,\Lambda}$
of $E_\Lambda(R)$ by applying $\mu$ to the relative root
parameters in the coordinates fixed in Section~\ref{sec:triality}.
This need not be coefficientwise action on the ambient split group:
$\mu$ need not commute with $\theta$ or conjugate it to
$\theta^{-1}$. For triality, the invariant isogeny lattices are
$\Lambda_r$ and $\Lambda_w$.

\begin{maintheorem}[Elementary groups]
\label{thm:main-elementary}
\label{thm:automorphisms-arbitrary-elementary-twisted-groups}
\label{thm:complete-adjoint-elementary-automorphisms}
Under the preceding assumptions, every automorphism of
$E_\Lambda(R)$ has the following form:
$$
\def\arraystretch{1.25}
\begin{array}{c|l}
 \text{twisted type}&\text{automorphism}\\ \hline
 {}^2A_\ell\ (\ell\geq5),\ {}^2D_\ell\ (\ell\geq5),\ {}^2E_6
       &i_g^\Lambda\circ\mu_*\\
 {}^2A_n\ (n=3,4)
       &i_g^\Lambda\circ\Theta_{\delta,\Lambda}^{(n)}\circ\mu_*\\
 {}^2D_4&i_g^\Lambda\circ\Gamma_{\mathbf e,\Lambda}\circ\mu_*\\
 {}^3D_4&i_g^\Lambda\circ\Phi_{\mu,\Lambda}.
\end{array}
$$
Here $g\in G_{\mathrm{ad},\sigma}(\Phi,R)$,
$\mu\in\operatorname{Aut}(R,\theta)$ outside triality and
$\mu\in\operatorname{Aut}_{\mathrm{tr}}(R,\theta)$ in triality;
$\delta\in\mathcal E_{\mathrm{glob}}(R,\theta)$ and the
idempotents $\mathbf e$ satisfy the conditions above.
With the root coordinates and graph representatives fixed, all
displayed data are unique. Conversely, every such composition is
an automorphism.
\end{maintheorem}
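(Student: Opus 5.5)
The plan is to follow the localization-and-gluing strategy indicated in the abstract. First we settle the converse and the uniqueness. The maps $i_g^\Lambda$ and $\mu_*$ are clearly automorphisms. For $\Theta_{\delta,\Lambda}^{(n)}$, $\Gamma_{\mathbf e,\Lambda}$ and $\Phi_{\mu,\Lambda}$ we check that they preserve the Steinberg-type commutator relations among relative root elements. The relevant constructions are in Sections~\ref{sec:exceptional-unitary-types}, \ref{sec:graph-automorphisms-2D4} and~\ref{sec:triality}.

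Uniqueness is proved by evaluating a given composition on the fixed relative root subgroups $X_A(R)$.
\begin{itemize}
\item The graph factor $\mathbf e$ is read off from which root subgroups are permuted on which idempotent component.
\item The parameter $\delta$ is read off from the nilpotent deviation on the low-rank unitary root subgroups; this deviation vanishes unless residue fields $\mathbb F_3$ occur.
\item The ring map $\mu$ is read off from the induced action on the parameter modules of the root subgroups.
\item Finally $g$ is determined modulo the centralizer of $E_\Lambda(R)$ in $G_{\mathrm{ad},\sigma}$, which is trivial.
\end{itemize}

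For existence, let $\varphi\in\operatorname{Aut}(E_\Lambda(R))$. For each $\theta$-stable maximal ideal, or more precisely each maximal ideal $\mathfrak m$ of the fixed ring $R^\theta$, we pass to the semilocal ring $R_{\mathfrak m}$. We then show that $\varphi$ maps the standard pair of opposite unipotent radicals, up to conjugation, to a pair of the same shape; this uses involutions, possible because $2\in R^\times$, and the centralizers of images of diagonal involutions. Using the known classification over local and semilocal rings, we normalize $\varphi$ on a finite root frame, that is, on root elements $x_A(1)$ for simple relative roots $A$ together with their Weyl-type elements.

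This normalization yields local data $(g_{\mathfrak m},\text{extra factor}_{\mathfrak m},\mu_{\mathfrak m})$. The data are unique by the uniqueness part, so they agree on overlaps $R_{\mathfrak m\mathfrak n}$. We glue them into global data by a Zariski-sheaf argument for $G_{\mathrm{ad},\sigma}$ and for idempotents and ring maps. For idempotents this produces $\mathbf e$ with $\theta(e_j)=e_{-j}$. For exceptional parameters it produces $\delta\in\mathcal E_{\mathrm{glob}}$; here the annihilator condition is checked locally.

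After composing $\varphi$ with the inverse of the glued standard automorphism, we obtain $\psi$ that fixes the frame elements. Then $\psi(x_A(t))=x_A(\nu_A(t))$ for additive maps $\nu_A$. The commutator relations $[x_A(s),x_B(t)]$ and the Weyl-element conjugation force all $\nu_A$ to come from a single ring automorphism $\mu$. Outside triality, the $\theta$-semilinear relation between the two parameter coordinates of long/short roots forces $\mu\theta=\theta\mu$. In triality, the relations only see $\operatorname{Tr}_\theta$ and norm-type expressions, which gives exactly $\mu\in\operatorname{Aut}_{\mathrm{tr}}(R,\theta)$. Hence $\psi=\mu_*$, respectively $\psi=\Phi_{\mu,\Lambda}$.

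The main obstacle is the local step: showing over $R_{\mathfrak m}$ that every automorphism is standard up to the listed extra factors. This is hardest in ${}^2A_3$ and ${}^2A_4$ with residue field $\mathbb F_3$, where the usual involution argument fails to pin down root subgroups. There I would first reduce modulo the radical, apply the classification over fields, and then analyze liftings to nilpotent thickenings as a cohomology-type computation. That computation should produce precisely the parameters $\delta$ with $\delta^2=3\delta=0$.
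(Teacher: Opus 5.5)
Your existence argument treats $\varphi$ as if it localized. You apply ``the known classification over local and semilocal rings'' at each $\mathfrak m$, then glue local data $(g_{\mathfrak m},\ldots,\mu_{\mathfrak m})$ that agree on overlaps ``by the uniqueness part''. But a global automorphism of $E(R)$ induces no automorphism of $E(R_{\mathfrak m})$. It can move the orbit of $\mathfrak m$ to another orbit (already $\mu_*$ does this), and $(\iota_{\mathfrak m})_*\circ\varphi\colon E(R)\to E(R_{\mathfrak m})$ is only a homomorphism from a nonlocal source that need not be surjective. So neither the local automorphism theorems nor the involution--centralizer arguments behind them apply. Likewise, over $R_{st}$ there is no automorphism whose decomposition could be unique.

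The paper supplies exactly these missing ingredients:
\begin{enumerate}
\item It identifies the maximal normal subgroups of $E(R)$ with the congruence kernels $N_i$ along $\theta$-orbits. For order two this comes from the Garge--Makadiya normal structure; in triality, where ideal levels fail, it uses a separate universal-element argument. This yields $\varphi^\sharp$ and residual isomorphisms $E(K_j)\to E(K_i)$, to which the field theorems apply.
\item It proves relative frame normalization (LR2) and root recognition (LR4) for homomorphisms $E(R)\to E(A_i)$ with prescribed residue. In ${}^2A_3,{}^2A_4$, surjectivity is replaced by the image equality $\Psi_i(E(R))=(\iota_i)_*(E(R))$ and a Nakayama argument.
\item It glues only the frame transporter, together with $\delta$ or the graph label. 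This is a point of a finitely presented affine scheme with at most one point over \emph{every} $R$-algebra, by frame rigidity.
\end{enumerate}
The ring map is never glued. It is recovered over $R$ once root subgroups are preserved, and that preservation needs (LR4) for both $\psi$ and $\psi^{-1}$. Fixing the frame does not by itself give $\psi(X_\lambda(R))=X_\lambda(R)$.

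Two further steps fail as stated. First, checking Steinberg-type commutator relations defines a map on the twisted Steinberg group, not on $E(R)$. Over an arbitrary ring the relations of $E(R)$ are not generated by these. So this does not show that $\Theta^{(n)}_{\delta,\Lambda}$ or $\Phi_{\mu,\Lambda}$ is well defined; the latter is in general not induced by any ambient map. The paper instead checks every relation by local detection against actual automorphisms over the $A_i$. For $\Phi_\mu$ this first requires showing that a trace-preserving $\mu$ permutes the orbit neighbourhoods and induces trace-preserving isomorphisms $A_j\to A_i$.

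Second, for $\Lambda\ne\Lambda_r$ you give no reason why only admissible graph data ($e_j=0$ when $\tau^j(\Lambda)\ne\Lambda$) occur. Your uniqueness argument for $\mathbf e$ is also not an argument: conjugation by $g$ hides which root subgroups are permuted. The paper separates the frames $\bigl((\tau^j)_*(x_\lambda(1))\bigr)_\lambda$ up to conjugacy via the three central involutions of $\operatorname{Spin}_8$. It then reduces to the adjoint quotient $E_\Lambda(R)/Z(E_\Lambda(R))\simeq E(R)$, lifts the factors uniquely by perfectness, and detects admissibility through the central isogeny.
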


The adjoint elementary classification is the case $\Lambda=\Lambda_r$.
No nontrivial central factor occurs for elementary groups, which
are perfect. Separating $i_g^\Lambda$ into strictly inner and
diagonal factors is possible but noncanonical; keeping the
adjoint-induced action makes the uniqueness statement precise.

For full groups, a \emph{central automorphism} is a bijective map
$$
        c_\zeta(x)=\zeta(x)x,
        \qquad
        \zeta:G_\Lambda(R)\longrightarrow Z(G_\Lambda(R))
$$
with $\zeta$ a group homomorphism.
Such an automorphism fixes $E_\Lambda(R)$ pointwise.
The next theorem describes both the image and the kernel of
restriction to the elementary subgroup.

\begin{maintheorem}[Full groups]
\label{thm:main-full}
\label{thm:automorphisms-full-groups-nonexceptional}
\label{thm:automorphisms-full-groups-exceptional-unitary}
\label{thm:final-automorphism-theorem}
Under the same assumptions, $E_\Lambda(R)$ is characteristic in
$G_\Lambda(R)$.
An automorphism of $E_\Lambda(R)$ extends to an automorphism of
$G_\Lambda(R)$ if and only if its decomposition in
Theorem~\ref{thm:main-elementary} has no nonzero exceptional factor,
or the type is ${}^2A_3$ and $\Lambda=\Lambda_w$.

For each extendible elementary automorphism $\alpha$, let
$\widehat\alpha$ be the product, in the same order, of the
full-group extensions constructed in
Section~\ref{sec:arbitrary-isogeny-types-full-groups}.
Every automorphism of $G_\Lambda(R)$ has a unique expression
$$
        \varphi=\widehat\alpha\circ c_\zeta,
$$
where $\alpha=\varphi|_{E_\Lambda(R)}$ and $c_\zeta$ is a central
automorphism. Conversely, every such composition is an automorphism.
The central factor is trivial for the adjoint isogeny type.
\end{maintheorem}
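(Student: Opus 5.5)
We will deduce Theorem~\ref{thm:main-full} from Theorem~\ref{thm:main-elementary} in three stages: characteristicity of the elementary subgroup, the extension criterion, and the splitting of the kernel of restriction into central automorphisms.

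\emph{Characteristicity.} We first show that $E_\Lambda(R)$ is characteristic in $G_\Lambda(R)$. The plan is to prove that $E_\Lambda(R)$ is the largest perfect normal subgroup, or equivalently that
$$
E_\Lambda(R)=[G_\Lambda(R),G_\Lambda(R)]=[E_\Lambda(R),E_\Lambda(R)].
$$
Perfectness of $E_\Lambda(R)$ is already used for Theorem~\ref{thm:main-elementary}, since rank $\geq2$ and $2\in R^\times$ give the generating commutator relations. The opposite inclusion $[G,G]\subseteq E$ is the local--global step. By normality of $E$ in $G$ (a Suslin--Taddei type result for twisted groups), the quotient $G/E$ should be abelian. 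To see this we localize at the $\theta$-stable maximal ideals, that is at maximal ideals of $R^\theta$. There $G=E\cdot T$ with $T$ the twisted torus, the commutators $[T,T]$ are trivial, and $[T,E]\subseteq E$. Whitehead--Quillen patching then gives the global statement.

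\emph{Extension criterion.} Given $\varphi\in\operatorname{Aut}G$, its restriction $\alpha$ is an automorphism of $E$ and therefore decomposes as in Theorem~\ref{thm:main-elementary}. We treat the factors one at a time.
\begin{itemize}
\item The factors $i_g^\Lambda$, $\mu_*$, $\Gamma_{\mathbf e,\Lambda}$ and $\Phi_{\mu,\Lambda}$ are defined on the ambient groups or on the relative coordinates, so they extend explicitly. This is the construction of the extensions $\widehat\alpha$.
\item For $\Theta_\delta$ with $\delta\neq0$, outside simply connected ${}^2A_3$, we must produce an obstruction. We will localize at a fixed maximal ideal $\mathfrak m$ with residue field $\mathbb F_3$ where $\delta\neq0$. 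Then we exhibit a torus element $h\in G_\Lambda(R_\mathfrak m)$ whose conjugation action on $E$ does not commute with $\Theta_\delta$ modulo inner automorphisms of $E$.
\item In the simply connected ${}^2A_3$ case, $\mathrm{SU}_4$ is generated by $E$ and a torus on which $\Theta_\delta$ can be defined compatibly, which gives the extension.
\end{itemize}
This obstruction analysis is the main obstacle. We must show that any extension would force $\Theta_\delta$ to normalize the image of $G$ in $\operatorname{Aut}E$. That image is $G_{\mathrm{ad}}$-conjugation restricted to the image of $G_\Lambda$, and the test reduces to a computation in the $\mathbb F_3$-fiber, where $\Theta_\delta$ acts through $\delta$ times a nonalgebraic derivation-like map.

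\emph{Central kernel and uniqueness.} Set $\psi=\widehat\alpha^{-1}\varphi$. It fixes $E$ pointwise, so for $x\in G$ and $e\in E$,
$$
\psi(x)\,e\,\psi(x)^{-1}=\psi(xex^{-1})=xex^{-1}.
$$
Hence $x^{-1}\psi(x)$ centralizes $E$. The centralizer of $E$ in $G$ is $Z(G)$: this follows by localization and the standard fact that an element commuting with all root subgroups over local rings is central. Setting $\zeta(x)=\psi(x)x^{-1}$, we obtain a map into $Z(G)$. It is a homomorphism because
$$
\zeta(xy)=\psi(x)\psi(y)y^{-1}x^{-1}=\psi(x)\zeta(y)x^{-1}=\zeta(x)\zeta(y),
$$
using that $\zeta(y)$ is central. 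Thus $\psi=c_\zeta$.

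Uniqueness holds because $\alpha$ is determined by restriction and $\widehat\alpha$ by $\alpha$. The converse direction is immediate. For the adjoint type, $Z(G_{\mathrm{ad}})$ is trivial, since the center is contained in the torus and corresponds to $\operatorname{Hom}(\Lambda_w/\Lambda_r,\cdot)$ restricted to $\Lambda_r$. Hence $\zeta=1$ and the central factor is trivial.
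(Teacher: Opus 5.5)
Your central-kernel argument is correct and is the paper's own, but the characteristicity step has a genuine gap.

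You derive $[G_\Lambda(R),G_\Lambda(R)]\subseteq E_\Lambda(R)$ from the local factorizations $G=E\cdot H$ by ``Whitehead--Quillen patching''. Membership in the elementary subgroup is not a local property. An element of $G_\Lambda(R)$ that is elementary at every maximal localization need not be elementary; this is exactly what a nontrivial non-stable $\operatorname{SK}_1$ detects. Quillen--Suslin type local--global principles concern elements of $G(R[X])$ that are elementary at $X=0$, not a commutator $[x,y]$ of arbitrary $x,y\in G_\Lambda(R)$.

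Nor is the global inclusion available by other means. For the split pair $R=B\times B$ in the simply connected form, it would assert that $\operatorname{SL}_{\ell+1}(B)/E_{\ell+1}(B)$ is abelian for every $B$, which is not known over arbitrary commutative rings.

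The paper never compares $E_\Lambda(R)$ with the commutator subgroup. For $K=\alpha(E_\Lambda(R))$ it argues as follows:
\begin{enumerate}
\item $E_\Lambda(R)$ is normally generated by the finite constant frame, by the maximal-normal-subgroup theorem plus perfectness.
\item Let $\mathfrak a$ be the invariant ideal generated by the entries of $M-I$, where $M$ runs over the matrices of the adjoint images of the $\alpha(x_\lambda(1))$ in a faithful representation. Perfectness of $K$ gives $\mathfrak a=\mathfrak a^2$, so $\mathfrak a$ is generated by a fixed idempotent.
\item The equality $C_{G_\Lambda(R)}(K)=Z(G_\Lambda(R))$ forces $\mathfrak a=R$.
\item The maximal normal subgroups and frame rigidity then give $E_\Lambda(R)\subseteq K$. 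Applying the same argument to $\alpha^{-1}$ gives equality.
\end{enumerate}

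The extension step is also incomplete, exactly at its nonformal points.

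\emph{Simply connected ${}^2A_3$.} You use that $\mathrm{SU}_4(R)$ is generated by $E$ and a torus, which fails in general. Take $R=R_1\times R_2$, where $R_1=\mathbb F_3[\epsilon]/(\epsilon^2)$ with $\theta(\epsilon)=-\epsilon$ carries $\delta=(\epsilon,0)\neq0$, and $R_2=\mathbb R[x,y]/(x^2+y^2-1)$ has trivial involution. The factor over $R_2$ is $\operatorname{Sp}_4(R_2)$. Its torus is elementary, but the group is not: the rotation matrix with entries $x,\pm y$ in a copy of $\operatorname{Sp}_2$ gives a non-contractible loop in $\operatorname{Sp}_4(\mathbb R)$. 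The paper instead uses that $\delta$ is supported at $\theta$-fixed maximal ideals with residue field $\mathbb F_3$, where the localized full group is elementary. Hence each $x\in\mathrm{SU}_4(R)$ is, on a finite cover by fixed principal opens, either elementary or in a region where $\delta$ vanishes, and the local values glue.

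\emph{Triality.} $\Phi_{\mu,\Lambda}$ is not an ambient coefficient action, since $\mu$ need not commute with $\theta$. A rule on relative root coordinates defines a map on $E_\Lambda(R)$ only. Its extension needs the torus multiplier rule, agreement on $E\cap H$ by frame rigidity, and gluing over semilocal factorizations $G=E\cdot H$.

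\emph{The obstruction.} You cannot test with $h\in G_\Lambda(R_{\mathfrak m})$, because a hypothetical automorphism of $G_\Lambda(R)$ does not act on $G_\Lambda(R_{\mathfrak m})$. A computation in the $\mathbb F_3$-fibre sees nothing either, since $\Theta_\delta$ is residually trivial. The paper uses the global constant element $t=t^{(n)}_\Lambda$ and proves $i_t\Theta_{\delta,\Lambda}^{(n)}i_t^{-1}=\Theta_{-\delta,\Lambda}^{(n)}$. An extension sending $t$ to $s$ then gives $i_{st^{-1}}=\Theta_{2\delta,\Lambda}^{(n)}$ on $E_\Lambda(R)$, and uniqueness in Theorem~\ref{thm:main-elementary} forces $\delta=0$. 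This also explains the exception: $\operatorname{diag}(1,1,-1,-1)$ is not twisting-fixed in $\mathrm{SU}_4$.
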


The extensions in Theorem~\ref{thm:main-full} use the canonical
adjoint and graph actions, ordinary coefficientwise action, the
triality action $\widehat\Phi_{\mu,\Lambda}$, and, for the simply
connected type ${}^2A_3$, the specified extension
$\widehat\Theta_{\delta,\mathrm{sc}}^{(3)}$.
Uniqueness is relative to these fixed extensions: any two extensions
of the same elementary automorphism differ by a central automorphism.
Both main theorems are proved in
Section~\ref{sec:arbitrary-isogeny-types-full-groups}.

In particular, the hyperbolic special unitary group
$$
 \operatorname{SU}_4(R,\theta;Q_4)
   =\{x\in\operatorname{SL}_4(R)\mid
                   xQ_4\overline x^{\,t}=Q_4\},
        \qquad \overline x=\theta(x),
$$
with the standard form matrix $Q_4$, has the classification
$$
        \varphi=i_g^{\Lambda_w}
          \circ\widehat\Theta_{\delta,\mathrm{sc}}^{(3)}
          \circ\mu_*\circ c_\zeta.
$$
This holds over every commutative ring with $2$ invertible.
In contrast, no nonzero exceptional elementary parameter extends
to either of the other isogeny forms of ${}^2A_3$ or to a full
group of type ${}^2A_4$.

\subsection*{Relation to earlier work}

The study of automorphisms of linear groups began with the
field-level results of Schreier and van der Waerden and the
subsequent work on classical groups by Rickart and Dieudonn\'e
\cite{SchreierVanDerWaerden,RickartLinearTransformations,
DieudonneClassicalGroups}.
For Chevalley groups over fields, the standard descriptions of
automorphisms and abstract isomorphisms were developed by
Steinberg, Humphreys, and Borel--Tits
\cite{SteinbergAutomorphisms,SteinbergLectures,
HumphreysAutomorphisms,BorelTitsAbstractHomomorphisms}.

Over rings, early results include those of Hua and Reiner on the
integral unimodular group and of O'Meara on linear groups over
integral domains
\cite{HuaReinerUnimodular,OMearaIntegralDomain}.
Automorphisms over local and general commutative rings were studied
by Pomfret and McDonald, Waterhouse, Petechuk, and others
\cite{PomfretMcDonaldLocal,WaterhouseGLn,PetechukMatrixGroups}.
The work of Golubchik and Mikhalev and of Zelmanov established
broad isomorphism theorems over associative rings containing $1/2$,
under the appropriate rank hypotheses
\cite{GolubchikMikhalevGL,ZelmanovLinearGroups}.

For unitary groups, Golubchik and Mikhalev worked under
hyperbolic-rank and central-unit assumptions
\cite{GolubchikMikhalevUnitary}; Zelmanov's method used graded
Lie algebras and Jordan systems \cite{ZelmanovLinearGroups}.
Ismagilova obtained an isomorphism theorem under suitable
idempotent hypotheses \cite{IsmagilovaUnitary}.
Their hypotheses do not cover the arbitrary-ring hyperbolic-rank-two
situation considered here. The exceptional family in ${}^2A_3$
and ${}^2A_4$ explains why the lower-rank answer differs from
the stable one.

For Chevalley groups over commutative rings, the structural work of
Abe, Suzuki, Vaserstein, Vavilov, and others
\cite{AbeLocal,AbeCommutative,AbeSuzukiNormal,Vaserstein,PlotkinVavilov}
provided the tools for local automorphism theorems and their
extension to arbitrary commutative rings
\cite{BuninaDifferentTypes,BuninaGeneral}.
Twisted groups and their central coverings were constructed over
rings by Abe \cite{AbeCoverings}; their normal subgroups,
elementary subgroups, and centers were studied by Suzuki
\cite{SuzukiLocalNormal,SuzukiNormality,SuzukiCenters}.
More recent normal-structure, normalizer, and factorization results
are due to Garge and Makadiya
\cite{GargeMakadiyaNormal,GargeMakadiyaNormalizer,
GargeMakadiyaFactorization}.

The local inputs for this paper are provided by
\cite{BuninaMakadiya2ALarge,BuninaMakadiya2ASmall,
BuninaMakadiya2D,BuninaMakadiya2E6,BuninaMakadiyaTriality}.
Passing to arbitrary rings requires more than applying a local
automorphism theorem at each maximal ideal: a localized global
automorphism has a possibly nonlocal source and need not be
surjective onto its local target. We use the available relative
results and prove the additional normalization and recognition
statements needed for these maps. We then glue the resulting data
and handle central isogenies and passage to the full groups.

\subsection*{Idea of the proof}

We work first with the adjoint elementary group. Its
\emph{standard frame} is the finite indexed family
$$
        \mathcal F_\Phi(R)
        =\bigl(x_\lambda(1)\bigr)_{\lambda\in\Phi_\rho},
$$
where $1$ means $(1,1/2)$ for a nonadditive relative root group.
Its centralizer in the ambient adjoint group is trivial over every
base ring under consideration. The frame need not generate the
elementary group: coefficient automorphisms fix it. We therefore
separate normalization of this finite family from recovery of
the arbitrary root parameters.

\smallskip
\noindent\emph{Residual quotients and local models.}
The maximal proper normal subgroups are the kernels of reduction
along $\theta$-orbits of maximal ideals. For order-two twistings,
this follows from the normal-structure results of
\cite{GargeMakadiyaNormal}, with the corresponding split results
when $\theta=\operatorname{id}_R$.
An ideal-level classification fails in general for triality, so we
prove the required maximal-normal-subgroup statement directly in
Theorem~\ref{thm:triality-maximal-normal-subgroups}.
An automorphism consequently permutes these residual quotients.
Localizing along a whole orbit retains the coefficient operator;
residual isomorphism theorems and relative local normalization
then supply the local data.

\smallskip
\noindent\emph{Finite normalization and gluing.}
The data to be glued are
$$
\begin{array}{c|c}
 \text{case}&\text{normalization data}\\ \hline
 \text{ordinary types and }{}^3D_4&g\\
 {}^2A_3,\ {}^2A_4&(g,\delta)\\
 {}^2D_4&(g,\mathbf e).
\end{array}
$$
In each case, finitely many equations define an affine scheme of
finite presentation, with at most one point over every test
algebra, not merely over local rings. This uniqueness gives both
faithfully flat descent and agreement on open intersections;
Proposition~\ref{prop:unique-affine-gluing} yields the global data.
For the exceptional parameter, the finite system imposes
$\delta^2=3\delta=0$; its full annihilator conditions are checked
after gluing. Twisting compatibility is also detected locally.

\smallskip
\noindent\emph{Coefficient recovery.}
Removing the normalization factors fixes the ordinary frame.
Recognition of root subgroups and rank-two commutator identities
recover addition, multiplication, and the coefficient involution.
In triality they recover the ring and $\operatorname{Tr}_\theta$,
but not necessarily $\theta$ up to inversion. This step is
independent of how the frame was normalized. For exceptional and
triality actions, local detection proves that the prescribed root
images respect all group relations.

\smallskip
\noindent\emph{Central isogenies and full groups.}
The elementary group in every invariant isogeny form is perfect,
with adjoint quotient obtained by dividing out its center.
Perfectness makes elementary lifts unique; graph factors lift
precisely when they preserve the lattice.
For full groups, restriction to the characteristic elementary
subgroup has central automorphisms as its kernel.
We construct extensions from local elementary and torus actions.
In the excluded unitary forms, conjugation by a torus element
changes the sign of the exceptional parameter, contradicting the
compatibility that any full-group extension must satisfy.

Section~1 fixes the groups, root coordinates, and local models.
Sections~2--3 establish the general normalization and recovery
argument under explicit local hypotheses, verified in
Sections~4--6. Sections~\ref{sec:exceptional-unitary-types},
\ref{sec:graph-automorphisms-2D4}, and~\ref{sec:triality}
treat the exceptional unitary, graph, and triality cases,
respectively. Section~\ref{sec:arbitrary-isogeny-types-full-groups}
passes to all invariant isogeny types and full groups and completes
the proofs of the main theorems.

\section{Preliminaries and local models}
\label{sec:preliminaries-local-models}

We fix the coefficient pairs, root coordinates, and adjoint groups
used in the proof. Localization along an orbit of maximal ideals
provides the local models; an affine gluing principle then assembles
uniquely determined local normalization data. Other isogeny forms
are treated in
Section~\ref{sec:arbitrary-isogeny-types-full-groups}.

\subsection{Rings and coefficient maps}
\label{subsec:rings-with-operators}

Fix a root system $\Phi$ and a diagram automorphism $\rho$ from
the list in the introduction, and put $m:=\operatorname{ord}(\rho)$.
All rings are commutative with identity, and all ring homomorphisms
are unital. We work with pairs $(R,\theta)$ satisfying
$$
        \theta\in\operatorname{Aut}(R),\qquad
        \theta^m=\operatorname{id}_R,\qquad m\in\{2,3\}.
$$
Throughout, $2\in R^\times$ and, if $m=3$, also $3\in R^\times$.
The order of $\theta$ may be smaller than $m$; this convention
is essential when passing to residue fields. Write
$$
        \overline r:=\theta(r),\qquad
        R_0:=R^\theta.
$$
For $m=2$, also put
$R^-_\theta:=\{r\in R\mid\overline r=-r\}$, so that
$R=R_0\oplus R^-_\theta$ as $R_0$-modules.
For $m=3$, $\overline{\overline r}$ denotes $\theta^2(r)$.

A ring homomorphism $f:(R,\theta_R)\to(S,\theta_S)$ is
\emph{equivariant} if $f\theta_R=\theta_Sf$.
For a $\theta$-invariant ideal $I$ or multiplicative set $\Sigma$,
the quotient $R/I$ or localization $\Sigma^{-1}R$ carries the
induced operator
$$
        \theta(r+I)=\theta(r)+I,\qquad
        \theta(r/s)=\theta(r)/\theta(s).
$$

The pinned graph action and the coefficient action commute on the
split adjoint group. We write
$$
        \sigma_R:=\rho\circ\theta
$$
for their composite, and use $\sigma$ when the ring is understood.
Every ring homomorphism induces the usual map on split group
points. For an equivariant map, it also induces maps on the twisted
and elementary groups defined below. All these maps are denoted
by $f_*$.

\subsection{Relative roots and root coordinates}
\label{subsec:relative-root-subgroups}

We group the absolute roots by their reduced relative directions
and choose coordinates compatible with equivariant coefficient maps.
For $\alpha\in\Phi$, put
$$
        \alpha_\rho:=\sum_{j=0}^{m-1}\rho^j(\alpha).
$$
Two absolute roots determine the same reduced relative root if
their orbit sums lie on the same positive ray. Denote the resulting
class by $[\alpha]$ and put
$$
        \Phi_\rho:=\{[\alpha]\mid\alpha\in\Phi\},\qquad
        -[\alpha]=[-\alpha].
$$
Write $\overline\alpha:=\rho(\alpha)$.
A class is either an orbit of $d$ pairwise orthogonal roots,
with $d\in\{1,m\}$, or, only when $m=2$, a set
$\{\alpha,\overline\alpha,\alpha+\overline\alpha\}$.
We call these classes of type $A_1^d$ and $A_2$, respectively.
The possibilities are
$$
\begin{array}{c|c|c|c}
\text{twisted type}&\Phi_\rho&
\text{long roots}&\text{short roots}\\ \hline
{}^2A_{2n-1}\ (n\geq2)&C_n&A_1&A_1^2\\
{}^2A_{2n}\ (n\geq2)&B_n&A_1^2&A_2\\
{}^2D_\ell\ (\ell\geq4)&B_{\ell-1}&A_1&A_1^2\\
{}^2E_6&F_4&A_1&A_1^2\\
{}^3D_4&G_2&A_1&A_1^3
\end{array}
$$
For ${}^2A_{2n}$ the non-reduced system is $BC_n$.
We index by the reduced system $B_n$, keeping the three absolute
roots on each short ray in one relative root subgroup.

Fix a $\rho$-adapted Chevalley basis, with signs as in
\cite[Lemma~2.4]{BuninaMakadiya2ALarge}, and write $x_\alpha(t)$
for the absolute root maps.
For each $\lambda\in\Phi_\rho$, choose a representative
$\alpha_\lambda$ with $\alpha_{-\lambda}=-\alpha_\lambda$.
For an $A_2$-class, choose one of its two non-fixed roots;
the order $(\alpha_\lambda,\rho(\alpha_\lambda))$ is then fixed.
These choices affect the coordinates, not the subgroups.
When $\lambda$ is understood, write $\alpha:=\alpha_\lambda$.

\smallskip
\noindent\emph{Additive root groups.}
For $\lambda\sim A_1^d$, put $R_\lambda=R_0$ if $d=1$ and
$R_\lambda=R$ if $d=m$, and define
$$
        x_\lambda(t):=
        \prod_{j=0}^{d-1}x_{\rho^j(\alpha)}(\theta^j(t)),
        \qquad t\in R_\lambda.
$$
The factors commute, so
$$
        x_\lambda(t)x_\lambda(u)=x_\lambda(t+u).
$$
In triality we subsequently choose the Weyl-coherent short-root
coordinates of
Subsection~\ref{subsec:triality-coefficient-automorphisms}.
They differ only by cyclic changes $t\mapsto\theta^j(t)$ and
leave the subgroups and their parameter-one elements unchanged.

\smallskip
\noindent\emph{Root groups of type $A_2$.}
Here $m=2$. Put $\gamma=\alpha+\overline\alpha$ and fix
$N=N_{\alpha,\overline\alpha}\in\{\pm1\}$ by
$$
 x_\alpha(a)x_{\overline\alpha}(b)
 =x_{\overline\alpha}(b)x_\alpha(a)x_\gamma(Nab).
$$
The graph action interchanges the first two root maps and sends
$x_\gamma(c)$ to $x_\gamma(-c)$.
Set
$$
        \mathcal A(R):=
        \{(t,u)\in R^2\mid t\overline t=u+\overline u\},
        \qquad R_\lambda:=\mathcal A(R),
$$
and define
$$
\begin{aligned}
 x_\lambda(t,u)
 &:=x_{\overline\alpha}(\overline t)x_\alpha(t)x_\gamma(Nu)\\
 &=x_\alpha(t)x_{\overline\alpha}(\overline t)
                         x_\gamma(-N\overline u).
\end{aligned}
$$
The parameter condition gives $u-t\overline t=-\overline u$,
so the two expressions agree and are fixed by $\sigma_R$.
Multiplication gives the parameter-group law
$$
        (t,u)\oplus(s,v)=(t+s,u+v+t\overline s),\qquad
        (t,u)^{-1}=(-t,\overline u).
$$
The identity is $(0,0)$, and the parameters $(0,z)$ with
$z\in R^-_\theta$ form a central subgroup.
Since $2$ is invertible, the defining condition is equivalent to
$$
        u=t\overline t/2+z,\qquad z\in R^-_\theta.
$$
Thus the first coordinate is arbitrary and the second has one
free anti-invariant part. Correspondingly,
$$
        x_\lambda(t,u)
        =x_\lambda(t,t\overline t/2)x_\lambda(0,z),
        \qquad z=u-t\overline t/2.
$$
We also write $x_\lambda(a)$ for $a=(t,u)\in\mathcal A(R)$.

In the positive $3\times3$ chart with $N=1$, this is the matrix
$$
        x_\lambda(t,u)=
        \begin{pmatrix}
        1&t&u\\0&1&\overline t\\0&0&1
        \end{pmatrix}.
$$
Here $u$ is the central matrix coordinate, as in
\cite[Subsection~2.2]{BuninaMakadiya2ASmall}.
For opposite roots, using
$N_{-\alpha,-\overline\alpha}=-N_{\alpha,\overline\alpha}$
in the unitary realizations with transposed absolute root maps gives
$$
        x_{-\lambda}(t,u)
        =x_\lambda(t,\overline u)^{\mathsf t}.
$$
Thus $(t,u)\mapsto(t,\overline u)$ converts the transposed
negative charts of \cite[Subsection~2.2]{BuninaMakadiya2ASmall}
to ours. It also converts the law with cross term $\overline t\,s$
in \cite[Subsection~2.4]{BuninaMakadiya2ALarge} to the present law.
It fixes $(1,1/2)$ and $(t,t\overline t/2)$ and sends
$(0,z)$ to $(0,-z)$. Imported Weyl and commutator formulas are
read with this conversion.

In all cases, ordered absolute unipotent coordinates show that
$x_\lambda:R_\lambda\to G_{\mathrm{ad}}(\Phi,R)$ is injective.
We denote its image by $X_\lambda(R)$. Thus
$$
R_\lambda=
\begin{cases}
R_0,&\lambda\sim A_1,\\
R,&\lambda\sim A_1^2\text{ or }A_1^3,\\
\mathcal A(R),&\lambda\sim A_2.
\end{cases}
$$
An equivariant map $f:(R,\theta_R)\to(S,\theta_S)$ induces
$f_\lambda(t)=f(t)$ in the additive cases and
$f_\lambda(t,u)=(f(t),f(u))$ in type $A_2$.
These are homomorphisms of parameter groups and satisfy
$$
        f_*(x_\lambda(a))=x_\lambda(f_\lambda(a)).
$$

\subsection{The adjoint twisted groups}
\label{subsec:adjoint-twisted-elementary-group}

Let $G_{\mathrm{ad}}(\Phi,R)$ denote the $R$-points of the split
adjoint group scheme. Define the full twisted group and its
elementary subgroup by
$$
\begin{aligned}
 G_\sigma(\Phi,R)
   &:=G_{\mathrm{ad},\sigma}(\Phi,R)
     =G_{\mathrm{ad}}(\Phi,R)^{\sigma_R},\\
 E_\sigma(\Phi,R)
   &:=E'_{\mathrm{ad},\sigma}(\Phi,R)
     =\langle X_\lambda(R)\mid\lambda\in\Phi_\rho\rangle.
\end{aligned}
$$
The root formulas give $E_\sigma(\Phi,R)\leq G_\sigma(\Phi,R)$.
When the type and operator are understood, write
$E(R)=E_\sigma(\Phi,R)$ and $G(R)=G_\sigma(\Phi,R)$;
$G_{\mathrm{ad}}(\Phi,R)$ always denotes the ambient split group.

Under the standing assumptions, $E(R)$ is normal in $G(R)$.
Hence $g\in G(R)$ induces the conjugation automorphism
$i_g(x)=gxg^{-1}$ of $E(R)$, called \emph{strictly inner}.
Every
$$
        \mu\in\operatorname{Aut}(R,\theta)
        :=\{\mu\in\operatorname{Aut}(R)\mid\mu\theta=\theta\mu\}
$$
acts on both groups by coefficient change, with
$\mu_*(x_\lambda(a))=x_\lambda(\mu_\lambda(a))$.
We call this a \emph{ring automorphism} of the group.
An automorphism of the adjoint elementary group is \emph{standard}
if it is $i_g\circ\mu_*$. The factors satisfy
$$
        \mu_*\circ i_g=i_{\mu_*(g)}\circ\mu_*.
$$
The additional graph factors in ${}^2D_4$ and the broader
coefficient actions in triality are treated in
Sections~\ref{sec:graph-automorphisms-2D4} and~\ref{sec:triality}.
They are not included in this use of the word standard.

\subsection{The standard frame and its rigidity}
\label{subsec:standard-frame}

For an additive relative root group, $x_\lambda(1)$ has its
literal meaning. For type $A_2$, put
$x_\lambda(1):=x_\lambda(1,1/2)$.
The \emph{standard frame} is the finite indexed family
$$
        \mathcal F_\Phi(R):=
        (x_\lambda(1))_{\lambda\in\Phi_\rho}.
$$
Its elements are defined over $\mathbb Z[1/2]$, independently of
the coefficient operator. Thus $\mathcal F_\Phi(T)$ is defined
in the ambient adjoint group over every ring $T$ with $2$ invertible,
without an operator on $T$, and every unital coefficient map
carries the frame to the frame.

The \emph{frame rigidity} property, denoted by $\mathrm{(LR3)}$, is
$$
        C_{G_{\mathrm{ad}}(\Phi,T)}(\mathcal F_\Phi(T))=1
$$
for every commutative ring $T$ satisfying the standing
invertibility assumptions. It is proved in
Section~\ref{sec:rigidity-standard-frame}.
In particular, an element $g\in G_{\mathrm{ad}}(\Phi,T)$ satisfying
$$
        g x_\lambda(1)g^{-1}=v_\lambda
        \qquad(\lambda\in\Phi_\rho)
$$
for a prescribed indexed family $(v_\lambda)$ is unique whenever
it exists. We call it the \emph{transporter} of the two frames.

The frame determines a conjugating element, not an automorphism:
it need not generate $E(R)$, and coefficient automorphisms fix it.
The coefficient map is recovered from the action on the whole root
subgroups. Local calculations also use Weyl and toral words, but
only their resulting frame identities enter the gluing.

\subsection{Orbit localizations and affine gluing}
\label{subsec:local-models}
\label{subsec:local-global-principles}
\label{subsec:local-global-language}

The operator carries $R_{\mathfrak m}$ to
$R_{\theta(\mathfrak m)}$, so a single maximal localization is
a coefficient pair only when $\theta(\mathfrak m)=\mathfrak m$.
We therefore localize along whole orbits. Put
$$
        \Omega_\theta(R):=\operatorname{Max}R/\langle\theta\rangle
          =\{\mathcal O_i\mid i\in\mathcal I\}.
$$
Each orbit has one or $m$ elements. For $i\in\mathcal I$, define
$$
\begin{aligned}
 I_i&:=\bigcap_{\mathfrak m\in\mathcal O_i}\mathfrak m,
 &K_i&:=R/I_i,\\
 \Sigma_i&:=R\setminus\bigcup_{\mathfrak m\in\mathcal O_i}\mathfrak m,
 &B_i&:=\Sigma_i^{-1}R,\\
 &&A_i&:=\prod_{\mathfrak m\in\mathcal O_i}R_{\mathfrak m}.
\end{aligned}
$$
These rings serve different purposes: $K_i$ is the residue algebra,
$B_i$ the semilocal orbit neighborhood, and $A_i$ its faithfully
flat cover by a product of local rings. The local calculations take
place over $A_i$; descent first returns their data to $B_i$.
All three rings carry induced operators, with $\theta$ permuting
the factors of $A_i$. Write
$\iota_i:R\to A_i$ and $q_i:R\to K_i$ for the natural equivariant
maps.

\begin{definition}
A pair $(A,\theta)$ satisfying the standing assumptions is a
\emph{local model} if either $A$ is local, or
$$
        A=\prod_{j=0}^{m-1}A^{(j)}
$$
with each $A^{(j)}$ local and $\theta$ cyclically permuting the
factors. These models are called \emph{fixed} and \emph{split},
respectively. In the fixed case it is the maximal ideal that is
fixed; $\theta$ need not be the identity.
For either model put
$$
        J_A:=\operatorname{Jac}(A),\qquad
        A_{\mathrm{res}}:=A/J_A,
$$
and denote reduction by $q_A:A\to A_{\mathrm{res}}$.
\end{definition}

Thus a fixed residue algebra is a field with an operator of order
dividing $m$; a split residue algebra is a cyclic product of $m$ fields.

\begin{lemma}
\label{lem:orbit-neighborhoods-general}
The ring $B_i$ is semilocal, with
$$
 \operatorname{Max}B_i
    =\{\mathfrak mB_i\mid\mathfrak m\in\mathcal O_i\},
 \qquad \operatorname{Jac}(B_i)=I_iB_i.
$$
The ring $A_i$ is a local model, and $B_i\to A_i$ is equivariant
and faithfully flat. There are canonical equivariant identifications
$$
 B_i/\operatorname{Jac}(B_i)\simeq K_i
 \simeq A_i/\operatorname{Jac}(A_i)
 \simeq\prod_{\mathfrak m\in\mathcal O_i}R/\mathfrak m,
$$
and
$$
        \operatorname{Jac}(A_i)
        =\prod_{\mathfrak m\in\mathcal O_i}
                    \mathfrak mR_{\mathfrak m}.
$$
\end{lemma}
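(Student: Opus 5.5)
The plan is to reduce everything to standard commutative algebra of localizations at finitely many primes, using that each orbit $\mathcal O_i$ is finite (one or $m$ elements). First, the maximal ideals of $B_i=\Sigma_i^{-1}R$ correspond to the primes of $R$ disjoint from $\Sigma_i$. By prime avoidance for the finite union $\bigcup_{\mathfrak m\in\mathcal O_i}\mathfrak m$, these are exactly the primes contained in some $\mathfrak m\in\mathcal O_i$. Hence the maximal ones are the $\mathfrak mB_i$, which are pairwise distinct since the $\mathfrak m$ are distinct maximal ideals. So $B_i$ is semilocal, and $\operatorname{Jac}(B_i)=\bigcap_{\mathfrak m}\mathfrak mB_i=(\bigcap\mathfrak m)B_i=I_iB_i$, because localization commutes with finite intersections. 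The set $\Sigma_i$ is $\theta$-stable because $\theta$ permutes $\mathcal O_i$. The induced operator on $B_i$ is therefore defined, and the localization map is equivariant.

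Next, $(B_i)_{\mathfrak mB_i}=R_{\mathfrak m}$ for each $\mathfrak m\in\mathcal O_i$. Thus $B_i\to A_i=\prod_{\mathfrak m}R_{\mathfrak m}$ is the product of the localizations of $B_i$ at all of its maximal ideals. This map is flat, being a finite product of flat maps. It is faithfully flat because every maximal ideal of $B_i$ is hit: equivalently, a $B_i$-module $M$ with $M_{\mathfrak n}=0$ for all maximal $\mathfrak n$ is zero. Equivariance holds because $\theta$ sends $R_{\mathfrak m}$ to $R_{\theta(\mathfrak m)}$ and permutes the factors compatibly with the map from $R$.

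For the local-model claim there are two cases.
\begin{itemize}
\item If $|\mathcal O_i|=1$, then $A_i=R_{\mathfrak m}$ is local with $\theta(\mathfrak m)=\mathfrak m$, which is the fixed case.
\item Otherwise $|\mathcal O_i|=m$ with $m$ prime, so $\theta$ acts on the orbit as an $m$-cycle $\mathfrak m,\theta(\mathfrak m),\dots$. Then $A_i=\prod_j R_{\theta^j(\mathfrak m)}$ with $\theta$ cyclically permuting the local factors, which is the split case.
\end{itemize}
The invertibility of $2$ (and $3$ when $m=3$) passes to $A_i$ automatically. The Jacobson radical of a finite product of local rings is the product of their maximal ideals, which gives $\operatorname{Jac}(A_i)=\prod\mathfrak mR_{\mathfrak m}$.

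For the residue identifications:
\begin{itemize}
\item By the Chinese remainder theorem, $K_i=R/I_i\simeq\prod_{\mathfrak m\in\mathcal O_i} R/\mathfrak m$, since distinct maximal ideals are comaximal.
\item We have $B_i/I_iB_i=\Sigma_i^{-1}(R/I_i)$. Every element of $\Sigma_i$ maps to a unit of $R/I_i$, because it is a unit in each factor $R/\mathfrak m$. Hence this localization is just $K_i$.
\item Likewise $A_i/\operatorname{Jac}(A_i)=\prod R_{\mathfrak m}/\mathfrak mR_{\mathfrak m}=\prod R/\mathfrak m$.
\end{itemize}
All these maps are induced by the natural maps out of $R$, so they commute with the operators. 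The only mildly delicate point is the prime-avoidance step identifying $\operatorname{Max}B_i$; it holds because the orbit is finite. Everything else is routine bookkeeping of equivariance.
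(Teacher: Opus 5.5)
Your proposal is correct and follows essentially the same route as the paper. You use finite prime avoidance to identify $\operatorname{Max}B_i$, the Chinese remainder theorem together with invertibility of $\Sigma_i$ modulo $I_i$ for the residue algebra, and the identification $(B_i)_{\mathfrak mB_i}=R_{\mathfrak m}$ with factorwise computation of the radical and residue algebra of $A_i$. The differences are cosmetic: you phrase faithful flatness via the criterion that $M_{\mathfrak n}=0$ for all maximal $\mathfrak n$ forces $M=0$, rather than via the spectra covering $\operatorname{Spec}B_i$, and you spell out the fixed/split dichotomy that the paper leaves implicit.
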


\begin{proof}
By finite prime avoidance, every prime disjoint from $\Sigma_i$
is contained in some $\mathfrak m\in\mathcal O_i$.
Thus the maximal ideals of $B_i$ are precisely the
$\mathfrak mB_i$, and their intersection is $I_iB_i$.
The Chinese remainder theorem identifies
$K_i$ with $\prod_{\mathfrak m\in\mathcal O_i}R/\mathfrak m$.
Every element of $\Sigma_i$ becomes a unit in this product, so
localization preserves the quotient.

Each factor of $A_i$ is
$R_{\mathfrak m}=(B_i)_{\mathfrak mB_i}$.
Their finite product is flat, and their spectra jointly cover
$\operatorname{Spec}B_i$, since every prime lies in a maximal ideal.
Hence $B_i\to A_i$ is faithfully flat.
The radical and residue algebra of $A_i$ are computed factorwise.
All maps and identifications commute with the induced operators.
\end{proof}

Thus $A_i=B_i=R_{\mathfrak m}$ for a fixed orbit
$\mathcal O_i=\{\mathfrak m\}$, while $A_i$ is split for an orbit
of size $m$. We use the residue identifications throughout, writing
$$
        \kappa_i:K_i\xrightarrow{\ \sim\ }A_{i,\mathrm{res}},
        \qquad q_{A_i}\iota_i=\kappa_iq_i.
$$
Every maximal localization of $R$ occurs as a factor of some $A_i$.

\smallskip
\noindent\emph{Groups over a cyclic product.}
After identifying its factors by powers of $\theta$, a split model
has the form
$$
        A=B^m,\qquad
        \theta_A(b_0,\ldots,b_{m-1})
        =(b_{m-1},b_0,\ldots,b_{m-2}).
$$
The following identification does not require $B$ to be local.

\begin{lemma}
\label{lem:split-product-identification}
For any commutative ring $B$ satisfying the standing assumptions,
first projection and the map
$$
        \Delta_\rho(g):=(g,\rho(g),\ldots,\rho^{m-1}(g))
$$
are inverse isomorphisms
$$
\begin{aligned}
 G_{\rho\theta_A}(\Phi,B^m)&\simeq G_{\mathrm{ad}}(\Phi,B),\\
 E_{\rho\theta_A}(\Phi,B^m)&\simeq E_{\mathrm{ad}}(\Phi,B).
\end{aligned}
$$
Here $E_{\mathrm{ad}}(\Phi,B)$ is the untwisted elementary group.
Projection maps each relative root subgroup onto the product of
all absolute root subgroups in its class, in the fixed root order.
\end{lemma}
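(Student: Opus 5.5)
We must prove Lemma~\ref{lem:split-product-identification}: for $A=B^m$ with the cyclic shift $\theta_A$, first projection and $\Delta_\rho$ are inverse isomorphisms on both the full and the elementary groups.

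The approach uses that the group scheme commutes with finite products. We identify $G_{\mathrm{ad}}(\Phi,B^m)=G_{\mathrm{ad}}(\Phi,B)^m$, and similarly for coefficient maps. Under this identification the coefficient action of $\theta_A$ is the cyclic shift
$$
 \theta_{A*}(g_0,\ldots,g_{m-1})=(g_{m-1},g_0,\ldots,g_{m-2}).
$$
The pinned graph action $\rho$ is defined over $\mathbb Z$ (or $\mathbb Z[1/2]$), so it acts componentwise. Hence
$$
 \sigma(g_0,\ldots,g_{m-1})=(\rho(g_{m-1}),\rho(g_0),\ldots,\rho(g_{m-2})).
$$

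\textbf{Step 1: the full groups.} A fixed point of $\sigma$ satisfies $g_j=\rho(g_{j-1})$ for $j=1,\ldots,m-1$ and $g_0=\rho(g_{m-1})$. Hence $g_j=\rho^j(g_0)$, and the last condition reads $\rho^m(g_0)=g_0$, which holds automatically since $\rho^m=\operatorname{id}$. Conversely, $\Delta_\rho(g)$ is $\sigma$-fixed for every $g$. Both maps are homomorphisms, because $\rho$ is a group automorphism and projection is a homomorphism. The composites are the identity in both orders: projection after $\Delta_\rho$ is clearly the identity, and $\Delta_\rho$ after projection is the identity by the recursion above. This gives the first isomorphism.

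\textbf{Step 2: the elementary groups.} We compute the projection of each relative root subgroup using the coordinates of Subsection~\ref{subsec:relative-root-subgroups}.

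For $\lambda\sim A_1^d$ and $t=(t_0,\ldots,t_{m-1})\in R_\lambda$, the element $x_\lambda(t)=\prod_j x_{\rho^j\alpha}(\theta^j t)$ has first component $\prod_j x_{\rho^j\alpha}(t_{-j})$, with indices taken mod $m$.
\begin{itemize}
\item If $d=m$, then $R_\lambda=A$ and the $t_{-j}$ are independent elements of $B$. The projection is therefore the full product $\prod_j X_{\rho^j\alpha}(B)$ of the mutually commuting absolute root subgroups.
\item If $d=1$, then $R_\lambda=A^{\theta}\simeq B$ diagonally, and the projection is $X_\alpha(B)$.
\end{itemize}

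For $\lambda\sim A_2$ (with $m=2$), the condition $t\overline t=u+\overline u$ on $(t,u)\in\mathcal A(A)$, written with $t=(t_0,t_1)$ and $u=(u_0,u_1)$, becomes $u_0+u_1=t_0t_1$. So $t_0,t_1,u_0$ are free and $u_1$ is determined. The first component is $x_{\overline\alpha}(t_1)x_\alpha(t_0)x_\gamma(Nu_0)$, so the projection is exactly $X_{\overline\alpha}(B)X_\alpha(B)X_\gamma(B)$ in the fixed order.

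Thus the projection of $E_{\rho\theta_A}(\Phi,B^m)$ is generated by all $X_\beta(B)$ with $\beta\in\Phi$, since every absolute root lies in some class. That is, it equals $E_{\mathrm{ad}}(\Phi,B)$. Injectivity on the elementary subgroup is inherited from Step~1. This gives the second isomorphism and the description of the root subgroups.

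The only delicate point is the bookkeeping in the last step. First, the sign conventions in $x_\lambda(t,u)$ and the action $x_\gamma(c)\mapsto x_\gamma(-c)$ must be checked to match the componentwise $\rho$. Second, one must verify that the parameter condition leaves exactly $t_0,t_1,u_0$ free, with no constraint on $u_0$. A short direct check handles both.
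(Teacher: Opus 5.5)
Your proposal is correct and follows the paper's proof essentially step for step. The fixed-point recursion $g_j=\rho(g_{j-1})$ gives the full-group identification. A class-by-class computation of first components then handles the elementary groups: diagonal parameters for $A_1$, independent components for $A_1^m$, and free $t_0,t_1,u_0$ with $u_1=t_0t_1-u_0$ for $A_2$. This shows that the projected root subgroups generate $E_{\mathrm{ad}}(\Phi,B)$, and injectivity is inherited from the full groups.

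The sign check you flag is not actually needed here. The first component of $x_\lambda(t,u)$ is read off without applying $\rho$, and $\sigma$-fixedness of the relative root elements was already established when they were defined.
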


\begin{proof}
The fixed-point equation for $(g_0,\ldots,g_{m-1})$ is
$g_j=\rho(g_{j-1})$, with indices modulo $m$.
It gives the full-group identification.
For a class of type $A_1$, the relative parameter is diagonal
and projects to an arbitrary parameter in $B$.
For type $A_1^m$, its $m$ components give independent parameters
in the $m$ commuting absolute root groups.
For type $A_2$, where $m=2$, take
$$
        t=(a,b),\qquad u=(c,ab-c),\qquad a,b,c\in B.
$$
Then $t\overline t=u+\overline u$, and the projected element is
$$
        x_{\rho(\alpha)}(b)x_\alpha(a)
        x_{\alpha+\rho(\alpha)}(N_{\alpha,\rho(\alpha)}c).
$$
Its three absolute coordinates are independent. Thus the projected
relative root subgroups contain every absolute root subgroup,
and their images generate $E_{\mathrm{ad}}(\Phi,B)$.
Injectivity follows from the full-group identification.
\end{proof}

This identifies the groups and root subgroups, but does not
normalize a homomorphism into them. The fixed-to-split normalization
argument is given in Subsection~\ref{subsec:common-local-reductions}.

\smallskip
\noindent\emph{Affine descent and gluing.}
Write $R_s=R[1/s]$ and $D(s)=\operatorname{Spec}R_s$.
We may always use fixed denominators in an invariant localization:
for $s\in\Sigma$, the product
$$
        \prod_{j=0}^{m-1}\theta^j(s)\in\Sigma\cap R_0
$$
is fixed and divisible by $s$. Replacing $s$ by this product
restricts $D(s)$ to a smaller invariant principal open.

For an affine $R$-scheme $X$, write $X(T)$ for its points over
an $R$-algebra $T$. Our normalization data will be such points.
We use the following standard facts; see \cite{StacksProject}.

\begin{lemma}[Affine descent and local detection]
\label{lem:affine-descent-and-local-detection}
\begin{enumerate}
\item
If $z$ is an element of an $R$-module and $z/1=0$ in every
maximal localization, then $z=0$. Consequently, equality of affine
$R$-points and membership of an $R$-point in a closed subscheme
can be checked over all $R_{\mathfrak m}$, or over all $A_i$.
\item
For a faithfully flat map $B\to A$ and an affine $B$-scheme $X$,
$$
        X(B)\longrightarrow X(A)
             \rightrightarrows X(A\otimes_B A)
$$
is an equalizer.
\item
If $X$ is affine of finite presentation over $R$ and $\Sigma$
is multiplicative, every point of $X(\Sigma^{-1}R)$ comes from
$X(R_s)$ for some $s\in\Sigma$.
\item
Points of an affine $R$-scheme over a finite principal-open
covering, equal on its pairwise intersections, glue uniquely
over $R$.
\end{enumerate}
\end{lemma}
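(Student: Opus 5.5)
The four assertions are standard commutative algebra; we prove them in order, reducing each to elementary module-theoretic facts and citing \cite{StacksProject} only for the equalizer statement.

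For (1), consider the annihilator $\operatorname{Ann}_R(z)$. If $z/1=0$ in $M_{\mathfrak m}$, some $s\notin\mathfrak m$ kills $z$, so $\operatorname{Ann}_R(z)\not\subseteq\mathfrak m$. If this holds for every $\mathfrak m$, the annihilator is not contained in any maximal ideal, hence equals $R$, and $z=0$. For points $x,y\in X(R)$ of $X=\operatorname{Spec}C$, equality means equality of the homomorphisms $C\to R$. It therefore suffices to test each $x(c)-y(c)$ in $R$, which the first part does. Membership in a closed subscheme $V(J)$ means that $x(f)=0$ for $f\in J$, and the same argument applies. Testing over the $A_i$ is equivalent, since every $R_{\mathfrak m}$ is a factor of some $A_i$ (the remark after Lemma~\ref{lem:orbit-neighborhoods-general}).

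Part (2) is faithfully flat descent for modules, applied to the algebra $C$. It suffices to know that
$$
B\to A\rightrightarrows A\otimes_BA
$$
is an equalizer of $B$-modules, which is exact because $B\to A$ is faithfully flat. One checks this after the faithfully flat base change $-\otimes_BA$, where the map acquires a section. Applying $\operatorname{Hom}_{B\text{-alg}}(C,-)$ then gives the equalizer of point sets.

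For (3), write $C=R[x_1,\dots,x_n]/(f_1,\dots,f_k)$. A point over $\Sigma^{-1}R$ is a tuple $(a_j/s_j)$ with $f_i(a/s)=0$ in $\Sigma^{-1}R$. Put $s=\prod s_j$. Then each $f_i(a/s)$ is an element of $R_s$ that vanishes in $\Sigma^{-1}R$, so it is killed by some $t_i\in\Sigma$. Replacing $s$ by $s\prod t_i$ gives a point of $X(R_s)$ mapping to the given point.

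Part (4) reduces to the sheaf property of the structure sheaf on a finite cover $D(s_1),\dots,D(s_n)$ with $(s_1,\dots,s_n)=R$. Compatible elements of the $R_{s_k}$ glue uniquely to an element of $R$. One can see this either by a partition-of-unity computation or as case (2) with $A=\prod_kR_{s_k}$, which is faithfully flat over $R$, since $A\otimes_RA=\prod_{k,l}R_{s_ks_l}$. Applying this coordinatewise to the images of the generators of $C$ glues the homomorphisms, and the relations $f_i$ hold by uniqueness.

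No step is a real obstacle; the only point needing care is the bookkeeping of denominators in (3), where finite presentation is exactly what is used.
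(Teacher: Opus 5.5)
Your proposal is correct and follows the paper's proof essentially step for step. The shared steps are the annihilator argument for (1), applying $\operatorname{Hom}_{B\text{-alg}}(C,-)$ to the exact sequence $B\to A\rightrightarrows A\otimes_B A$ for (2), clearing the finitely many denominators of the coordinates and relations for (3), and the Zariski sheaf property for (4); you simply supply more detail, though in (4) faithful flatness of $\prod_k R_{s_k}$ comes from flatness together with the covering of $\operatorname{Spec}R$, not from the identification $A\otimes_RA=\prod_{k,l}R_{s_ks_l}$, which is instead what turns the equalizer condition into agreement on pairwise intersections.
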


\begin{proof}
For (1), a nonzero $z$ has annihilator contained in a maximal
ideal $\mathfrak m$. Vanishing in that localization would give
$sz=0$ for some $s\notin\mathfrak m$, a contradiction.
Apply this to coordinate equalities and defining equations for
the affine assertions.
Part (2) is affine faithfully flat descent; equivalently, apply
$\operatorname{Hom}_{B\text{-alg}}(C,-)$ to the equalizer for
$B\to A$, where $X=\operatorname{Spec}C$.
In (3), finitely many denominators suffice for the coordinates
and the finitely many relations. Part (4) is the affine Zariski
sheaf property.
\end{proof}

The uniqueness required below is over every base algebra, not
only over local models. It makes both descent and agreement on
intersections automatic.

\begin{proposition}[Unique affine gluing]
\label{prop:unique-affine-gluing}
Let $X$ be an affine $R$-scheme of finite presentation.
Suppose that $X(T)$ has at most one point for every commutative
$R$-algebra $T$, and that $X(A_i)$ is nonempty for every
$i\in\mathcal I$. Then $X(R)$ consists of one point, whose
image over each $A_i$ is its prescribed point.
\end{proposition}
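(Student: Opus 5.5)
The plan is to descend from each $A_i$ to the orbit neighborhood $B_i$, spread out to a principal open $D(s_i)$, and then glue along a finite cover, using the uniqueness hypothesis at every step. Uniqueness over arbitrary $R$-algebras is what makes all compatibility checks automatic.

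\emph{Descent to $B_i$.} Fix $i\in\mathcal I$ and let $x_i\in X(A_i)$ be a point. By Lemma~\ref{lem:orbit-neighborhoods-general}, the map $B_i\to A_i$ is faithfully flat. The two images of $x_i$ in $X(A_i\otimes_{B_i}A_i)$ are points over a single $R$-algebra, so by hypothesis they coincide. Lemma~\ref{lem:affine-descent-and-local-detection}(2) then produces $y_i\in X(B_i)$ mapping to $x_i$.

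\emph{Spreading out and covering.} Since $X$ is of finite presentation and $B_i=\Sigma_i^{-1}R$, Lemma~\ref{lem:affine-descent-and-local-detection}(3) gives $s_i\in\Sigma_i$ and $z_i\in X(R_{s_i})$ whose image in $X(B_i)$ is $y_i$. The ideal generated by all $s_i$ is not contained in any maximal ideal $\mathfrak m$. Indeed, $\mathfrak m$ lies in some orbit $\mathcal O_i$, and then $s_i\notin\mathfrak m$. Hence the $s_i$ generate the unit ideal, and finitely many $s_{i_1},\dots,s_{i_k}$ already do.

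\emph{Gluing.} On each intersection $R_{s_as_b}$, the restrictions of $z_a$ and $z_b$ are points of $X(R_{s_as_b})$, so they agree by uniqueness. Lemma~\ref{lem:affine-descent-and-local-detection}(4) glues them to a point $z\in X(R)$. This shows $X(R)\neq\emptyset$, and uniqueness over $T=R$ shows that $X(R)$ consists of exactly one point. For the compatibility claim, fix any $i$. The image of $z$ in $X(A_i)$ and the prescribed point $x_i$ both lie in $X(A_i)$, which has at most one point, so they are equal.

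The only step needing care is the covering argument, namely that the chosen $s_i$ cover $\operatorname{Spec}R$. This follows because every maximal ideal lies in some orbit and $s_i$ avoids every maximal ideal of its own orbit $\mathcal O_i$. The assumption that $X$ is of finite presentation is used exactly once, in the spreading-out step via Lemma~\ref{lem:affine-descent-and-local-detection}(3).
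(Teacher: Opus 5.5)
Your proof is correct and follows the paper's argument step for step. You descend from $A_i$ to $B_i$ by faithful flatness and uniqueness, spread out to $D(s_i)$ by finite presentation, cover $\operatorname{Spec}R$ using $s_i\in\Sigma_i$, and glue using uniqueness on the overlaps. The only difference is that the paper also replaces each $s_i$ by a $\theta$-fixed denominator; this is a convenience for later use and is not needed for the statement itself.
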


\begin{proof}
Choose $a_i\in X(A_i)$. Its two pullbacks to
$A_i\otimes_{B_i}A_i$ agree by uniqueness, so faithful flatness
descends it to $b_i\in X(B_i)$.
Finite presentation spreads $b_i$ to some $x_i\in X(R_{s_i})$,
$s_i\in\Sigma_i$. Replacing the denominator as above and restricting
$x_i$, we may take $s_i\in R_0$.

Since $s_i$ avoids every maximal ideal in $\mathcal O_i$,
the opens $D(s_i)$ cover all maximal ideals, hence all of
$\operatorname{Spec}R$. Choose a finite subcover.
On each intersection $D(s_is_j)$, the points agree by the same
uniqueness hypothesis. They therefore glue to a point of $X(R)$.
Its uniqueness and its prescribed images over all $A_i$ follow
again from uniqueness over the corresponding algebras.
\end{proof}

The construction follows the sequence
$$
        X(A_i)\leadsto X(B_i)
        \leadsto X(R_{s_i})\leadsto X(R).
$$
Finite presentation allows passage to principal neighborhoods;
uniqueness over all algebras gives both descent and agreement
on intersections. No operator is needed on these test algebras.
Twisting identities such as $\sigma(g)=g$ are checked on the
resulting global data by local detection.

The hypotheses on residual isomorphisms and relative local
normalization are stated where they are used, at the beginning
of Section~\ref{sec:global-frame-normalization}.
\medskip


\section{Global normalization of the standard frame}
\label{sec:global-frame-normalization}

We first identify the maximal congruence quotients and the residual
isomorphisms induced by an automorphism. This construction applies
to every type under consideration. Under the ordinary local
hypotheses below, we then obtain a unique global frame transporter
by the affine gluing principle of Section~\ref{sec:preliminaries-local-models}.

Retain the orbit notation $I_i,K_i,A_i$ of that section and write
$E(U),G(U)$ for the adjoint elementary and full twisted groups
over a coefficient pair $U$. Fix $\varphi\in\operatorname{Aut}E(R)$.

\subsection{Local hypotheses}
\label{subsec:local-rigidity-package}

Frame rigidity $\mathrm{(LR3)}$ is as in
Subsection~\ref{subsec:standard-frame}; its proof is in
Section~\ref{sec:rigidity-standard-frame}.
We record the remaining ordinary local inputs and their ranges
of validity.

\smallskip
\noindent\emph{\textup{(LR1)} Residual standardness.}
For an order-two twisting with $\Phi\ne D_4$, every isomorphism
$$
        \alpha:E(K)\xrightarrow{\ \sim\ }E(L)
$$
between groups over residue algebras of local models has the form
$\alpha=i_h\circ\nu_*$, where $h\in G(L)$ and
$\nu:K\xrightarrow{\sim}L$ is an equivariant ring isomorphism.
In particular, $K,L$ are both fixed or both split. This is proved
in Section~\ref{sec:residual-standardness}.

\smallskip
\noindent\emph{\textup{(LR2)} Relative frame normalization.}
Let $(S,\theta_S)$ be arbitrary, let $(A,\theta_A)$ be a local
model, and let $\Psi:E(S)\to E(A)$ be a homomorphism with
$$
        \Psi_{\mathrm{res}}:=(q_A)_*\circ\Psi
             =i_h\circ(\nu_{\mathrm{res}})_*,
        \qquad h\in G(A_{\mathrm{res}}),
$$
where $\nu_{\mathrm{res}}:S\to A_{\mathrm{res}}$ is an equivariant
unital ring homomorphism. Then there is $g_A\in G(A)$ such that
$$
        (q_A)_*(g_A)=h,\qquad
        g_Ax_\lambda(1)g_A^{-1}=\Psi(x_\lambda(1))
        \quad(\lambda\in\Phi_\rho).
$$
Frame rigidity makes $g_A$ unique.

\smallskip
\noindent\emph{\textup{(LR4)} Root-subgroup recognition.}
For the same source and target, suppose that $\Psi$ carries the
source frame to the target frame and
$\Psi_{\mathrm{res}}=(\nu_{\mathrm{res}})_*$ for an equivariant
unital ring homomorphism $\nu_{\mathrm{res}}$. Then
$$
        \Psi(X_\lambda(S))\subseteq X_\lambda(A)
        \qquad(\lambda\in\Phi_\rho).
$$

The source in $\mathrm{(LR2)}$ and $\mathrm{(LR4)}$ need not be
local, and $\Psi$ need be neither injective nor surjective.
These properties are verified in
Section~\ref{sec:relative-local-rigidity} for
${}^2A_\ell$ ($\ell\geq5$), ${}^2D_\ell$ ($\ell\geq4$),
and ${}^2E_6$; the ordinary ${}^2D_4$ case is recorded separately
in Lemma~\ref{lem:relative-local-rigidity-2D4}.
The residual graph variant $\mathrm{(LR1}_{D_4}\mathrm{)}$ is
Theorem~\ref{thm:residual-graph-standardness-D4}.
The exceptional unitary, graph, and triality normalizations are
handled in Sections~\ref{sec:exceptional-unitary-types},
\ref{sec:graph-automorphisms-2D4}, and~\ref{sec:triality}, respectively.
Once the residual identity is known, only $\mathrm{(LR2)}$ and
$\mathrm{(LR3)}$ enter the normalization below;
$\mathrm{(LR4)}$ is used in
Section~\ref{sec:recovery-of-coefficient-automorphism}.

\subsection{Maximal congruence quotients}
\label{subsec:congruence-spectrum}

We identify the residual quotients from the abstract group before
applying any standardness theorem to their isomorphisms.
For a $\theta$-invariant ideal $I\lhd R$, let $q_I:R\to R/I$
be reduction and put
$$
        N_I:=\ker\bigl(q_{I,*}:E(R)\longrightarrow E(R/I)\bigr).
$$

\begin{lemma}
\label{lem:surjectivity-reduction}
For every $\theta$-invariant ideal $I$, the map
$q_{I,*}:E(R)\to E(R/I)$ is surjective.
\end{lemma}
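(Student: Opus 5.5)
The plan is to show that every generator of $E(R/I)$ lies in the image. Since $E(R/I)$ is generated by the relative root subgroups $X_\lambda(R/I)$, $\lambda\in\Phi_\rho$, and $q_{I,*}(x_\lambda(a))=x_\lambda((q_I)_\lambda(a))$ by the compatibility recorded in Subsection~\ref{subsec:relative-root-subgroups}, it suffices to prove that the induced map on parameter groups
$$
        (q_I)_\lambda:R_\lambda\longrightarrow (R/I)_\lambda
$$
is surjective for every $\lambda$. The image is then a subgroup containing all root subgroups of $E(R/I)$, hence equal to it.

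I would treat the parameter groups case by case. For $\lambda\sim A_1^m$ we have $R_\lambda=R$ and $(R/I)_\lambda=R/I$, so surjectivity is immediate. For $\lambda\sim A_1$ we need $R_0\to(R/I)^\theta$ to be onto. Given $\bar r\in(R/I)^\theta$, lift it to $r\in R$ and replace $r$ by the average $\frac1m\sum_{j}\theta^j(r)$. This is legitimate because $m\in\{2,3\}$ is invertible under the standing assumptions ($2$ always, and $3$ when $m=3$). The average is $\theta$-fixed. Since $I$ is $\theta$-invariant and $\bar r$ is fixed, it still reduces to $\bar r$.

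For $\lambda\sim A_2$ (so $m=2$), I would use the description $u=t\overline t/2+z$ with $z\in R^-_\theta$. Given $(\bar t,\bar u)\in\mathcal A(R/I)$, write $\bar u=\bar t\,\overline{\bar t}/2+\bar z$ with $\bar z\in(R/I)^-_\theta$. Lift $\bar t$ arbitrarily to $t$. Lift $\bar z$ to $z$ and then replace $z$ by $(z-\theta(z))/2$, which is anti-invariant and has the same reduction. Then $(t,t\overline t/2+z)\in\mathcal A(R)$ maps to $(\bar t,\bar u)$.

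The argument is routine. The only point needing care is the averaging step, which is exactly where invertibility of $m$ is used. This is presumably why the standing assumption of $3\in R^\times$ for triality is needed already here.
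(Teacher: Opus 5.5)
Your proposal is correct and takes essentially the same approach as the paper. The paper also reduces to lifting relative root parameters: it averages a lift by $m^{-1}\sum_{j}\theta^j$ for $A_1$-roots and lifts $A_1^m$-parameters arbitrarily. For $A_2$-roots it writes the residue parameter as $(t,t\overline t/2+z)$, lifts $t$ arbitrarily, and replaces a lift $b$ of $z$ by $(b-\overline b)/2$.
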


\begin{proof}
It suffices to lift relative root parameters.
For a root of type $A_1$, average any lift $a\in R$ by
$m^{-1}\sum_{r=0}^{m-1}\theta^r(a)$ to obtain a fixed lift.
Parameters for $A_1^2$- and $A_1^3$-groups lift arbitrarily.
For type $A_2$, write a residue parameter as
$(t,t\overline t/2+z)$, with $z$ anti-invariant, using
Subsection~\ref{subsec:relative-root-subgroups}.
Lift $t$ arbitrarily and lift $z$ by replacing any lift $b$ with
$(b-\overline b)/2$. The same formula gives an admissible lift
of the pair.
\end{proof}

Thus $E(R)/N_I\simeq E(R/I)$.
Let $R_\lambda(I)$ be the kernel of
$R_\lambda\to(R/I)_\lambda$. Explicitly,
$$
R_\lambda(I)=
\begin{cases}
 I\cap R_0,&\lambda\sim A_1,\\
 I,&\lambda\sim A_1^2\text{ or }A_1^3,\\
 \mathcal A(R)\cap(I\times I),&\lambda\sim A_2.
\end{cases}
$$
Denote by $E(R,I)$ the normal closure in $E(R)$ of all
$x_\lambda(a)$ with $a\in R_\lambda(I)$.
For order-two twistings we use the following form of
\cite[Theorems~1.2 and~1.4]{GargeMakadiyaNormal}.

\begin{theorem}[Normal structure for order-two twistings]
\label{thm:normal-structure}
If $m=2$, every normal subgroup $H\lhd E(R)$ has a unique
$\theta$-invariant ideal $I\lhd R$ such that
$$
        E(R,I)\leq H\leq N_I.
$$
\end{theorem}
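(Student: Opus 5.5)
The plan is to deduce the statement from \cite[Theorems~1.2 and~1.4]{GargeMakadiyaNormal} by a dictionary translation, and then to prove uniqueness of $I$ directly from the root subgroups. The argument separates existence, which is imported, from uniqueness, which is elementary.

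\emph{Existence.} The cited theorems give, for a normal subgroup $H\lhd E(R)$, a level. In the unitary and orthogonal language this is a form ideal, or a $\theta$-invariant ideal together with a form parameter, squeezed between the relative elementary subgroup and the full congruence subgroup of that level.
\begin{enumerate}
\item First I would check that, because $2\in R^\times$, every admissible form parameter is determined by its ideal. The decomposition $R=R_0\oplus R^-_\theta$ and $u=t\overline t/2+z$ from Subsection~\ref{subsec:relative-root-subgroups} show that the form parameter attached to a $\theta$-invariant ideal $I$ is forced to be $I\cap R^-_\theta$, possibly twisted by the sign convention.
\item Under this identification, the relative elementary subgroup of \cite{GargeMakadiyaNormal} is generated, as a normal subgroup, by the $x_\lambda(a)$ with $a\in R_\lambda(I)$. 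This is exactly $E(R,I)$.
\item Their full congruence subgroup contains $N_I$. Intersecting with $E(R)$ gives the upper bound $H\le N_I$, since $H\le E(R)$ anyway.
\item The conversion $(t,u)\mapsto(t,\overline u)$ of Subsection~\ref{subsec:relative-root-subgroups} must be applied to the $A_2$-parameters in type ${}^2A_{2n}$. It preserves $R_\lambda(I)$, so the identification is unaffected.
\item When $\theta=\operatorname{id}_R$, the group is split over $R$ by the same pinning. The split sandwich theorem of Abe and Vaserstein \cite{AbeCommutative,Vaserstein} gives the same statement, with every ideal trivially invariant.
\end{enumerate}

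\emph{Uniqueness.} Suppose $I$ and $J$ both work for $H$. Then $E(R,I)\le H\le N_J$, so for every $\lambda$ and every $a\in R_\lambda(I)$ we have $q_{J,*}x_\lambda(a)=1$. Since $x_\lambda$ is injective over $R/J$, this gives $a\in R_\lambda(J)$. It remains to choose a relative root that detects all of $I$.
\begin{itemize}
\item In types ${}^2A_{2n-1}$, ${}^2D_\ell$ and ${}^2E_6$, there are short roots of type $A_1^2$ with $R_\lambda=R$. These give $I=R_\lambda(I)\subseteq J$ directly.
\item In type ${}^2A_{2n}$, the long roots are of type $A_1^2$. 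These give the same inclusion.
\end{itemize}
By symmetry $J\subseteq I$, so $I=J$.

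\emph{Main obstacle.} The only real difficulty is the bookkeeping in the existence part: matching the levels of \cite{GargeMakadiyaNormal}, stated for form rings, with our $\theta$-invariant ideals and our $A_2$ coordinates. In particular, one must confirm that their relative elementary group is exactly our normal closure $E(R,I)$, not merely commensurable with it. I would handle this by writing out the generators for each class type $A_1$, $A_1^2$, $A_2$ and using $2\in R^\times$ to split every parameter into its invariant and anti-invariant parts.
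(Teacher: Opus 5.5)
Your strategy is the paper's: import \cite[Theorems~1.2 and~1.4]{GargeMakadiyaNormal} and translate coordinates. Your direct uniqueness argument via a root of type $A_1^2$ with $R_\lambda=R$ is correct. However, step~3 has a genuine gap, because the inclusion you state runs the wrong way.

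The cited sandwich theorems bound $H$ above by a \emph{central} (full) congruence subgroup $C(R,I)$, consisting of elements that become central modulo $I$. This group is in general larger than the reduction kernel. Knowing $N_I\subseteq C(R,I)$ only gives $H\leq C(R,I)\cap E(R)$, and that group contains $N_I$. To conclude $H\leq N_I$ you need the reverse inclusion $C(R,I)\cap E(R)\subseteq N_I$.

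This is not bookkeeping. It is exactly where the adjoint isogeny type is used, and it fails for forms with nontrivial center. In a simply connected form, a nontrivial central subgroup of the elementary group is normal of level $0$, yet it is not contained in the trivial kernel of reduction modulo $0$. So your ``main obstacle'' paragraph locates the difficulty in the wrong place.

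The paper closes this step with frame rigidity over $R/I$. Take $h\in H$. The cited bound says that $q_{I,*}(h)$ commutes with every element of $E(R/I)$. If the bound is phrased instead as $[h,E(R)]\subseteq E(R,I)$, use $E(R,I)\leq N_I$ and the surjectivity of $q_{I,*}$ from Lemma~\ref{lem:surjectivity-reduction}. In particular $q_{I,*}(h)$ centralizes the standard frame $\mathcal F_\Phi(R/I)$ in $G_{\mathrm{ad}}(\Phi,R/I)$. By $\mathrm{(LR3)}$ (Section~\ref{sec:rigidity-standard-frame}), which holds over every ring with $2$ invertible and in particular over $R/I$, it is therefore $1$. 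Hence $h\in N_I$.

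The same repair is needed in step~5, and there you should also correct the type. For $\theta=\operatorname{id}_R$, the group $E(R)$ is not split of type $\Phi$. It is the elementary group of the folded split type $\Phi_\rho$, namely $C_n$, $B_n$, $B_{\ell-1}$ or $F_4$ (cf.\ the table in Section~\ref{sec:residual-standardness}). This is how the paper applies \cite[Theorem~1.2]{GargeMakadiyaNormal}. The Abe--Vaserstein sandwich for that type again yields only a central congruence bound. Since $\mathrm{(LR3)}$ is formulated in the ambient group $G_{\mathrm{ad}}(\Phi,-)$ without any coefficient operator, the single frame-rigidity argument handles both the twisted and the folded split cases.
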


\begin{proof}
For $\theta\ne\operatorname{id}_R$, apply
\cite[Theorem~1.4]{GargeMakadiyaNormal}.
For $\theta=\operatorname{id}_R$, apply
\cite[Theorem~1.2]{GargeMakadiyaNormal} to the corresponding
folded split group. The root-coordinate identifications respect
reduction and relative elementary subgroups. In either case,
frame rigidity over $R/I$ makes the central congruence bound
exactly $N_I$.
\end{proof}

The ideal $I$ is the \emph{level} of $H$.
The analogous ideal-level statement fails in general for triality.
We use instead the independent maximal-normal-subgroup theorem
of Subsection~\ref{subsec:triality-maximal-normal-subgroups},
which also gives a counterexample to the ideal-level statement.

\begin{lemma}
\label{lem:maximal-invariant-ideals}
The ideals $I_i$, $i\in\mathcal I$, are precisely the maximal
proper $\theta$-invariant ideals of $R$, without repetitions.
\end{lemma}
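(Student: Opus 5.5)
We must show two things: each $I_i$ is a maximal proper $\theta$-invariant ideal, and every maximal proper $\theta$-invariant ideal equals some $I_i$, with $I_i=I_j$ forcing $i=j$. The tool is the elementary correspondence between $\theta$-invariant ideals and $\langle\theta\rangle$-stable sets of maximal ideals. It is available because $\langle\theta\rangle$ is a finite group of order dividing $m$.

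First, $I_i$ is $\theta$-invariant: $\theta$ permutes $\mathcal O_i$, so it permutes the terms of the intersection. It is proper because it lies in any $\mathfrak m\in\mathcal O_i$. For maximality, let $J\supseteq I_i$ be a proper $\theta$-invariant ideal. Choose a maximal ideal $\mathfrak n\supseteq J$; then $\mathfrak n\supseteq I_i=\bigcap_{\mathfrak m\in\mathcal O_i}\mathfrak m$. This is a finite intersection, so $\mathfrak n$ contains the product $\prod_{\mathfrak m\in\mathcal O_i}\mathfrak m$. Since $\mathfrak n$ is prime, it contains some $\mathfrak m\in\mathcal O_i$, and maximality gives $\mathfrak n\in\mathcal O_i$. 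Applying $\theta^j$ to $J\subseteq\mathfrak n$ and using invariance of $J$ shows $J\subseteq\theta^j(\mathfrak n)$ for every $j$. Hence $J\subseteq\bigcap_j\theta^j(\mathfrak n)=I_i$, so $J=I_i$.

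Conversely, let $J$ be a maximal proper $\theta$-invariant ideal. Choose a maximal ideal $\mathfrak n\supseteq J$ and let $\mathcal O_i$ be its orbit. As above, $J\subseteq\theta^j(\mathfrak n)$ for all $j$, so $J\subseteq I_i$. Since $I_i$ is proper and $\theta$-invariant, maximality of $J$ gives $J=I_i$.

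For the absence of repetitions, suppose $I_i=I_j$. Take $\mathfrak n\in\mathcal O_j$. Then $\mathfrak n\supseteq I_i$, and the prime-avoidance step of the first paragraph puts $\mathfrak n$ in $\mathcal O_i$. The orbits therefore meet, hence coincide, so $i=j$. The argument uses only that orbits are finite, which holds since $\theta^m=\operatorname{id}_R$. There is no real obstacle; the one point needing care is that passing from an intersection to a product uses the finiteness of the orbit. This is the step where "prime contains intersection implies prime contains a member" would fail for infinite families.
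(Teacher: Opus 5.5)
Your proof is correct and follows the paper's argument. Every proper invariant ideal lies in the orbit intersection over any maximal ideal containing it, and the maximal ideals containing $I_i$ are exactly the members of $\mathcal O_i$; the only difference is that the paper checks maximality of $I_i$ through the Chinese remainder decomposition $R/I_i\simeq\prod_{\mathfrak m\in\mathcal O_i}R/\mathfrak m$ with transitively permuted field factors, whereas you argue directly with a maximal ideal over $J\supseteq I_i$, which is equally valid.
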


\begin{proof}
Every proper invariant ideal $L$ lies in some maximal ideal
$\mathfrak m$, hence in the intersection $I_i$ of its orbit.
If $L$ is maximal invariant, then $L=I_i$.
Conversely,
$$
        R/I_i\simeq
        \prod_{\mathfrak m\in\mathcal O_i}R/\mathfrak m
$$
has no nonzero proper invariant ideal, since its field factors
are permuted transitively. Thus $I_i$ is maximal invariant.
The maximal ideals containing $I_i$ are exactly the members of
$\mathcal O_i$, so $I_i$ determines its orbit.
\end{proof}

Put $N_i:=N_{I_i}=\ker(q_i)_*$.

\begin{proposition}[Maximal normal subgroups]
\label{prop:maximal-normal-subgroups}
Every proper normal subgroup of $E(R)$ is contained in some $N_i$.
The map $i\mapsto N_i$ is a bijection from $\mathcal I$ to the
maximal proper normal subgroups of $E(R)$.
\end{proposition}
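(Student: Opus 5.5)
We will reduce everything to the level map of Theorem~\ref{thm:normal-structure} when $m=2$. For $m=3$ that theorem is unavailable, since the ideal-level statement fails for triality. In that case we will invoke the independent maximal-normal-subgroup statement, Theorem~\ref{thm:triality-maximal-normal-subgroups}, which is stated exactly in the present form. So the plan below concerns order-two twistings, including $\theta=\operatorname{id}_R$.

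\emph{Step 1: each $N_i$ is a maximal proper normal subgroup.} By Lemma~\ref{lem:surjectivity-reduction}, $E(R)/N_i\simeq E(K_i)$. This group is nontrivial because $K_i\neq0$ and the root maps are injective, so $N_i$ is proper. For maximality we show that $E(K_i)$ is simple. Apply Theorem~\ref{thm:normal-structure} to the pair $(K_i,\theta)$. By Lemma~\ref{lem:maximal-invariant-ideals}, $K_i\simeq\prod_{\mathfrak m\in\mathcal O_i}R/\mathfrak m$ has only the invariant ideals $0$ and $K_i$.
\begin{itemize}
\item If a normal subgroup $H\lhd E(K_i)$ has level $0$, then $H\leq N_0=\ker(\mathrm{id}_*)=1$.
\item If its level is $K_i$, then $H\geq E(K_i,K_i)=E(K_i)$.
\end{itemize}
Hence every normal subgroup strictly containing $N_i$ is all of $E(R)$.

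\emph{Step 2: containment.} Let $H\lhd E(R)$ be proper, with level $I$. If $I=R$, then $E(R,R)=E(R)\leq H$, which is impossible. So $I$ is a proper invariant ideal. By Lemma~\ref{lem:maximal-invariant-ideals}, $I\subseteq I_i$ for some $i$. Reduction modulo $I_i$ factors through reduction modulo $I$, so
$$
H\leq N_I\leq N_{I_i}=N_i .
$$
Combined with Step~1, this shows that every maximal proper normal subgroup is contained in, and hence equal to, some $N_i$. So the map $i\mapsto N_i$ is surjective onto the maximal proper normal subgroups.

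\emph{Step 3: injectivity.} The subgroup $N_i$ satisfies $E(R,I_i)\leq N_i\leq N_{I_i}$, so its level is $I_i$. If $N_i=N_j$, uniqueness of the level in Theorem~\ref{thm:normal-structure} gives $I_i=I_j$. Lemma~\ref{lem:maximal-invariant-ideals} then gives $i=j$.

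The only nonformal input is the normal structure theorem, and in particular the simplicity of $E(K_i)$ over a residue algebra that may be split. I expect the main obstacle to be the triality case, where this input fails at the ideal level. There the argument must be replaced wholesale by Theorem~\ref{thm:triality-maximal-normal-subgroups}, and we rely on that theorem rather than on Steps~1--3.
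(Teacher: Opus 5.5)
Your proposal is correct and follows the paper's proof essentially step for step: simplicity of $E(K_i)$ from Theorem~\ref{thm:normal-structure}, containment via the level $I\subseteq I_i$, and deferral of the case $m=3$ to Theorem~\ref{thm:triality-maximal-normal-subgroups}. The only difference is that you obtain distinctness of the $N_i$ from uniqueness of the level, whereas the paper reads it off, uniformly in $m$, from $N_i\cap X_{\lambda_0}(R)=x_{\lambda_0}(I_i)$ for a root $\lambda_0\sim A_1^m$. Both arguments work.
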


\begin{proof}
Every type under consideration has a relative root
$\lambda_0\sim A_1^m$ with parameter group $R$.
Injectivity of its root coordinates gives
$$
        N_i\cap X_{\lambda_0}(R)=x_{\lambda_0}(I_i),
$$
so the $N_i$ are distinct.
For $m=3$, containment and maximality are
Theorem~\ref{thm:triality-maximal-normal-subgroups}, whose proof
uses neither the residual-action construction below nor
an automorphism classification.

Let $m=2$. The only invariant ideals of $K_i$ are $0$ and $K_i$,
so Theorem~\ref{thm:normal-structure} makes $E(K_i)$ simple.
It is nontrivial because $x_{\lambda_0}(1)\ne1$.
Surjectivity of reduction gives $E(R)/N_i\simeq E(K_i)$,
hence $N_i$ is maximal normal.
If $H\lhd E(R)$ is proper, its level $I$ is proper and lies in
some $I_i$ by the preceding proof. Thus
$$
        H\leq N_I\leq N_i<E(R).
$$
In particular, a maximal proper normal subgroup equals that $N_i$.
\end{proof}

\smallskip
\noindent\emph{The induced residual isomorphisms.}
The automorphism $\varphi$ therefore defines a permutation
$\varphi^\sharp:\mathcal I\to\mathcal I$ by
$$
        \varphi(N_i)=N_{\varphi^\sharp(i)}.
$$
For a target index $i$, put $j=(\varphi^\sharp)^{-1}(i)$.
Since $\varphi(N_j)=N_i$, passage to the quotients gives
$$
\begin{aligned}
        \varphi_i^{\mathrm{res}}:E(K_j)&\xrightarrow{\ \sim\ }E(K_i),\\
        \varphi_i^{\mathrm{res}}\bigl((q_j)_*(x)\bigr)
             &=(q_i)_*(\varphi(x))\qquad(x\in E(R)).
\end{aligned}
$$
The source index is determined by $\varphi$. Neither this
permutation nor the residual isomorphisms require
$\mathrm{(LR1)}$; both are available in every type.

\subsection{Local and global frame normalization}
\label{subsec:transporter-of-standard-frame}
\label{subsec:orbit-neighborhoods}
\label{subsec:global-gluing-transporter}

We now assume ordinary residual standardness and apply
Proposition~\ref{prop:unique-affine-gluing} to the frame transporter.
Write
$$
        \varphi_i^{\mathrm{res}}=i_{h_i}\circ(\nu_i)_*,
        \qquad h_i\in G(K_i),\qquad
        \nu_i:K_j\xrightarrow{\ \sim\ }K_i,
$$
where $\nu_i$ is equivariant and $j=(\varphi^\sharp)^{-1}(i)$.
Since $\nu_i$ preserves the number of field factors,
$\varphi^\sharp$ preserves fixed and split orbits separately.

Identify $A_{i,\mathrm{res}}=K_i$ by
Lemma~\ref{lem:orbit-neighborhoods-general}, and put
$$
        \Psi_i:=(\iota_i)_*\circ\varphi,\qquad
        \nu_i^{\mathrm{res}}:=\nu_i\circ q_j:R\longrightarrow K_i.
$$
The defining residual identity gives
$$
        (q_{A_i})_*\circ\Psi_i
             =i_{h_i}\circ(\nu_i^{\mathrm{res}})_*.
$$
Before identifying the residue algebras, the coefficient map and
conjugator are $\kappa_i\nu_iq_j$ and $(\kappa_i)_*(h_i)$.
This supplies $\mathrm{(LR2)}$. Only the conjugator is lifted
at this stage; the coefficient map already carries the constant
frame to itself.

\smallskip
\noindent\emph{The affine transporter.}
Put $u_\lambda=x_\lambda(1)$ and $v_\lambda=\varphi(u_\lambda)$.
For an $R$-algebra $T$, denote their images by
$u_{\lambda,T},v_{\lambda,T}$ and define
$$
\mathcal T_\varphi(T):=
\{c\in G_{\mathrm{ad}}(\Phi,T)\mid
     cu_{\lambda,T}c^{-1}=v_{\lambda,T}
                   \ (\lambda\in\Phi_\rho)\}.
$$
Only these finite families are extended to $T$, not $\varphi$;
no operator on $T$ is needed. The transport equations define a
closed affine $R$-scheme of finite presentation.

\begin{lemma}
\label{lem:uniqueness-transporter}
For every commutative $R$-algebra $T$, the set
$\mathcal T_\varphi(T)$ has at most one element.
\end{lemma}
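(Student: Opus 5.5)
The plan is to reduce the claim directly to frame rigidity $\mathrm{(LR3)}$. Suppose $c,c'\in\mathcal T_\varphi(T)$. Then for every $\lambda\in\Phi_\rho$,
$$
        c\,u_{\lambda,T}\,c^{-1}=v_{\lambda,T}=c'\,u_{\lambda,T}\,c'^{-1},
$$
so $d:=c'^{-1}c$ satisfies $d\,u_{\lambda,T}\,d^{-1}=u_{\lambda,T}$ for all $\lambda$. Hence $d$ lies in the centralizer $C_{G_{\mathrm{ad}}(\Phi,T)}(\mathcal F_\Phi(T))$. This uses only that $G_{\mathrm{ad}}(\Phi,T)$ is a group; no operator on $T$ is involved.

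The remaining point is to identify $(u_{\lambda,T})_\lambda$ with the standard frame $\mathcal F_\Phi(T)$ of Subsection~\ref{subsec:standard-frame}. The elements $x_\lambda(1)$, or $x_\lambda(1,1/2)$ in type $A_2$, are products of absolute root elements with parameters in $\mathbb Z[1/2]$. Here we use that $\theta^j(1)=1$ and that the $A_2$ coordinates $(1,1/2)$ involve only $1$ and $1/2$. These elements are therefore defined over $\mathbb Z[1/2]$, and their images under the structure map $R\to T$ are the corresponding elements of $\mathcal F_\Phi(T)$, since unital coefficient maps carry the frame to the frame. Moreover, $T$ is an $R$-algebra, so $2\in T^\times$, and when $m=3$ also $3\in T^\times$. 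Thus $T$ satisfies the standing invertibility assumptions under which $\mathrm{(LR3)}$ is asserted.

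Applying $\mathrm{(LR3)}$ then gives $d=1$, that is, $c=c'$. I expect no real obstacle: the only care needed is that $\mathrm{(LR3)}$ is stated for arbitrary rings with the required invertibility, not merely for local models or rings carrying an operator, and the definition in Subsection~\ref{subsec:standard-frame} grants exactly this. The substantive input, the proof of $\mathrm{(LR3)}$ itself, is deferred to Section~\ref{sec:rigidity-standard-frame}.
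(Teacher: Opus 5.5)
Your argument is correct and is the paper's proof: the quotient of two transporters centralizes the standard frame over $T$, so it is trivial by frame rigidity $\mathrm{(LR3)}$. Your extra checks make $\mathrm{(LR3)}$ applicable to an arbitrary $R$-algebra $T$: the frame is defined over $\mathbb Z[1/2]$, and $T$ inherits invertibility of $2$ (and of $3$ when $m=3$) from $R$.
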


\begin{proof}
For two transporters $c,d$, the element $d^{-1}c$ centralizes
the standard frame and is one by $\mathrm{(LR3)}$.
\end{proof}

\begin{theorem}[Global frame normalization]
\label{thm:global-frame-normalization}
\label{prop:orbitwise-transporter}
Assume frame rigidity $\mathrm{(LR3)}$.
Suppose that every $\varphi_i^{\mathrm{res}}$ has the ordinary
form above and that $\mathrm{(LR2)}$ applies to each $\Psi_i$.
Then there is a unique $g\in G(R)$ such that
$$
        \varphi(x_\lambda(1))=gx_\lambda(1)g^{-1}
        \qquad(\lambda\in\Phi_\rho).
$$
Its image $c_i:=(\iota_i)_*(g)$ is the unique local frame
transporter and satisfies $(q_{A_i})_*(c_i)=h_i$.
Consequently, $\psi:=i_{g^{-1}}\circ\varphi$ fixes the standard
frame pointwise.
\end{theorem}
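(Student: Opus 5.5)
The plan is to apply Proposition~\ref{prop:unique-affine-gluing} to the affine transporter scheme $X=\mathcal T_\varphi$. This scheme is a closed subscheme of the finitely presented affine group scheme $G_{\mathrm{ad}}(\Phi,-)$, cut out by finitely many transport equations, so it is of finite presentation over $R$. By Lemma~\ref{lem:uniqueness-transporter}, which rests on $\mathrm{(LR3)}$, it has at most one point over every commutative $R$-algebra $T$. It therefore remains only to show that $X(A_i)\ne\emptyset$ for every $i\in\mathcal I$.

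For this we use the local input. By the computation preceding the statement, $(q_{A_i})_*\circ\Psi_i=i_{h_i}\circ(\nu_i^{\mathrm{res}})_*$, so the hypothesis of $\mathrm{(LR2)}$ holds for $\Psi_i=(\iota_i)_*\circ\varphi$ with source $(R,\theta)$ and local model target $A_i$. It yields $g_{A_i}\in G(A_i)$ with $(q_{A_i})_*(g_{A_i})=h_i$ and
$$
g_{A_i}x_\lambda(1)g_{A_i}^{-1}=\Psi_i(x_\lambda(1))=v_{\lambda,A_i}.
$$
Since $\iota_i$ is unital, it carries the frame to the frame, so $u_{\lambda,A_i}=x_\lambda(1)$ in $G_{\mathrm{ad}}(\Phi,A_i)$. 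Hence $g_{A_i}\in\mathcal T_\varphi(A_i)$. The proposition then gives a unique $g\in\mathcal T_\varphi(R)\subseteq G_{\mathrm{ad}}(\Phi,R)$, and its image over each $A_i$ is $c_i=g_{A_i}$. In particular $(q_{A_i})_*(c_i)=h_i$, and $c_i$ is the unique local transporter by Lemma~\ref{lem:uniqueness-transporter}.

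The remaining step, which I expect to be the only real point, is to show that $g$ lies in the twisted group $G(R)=G_{\mathrm{ad}}(\Phi,R)^{\sigma_R}$; gluing does not see the operator. The first argument I would try is global. The frame elements $u_\lambda$ are $\sigma_R$-fixed, and so are the $v_\lambda=\varphi(u_\lambda)\in E(R)\le G(R)$. Applying $\sigma_R$ to the transport equations therefore shows that $\sigma_R(g)$ is again a transporter over $R$, and uniqueness forces $\sigma_R(g)=g$. This argument needs $\sigma_R$ to be an automorphism of $G_{\mathrm{ad}}(\Phi,R)$ that fixes the frame, and it does: the pinned graph action permutes the absolute root elements $x_\alpha(1)$ compatibly with the chosen signs, and $\theta$ fixes $1$ and $1/2$. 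As a cross-check, the same identity can be obtained by local detection (Lemma~\ref{lem:affine-descent-and-local-detection}(1)), since each $c_i=g_{A_i}$ lies in $G(A_i)$.

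Uniqueness of $g$ in $G(R)$ follows from uniqueness in $\mathcal T_\varphi(R)$. Finally, $\psi=i_{g^{-1}}\circ\varphi$ satisfies $\psi(x_\lambda(1))=g^{-1}v_\lambda g=x_\lambda(1)$. This is an automorphism of $E(R)$, because $g\in G(R)$ and $E(R)$ is normal in $G(R)$ under the standing assumptions.
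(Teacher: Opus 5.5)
Your proposal is correct and follows the paper's own proof. The $\mathrm{(LR2)}$ transporters give points of $\mathcal T_\varphi(A_i)$, and Lemma~\ref{lem:uniqueness-transporter} with Proposition~\ref{prop:unique-affine-gluing} glues them to a unique $g\in\mathcal T_\varphi(R)$ with $(\iota_i)_*(g)=c_i$; since $\sigma_R(g)$ is another transporter, uniqueness gives $g\in G(R)$. Your local-detection cross-check and the explicit finite-presentation remark are harmless additions.
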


\begin{proof}
Property $\mathrm{(LR2)}$ gives $c_i\in G(A_i)$ lifting $h_i$
with $c_i\in\mathcal T_\varphi(A_i)$.
By Lemma~\ref{lem:uniqueness-transporter} and
Proposition~\ref{prop:unique-affine-gluing}, these points give a
unique $g\in\mathcal T_\varphi(R)$ with
$(\iota_i)_*(g)=c_i$ for every $i$.
Both frames are fixed by $\sigma_R$, so $\sigma_R(g)$ is another
transporter. Uniqueness gives $\sigma_R(g)=g$, hence $g\in G(R)$.
The local reductions and fixation of the frame by $\psi$ follow
from the transport identities.
\end{proof}

\begin{corollary}
\label{cor:local-reduction-after-global-normalization}
For $\psi=i_{g^{-1}}\circ\varphi$ as above, put
$\Psi_i^0:=i_{c_i^{-1}}\circ\Psi_i$. Then
$$
        (\iota_i)_*\circ\psi=\Psi_i^0,\qquad
        (q_{A_i})_*\circ\Psi_i^0=(\nu_i^{\mathrm{res}})_*.
$$
Moreover, $\psi^\sharp=\varphi^\sharp$ and
$\psi_i^{\mathrm{res}}=(\nu_i)_*$.
For $j=(\varphi^\sharp)^{-1}(i)$, the residual isomorphism of
$\psi^{-1}$ at target $j$ is $(\nu_i^{-1})_*$.
\end{corollary}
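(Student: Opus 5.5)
The corollary follows by unwinding the definitions of $\psi$, $c_i$ and $\Psi_i$ and using the naturality of $(\iota_i)_*$ and $(q_{A_i})_*$ with respect to conjugation. There is no real obstacle; the only points needing care are the bookkeeping of source versus target indices, and the identification $A_{i,\mathrm{res}}=K_i$ via $\kappa_i$, which I suppress as the paper does.

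First identity. For $x\in E(R)$ we have $\psi(x)=g^{-1}\varphi(x)g$. Since $(\iota_i)_*$ is a group homomorphism on the ambient groups and $(\iota_i)_*(g)=c_i$,
$$
(\iota_i)_*(\psi(x))=c_i^{-1}\,(\iota_i)_*(\varphi(x))\,c_i=i_{c_i^{-1}}(\Psi_i(x))=\Psi_i^0(x).
$$
Second identity. Apply $(q_{A_i})_*$, which is also a homomorphism of ambient groups. Theorem~\ref{thm:global-frame-normalization} gives $(q_{A_i})_*(c_i)=h_i$, and the residual identity for $\Psi_i$ gives
$$
(q_{A_i})_*\Psi_i^0=i_{h_i^{-1}}\circ(q_{A_i})_*\circ\Psi_i
 =i_{h_i^{-1}}\circ i_{h_i}\circ(\nu_i^{\mathrm{res}})_*
 =(\nu_i^{\mathrm{res}})_*.
$$

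Residual data of $\psi$. The permutation is unchanged: since $g\in G(R)$ and $N_j=\ker(q_j)_*$ on $E(R)$, conjugation by $g$ preserves $N_j$, because $(q_j)_*(gxg^{-1})=(q_j)_*(g)(q_j)_*(x)(q_j)_*(g)^{-1}$. Hence $\psi(N_j)=g^{-1}\varphi(N_j)g=N_{\varphi^\sharp(j)}$, so $\psi^\sharp=\varphi^\sharp$. For the residual isomorphism, take $x\in E(R)$ and compute
$$
\psi_i^{\mathrm{res}}((q_j)_*x)=(q_i)_*(\psi(x))=(q_i)_*(g)^{-1}\,\varphi_i^{\mathrm{res}}((q_j)_*x)\,(q_i)_*(g).
$$
Here $(q_i)_*(g)$ corresponds to $(q_{A_i})_*(c_i)=h_i$ under $\kappa_i$, via $q_{A_i}\iota_i=\kappa_iq_i$. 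Therefore $\psi_i^{\mathrm{res}}=i_{h_i^{-1}}\circ i_{h_i}\circ(\nu_i)_*=(\nu_i)_*$. Alternatively, this is just the second identity read on $E(R)$, using surjectivity of $(q_j)_*$ from Lemma~\ref{lem:surjectivity-reduction}.

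Inverse. The map $\psi^{-1}$ sends $N_i$ to $N_j$, so its residual isomorphism at target $j$ is the map $E(K_i)\to E(K_j)$ induced on quotients. Residual isomorphisms compose functorially, since each is determined by $(q)_*\circ\varphi$ on the surjective reductions. Hence the residual isomorphism of $\psi^{-1}$ at target $j$ is the inverse of $\psi_i^{\mathrm{res}}$, namely $((\nu_i)_*)^{-1}=(\nu_i^{-1})_*$. The last equality holds because $\nu_i$ is an equivariant ring isomorphism, so $\nu_i^{-1}$ is also equivariant and coefficient change is functorial.
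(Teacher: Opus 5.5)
Your proof is correct and follows the paper's argument. You derive the local formulas from $(\iota_i)_*(g)=c_i$ and $(q_{A_i})_*(c_i)=h_i$, use $(q_i)_*(g)=h_i$ to see that conjugation by $g$ preserves every $N_j$, obtain $\psi_i^{\mathrm{res}}=i_{h_i^{-1}}\circ\varphi_i^{\mathrm{res}}=(\nu_i)_*$, and get the $\psi^{-1}$ statement by inversion; your remark that residual isomorphisms compose functorially through the surjective reductions simply makes the paper's final inversion step explicit.
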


\begin{proof}
The identities $(\iota_i)_*(g)=c_i$ and $(q_{A_i})_*(c_i)=h_i$
give the local formulas and $(q_i)_*(g)=h_i$.
Since $(q_i)_*\circ i_g=i_{h_i}\circ(q_i)_*$, conjugation by
$g$ preserves every $N_i$. Thus $\psi^\sharp=\varphi^\sharp$ and
$$
        \psi_i^{\mathrm{res}}
             =i_{h_i^{-1}}\circ\varphi_i^{\mathrm{res}}=(\nu_i)_*.
$$
Inverting this isomorphism gives the assertion about $\psi^{-1}$.
\end{proof}

Both $\psi$ and $\psi^{-1}$ therefore fix the frame and have
coefficient-induced residual maps. Their localizations meet the
hypotheses of $\mathrm{(LR4)}$. The next section uses this
recognition to recover the global coefficient automorphism from
arbitrary root parameters.
\medskip

\section{Coefficient recovery and the standard decomposition}
\label{sec:recovery-of-coefficient-automorphism}

We recover the coefficient action once the standard frame is fixed
and the relative root subgroups are preserved. Addition comes from
the additive root groups, the involution from Weyl conjugation,
and multiplication from rank-two commutators. The argument is
independent of the preceding normalization and therefore also
applies after removal of the exceptional or graph factors in
Sections~\ref{sec:exceptional-unitary-types}
and~\ref{sec:graph-automorphisms-2D4}.

We consider the order-two types ${}^2A_\ell$ ($\ell\geq3$),
${}^2D_\ell$ ($\ell\geq4$), and ${}^2E_6$, with the notation
of Section~\ref{sec:preliminaries-local-models}.
Thus $\theta^2=\operatorname{id}_R$, $2\in R^\times$,
$E(R)=E_\sigma(\Phi,R)$, and $G(R)=G_\sigma(\Phi,R)$.
Triality is treated separately in Section~\ref{sec:triality}.

\subsection{Preservation of the relative root subgroups}
\label{subsec:preservation-relative-root-subgroups}

Local recognition for an automorphism and its inverse will give
global root-subgroup preservation.

\begin{lemma}
\label{lem:local-detection-relative-root-subgroups}
Let $\lambda\in\Phi_\rho$ and $y\in G_{\mathrm{ad}}(\Phi,R)$.
If $(\iota_i)_*(y)\in X_\lambda(A_i)$ for every $i\in\mathcal I$,
then $y\in X_\lambda(R)$.
\end{lemma}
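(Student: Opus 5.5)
We will realize $X_\lambda$ as the $R$-points of a closed subscheme of the ambient affine group and apply local detection, Lemma~\ref{lem:affine-descent-and-local-detection}(1). Fix an ordering of the positive absolute roots adapted to the class of $\lambda$ and write an element of the big cell (or simply of the unipotent radical containing the absolute root groups of the class) in ordered absolute unipotent coordinates. The key observation is that $X_\lambda(T)$, for any coefficient pair $T$, is described by explicit polynomial conditions. These say that $y$ lies in the product of the absolute root subgroups of the roots in the class of $\lambda$ (in the fixed order), that its coordinates $(a_\beta)$ for $\beta\in[\lambda]$ satisfy the twisting relations, and that the condition $t\overline t=u+\overline u$ holds in type $A_2$. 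The twisting relations are $a_{\rho^j(\alpha)}=\theta^j(a_\alpha)$ in type $A_1^d$ (together with $\theta(a_\alpha)=a_\alpha$ when $d=1$), and $a_{\overline\alpha}=\overline{a_\alpha}$, $a_\gamma=Nu$ in type $A_2$.

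First we show that $y$ lies in the ambient unipotent subgroup $U_{[\lambda]}(R)=\prod_{\beta\in[\lambda]}X_\beta(R)$. This product is a closed subscheme of $G_{\mathrm{ad}}(\Phi)$ (the product map from affine space is a closed immersion, by the ordered coordinate description of unipotent subgroups), so membership of $y$ can be checked over all $A_i$, since every $R_{\mathfrak m}$ is a factor of some $A_i$ and $A_i$-points project to these factors. Hence $y=\prod_\beta x_\beta(a_\beta)$ with $a_\beta\in R$ uniquely determined, by injectivity of the ordered coordinates. Moreover $(\iota_i)_*(a_\beta)$ are the coordinates of $(\iota_i)_*(y)$.

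Next we verify the twisting identities on the coordinates. Each identity is an equality of elements of $R$ of the form $f(a)=\theta(g(a))$ or $t\overline t=u+\overline u$, where $\theta$ enters. Since $\iota_i$ is equivariant, $(\iota_i)_*$ commutes with $\theta$, and the hypothesis that $(\iota_i)_*(y)\in X_\lambda(A_i)$ gives each identity after applying $\iota_i$. The difference of the two sides is an element $z\in R$ vanishing in every $A_i$, hence in every $R_{\mathfrak m}$, so $z=0$ by Lemma~\ref{lem:affine-descent-and-local-detection}(1). The coordinates then form a parameter $a\in R_\lambda$ with $x_\lambda(a)=y$.

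The step needing care is the first, because $\theta$ is not an $R$-linear operation, so $X_\lambda$ itself is not literally a closed $R$-subscheme. We avoid this obstacle by splitting the argument as above: the $R$-linear closed condition (membership in the ambient product of absolute root groups) is handled by local detection, and the semilinear twisting conditions are handled elementwise using equivariance of $\iota_i$. We also check that the identification of coordinates is compatible with the fixed root orders and with the sign $N$ of Subsection~\ref{subsec:relative-root-subgroups}.
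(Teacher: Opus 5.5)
Your proposal is correct and follows the paper's proof: you first use local detection on the closed absolute unipotent subgroup generated by the root groups of the class to get $y$ into it with ordered coordinates in $R$, then transfer the twisting identities (fixed coordinates for $A_1$, conjugate coordinates, and $t\overline t=u+\overline u$ for $A_2$) from the $A_i$ to $R$ by equivariance of $\iota_i$ and elementwise local detection. Two cosmetic points: since the class $\lambda$ may consist of negative roots, order a positive system containing it, and the coordinate images should read $\iota_i(a_\beta)$ rather than $(\iota_i)_*(a_\beta)$.
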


\begin{proof}
Let $\mathbf U_\lambda$ be the closed absolute unipotent subgroup
generated by the root groups in the class $\lambda$.
Since every maximal localization of $R$ is a factor of some $A_i$,
Lemma~\ref{lem:affine-descent-and-local-detection} first gives
$y\in\mathbf U_\lambda(R)$.
Its ordered absolute coordinates therefore lie in $R$.
Over every $A_i$ they satisfy the relative coordinate formulas of
Subsection~\ref{subsec:relative-root-subgroups}, including the
fixed-coordinate condition for $A_1$ and
$t\overline t=u+\overline u$ for $A_2$.
Equivariance of $\iota_i$ and local detection give the same
identities over $R$, so $y=x_\lambda(a)$ for a unique
$a\in R_\lambda$.
\end{proof}

\begin{proposition}[Local recognition implies root preservation]
\label{prop:preservation-relative-root-subgroups}
Let $\psi\in\operatorname{Aut}E(R)$. Suppose that, for every
$i\in\mathcal I$ and $\lambda\in\Phi_\rho$, both maps
$(\iota_i)_*\circ\psi$ and $(\iota_i)_*\circ\psi^{-1}$ carry
$X_\lambda(R)$ into $X_\lambda(A_i)$.
Then $\psi(X_\lambda(R))=X_\lambda(R)$ for every $\lambda$.
There are consequently unique automorphisms of parameter groups
$\eta_\lambda:R_\lambda\to R_\lambda$ such that
$$
        \psi(x_\lambda(a))=x_\lambda(\eta_\lambda(a))
        \qquad(a\in R_\lambda).
$$
\end{proposition}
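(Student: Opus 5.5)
The plan is to first prove the inclusion $\psi(X_\lambda(R))\subseteq X_\lambda(R)$, then apply the same argument to $\psi^{-1}$, and finally read off the parameter maps from injectivity of the root coordinates.

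\emph{Inclusion.} Fix $\lambda\in\Phi_\rho$ and $a\in R_\lambda$, and put $y:=\psi(x_\lambda(a))\in E(R)\leq G_{\mathrm{ad}}(\Phi,R)$. By hypothesis,
$$
        (\iota_i)_*(y)=\bigl((\iota_i)_*\circ\psi\bigr)(x_\lambda(a))\in X_\lambda(A_i)
        \qquad(i\in\mathcal I).
$$
Lemma~\ref{lem:local-detection-relative-root-subgroups} then gives $y\in X_\lambda(R)$. Thus $\psi(X_\lambda(R))\subseteq X_\lambda(R)$.

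\emph{Equality.} The hypotheses on $\psi$ and $\psi^{-1}$ are symmetric, so the same argument gives $\psi^{-1}(X_\lambda(R))\subseteq X_\lambda(R)$. Applying $\psi$ to this inclusion yields $X_\lambda(R)\subseteq\psi(X_\lambda(R))$, and hence
$$
        \psi(X_\lambda(R))=X_\lambda(R).
$$

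\emph{Parameter maps.} By Subsection~\ref{subsec:relative-root-subgroups}, the map $x_\lambda:R_\lambda\to X_\lambda(R)$ is a bijective homomorphism from the parameter group, with the law $\oplus$ in type $A_2$. Define
$$
        \eta_\lambda:=x_\lambda^{-1}\circ\psi|_{X_\lambda(R)}\circ x_\lambda .
$$
This is a composite of group isomorphisms, so $\eta_\lambda$ is an automorphism of the parameter group $R_\lambda$ and satisfies $\psi(x_\lambda(a))=x_\lambda(\eta_\lambda(a))$. Uniqueness of $\eta_\lambda$ follows from injectivity of $x_\lambda$.

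There is no real obstacle. All the substantive work, namely descending from the $A_i$ to $R$ together with the fixed-coordinate and $A_2$ parameter conditions, is already contained in Lemma~\ref{lem:local-detection-relative-root-subgroups}. The only point needing care is that the hypothesis must be used for $\psi^{-1}$ as well, since equality of the subgroups does not follow from the inclusion alone.
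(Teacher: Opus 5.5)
Your proposal is correct and follows the paper's own argument. Both apply Lemma~\ref{lem:local-detection-relative-root-subgroups} to the images of root elements under $\psi$ and under $\psi^{-1}$ to get the two inclusions, then use the injective root parameterizations to define the parameter-group automorphisms $\eta_\lambda$.
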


\begin{proof}
The preceding lemma, applied to the images of root elements under
$\psi$ and $\psi^{-1}$, gives both inclusions. The injective
root parameterizations identify the restrictions of $\psi$ with
the asserted parameter-group automorphisms.
\end{proof}

If $\psi$ fixes the frame, these local inclusions follow from
root-subgroup recognition $\mathrm{(LR4)}$ whenever both maps have
equivariant coefficient reductions. Once preservation is known,
coefficient recovery takes place entirely over $R$.

\subsection{The common parameter map}
\label{subsec:common-parameter-map}

We first identify the additive parameter maps by Weyl conjugation,
then recover multiplication and, in type $A_2$, the second coordinate.

\begin{theorem}[Coefficient recovery from a fixed frame]
\label{thm:coefficient-recovery-fixed-frame}
Let $\psi\in\operatorname{Aut}E(R)$ satisfy
$$
        \psi(x_\lambda(1))=x_\lambda(1),\qquad
        \psi(X_\lambda(R))=X_\lambda(R)
        \qquad(\lambda\in\Phi_\rho).
$$
Then there is a unique $\mu\in\operatorname{Aut}(R,\theta)$
such that $\psi=\mu_*$. In particular,
$$
        \psi(x_\lambda(a))=x_\lambda(\mu_\lambda(a))
        \qquad(\lambda\in\Phi_\rho,\ a\in R_\lambda),
$$
where $\mu_\lambda$ is the parameter map of
Subsection~\ref{subsec:relative-root-subgroups}.
\end{theorem}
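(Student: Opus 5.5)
By Proposition~\ref{prop:preservation-relative-root-subgroups}, $\psi$ determines parameter-group automorphisms $\eta_\lambda$ of $R_\lambda$. These satisfy $\psi(x_\lambda(a))=x_\lambda(\eta_\lambda(a))$, with $\eta_\lambda(1)=1$; in type $A_2$ this reads $\eta_\lambda(1,1/2)=(1,1/2)$. The plan is to show that all $\eta_\lambda$ come from one ring automorphism $\mu$ commuting with $\theta$. Uniqueness is immediate: $\mu$ is read off from any $A_1^2$-root group, whose parameter group is all of $R$.

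\emph{Step 1: transport along the Weyl group.} The frame contains $x_\lambda(1)$ and $x_{-\lambda}(1)$. Hence $\psi$ fixes the Weyl-type elements
$$
w_\lambda=x_\lambda(1)x_{-\lambda}(\epsilon)x_\lambda(1).
$$
In the $A_1$ and $A_1^2$ cases $\epsilon=-1$, which is the inverse of a frame element; in type $A_2$ we use the inverse parameter $(-1,1/2)$. Conjugation by $w_\lambda$ carries $X_\mu(R)$ to $X_{s_\lambda\mu}(R)$ by the rule $x_\mu(a)\mapsto x_{s_\lambda\mu}(c\cdot\tau(a))$. Here $c$ is a sign or unit defined over $\mathbb Z[1/2]$, and $\tau$ is either the identity or $\theta$, coming from the order of $(\alpha_\lambda,\rho(\alpha_\lambda))$. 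Applying $\psi$ then gives $\eta_{s_\lambda\mu}(c\tau(a))=c\,\tau'(\eta_\mu(a))$. I would first show that each $\eta$ commutes with multiplication by $\pm1$: it fixes $1$ and is additive, so it fixes $-1$. This identifies all $\eta_\lambda$ of the same length and type with a single map, up to the twists by $\theta$.

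\emph{Step 2: multiplicativity and $\theta$.} Next I would use a rank-two subsystem ($C_2$, $B_2$ or $BC_2$) containing a long root $\beta$ with $R_\beta=R_0$ and a short root $\gamma$ with $R_\gamma=R$ (or $\mathcal A(R)$). Its Chevalley commutator formulas include, for instance,
$$
[x_\gamma(s),x_\beta(t)]=x_{\beta+\gamma}(\pm ts)\,x_{\beta+2\gamma}(\pm t s\overline s),
$$
and in $C_2$ one also gets $[x_{\gamma}(s),x_{\gamma'}(u)]=x_{\beta'}(\pm(s\overline u+\overline s u))$. Apply $\psi$ and compare coordinates using injectivity. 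This gives $\eta(ts)=\eta(t)\eta(s)$ for $t\in R_0$, and shows that $\eta$ preserves the trace $s\overline u+\overline s u$. Setting $u=1$ gives $\eta(s+\overline s)=\eta(s)+\overline{\eta(s)}$. Taking the commutator of two short root elements landing in a short root group, available for ranks $\geq3$ in these types, gives full multiplicativity $\eta(su)=\eta(s)\eta(u)$ on $R$, so $\mu:=\eta$ is a ring automorphism. The compatibility of $\eta$ with the $\tau$-twists in Step 1 then forces $\mu\theta=\theta\mu$, since some Weyl element sends $x_\mu(a)$ to $x_{\mu'}(\overline a)$. Alternatively, $\mu$ preserves $R_0$ (it is the map $\eta_\beta$) and the trace, hence preserves $R^-_\theta=\ker(\mathrm{Tr})$, and therefore commutes with $\theta$ since $2$ is invertible.

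\emph{Step 3: the $A_2$ coordinate.} In ${}^2A_{2n}$ the short groups are of type $A_2$. First compute $[x_\lambda(t,u),x_{\lambda'}(s,v)]$ with $\lambda'$ an adjacent long root of type $A_1^2$; the first coordinate of $\eta_\lambda$ is then $\mu(t)$. Writing $u=t\overline t/2+z$, it remains to show that the central part $z\in R^-_\theta$ maps to $\mu(z)$. For this, use $[x_\lambda(0,z),x_{\lambda''}(r)]$ landing in an $A_1^2$-group with parameter $\pm zr$, which detects $\eta(0,z)=(0,\mu(z))$. For ${}^2A_3$ and ${}^2A_4$ the available rank-two configurations are tight: the maps here are exactly where exceptional automorphisms live. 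This is where $\psi$ genuinely preserving each $X_\lambda$ must be used, so I expect this bookkeeping of signs and cross terms to be the main obstacle.

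Finally, $\psi$ and $\mu_*$ agree on all generators $X_\lambda(R)$ of $E(R)$, so $\psi=\mu_*$.
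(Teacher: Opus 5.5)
\textbf{Gap: full multiplicativity in Step~2.} You derive it from a commutator of two $A_1^2$-root elements landing in a third $A_1^2$-root group. That requires the $A_1^2$-roots to contain an $A_2$-configuration, which happens for ${}^2A_\ell$ with $\ell\geq5$ and for ${}^2E_6$. It fails for ${}^2D_\ell$ for every $\ell$: there the $A_1^2$-roots are the short roots $\pm e_i$ of $B_{\ell-1}$, and the sum of two of them is long or not a root. It also fails in relative rank two, for ${}^2A_3$ and ${}^2A_4$. The theorem must cover these cases, since it is applied there after the exceptional factor is removed.

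The identities you already have suffice. Once $\mu$ commutes with $\theta$ (your trace argument gives this whenever $A_1$-roots exist), use
\[
 2tu=(t+\overline t)u+(u+\overline u)t-(t\overline u+\overline t\,u).
\]
It writes $tu$ through products with a fixed factor and the preserved form $t\overline u+\overline t u$. Applying $\mu$ gives $2\mu(tu)=2\mu(t)\mu(u)$, which is how the paper concludes.

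\textbf{The family ${}^2A_{2n}$.} Step~2 presupposes a root with $R_\beta=R_0$, and ${}^2A_{2n}$ has none: its roots are of types $A_1^2$ and $A_2$. So neither the fixed-factor identity nor your trace route to $\mu\theta=\theta\mu$ is available there. Step~3 only asserts that the first coordinate of $\eta_\lambda$ is $\mu(t)$, without saying how equivariance and multiplicativity are obtained.

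The paper gets equivariance, in all types, from an element of $W(\Phi)^\rho$ interchanging the two absolute roots of an $A_1^2$-class, for instance $s_\alpha s_{\rho(\alpha)}w_0$ in type $A$. Its lift in the group generated by the frame Weyl words acts on $X_\lambda(R)$ by $t\mapsto c\overline t$.

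It gets multiplicativity from $[x_1(a),q_2(t)]=q_3(at)x_4(-at\overline t/2)$. At $t=1$, $q_2(1)=x_\beta(1,1/2)$ is a frame element, so $\psi(q_3(a))=q_3(\mu(a))$. Arbitrary $a,t$ then give $\mu(at)=\mu(a)\mu(t)$.

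\textbf{Two slips in Step~1.}
\begin{enumerate}
\item The product $x_\lambda(1,1/2)x_{-\lambda}(-1,1/2)x_\lambda(1,1/2)$ is not monomial, so it is not a Weyl element. The $A_2$ Weyl word uses $x_{-\lambda}(1,1/2)^{-2}=x_{-\lambda}(-2,2)$ instead.
\item Consequently the Weyl scalars include powers of $2$. You therefore need $\mathbb Z[1/2]$-linearity of the $\eta_\lambda$, not only $\eta(-1)=-1$.
\end{enumerate}
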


\begin{proof}
Let $\eta_\lambda$ be the automorphism of $R_\lambda$ induced
by $\psi|_{X_\lambda(R)}$.
For types $A_1$ and $A_1^2$, it is additive and hence
$\mathbb Z[1/2]$-linear; fixation of the frame gives
$\eta_\lambda(1)=1$.
For type $A_2$, it fixes $(1,1/2)$.

\smallskip
\noindent\emph{Weyl conjugation and the involution.}
The standard relative Weyl representatives are words in the frame
and hence fixed by $\psi$. Their negative-root factors use
$x_{-\lambda}(-1)=x_{-\lambda}(1)^{-1}$ for additive roots and
$$
        x_{-\lambda}(-2,2)=x_{-\lambda}(1,1/2)^{-2}
$$
for type $A_2$. The latter equality follows from the parameter
law in Subsection~\ref{subsec:relative-root-subgroups}.

If a product $n$ of these representatives carries an additive
root $\lambda$ to $\lambda'$, its action in the chosen coordinates
is
$$
        n x_\lambda(t)n^{-1}
        =x_{\lambda'}(c\theta^j(t)),
        \qquad c\in\{\pm2^k\mid k\in\mathbb Z\},\quad j\in\{0,1\},
$$
with $j=0$ for type $A_1$.
These are the absolute Weyl formulas in relative coordinates;
the powers of $2$ come from the Weyl words for $A_2$-classes;
see \cite[Subsection~2.4]{BuninaMakadiya2ALarge}.
Since $n$ is fixed, applying $\psi$ gives
$$
        \eta_{\lambda'}(c\theta^j(t))
        =c\theta^j(\eta_\lambda(t)).
$$

For each class $\lambda=\{\alpha,\rho(\alpha)\}$ of type $A_1^2$,
there is an element of $W(\Phi)^\rho$ interchanging its two
absolute roots. In types $A_\ell$ and $E_6$, take
$s_\alpha s_{\rho(\alpha)}w_0$, where $w_0=-\rho$.
In type $D_\ell$, the two roots have opposite $e_\ell$-coordinates
and the same $e_i$-coordinate. Changing the signs of $e_\ell$
and one $e_j$ with $j\ne i,\ell$ interchanges them.
A lift in the subgroup generated by the fixed relative Weyl
representatives acts on $X_\lambda(R)$ by $t\mapsto c\overline t$.
The preceding equality and $\mathbb Z[1/2]$-linearity therefore give
$$
        \eta_\lambda(\overline t)=\overline{\eta_\lambda(t)}.
$$
Weyl transitivity on roots of each type now identifies their
parameter maps, since the factors $c$ and $\theta^j$ commute
with these maps.
Thus all $A_1^2$-groups have one additive bijection $\mu:R\to R$,
and all $A_1$-groups, when present, have one additive bijection
$\mu_0:R_0\to R_0$. They satisfy
$$
        \mu(1)=1,\qquad \mu(\overline t)=\overline{\mu(t)},
        \qquad \mu_0(1)=1.
$$

It remains to compare the two root types and recover multiplication.
We use $[x,y]=xyx^{-1}y^{-1}$. Ordered absolute unipotent
coordinates make the rank-two root factorizations below unique.

\smallskip
\noindent\emph{Roots of types $A_1$ and $A_1^2$.}
This case covers ${}^2A_{2n-1}$, ${}^2D_\ell$, and ${}^2E_6$.
Choose a relative $B_2$-subsystem with long simple root
$\alpha\sim A_1$ and short simple root $\beta\sim A_1^2$.
In compatible root coordinates, the commutator formulas of
\cite[Subsection~3.3]{GargeMakadiyaNormal} are
$$
\begin{aligned}
 {}[x_\beta(t),x_\alpha(s)]
 &=x_{\alpha+\beta}(\varepsilon_1st)
   x_{\alpha+2\beta}(\varepsilon_2st\overline t),\\
 [x_\beta(t),x_{\alpha+\beta}(u)]
 &=x_{\alpha+2\beta}
   \bigl(\varepsilon_3(t\overline u+\overline t\,u)\bigr),
\end{aligned}
$$
where $s\in R_0$, $t,u\in R$, and $\varepsilon_i\in\{\pm1\}$.
Changes of coordinates by signs or $\theta$ commute with the
parameter maps just obtained.

Apply $\psi$ to the first identity and compare the
$X_{\alpha+\beta}$-parameter. This gives
$\mu(st)=\mu_0(s)\mu(t)$.
Taking $t=1$ identifies $\mu_0$ with $\mu|_{R_0}$, and hence
\begin{equation}
\label{eq:multiplicativity-fixed-arbitrary}
        \mu(st)=\mu(s)\mu(t)\qquad(s\in R_0,\ t\in R).
\end{equation}
Thus multiplication is preserved whenever one factor is fixed.
The second commutator identity gives
$$
        \mu(t\overline u+\overline t\,u)
        =\mu(t)\overline{\mu(u)}+\overline{\mu(t)}\mu(u).
$$
These two identities recover arbitrary multiplication through
$$
        2tu=(t+\overline t)u+(u+\overline u)t
                         -(t\overline u+\overline t\,u).
$$
The first two products have fixed factors, and the last term is
the expression preserved by the second commutator. Applying $\mu$
and using equivariance gives $2\mu(tu)=2\mu(t)\mu(u)$, hence
\begin{equation}
\label{eq:multiplicativity-anti-invariant}
        \mu(tu)=\mu(t)\mu(u)\qquad(t,u\in R).
\end{equation}

\smallskip
\noindent\emph{Roots of types $A_1^2$ and $A_2$.}
This case occurs in ${}^2A_{2n}$, $n\geq2$.
In a terminal relative $B_2$-subsystem write
$$
        x_1=x_\alpha,\quad x_2=x_\beta,\quad
        x_3=x_{\alpha+\beta},\quad x_4=x_{\alpha+2\beta},
$$
where $\alpha\sim A_1^2$ and $\beta\sim A_2$.
Use the five-dimensional charts of
\cite[Subsection~2.2]{BuninaMakadiya2ASmall}, with the $A_2$
convention of Subsection~\ref{subsec:relative-root-subgroups}.
Put
$$
        q_j(t):=x_j(t,t\overline t/2)\quad(j=2,3),
        \qquad w_1:=w_\alpha(1).
$$
We first determine the images of $q_j(t)$ and recover multiplication;
the central coordinate is treated afterwards.
The chart identities are
$$
\begin{aligned}
        w_1x_2(t,u)w_1^{-1}&=x_3(t,u),\\
        [x_1(a),q_2(t)]&=q_3(at)x_4(-at\overline t/2),\\
        [x_{-1}(t),x_4(1/2)]&=x_2(0,(t-\overline t)/2).
\end{aligned}
$$
On all signed $A_1^2$-subgroups the parameter map is already $\mu$.

Set $t=1$ in the mixed commutator and apply $\psi$.
Since $q_2(1)$ is fixed and $\mu(a/2)=\mu(a)/2$, we obtain
$$
        \psi(q_3(a))x_4(-\mu(a)/2)
        =q_3(\mu(a))x_4(-\mu(a)/2).
$$
Cancellation determines $\psi(q_3(a))$; conjugation by the fixed
$w_1^{-1}$ then gives
\begin{equation}
\label{eq:image-of-special-A2-parameter}
        \psi(q_j(t))=q_j(\mu(t))
        \qquad(j=2,3,\ t\in R).
\end{equation}
No multiplicativity has been used here: the second coordinate on
the right is $\mu(t)\overline{\mu(t)}/2$.
Now apply $\psi$ to $[x_1(a),q_2(t)]$ with arbitrary $a,t$.
The first parameter of its $X_3$-factor is $\mu(at)$ when the
right-hand side is transformed, and $\mu(a)\mu(t)$ when the
commutator of the images is evaluated. Uniqueness of the ordered
factorization gives $\mu(at)=\mu(a)\mu(t)$.

For $z\in R^-_\theta$, the last chart identity, together with
$\mu(1/2)=1/2$ and $\overline{\mu(z)}=-\mu(z)$, gives
\begin{equation}
\label{eq:image-of-central-A2-parameter}
        \psi(x_\beta(0,z))=x_\beta(0,\mu(z))
        \qquad(z\in R^-_\theta).
\end{equation}

Finally, for $(t,u)\in\mathcal A(R)$ put
$z=u-t\overline t/2\in R^-_\theta$ and use
$x_\beta(t,u)=q_2(t)x_\beta(0,z)$.
The two image formulas, together with multiplicativity, give
$$
        \psi(x_\beta(t,u))=x_\beta(\mu(t),\mu(u)).
$$
The pair is admissible because $\mu$ is multiplicative and
equivariant. Its coefficient action $\mu_*$ now agrees with $\psi$
on $X_\beta(R)$ and on all standard Weyl representatives, hence
on every $A_2$-group by Weyl conjugation. This includes the
negative charts: their change $u\mapsto\overline u$ commutes
with $\mu$.

In both cases the additive bijection $\mu$ preserves $1$,
multiplication, and $\theta$, and gives the action on every
relative root subgroup. Root generation yields $\psi=\mu_*$;
its restriction to any $A_1^2$-subgroup determines $\mu$ uniquely.
\end{proof}

\subsection{The local--global automorphism theorem}
\label{subsec:local-global-automorphism-theorem}

Combining coefficient recovery with the frame normalization of
Section~\ref{sec:global-frame-normalization} gives the ordinary
standard decomposition.

\begin{theorem}[Local--global automorphism theorem]
\label{thm:local-global-automorphism}
Let $\varphi\in\operatorname{Aut}E(R)$ satisfy the hypotheses
of Theorem~\ref{thm:global-frame-normalization}, and assume
root-subgroup recognition $\mathrm{(LR4)}$.
Then there is a unique pair
$$
        g\in G(R),\qquad \mu\in\operatorname{Aut}(R,\theta)
$$
such that $\varphi=i_g\circ\mu_*$. Conversely, every such pair
defines an automorphism of $E(R)$.
\end{theorem}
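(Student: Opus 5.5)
The plan is to combine Theorem~\ref{thm:global-frame-normalization} with Theorem~\ref{thm:coefficient-recovery-fixed-frame}, passing between them through Corollary~\ref{cor:local-reduction-after-global-normalization}, Proposition~\ref{prop:preservation-relative-root-subgroups} and $\mathrm{(LR4)}$.

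\emph{Existence.} First apply Theorem~\ref{thm:global-frame-normalization} to $\varphi$. This gives $g\in G(R)$ such that $\psi:=i_{g^{-1}}\circ\varphi$ fixes the standard frame pointwise. By Corollary~\ref{cor:local-reduction-after-global-normalization}, each localization $(\iota_i)_*\circ\psi=\Psi_i^0$ fixes the frame, and its residual reduction is the coefficient map $(\nu_i^{\mathrm{res}})_*$. The same holds for $\psi^{-1}$, with residual coefficient maps $(\nu_i^{-1})_*$. Indeed, $\psi^{-1}$ also fixes the frame, and $i_{c_j^{-1}}$ is not even needed, since its conjugator is trivial.

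Hence $\mathrm{(LR4)}$ applies to both $(\iota_i)_*\circ\psi$ and $(\iota_i)_*\circ\psi^{-1}$, with source $R$ and local model $A_i$. Both maps therefore carry $X_\lambda(R)$ into $X_\lambda(A_i)$. Proposition~\ref{prop:preservation-relative-root-subgroups} then gives $\psi(X_\lambda(R))=X_\lambda(R)$ for all $\lambda$. Now Theorem~\ref{thm:coefficient-recovery-fixed-frame} yields $\mu\in\operatorname{Aut}(R,\theta)$ with $\psi=\mu_*$, so $\varphi=i_g\circ\mu_*$.

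\emph{Uniqueness.} Suppose $i_g\circ\mu_*=i_{g'}\circ\mu'_*$. Both $\mu_*$ and $\mu'_*$ fix the frame, so $g^{-1}g'$ centralizes $\mathcal F_\Phi(R)$ in $G_{\mathrm{ad}}(\Phi,R)$. By $\mathrm{(LR3)}$, $g=g'$. Cancelling gives $\mu_*=\mu'_*$. Restricting to an $A_1^2$-subgroup and using injectivity of $x_\lambda$ then gives $\mu=\mu'$; this is the uniqueness clause of Theorem~\ref{thm:coefficient-recovery-fixed-frame}.

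\emph{Converse.} $i_g$ is an automorphism of $E(R)$ because $E(R)$ is normal in $G(R)$. The map $\mu_*$ is an automorphism of $G_{\mathrm{ad}}(\Phi,R)$ commuting with $\sigma_R$, since $\mu\theta=\theta\mu$ and the graph action commutes with coefficient change. It carries each $X_\lambda(R)$ onto itself via $\mu_\lambda$, so it preserves $E(R)$, with inverse $(\mu^{-1})_*$.

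The only delicate point is checking the hypotheses of $\mathrm{(LR4)}$ for $\psi^{-1}$, because $\varphi$'s hypotheses were stated only for $\varphi$ itself. This is exactly what the last clause of Corollary~\ref{cor:local-reduction-after-global-normalization} supplies.
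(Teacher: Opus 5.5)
Your proposal is correct and follows the paper's proof essentially step for step. You use global frame normalization to produce $g$, then Corollary~\ref{cor:local-reduction-after-global-normalization} to give coefficient-induced residual maps for both $\psi$ and $\psi^{-1}$, so that $\mathrm{(LR4)}$ applies to all their localizations, and finish with Proposition~\ref{prop:preservation-relative-root-subgroups} and Theorem~\ref{thm:coefficient-recovery-fixed-frame}; your uniqueness argument (frame rigidity for $g$, an $A_1^2$-root parameterization for $\mu$) and your converse (normality of $E(R)$ in $G(R)$ and equivariant coefficient change) also match the paper's.
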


\begin{proof}
Let $g$ be the frame transporter of
Theorem~\ref{thm:global-frame-normalization}, and put
$\psi=i_{g^{-1}}\circ\varphi$.
Both $\psi$ and $\psi^{-1}$ fix the frame.
By Corollary~\ref{cor:local-reduction-after-global-normalization},
their residual isomorphisms are induced by equivariant ring
isomorphisms. Hence $\mathrm{(LR4)}$ applies to all their
localizations. Proposition~\ref{prop:preservation-relative-root-subgroups}
and Theorem~\ref{thm:coefficient-recovery-fixed-frame} give
$\psi=\mu_*$ for a unique $\mu\in\operatorname{Aut}(R,\theta)$.

In any such decomposition, $g$ transports the frame, so frame
rigidity determines $g$ and coefficient recovery determines $\mu$.
Conversely, $G(R)$ normalizes $E(R)$, and every equivariant
ring automorphism induces an automorphism of $E(R)$.
\end{proof}

\begin{corollary}
\label{cor:automorphism-group}
If the hypotheses of Theorem~\ref{thm:local-global-automorphism}
hold for every automorphism of $E(R)$, then
$$
        \operatorname{Aut}E(R)
        \simeq G(R)\rtimes\operatorname{Aut}(R,\theta),
        \qquad \mu\cdot g:=\mu_*(g).
$$
\end{corollary}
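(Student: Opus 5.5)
The statement to prove is Corollary~\ref{cor:automorphism-group}. The plan is to build the isomorphism from the decomposition and uniqueness in Theorem~\ref{thm:local-global-automorphism}. Define
$$
        \Xi:G(R)\rtimes\operatorname{Aut}(R,\theta)\longrightarrow\operatorname{Aut}E(R),
        \qquad (g,\mu)\longmapsto i_g\circ\mu_*.
$$
Here the semidirect product has multiplication $(g,\mu)(g',\mu')=(g\,\mu_*(g'),\mu\mu')$. This is a group precisely because $\mu\mapsto\mu_*$ is an action of $\operatorname{Aut}(R,\theta)$ on $G(R)$ by group automorphisms. Indeed, $\mu$ commutes with $\theta$, hence with $\sigma_R$, so coefficientwise action preserves the fixed points $G(R)$, and $(\mu\mu')_*=\mu_*\mu'_*$.

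\emph{Homomorphism.} We use the commutation rule $\mu_*\circ i_{g'}=i_{\mu_*(g')}\circ\mu_*$ from Subsection~\ref{subsec:adjoint-twisted-elementary-group}. It gives
$$
        \Xi(g,\mu)\,\Xi(g',\mu')=i_g\,\mu_*\,i_{g'}\,\mu'_*
        =i_{g\mu_*(g')}\,(\mu\mu')_*=\Xi\bigl((g,\mu)(g',\mu')\bigr).
$$
Each $\Xi(g,\mu)$ is an automorphism of $E(R)$ by the converse part of Theorem~\ref{thm:local-global-automorphism}.

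\emph{Bijectivity.} Surjectivity is exactly the existence part of Theorem~\ref{thm:local-global-automorphism}, applied to every $\varphi\in\operatorname{Aut}E(R)$; this is allowed by the hypothesis of the corollary. For injectivity, suppose $i_g\circ\mu_*=\operatorname{id}$. Since $\mu_*$ fixes the standard frame, $g$ centralizes $\mathcal F_\Phi(R)$, so $g=1$ by frame rigidity $\mathrm{(LR3)}$. Then $\mu_*=\operatorname{id}$. Restricting to an $A_1^2$-subgroup $X_\lambda(R)$ gives $\mu=\operatorname{id}_R$, by the uniqueness clause of Theorem~\ref{thm:coefficient-recovery-fixed-frame}.

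The only point that needs care is the well-definedness of the action, namely that $\mu_*(g)\in G(R)$ and that $\mu_*$ restricts correctly to $E(R)$. Both follow from the equivariance $\mu\theta=\theta\mu$, together with the fact that the pinned graph action commutes with coefficient maps. There is no genuine obstacle beyond this.
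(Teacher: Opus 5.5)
Your proposal is correct and follows essentially the same route as the paper: the unique decomposition of Theorem~\ref{thm:local-global-automorphism} gives the bijection $(g,\mu)\mapsto i_g\circ\mu_*$, and the commutation rule $\mu_*\circ i_h=i_{\mu_*(h)}\circ\mu_*$ turns composition into the semidirect-product law. Your explicit injectivity check, via frame rigidity and then an $A_1^2$-parameterization, is just the uniqueness clause of that theorem unpacked, and your remarks on why $\mu_*$ preserves $G(R)$ fill in what the paper takes as already established.
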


\begin{proof}
The unique decomposition gives the bijection
$(g,\mu)\mapsto i_g\circ\mu_*$. The identity
$$
        (i_g\circ\mu_*)\circ(i_h\circ\nu_*)
        =i_{g\mu_*(h)}\circ(\mu\circ\nu)_*
$$
is precisely the multiplication rule for the indicated semidirect
product.
\end{proof}
\medskip


\section{Rigidity of the standard frame}
\label{sec:rigidity-standard-frame}

We prove the frame rigidity property $\mathrm{(LR3)}$ used in
Sections~\ref{sec:global-frame-normalization}
and~\ref{sec:recovery-of-coefficient-automorphism}:
$$
        C_{G_{\mathrm{ad}}(\Phi,S)}(\mathcal F_\Phi(S))=1
$$
for every commutative ring $S$ satisfying the standing invertibility
assumptions. No coefficient operator on $S$ is needed: the frame
is constant, and its centralizer is taken in the ambient split group.

For the classical types, we first remove projective scalars and
then compute the matrix centralizer. In the unitary case it is
scalar; in the orthogonal cases it acts scalarly on two orthogonal
summands, and determinant one makes the two scalars equal.
For triality and ${}^2E_6$, the required matrix centralizers are
available over arbitrary bases in the corresponding local papers.

Write $U_\lambda$ for the standard matrix of $x_\lambda(1)$
and $E_{ij}$ for matrix units. All group notation denotes points
of group schemes, not naive quotients of matrix groups.
After faithfully flat base change, a point of
$G_{\mathrm{ad}}(A_\ell,S)$ lifts to $\mathrm{GL}_{\ell+1}$,
and a point of $G_{\mathrm{ad}}(D_\ell,S)$ lifts to
$\mathrm{SO}_{2\ell}$. When making such a lift, we keep the
notation $S$ for the base and $C$ for the representing matrix.
Projective centralization means
$$
        CU_\lambda C^{-1}=\xi_\lambda U_\lambda,
        \qquad \xi_\lambda\in S^\times.
$$
If $C$ is scalar, its adjoint class is one, and this equality
descends by Lemma~\ref{lem:affine-descent-and-local-detection}.
In the faithful adjoint representation of $E_6$, no lift is needed.

\begin{lemma}
\label{lem:projective-scalar-square-zero}
Let $2\in S^\times$ and $U=I_n+N$, where $N^2=0$ and an
off-diagonal entry of $N$ is $1$ or $-1$.
If $CUC^{-1}=\xi U$ for $C\in\mathrm{GL}_n(S)$ and
$\xi\in S^\times$, then $\xi=1$.
\end{lemma}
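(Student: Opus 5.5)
The idea is to compare the entry of $CUC^{-1}=\xi U$ that carries the off-diagonal $\pm1$ with the equation obtained after squaring away the nilpotent part. Rewrite the hypothesis as $CU=\xi UC$, that is,
$$
        C+CN=\xi C+\xi NC,
$$
so that
$$
        (1-\xi)C=\xi NC-CN.
$$

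The main step is to show that $1-\xi$ is nilpotent. Put $D:=\xi NC-CN$. Then $(1-\xi)^kC$ is a sum of words of the form $N^aCN^b$ with $\xi$-coefficients. Iterating once more gives
$$
        (1-\xi)^2C=(1-\xi)(\xi NC-CN)=\xi N\bigl((1-\xi)C\bigr)-\bigl((1-\xi)C\bigr)N.
$$
Substituting the first identity and using $N^2=0$, this becomes
$$
        (1-\xi)^2C=-2\xi NCN.
$$
A third iteration gives
$$
        (1-\xi)^3C=-2\xi N\bigl(\xi NC-CN\bigr)N=0.
$$
Since $C$ is invertible, $(1-\xi)^3=0$, so $1-\xi$ is nilpotent.

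Cleaner still, compare traces or use the conjugation directly. $CNC^{-1}=\xi I+\xi N-I=(\xi-1)I+\xi N$. Squaring gives
$$
        0=CN^2C^{-1}=(\xi-1)^2I+2(\xi-1)\xi N.
$$
Reading an off-diagonal entry where $N$ is $\pm1$ yields $2\xi(\xi-1)=0$. Since $2$ and $\xi$ are units, $\xi=1$ follows immediately, with no nilpotence argument needed.

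So the plan is a direct computation: set $M:=CNC^{-1}=(\xi-1)I+\xi N$, use $M^2=CN^2C^{-1}=0$, expand, and take the $(i,j)$ entry with $N_{ij}=\pm1$, $i\ne j$. The identity matrix contributes nothing off the diagonal, and $N^2=0$ kills the quadratic term in $N$. This leaves $\pm2\xi(\xi-1)=0$, and the invertibility of $2$ and of $\xi$ concludes. There is no real obstacle. The only care needed is noting that the diagonal term $(\xi-1)^2I$ vanishes at an off-diagonal position, which is exactly why the hypothesis asks for an off-diagonal entry.
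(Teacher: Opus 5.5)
Your final argument is correct and is the paper's proof: conjugating $N^2=0$ gives $(\xi-1)^2I_n+2\xi(\xi-1)N=0$, and the off-diagonal $\pm1$ entry forces $2\xi(\xi-1)=0$, hence $\xi=1$. The preliminary computation giving $(1-\xi)^3=0$ is also correct but, as you observed, not needed.
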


\begin{proof}
Conjugating $(U-I_n)^2=0$ gives
$$
        0=(\xi U-I_n)^2
          =(\xi-1)^2I_n+2\xi(\xi-1)N.
$$
The chosen off-diagonal entry gives $2\xi(\xi-1)=0$.
Since $2\xi$ is a unit, $\xi=1$.
\end{proof}

\begin{lemma}
\label{lem:orthogonal-two-parameter-centralizer}
Let $2\in S^\times$, let $n$ be even, and let $J$ be a
nondegenerate symmetric matrix. Suppose that
$$
        Q^2=I_n,\qquad Q^tJ=JQ,
$$
and that the projectors $\pi_\pm=(I_n\pm Q)/2$ have free images
of ranks $n-1$ and $1$, respectively.
If $C=pI_n+qQ$ belongs to $\mathrm{SO}(J)$, then $q=0$.
\end{lemma}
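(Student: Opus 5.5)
We work with the orthogonal decomposition $S^n=V_+\oplus V_-$, where $V_\pm:=\pi_\pm(S^n)$. The hypotheses $Q^2=I_n$ and $2\in S^\times$ make $\pi_\pm$ complementary idempotents, so this decomposition holds. The relation $Q^tJ=JQ$ says that $Q$ is $J$-self-adjoint. Hence $\pi_+^tJ\pi_-=J\pi_+\pi_-=0$, so $V_+$ and $V_-$ are $J$-orthogonal. Since $J$ is nondegenerate, its restrictions to $V_+$ and to $V_-$ are then nondegenerate as well.

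On $V_\pm$ the matrix $Q$ acts as $\pm1$. So $C=pI_n+qQ$ acts on $V_+$ by the scalar $a:=p+q$ and on $V_-$ by the scalar $b:=p-q$. Since $2$ is invertible, $q=(a-b)/2$, and it suffices to show $a=b$.

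First we use the orthogonality condition $C^tJC=J$. Restricting to $V_+\times V_+$ gives $a^2J|_{V_+}=J|_{V_+}$. Nondegeneracy on the free module $V_+$ then gives $a^2=1$. The same argument on $V_-$ gives $b^2=1$. In particular $a$ and $b$ are units.

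Next we use the determinant. Choose bases of the free modules $V_+$ and $V_-$, of ranks $n-1$ and $1$. In the resulting basis of $S^n$ the matrix $C$ is block diagonal, so $\det C=a^{n-1}b$. Because $n$ is even, $n-1$ is odd, and $a^2=1$ gives $a^{n-1}=a$. The condition $\det C=1$ therefore becomes $ab=1$. Combined with $b^2=1$, this gives $a=ab^2=b$. Hence $q=0$.

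The main obstacle is the identification $\det C=a^{n-1}b$. It requires a basis adapted to the decomposition, and that is exactly where the freeness hypothesis on the images is used. Without freeness one would have to work locally: prove $q=0$ over each $S_{\mathfrak m}$, where projective modules are free, and then conclude by Lemma~\ref{lem:affine-descent-and-local-detection}. With freeness assumed, the argument goes through directly. The only other points to check are that orthogonality of $V_\pm$ follows from $Q^tJ=JQ$, and that nondegeneracy passes to the summands.
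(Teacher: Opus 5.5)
Your proof is correct and follows the paper's argument: decompose into the $J$-orthogonal, nondegenerate eigenspaces $\pi_\pm S^n$, obtain $a^2=b^2=1$ from orthogonality and $a^{n-1}b=ab=1$ from the determinant, and conclude $a=b$, so $2q=a-b=0$. The only difference is that you verify explicitly, via $\pi_+^tJ\pi_-=J\pi_+\pi_-=0$, that the summands are orthogonal and nondegenerate, which the paper states without proof.
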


\begin{proof}
The summands $\pi_\pm S^n$ are orthogonal and nondegenerate.
On them, $C$ acts by the scalars $u=p+q$ and $v=p-q$.
Orthogonality gives $u^2=v^2=1$, while determinant one gives
$$
        1=u^{n-1}v=uv.
$$
Thus $v=u^{-1}=u$, and $2q=u-v=0$.
\end{proof}

\subsection{The unitary types \texorpdfstring{${}^2A_\ell$}{2A(l)}}
\label{subsec:rigidity-unitary-types}

We use $G_{\mathrm{ad}}(A_\ell,S)=\mathrm{PGL}_n(S)$, where
$n=\ell+1$ and $\ell\geq3$, with paired indices $i^*:=n+1-i$.
The additive-root matrices generate all matrix units if $n$ is
even. If $n$ is odd, they first determine the action off the
middle line, which one $A_2$-frame element then connects to it.

\begin{proposition}
\label{prop:rigidity-unitary-frame}
For every commutative ring $S$ with $2\in S^\times$ and every
$\ell\geq3$,
$$
        C_{G_{\mathrm{ad}}(A_\ell,S)}(\mathcal F_\Phi(S))=1.
$$
\end{proposition}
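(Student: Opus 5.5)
To prove the proposition we show that any $c\in G_{\mathrm{ad}}(A_\ell,S)=\mathrm{PGL}_n(S)$ centralizing the frame is trivial. Since the claim $c=1$ is an equality of affine points, Lemma~\ref{lem:affine-descent-and-local-detection} lets us pass to a faithfully flat extension, and there we lift $c$ to a matrix $C\in\mathrm{GL}_n(S)$. Centralizing in $\mathrm{PGL}_n$ then means projective centralization, $CU_\lambda C^{-1}=\xi_\lambda U_\lambda$ with $\xi_\lambda\in S^\times$.

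\textbf{Step 1: remove the scalars.} For an additive root, $U_\lambda-I_n$ is a sum of at most two matrix units $\pm E_{ij}$ coming from pairwise orthogonal absolute roots. Its square is zero, and it has an entry $\pm1$. For an $A_2$-element $x_\lambda(1,1/2)$ the nilpotent part has cube zero but not square zero. We first apply Lemma~\ref{lem:projective-scalar-square-zero} to every additive frame element to get $\xi_\lambda=1$.

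For an $A_2$-element $U=I+N$ we have $N^3=0$, and $N^2$ has an entry $\pm1$ in the $(i,i^*)$ position coming from the product of the two non-fixed root units. We conjugate $(U-I)^3=0$ and expand:
$$
0=(\xi U-I)^3=(\xi-1)^3I+3\xi(\xi-1)^2N+3\xi^2(\xi-1)N^2.
$$
Comparing the entries of $I$, $N$ and $N^2$ in positions where the other terms vanish gives the following.
\begin{itemize}
\item[(a)] The diagonal entry gives $(\xi-1)^3=0$.
\item[(b)] Write $\epsilon=\xi-1$, so $\epsilon^3=0$. An off-diagonal entry of $N$ that is a unit gives $3\xi\epsilon^2=0$ for the $N$-term.
\item[(c)] Combining these, $\epsilon$ satisfies both $\epsilon^2\cdot(\text{unit})=\ldots$ and the $N^2$-term constraint.
\end{itemize}
This needs care when $3$ is not invertible. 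A cleaner route is available: for odd $n$ the $A_2$-element only occurs together with additive elements. So we instead prove that $C$ commutes with the additive $U_\lambda$ exactly, deduce that $C$ is nearly scalar, and only then handle the $A_2$-element.

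\textbf{Step 2: commuting with the additive frame.} Suppose $C$ commutes with every $U_\lambda$ for $\lambda$ additive. Then $C$ commutes with the algebra generated by the matrices $U_\lambda-I$. These are $E_{i,i^*}$ type long-root units together with $E_{ij}\pm E_{j^*i^*}$ pairs, in both positive and negative directions.

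For $n$ even, all indices are paired and no index is fixed by $i\mapsto i^*$. Products such as $(E_{ij}\pm E_{j^*i^*})E_{j,j^*}$ isolate single matrix units, so this algebra contains all $E_{ij}$. Hence $C$ is scalar and $c=1$.

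For $n$ odd, let $k$ be the middle index with $k^*=k$. The same products generate all $E_{ij}$ with $i,j\neq k$. Therefore $C=pI+qE_{kk}$ modulo terms forced to vanish: $C$ must commute with all matrix units off the middle line, which kills the entries $C_{ik}$ and $C_{ki}$.

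\textbf{Step 3: the odd case with the $A_2$-element.} Now we use the $A_2$-element $U=x_\lambda(1,1/2)$, with $\xi$ still unknown. The additive frame element for the $A_1^2$-class adjacent to the short root lets us normalize: scaling the lift $C$, we may take $p=1$, so $C=I+qE_{kk}$ with $1+q$ a unit.

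Conjugating $U$ by $C$ scales the entries of $N$ in row $k$ and column $k$ by $(1+q)^{\pm1}$, and fixes the corner entry $\pm1/2$ at position $(i,i^*)$ (or $u$). Since the $(i,i^*)$ entry is unchanged, $\xi=1$. The $(i,k)$ entry then gives $(1+q)^{-1}=1$, so $q=0$. Hence $C$ is scalar and $c=1$.

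The main obstacle is the bookkeeping in Step 2 for odd $n$: we must verify that the additive frame elements, positive and negative, really generate all matrix units off the middle line. This amounts to checking that products like $E_{i,i^*}(E_{i^*j}\pm E_{j^*i})$ isolate single units, with signs that do not cancel, and that $2$ is invertible where halves appear.
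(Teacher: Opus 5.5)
Your overall route is the paper's. You remove the scalars on the additive frame with the square-zero lemma, use the algebra generated by the additive nilpotent parts to make $C$ scalar (even $n$) or of the form $aI_W\oplus b$ (odd $n$), and only then bring in the $A_2$-element, where diagonal conjugation forces $\xi=1$ and then $a=b$. Your Step~3 is correct. You were right to abandon the cubic expansion: with $3$ possibly non-invertible it does not force $\xi=1$. The even case of Step~2 is also fine.

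The gap is in Step~2 for odd $n$, at exactly the point you flag as the main obstacle. For $n=2r+1$ the relative system is $B_r$, with long classes of type $A_1^2$ and short classes of type $A_2$. There are no $A_1$-classes, because every fixed absolute root $\varepsilon_i-\varepsilon_{i^*}$ lies in the $A_2$-class $\{\varepsilon_i-\varepsilon_{i_0},\varepsilon_{i_0}-\varepsilon_{i^*},\varepsilon_i-\varepsilon_{i^*}\}$. Consequently $E_{i,i^*}$ is not the nilpotent part of any additive frame element. The products $(E_{ij}\pm E_{j^*i^*})E_{j,j^*}$ from your even case, and the products $E_{i,i^*}(E_{i^*j}\pm E_{j^*i})$ in your last paragraph, are therefore not known to commute with $C$, so ``the same products'' does not work. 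Nor can you extract $E_{i,i^*}$ from the $A_2$-element at this stage, since $C$ centralizes it only up to the still unknown $\xi$.

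The missing step is to build $E_{i,i^*}$ from two $A_1^2$-elements. For $i\neq i_0$, choose $k\notin\{i,i^*,i_0\}$; this is possible because $n$ odd and $\ell\geq3$ give $n\geq5$. Writing $N_{ij}=\epsilon_{ij}E_{ij}+\delta_{ij}E_{j^*,i^*}$, one checks
$$
N_{ik}N_{k,i^*}=\epsilon_{ik}\epsilon_{k,i^*}E_{i,i^*}.
$$
The other three products of matrix-unit summands vanish because $k\neq i$, $k\neq i^*$ and $i\neq i^*$. The opposite products give $E_{i^*,i}$, hence $E_{ii}=E_{i,i^*}E_{i^*,i}$. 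Then $E_{ii}N_{ij}E_{jj}=\epsilon_{ij}E_{ij}$ supplies all of $\operatorname{End}_S(W)$. With this inserted, your conclusion $C=aI_W\oplus b$ and your Step~3 complete the proof as in the paper.
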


\begin{proof}
Let $c$ centralize the frame and choose its matrix lift $C$.
For roots of type $A_1$ or $A_1^2$, the nilpotent part
$N_\lambda=U_\lambda-I_n$ has square zero and an entry $\pm1$.
Lemma~\ref{lem:projective-scalar-square-zero} makes $C$ commute
with these matrices and the algebra they generate.
For paired indices $i,j$ with $i\ne j$ and $j\ne i^*$, the
$A_1^2$-matrix through $\varepsilon_i-\varepsilon_j$ has the form
$$
        N_{ij}=\epsilon_{ij}E_{ij}+\delta_{ij}E_{j^*,i^*},
        \qquad \epsilon_{ij},\delta_{ij}\in\{\pm1\}.
$$

\smallskip
\noindent\emph{Even dimension.}
The $A_1$-matrices give $E_{i,i^*}$ and $E_{i^*,i}$ up to signs,
and their products give every diagonal unit $E_{ii}$.
The equality $E_{ii}N_{ij}E_{jj}=\epsilon_{ij}E_{ij}$ then
supplies all remaining matrix units. Thus the generated algebra
is $M_n(S)$, and $C$ is scalar.

\smallskip
\noindent\emph{Odd dimension.}
Put $i_0=(n+1)/2$ and $W=\langle e_i\mid i\ne i_0\rangle_S$.
For $i\ne i_0$, choose $k\notin\{i,i^*,i_0\}$, possible since
$n\geq5$. Multiplication gives
$$
        N_{ik}N_{k,i^*}
        =\epsilon_{ik}\epsilon_{k,i^*}E_{i,i^*},
$$
all other products of matrix-unit summands being zero.
The opposite products give $E_{i^*,i}$ and then $E_{ii}$.
These diagonal units isolate the remaining $E_{ij}$ on $W$.
Thus the algebra contains $\operatorname{End}_S(W)$, embedded
by zero on $Se_{i_0}$, including when $n=5$.
Commutation with its identity projector and matrix units forces
$$
        C=aI_W\oplus b,\qquad a,b\in S^\times.
$$
An $A_2$-frame matrix has the form
$$
        U_\lambda=I_n+\epsilon E_{i,i_0}
                +\delta E_{i_0,i^*}+\frac{\gamma}{2}E_{i,i^*},
        \qquad \epsilon,\delta,\gamma\in\{\pm1\}.
$$
In $CU_\lambda C^{-1}=\xi_\lambda U_\lambda$, the diagonal
entries give $\xi_\lambda=1$, and the $(i,i_0)$-entry gives
$a=b$. Hence $C$ is scalar in both cases, and $c=1$ by descent.
\end{proof}

\subsection{The orthogonal types \texorpdfstring{${}^2D_\ell$}{2D(l)}}
\label{subsec:rigidity-type-2D}

Let $\ell\geq4$, $n=2\ell$, and use the natural orthogonal model
with basis $e_1,\ldots,e_\ell,e_{-1},\ldots,e_{-\ell}$ and
$$
        J=\sum_{i=1}^{\ell}(E_{i,-i}+E_{-i,i}),\qquad
        V_i=\langle e_i,e_{-i}\rangle_S.
$$
Let $Q$ interchange $e_\ell,e_{-\ell}$ and fix the other basis
vectors. It represents the graph involution and satisfies
$$
        Q^2=I_n,\qquad Q^tJ=JQ,\qquad\det Q=-1.
$$
Writing $W=V_1\oplus\cdots\oplus V_{\ell-1}$, its eigenspaces are
$$
        W\oplus S(e_\ell+e_{-\ell}),\qquad
        S(e_\ell-e_{-\ell}),
$$
of ranks $n-1$ and $1$. We show that the frame centralizer acts
scalarly on these two summands and apply the preceding lemma.

\begin{proposition}
\label{prop:rigidity-type-2D}
For every commutative ring $S$ with $2\in S^\times$ and every
$\ell\geq4$,
$$
        C_{G_{\mathrm{ad}}(D_\ell,S)}(\mathcal F_\Phi(S))=1.
$$
\end{proposition}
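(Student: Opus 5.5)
The argument parallels the unitary case. Let $c$ centralize the frame and, after faithfully flat base change, lift it to $C\in\mathrm{SO}_{2\ell}(S)$. Every relative root of ${}^2D_\ell$ is of type $A_1$ or $A_1^2$. In the natural representation, each absolute root element $x_\alpha(1)$ has the form $I_n+N_\alpha$. Here $N_\alpha=\pm E_{ab}\pm E_{-b,-a}$ has square zero, and the same holds for the commuting product over an $A_1^2$-class, since the two absolute roots are orthogonal and their nilpotent parts multiply to zero. Lemma~\ref{lem:projective-scalar-square-zero} therefore forces $\xi_\lambda=1$. Thus $C$ commutes with every frame matrix $U_\lambda$, hence with every $N_\lambda$ and the $S$-algebra $\mathcal N$ they generate.

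\emph{Step 1: describe $\mathcal N$.} The roots $\pm\varepsilon_i\pm\varepsilon_j$ with $i,j<\ell$ are $\rho$-fixed, so they are of type $A_1$. Their matrices are $\pm E_{ab}\pm E_{-b,-a}$ with $a,b\in\{\pm1,\ldots,\pm(\ell-1)\}$, $a\neq\pm b$. Since $\ell-1\geq3$, products of such matrices give every matrix unit on $W$. Concretely, $(E_{ab}\pm E_{-b,-a})(E_{bc}\pm E_{-c,-b})=E_{ac}+(\text{term})$ when $a,b,c$ are pairwise non-paired indices. Iterating, and using a third index to isolate single units as in the odd unitary case, gives $\operatorname{End}_S(W)$ embedded by zero on $V_\ell$.

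The $A_1^2$-classes $\{\varepsilon_i-\varepsilon_\ell,\varepsilon_i+\varepsilon_\ell\}$ (up to signs) give matrices of the form
$$
N=\pm E_{i\ell}\pm E_{-\ell,-i}\pm E_{i,-\ell}\pm E_{\ell,-i}.
$$
With the signs fixed by the $\rho$-adapted basis, this equals $e_i\otimes f^*+f\otimes e_{-i}^*$ up to sign, where $f=e_\ell\pm e_{-\ell}$ spans the $(+1)$-eigenline $S(e_\ell+e_{-\ell})$ of $Q$. This sign check, that the $A_1^2$-frame elements involve exactly the $Q$-fixed vector $e_\ell+e_{-\ell}$ and annihilate $e_\ell-e_{-\ell}$, follows because they are $\rho$-fixed and $\rho$ acts by $Q$.

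\emph{Step 2: deduce the shape of $C$.} Commuting with the identity projector of $W$ and its matrix units gives
$$
C=aI_W\oplus C_\ell
$$
with $C_\ell\in\operatorname{GL}(V_\ell)$. Multiplying $N$ by suitable matrix units of $W$ isolates $e_i\otimes f^*$ and $f\otimes e_{-i}^*$, so these lie in $\mathcal N$. Commutation with $f\otimes e_{-i}^*$ gives $Cf=af$. Commutation with $e_i\otimes f^*$ gives $f^*\circ C_\ell=af^*$ on $V_\ell$, where $f^*$ is the functional pairing with $f$ under $J$. Hence $C_\ell$ preserves the line $Sf$ with eigenvalue $a$, and also preserves $\ker(f^*)=S(e_\ell-e_{-\ell})$, the $J$-orthogonal complement of $f$ in $V_\ell$, acting there by some scalar $b$.

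Consequently $C$ is scalar, equal to $a$, on $W\oplus S(e_\ell+e_{-\ell})$ and equal to $b$ on $S(e_\ell-e_{-\ell})$. That is, $C=pI_n+qQ$ with $p+q=a$ and $p-q=b$. Lemma~\ref{lem:orthogonal-two-parameter-centralizer} applies: $C\in\mathrm{SO}(J)$ and the eigenspaces are free of ranks $n-1$ and $1$. It gives $q=0$, so $C$ is scalar, its adjoint class is trivial, and $c=1$ by descent (Lemma~\ref{lem:affine-descent-and-local-detection}).

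The main obstacle is Step 1: verifying, with the actual signs of the $\rho$-adapted Chevalley basis, that the generated algebra really contains all of $\operatorname{End}(W)$ and the two rank-one pieces $e_i\otimes f^*$ and $f\otimes e_{-i}^*$. The risk is that sign cancellations could produce $e_\ell-e_{-\ell}$ in place of $e_\ell+e_{-\ell}$. I would handle this by noting that $\rho$-invariance of $x_\lambda(1)$ means $QU_\lambda Q^{-1}=U_\lambda$, which pins the vector to the $(+1)$-eigenspace regardless of the individual signs.
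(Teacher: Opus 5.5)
Your route is the paper's: long roots give exact commutation and generate $\operatorname{End}_S(W)$, forcing $C=aI_W\oplus C_\ell$. The short roots then force $C=pI_n+qQ$, and Lemma~\ref{lem:orthogonal-two-parameter-centralizer} gives $q=0$. Your handling of the short roots through $Cf=af$ and $f^*\circ C_\ell=af^*$ is a coordinate-free version of the paper's row-and-column-sum comparison. However, your opening step contains a false claim. The $A_1^2$-frame elements are \emph{not} square-zero in type $D_\ell$, so Lemma~\ref{lem:projective-scalar-square-zero} does not give $\xi_\lambda=1$ for them.

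For $\lambda=\{\varepsilon_i-\varepsilon_\ell,\varepsilon_i+\varepsilon_\ell\}$ one has
$$
 (E_{i,\ell}-E_{-\ell,-i})(E_{i,-\ell}-E_{\ell,-i})=-E_{i,-i}\neq0 .
$$
Hence $U_\lambda=I_n+N+\tfrac12N^2$, with $N=e_i\otimes f^*-f\otimes e_{-i}^*$ and $N^2=-2E_{i,-i}$, since $J(f,f)=2$. This is the paper's $4\times4$ short-root block, whose $(e_i,e_{-i})$-entry is $-1$. Orthogonality of the two roots makes their root subgroups commute, but the nilpotent operators still compose along $e_{-i}\mapsto e_{\pm\ell}\mapsto e_i$. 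Unipotency alone does not rescue the step: the relation $(U_\lambda-I_n)^3=0$ yields $\xi_\lambda=1$ only if $3\in S^\times$.

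The fix is to reorder the argument, as the paper does.
\begin{enumerate}
\item Apply Lemma~\ref{lem:projective-scalar-square-zero} to long roots only, and obtain $C=aI_W\oplus C_\ell$.
\item Evaluate $CU_\lambda C^{-1}=\xi_\lambda U_\lambda$ at $e_j$ for some $j<\ell$ with $j\neq i$, which exists since $\ell\geq4$. There $U_\lambda$ is the identity and $C$ is $a$, so $\xi_\lambda=1$.
\end{enumerate}
Your isolation step survives the corrected shape of $N_\lambda=U_\lambda-I_n$. Let $P_W$ be the identity of $\operatorname{End}_S(W)$, extended by zero on $V_\ell$. Then
$$
 E_{ii}N_\lambda(I_n-P_W)=e_i\otimes f^*,\qquad
 (I_n-P_W)N_\lambda E_{-i,-i}=-f\otimes e_{-i}^*,
$$
and both products kill the extra $E_{i,-i}$ term. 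The rest of your argument then goes through unchanged.
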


\begin{proof}
Let $c$ centralize the frame and lift it to $C\in\mathrm{SO}(J)$
after faithfully flat base change.
For a long root, $U_\lambda-I_n$ has square zero and an entry
$\pm1$, so Lemma~\ref{lem:projective-scalar-square-zero} gives
exact commutation.

The long-root nilpotent parts generate $\operatorname{End}_S(W)$,
embedded by zero on $V_\ell$. Indeed, for distinct $i,j,k<\ell$,
$$
 (E_{i,j}-E_{-j,-i})(E_{j,k}-E_{-k,-j})=E_{i,k}.
$$
Since $\ell-1\geq3$, this gives all off-diagonal units on the
positive part of $W$, and products give the diagonal units.
Multiplying $E_{i,-j}-E_{j,-i}$ and the opposite long-root
matrices by these units isolates the units between positive and
negative coordinates. Their products give the remaining units.
Consequently,
$$
        C=aI_W\oplus D,\qquad
        a\in S^\times,\quad D\in\mathrm{GL}_2(S).
$$

For a short root, take the standard block on $V_i\oplus V_\ell$,
$i<\ell$, in the order $(e_i,e_{-i},e_\ell,e_{-\ell})$:
$$
        U=\begin{pmatrix}
        1&-1&1&1\\
        0&1&0&0\\
        0&-1&1&0\\
        0&-1&0&1
        \end{pmatrix}.
$$
In $CU=\xi UC$, comparison on the complementary part of $W$
gives $\xi=1$. The remaining equations make every row and column
sum of $D$ equal to $a$. Hence
$$
        D=\begin{pmatrix}p&q\\q&p\end{pmatrix},\qquad
        a=p+q,\qquad C=pI_n+qQ.
$$
This is the two-parameter matrix centralizer of
\cite[Lemma~3.3]{BuninaMakadiya2D}, obtained here without locality
or a congruence assumption. Since $C\in\mathrm{SO}(J)$,
Lemma~\ref{lem:orthogonal-two-parameter-centralizer} gives $q=0$.
Thus $C$ is scalar and $c=1$ by descent.
\end{proof}

\subsection{The triality type \texorpdfstring{${}^3D_4$}{3D4}}
\label{subsec:rigidity-triality}

Use the preceding orthogonal model with $\ell=4$, $n=8$.
The matrix $Q$ still interchanges $e_4,e_{-4}$; it does not
represent triality, but describes the matrix centralizer below.
Choose
$$
        \alpha_1=e_1-e_2,\quad \alpha_2=e_2-e_3,\quad
        \alpha_3=e_3-e_4,\quad \alpha_4=e_3+e_4,
$$
so that triality fixes $\alpha_2$ and cycles the other three.
The simple relative roots are $\beta_s=[\alpha_1]$ and
$\beta_\ell=[\alpha_2]$.
Use the root-vector matrices $X_\alpha$ with the signs of
\cite[Subsections~2.1--2.2]{BuninaMakadiyaTriality}.
The four signed simple frame matrices are
$$
\begin{aligned}
 U_s&=(I_8+X_{\alpha_4})(I_8+X_{\alpha_1})(I_8+X_{\alpha_3}),
 &U_\ell&=I_8+X_{\alpha_2},\\
 U_{-s}&=(I_8+X_{-\alpha_4})(I_8+X_{-\alpha_1})(I_8+X_{-\alpha_3}),
 &U_{-\ell}&=I_8+X_{-\alpha_2}.
\end{aligned}
$$
They suffice to determine the centralizer.

\begin{proposition}
\label{prop:rigidity-triality-frame}
For the triality folding and every commutative ring $S$ with
$2,3\in S^\times$,
$$
        C_{G_{\mathrm{ad}}(D_4,S)}(\mathcal F_\Phi(S))=1.
$$
\end{proposition}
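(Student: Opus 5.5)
We follow the pattern of Proposition~\ref{prop:rigidity-type-2D}. Let $c$ centralize the frame. After faithfully flat base change we lift $c$ to $C\in\mathrm{SO}(J)\subset\mathrm{GL}_8(S)$, and we work with projective centralization
$$
        CU_\lambda C^{-1}=\xi_\lambda U_\lambda,\qquad \xi_\lambda\in S^\times .
$$
For the long roots, $U_{\pm\ell}-I_8=X_{\pm\alpha_2}$ has square zero and an entry $\pm1$, so Lemma~\ref{lem:projective-scalar-square-zero} gives $\xi_{\pm\ell}=1$. For the short frame elements, $U_{\pm s}-I_8$ is nilpotent but not of square zero. The factors commute, since $\alpha_1,\alpha_3,\alpha_4$ are pairwise orthogonal with no sums that are roots. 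Hence $U_s=\exp(N_s)$, where
$$
        N_s=X_{\alpha_1}+X_{\alpha_3}+X_{\alpha_4},
$$
and the exponential series is finite. This uses $2,3\in S^\times$, because $N_s^3$ may be nonzero. We have $\xi_sU_s=\exp(CN_sC^{-1})$ unipotent, and taking the logarithm (again a finite series) shows that $\xi_s U_s$ is unipotent. Comparing the diagonal, or applying the argument of Lemma~\ref{lem:projective-scalar-square-zero} to $(\xi_sU_s-I)^k=0$, yields $\xi_s=1$. So $C$ commutes with $U_{\pm s}$ and $U_{\pm\ell}$, and therefore with $N_{\pm s}=\log U_{\pm s}$ and $X_{\pm\alpha_2}$.

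Next we compute the centralizer of the associative algebra generated by $N_{\pm s}$ and $X_{\pm\alpha_2}$. Bracketing gives the Lie algebra of the fixed subalgebra $G_2\subset\mathfrak{so}_8$, so $C$ commutes with the image of $\mathfrak g_2$ in its $8$-dimensional representation $S\oplus\mathbb O_0$-type decomposition. Concretely, we compute the associative subalgebra spanned by products of these explicit matrices, in the same spirit as the long-root computation in the ${}^2D_4$ case. We aim to show that its commutant consists of the matrices $pI_8+qP$, where $P$ is the symmetric projector-type involution splitting $S^8$ into a rank-$1$ line (the $G_2$-invariant vector, a combination of $e_4\pm e_{-4}$ with $e_3,e_{-3}$ adjusted) and its rank-$7$ orthogonal complement. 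This is the centralizer statement from \cite[Subsections~2.1--2.2]{BuninaMakadiyaTriality}, which the paper says is available over arbitrary bases. We would either cite it or verify it by checking that the generated algebra contains $\operatorname{End}$ of the rank-$7$ summand, using $3\in S^\times$ where coefficients $\pm2,\pm3$ appear.

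Granting this, we have $C=pI_8+qP$ with $P^2=I$ and $P^tJ=JP$, with eigen-summands free of ranks $7$ and $1$. Lemma~\ref{lem:orthogonal-two-parameter-centralizer} then gives $q=0$, so $C$ is scalar and $c=1$ by descent (Lemma~\ref{lem:affine-descent-and-local-detection}). The main obstacle is the commutant computation. Unlike the ${}^2D_4$ case, the short frame matrices are not sums of matrix units supported on disjoint blocks, so isolating matrix units requires explicit products, and invertibility of $3$ is needed to divide out structure constants. We would first try to extract $E$-type units on $\langle e_1,e_2,e_{-1},e_{-2}\rangle$ from products of $X_{\pm\alpha_2}$ with $N_{\pm s}^k$, then propagate them by Weyl-type conjugation using fixed frame words.
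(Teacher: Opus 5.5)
Your overall route is the paper's. You lift $c$ to $C\in\mathrm{SO}(J)$, remove the projective scalars, and get exact commutation with $U_{\pm s},U_{\pm\ell}$, hence with $W_s=U_sU_{-s}^{-1}U_s$ and $W_\ell=U_\ell U_{-\ell}^{-1}U_\ell$. You then use the centralizer $SI_8+SQ$ and finish with Lemma~\ref{lem:orthogonal-two-parameter-centralizer} and descent. Some corrections:
\begin{itemize}
\item The centralizer result is \cite[Proposition~3.7]{BuninaMakadiyaTriality}; the paper uses Subsections~2.1--2.2 there only for sign conventions.
\item That result needs only $2\in S^\times$.
\item Your $P$ is just $Q$, so the rank-one summand is $S(e_4-e_{-4})$, with no $e_{\pm3}$ correction.
\item $N_s^3=0$ in the natural representation, not ``may be nonzero,'' so your exponential also needs only $2$.
\end{itemize}

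The step that fails as written is the removal of $\xi_s$. Unipotency of $\xi_sU_s$ only gives $(\xi_s-1)^k=0$ from the diagonal, and this does not force $\xi_s=1$ over nonreduced rings. For instance, over $\mathbb Z[1/6][\epsilon]/(\epsilon^2)$ the matrix $(1+\epsilon)U_s$ is unipotent, because $(U_s-I_8)^3=0$. So ``comparing the diagonal'' proves only nilpotency of $\xi_s-1$.

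The expansion argument of Lemma~\ref{lem:projective-scalar-square-zero} can be rescued, but only with the exact exponent $k=3$, the nilpotency index of $U_s-I_8$, which conjugation preserves. You then compare suitable entries of $U_s-I_8$ and $(U_s-I_8)^2$ in two steps, using $3\in S^\times$.

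The paper's argument is one line and treats all four frame matrices at once. Each has trace $8$, so taking traces in $CU_\gamma C^{-1}=\xi_\gamma U_\gamma$ gives $8=8\xi_\gamma$, hence $\xi_\gamma=1$.

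Alternatively, $\xi_s\in\boldsymbol\mu_2(S)$, because the kernel of $\mathrm{SO}(J)\to G_{\mathrm{ad}}(D_4)$ is $\boldsymbol\mu_2$. Nilpotency of $\xi_s-1$ then does suffice, since $\xi_s+1$ is a unit.
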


\begin{proof}
Let $c$ centralize the frame and choose a lift $C\in\mathrm{SO}(J)$
after faithfully flat base change. Each displayed frame matrix
has trace $8$, so taking traces in
$CU_\gamma C^{-1}=\xi_\gamma U_\gamma$ gives $\xi_\gamma=1$.
Thus $C$ commutes with all four matrices and with
$$
        W_s=U_sU_{-s}^{-1}U_s,\qquad
        W_\ell=U_\ell U_{-\ell}^{-1}U_\ell.
$$
By \cite[Proposition~3.7]{BuninaMakadiyaTriality}, the common
matrix centralizer of $U_s,U_\ell,W_s,W_\ell$ is $SI_8+SQ$
over every commutative ring with $2$ invertible.
Thus $C=pI_8+qQ$, and the orthogonal summands of ranks $7$ and
$1$ give $q=0$ by
Lemma~\ref{lem:orthogonal-two-parameter-centralizer}.
The scalar lift has trivial adjoint class, so $c=1$ by descent.
This also follows directly from
\cite[Lemma~3.8]{BuninaMakadiyaTriality}.
\end{proof}

\subsection{The type \texorpdfstring{${}^2E_6$}{2E6}}
\label{subsec:rigidity-type-2E6}

We use the arbitrary-base centralizer theorem of
\cite[Lemma~5.2]{BuninaMakadiya2E6}, retaining the module notation
for the later normalization. Let
$L_S=L_{\mathbb Z[1/2]}\otimes_{\mathbb Z[1/2]}S$ be the
$78$-dimensional Chevalley module with bracket $\{\ ,\ \}$,
in the $\rho$-stable basis of
\cite[Subsection~1.1]{BuninaMakadiya2E6}.
The graph involution has matrix $Q$, with $Q^2=I_{78}$; put
$$
        \pi_\pm:=\frac{I_{78}\pm Q}{2},\qquad
        L_S^\pm:=\pi_\pm L_S,\qquad
        L_S=L_S^+\oplus L_S^-.
$$
The constant frame matrices commute with $Q$ and preserve the
bracket. Their linear centralizer acts scalarly on the two summands,
but only its identity belongs to the adjoint group.

\begin{proposition}
\label{prop:rigidity-type-2E6}
For every commutative ring $S$ with $2\in S^\times$,
$$
        C_{G_{\mathrm{ad}}(E_6,S)}(\mathcal F_\Phi(S))=1.
$$
\end{proposition}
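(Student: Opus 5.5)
The plan follows the preamble of this subsection. We compute the linear centralizer of the frame on $L_S$ and then show that its only adjoint point is the identity. Let $c$ centralize the frame. Because the adjoint representation of $E_6$ is faithful, no lift is needed. We regard $c$ as an automorphism $C$ of the Lie algebra $L_S$ that commutes with every frame matrix $U_\lambda$.

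The first step is to show that $C$ commutes with the nilpotent parts. Each $U_\lambda$ is $\exp$ of a nilpotent derivation $\operatorname{ad}X$ with $X$ in $L_S$, or a product of two commuting such exponentials for an $A_1^2$ class. Since $2$ is invertible but $3$ need not be, we do not take logarithms directly. Instead we use that $C$ is a bracket automorphism, so $C U_\lambda C^{-1}=U_\lambda$ means $\exp(\operatorname{ad}C(X_\lambda))=\exp(\operatorname{ad}X_\lambda)$.

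For $A_1$ classes we compare on a suitable weight vector: $U_\lambda-I$ applied to $H$ or to a root vector $e_{-\alpha}$ returns $X_\lambda$ up to a sign and a unit. This gives $C(X_\lambda)=X_\lambda$. For $A_1^2$ classes the same argument gives $C(e_\alpha+e_{\overline\alpha})=e_\alpha+e_{\overline\alpha}$.

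The second step is to show that $C$ commutes with $Q$ and acts by scalars on the two summands. The fixed elements $e_\alpha+e_{\overline\alpha}$, the long root vectors, and their negatives generate, under brackets, the subalgebra $L_S^+$, which is the $F_4$ Chevalley algebra over $S$ with $2$ invertible. Hence $C$ is the identity on $L_S^+$. This is the content of \cite[Lemma~5.2]{BuninaMakadiya2E6}, namely that the linear centralizer is $\{aI\oplus bI\}$ on $L_S^+\oplus L_S^-$.

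From this, $C$ preserves $L_S^-$, since $L_S^-$ is the complement characterized as an $L_S^+$-module, and $C$ commutes with the $L_S^+$-action. The module $L_S^-$ is the $26$-dimensional $F_4$-module, whose endomorphism ring over the split form is $S$ after localization. So $C|_{L_S^-}=b$ is a scalar with $b\in S^\times$.

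The third step, the main obstacle, is to show $b=1$. This is where the adjoint group, rather than merely linear automorphisms, matters. I first try using only the bracket: $\{L_S^-,L_S^-\}$ contains elements of $L_S^+$ with unit coefficients. For instance, $\{e_\alpha-e_{\overline\alpha},\,e_{-\alpha}-e_{-\overline\alpha}\}$ is a coroot combination $h_\alpha+h_{\overline\alpha}$ in $L_S^+$, which is nonzero primitive. Since $C$ is a Lie automorphism, this gives $b^2=1$, so $b=\pm1$ locally.

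The sign $b=-1$ is realized by the linear map $\pi_+-\pi_-=Q$, which is a Lie automorphism, namely the graph automorphism. It does not lie in $G_{\mathrm{ad}}(E_6,S)$: after passing to residue fields, the outer graph automorphism is not inner. Scheme-theoretically, $\operatorname{Aut}(L)=G_{\mathrm{ad}}\rtimes\mathbb Z/2$, so the element $e=(1-b)/2$ is an idempotent with $C$ equal to $Q$ on $eS$. Membership in $G_{\mathrm{ad}}$ forces $e=0$.

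Hence $C=I$. Local detection, Lemma~\ref{lem:affine-descent-and-local-detection}, then gives $c=1$.
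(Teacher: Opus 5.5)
Your proposal is correct and follows the paper's route. The paper simply cites \cite[Lemma~5.2]{BuninaMakadiya2E6} both for the linear centralizer $S\pi_+\oplus S\pi_-$ and for the triviality of its intersection with $G_{\mathrm{ad}}(E_6,S)$, whereas you prove the second half yourself via $b^2=1$, the idempotent $(1-b)/2$, and the fact that the graph automorphism $Q$ is not inner over any residue field.

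Your Step~1 does not, as written, give $C(X_\lambda)=X_\lambda$, since it tacitly uses that $C$ fixes $H$ or $e_{-\alpha}$. It is dispensable, however: once $C=a\pi_++b\pi_-$, brackets with unimodular values in $\{L_S^+,L_S^+\}$ and $\{L_S^-,L_S^-\}$ give $a^2=a$ and $b^2=a$, so $a=b^2=1$. The scheme-theoretic claim $\operatorname{Aut}(L)=G_{\mathrm{ad}}\rtimes\mathbb Z/2$ is likewise unnecessary, since your argument only uses outerness of $Q$ over residue fields.
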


\begin{proof}
For this frame, \cite[Lemma~5.2]{BuninaMakadiya2E6} gives
$$
        \operatorname{Cent}_{\operatorname{End}_S(L_S)}
                    (\mathcal F_\Phi(S))=S\pi_+\oplus S\pi_-
$$
and proves that its intersection with $G_{\mathrm{ad}}(E_6,S)$
is $\{I_{78}\}$. The result holds over every commutative
$\mathbb Z[1/2]$-algebra, without locality or flatness assumptions,
including characteristic three.
\end{proof}

Propositions~\ref{prop:rigidity-unitary-frame},
\ref{prop:rigidity-type-2D}, \ref{prop:rigidity-triality-frame},
and~\ref{prop:rigidity-type-2E6} prove $\mathrm{(LR3)}$ in every
case. Their arbitrary-base formulations apply in particular to
the tensor products and open intersections used in
Proposition~\ref{prop:unique-affine-gluing}.

\medskip


\section{Residual standardness outside absolute type \texorpdfstring{$D_4$}{D4}}
\label{sec:residual-standardness}

We verify the residual standardness property $\mathrm{(LR1)}$
for ${}^2A_\ell$ ($\ell\geq3$), ${}^2D_\ell$ ($\ell\geq5$),
and ${}^2E_6$. Throughout, $\rho$ has order two and all residue
fields have characteristic different from two. We first distinguish
the coefficient models, then absorb the possible diagram factor
into an equivariant coefficient isomorphism.

For a residue algebra $(K,\theta)$, the fixed subring $k=K^\theta$
is a field. The root formulas and
Lemma~\ref{lem:split-product-identification} identify
$E_{\rho\theta}(\Phi,K)$ with the elementary subgroup of the
following adjoint group over $k$:
$$
\begin{array}{c|c}
 \text{coefficient model}&\text{adjoint group over }k\\ \hline
 K=k,\ \theta=\operatorname{id}
     &\text{split of type }\Phi_\rho\\
 {}[K:k]=2,\ \operatorname{ord}(\theta)=2
     &\text{quasi-split of type }{}^2\Phi\\
 K=k\times k,\ \theta(a,b)=(b,a)
     &\text{split of type }\Phi.
\end{array}
$$
Thus a fixed residue algebra may give a split group, but then its
type is the folded type, not $\Phi$.

For finite fields, use
\cite[Theorems~30 and~36]{SteinbergLectures} for automorphisms
and its Theorem~37 for isomorphisms.
For infinite fields, use the Borel--Tits theorem and its isomorphism
complement \cite[8.11]{BorelTitsAbstractHomomorphisms}; see
\cite[Theorem~1.3 and Section~3.9]{SteinbergAbstractHomomorphisms}.
The groups are absolutely simple and isotropic, their elementary
subgroups are Zariski dense, and the target groups are adjoint.
The recovered special isogenies are central in the present types
and characteristics, hence are isomorphisms between the adjoint
groups; see \cite[Section~2.5]{SteinbergAbstractHomomorphisms}.

For each fixed $\Phi$, these results distinguish the three rows:
the folded group has a different absolute type, while the other
two forms differ in splitness. None of the finite exceptional
isomorphisms identifies these models under our assumptions.
The field theorems also give the inner--diagonal, field, and
diagram factors. In every absolute type considered here,
$$
        \operatorname{Aut}(\operatorname{Dyn}(\Phi))
             =\langle\rho\rangle.
$$
We show that its two elements contribute no independent factor.

\subsection{Fixed residue algebras}
\label{subsec:fixed-residue-algebras}

We treat the two field cases in the table. For a nontrivial
involution, the field theorem is applied over $k=K^\theta$;
recovering the coefficient pair also requires recovering $K/k$.

\begin{proposition}
\label{prop:fixed-residue-standardness}
Let $(K_1,\theta_1)$ and $(K_2,\theta_2)$ be fixed residue
algebras, put $\sigma_i=\rho\theta_i$, and let
$$
        \alpha:E_{\sigma_1}(\Phi,K_1)
                  \xrightarrow{\ \sim\ }E_{\sigma_2}(\Phi,K_2)
$$
be a group isomorphism. Then $\theta_1$ and $\theta_2$ have the
same order, and
$$
        \alpha=i_g\circ\nu_*
$$
for some $g\in G_{\sigma_2}(\Phi,K_2)$ and an equivariant field
isomorphism $\nu:(K_1,\theta_1)\xrightarrow{\ \sim\ }(K_2,\theta_2)$.
\end{proposition}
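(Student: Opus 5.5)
Set $k_i:=K_i^{\theta_i}$ and identify $E_{\sigma_i}(\Phi,K_i)$ with the elementary subgroup $E(\mathbf G_i(k_i))$ of the adjoint $k_i$-group in the table: split of type $\Phi_\rho$ when $\theta_i=\operatorname{id}$, quasi-split of type ${}^2\Phi$ when $\theta_i$ has order two. The plan is to apply the field theorems cited above to $\alpha$, to read off from the resulting algebraic isomorphism that the two rows coincide, and then to convert the field and diagram factors into one equivariant isomorphism $\nu$.

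\emph{Step 1.} Apply Steinberg's Theorem~37 for finite fields and Borel--Tits~8.11 for infinite fields. This gives a field isomorphism $\phi:k_1\to k_2$, a special isogeny $\beta:{}^\phi\mathbf G_1\to\mathbf G_2$ with $\alpha=\beta\circ\phi^0$ on elementary points, and, by the centrality remark above, $\beta$ is an isomorphism of adjoint groups. In particular $\mathbf G_1$ and $\mathbf G_2$ have the same absolute type and the same splitness. The row $\theta_i=\operatorname{id}$ has absolute type $\Phi_\rho\neq\Phi$, while the quasi-split row is non-split of type $\Phi$. Hence $\theta_1,\theta_2$ have the same order. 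The finite-field exceptional isomorphisms are excluded in characteristic $\neq2$ and rank $\geq2$, as noted above.

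\emph{Step 2: the trivial involution.} Here the groups are split of type $\Phi_\rho$, and $\beta$ decomposes as inner--diagonal times a possible graph automorphism of $\Phi_\rho$. For $\Phi_\rho=C_n,B_n,F_4$ with $\operatorname{char}\neq2$ there are no graph automorphisms. So $\alpha=i_g\circ\phi_*$ with $g\in G_{\mathrm{ad}}(\Phi_\rho,k_2)$. It remains to identify this adjoint group with $G_{\sigma_2}(\Phi,K_2)$ compatibly with conjugation. I would do this by checking that the fixed-point group of $\rho$ in the split adjoint group of type $\Phi$ acts on $E$ through the adjoint group of type $\Phi_\rho$. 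This is the point I expect to be the main obstacle. The adjoint group of the folded type may be larger than the image of the $\rho$-fixed points (consider $\operatorname{PGL}/\operatorname{PSp}$ centers). If so, I would instead use frame rigidity $\mathrm{(LR3)}$ together with density to realize $i_g$ through $G_{\sigma_2}$, or else show that the extra diagonal classes coincide.

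\emph{Step 3: order two.} Now $\mathbf G_i$ is quasi-split of type ${}^2\Phi$ split by $K_i/k_i$. The isomorphism ${}^\phi\mathbf G_1\simeq\mathbf G_2$ forces the splitting fields to correspond: $\phi$ extends to $\nu:K_1\to K_2$ with $\nu\theta_1=\theta_2\nu$. The reason is that the quasi-split form is classified by its splitting quadratic extension, namely the $*$-action of the Galois group on the Dynkin diagram. Base change $\nu_*$ then gives an isomorphism $E_{\sigma_1}(K_1)\to E_{\sigma_2}(K_2)$. The composite $\alpha\circ\nu_*^{-1}$ is an abstract automorphism coming from an algebraic automorphism of $\mathbf G_2$, i.e.\ an element of $\mathbf G_2(k_2)$ times a diagram symmetry. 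Since $\operatorname{Aut}(\operatorname{Dyn}\Phi)=\langle\rho\rangle$, the only possible outer piece is $\rho$ itself. On $\sigma_2$-fixed points, $\rho$ acts as $\theta_2$, i.e.\ as the coefficient map $(\theta_2)_*$, which is equivariant. So I replace $\nu$ by $\theta_2\nu$ if necessary, which absorbs the diagram factor. The remaining inner part lies in $\mathbf G_2(k_2)=G_{\sigma_2}(\Phi,K_2)$ because $\mathbf G_2$ is adjoint, and we obtain $\alpha=i_g\circ\nu_*$.
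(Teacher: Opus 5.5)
Your argument follows the paper's proof step for step. The field theorems separate the folded split model from the quasi-split model, the folded types $B_n$, $C_n$, $F_4$ contribute no diagram factor, and in the order-two case the field map extends through the quadratic splitting extension while the residual $\rho_*$ is absorbed via $\rho_*=\theta_{2,*}$ on the fixed points.

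The one point you leave open in Step~2 is not a real obstacle; the paper settles it in a single sentence. For $\theta_2=\operatorname{id}$, the group $G_{\sigma_2}(\Phi,K_2)$ is by definition the $\rho$-fixed subgroup of the \emph{adjoint} group $G_{\mathrm{ad}}(\Phi,k_2)$. That fixed-point group is exactly the adjoint group of type $\Phi_\rho$:
\begin{itemize}
\item $\mathrm{PGL}_{2n}(k)^\rho=\mathrm{GSp}_{2n}(k)/k^\times=\mathrm{PGSp}_{2n}(k)$;
\item $\mathrm{PGL}_{2n+1}(k)^\rho=\mathrm{SO}_{2n+1}(k)$;
\item $G_{\mathrm{ad}}(D_\ell,k)^\rho\simeq\mathrm{SO}_{2\ell-1}(k)$;
\item $G_{\mathrm{ad}}(E_6,k)^\rho=F_4(k)$.
\end{itemize}
The discrepancy $\mathrm{PSp}_{2n}(k)\neq\mathrm{PGSp}_{2n}(k)$ you have in mind concerns the image of the simply connected fixed points $\mathrm{SL}_{2n}(k)^\rho=\mathrm{Sp}_{2n}(k)$. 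That image never enters, because the twisted group is defined inside the adjoint group. Hence the inner--diagonal element supplied by the field theorem is already an element $g\in G_{\sigma_2}(\Phi,K_2)$. It induces the same conjugation on $E_{\sigma_2}(\Phi,K_2)$, since root subgroups correspond under the folding. No frame-rigidity or density argument is needed.
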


\begin{proof}
The distinction of the two field models shows that the involutions
are either both trivial or both nontrivial.

If both are trivial, the field theorem for the folded split groups
gives a field isomorphism $\nu:K_1\to K_2$ and an inner--diagonal
factor. The folded adjoint group embeds in
$G_{\sigma_2}(\Phi,K_2)$, so this factor is $i_g$ for an element
$g$ of that group.
The folded types $B_n$, $C_n$, and $F_4$ have no additional
diagram factor in characteristic different from two.
Thus $\alpha=i_g\circ\nu_*$, with equivariance automatic.

Suppose that both involutions are nontrivial, and put
$k_i=K_i^{\theta_i}$.
The field theorem gives an isomorphism $k_1\to k_2$ identifying
the corresponding quasi-split adjoint forms.
The kernel of their Galois action on the absolute Dynkin diagram
determines the quadratic splitting extension $K_i/k_i$.
Hence the recovered field isomorphism extends to
$f:K_1\xrightarrow{\sim}K_2$ with $f\theta_1=\theta_2f$.
After this coefficient identification, the remaining factors give
$$
        \alpha=i_g\circ(\rho^j)_*\circ f_*,
        \qquad g\in G_{\sigma_2}(\Phi,K_2),\quad j\in\{0,1\}.
$$
On the target twisted group, the fixed-point equation implies
$$
        \rho_*\theta_{2,*}=\operatorname{id},
        \qquad \rho_*=\theta_{2,*}.
$$
Thus $(\rho^j)_*\circ f_*=(\theta_2^j\circ f)_*$.
The map $\nu:=\theta_2^j\circ f$ is still equivariant and gives
the required decomposition.
\end{proof}

\subsection{Split residue algebras}
\label{subsec:split-residue-algebras}

Here first projection gives an untwisted group of absolute type
$\Phi$. Switching the coefficient factors realizes its diagram
involution.

Choose one field factor and identify the other with it using the
involution. Then
$$
        K=k\times k,\qquad \theta(a_0,a_1)=(a_1,a_0).
$$
By Lemma~\ref{lem:split-product-identification}, first projection
is an isomorphism on the full groups, with inverse
$\Delta_\rho(h)=(h,\rho(h))$, and restricts to
\begin{equation}
\label{eq:split-residue-elementary-identification}
        \operatorname{pr}_0:E_{\rho\theta}(\Phi,k\times k)
                    \xrightarrow{\ \sim\ }E_{\mathrm{ad}}(\Phi,k).
\end{equation}

\smallskip
\noindent\emph{Switching coefficients and the diagram action.}
For $K_i=k_i\times k_i$ with switching involutions, every
equivariant ring isomorphism $\nu:K_1\to K_2$ has the form
$$
        \nu=\theta_2^j\circ(\nu_0\times\nu_0),
        \qquad j\in\{0,1\},\qquad
        \nu_0:k_1\xrightarrow{\ \sim\ }k_2.
$$
Indeed, it permutes the two primitive idempotents, and equivariance
makes the component field maps equal. Conversely, every displayed
map is equivariant.
Writing $\operatorname{pr}_{i,0}$ for first projection over $K_i$,
we obtain
$$
        \operatorname{pr}_{2,0}\circ\nu_*\circ\Delta_\rho
        =(\rho^j)_*\circ(\nu_0)_*.
$$
For $j=1$, switching the image of $(x,\rho(x))$ makes its first
coordinate $(\nu_0)_*(\rho(x))=\rho_*((\nu_0)_*(x))$.
Thus the two choices of component order account for both diagram
factors.

\begin{proposition}
\label{prop:split-residue-standardness}
Let $(K_1,\theta_1)$ and $(K_2,\theta_2)$ be residue algebras
of local models, put $\sigma_i=\rho\theta_i$, and let
$$
        \alpha:E_{\sigma_1}(\Phi,K_1)
                  \xrightarrow{\ \sim\ }E_{\sigma_2}(\Phi,K_2)
$$
be a group isomorphism. If at least one of $K_1,K_2$ is split,
then both are split and
$$
        \alpha=i_g\circ\nu_*
$$
for some $g\in G_{\sigma_2}(\Phi,K_2)$ and an equivariant ring
isomorphism $\nu:(K_1,\theta_1)\xrightarrow{\ \sim\ }(K_2,\theta_2)$.
\end{proposition}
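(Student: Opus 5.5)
We first show that both coefficient models are split, and then reduce to the untwisted Chevalley groups over the factor fields, where the field automorphism theorems apply.

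\emph{Both models are split.} Suppose $K_2$ is split. By \eqref{eq:split-residue-elementary-identification}, $E_{\sigma_2}(\Phi,K_2)$ is isomorphic to the split elementary group $E_{\mathrm{ad}}(\Phi,k_2)$ of absolute type $\Phi$. If $K_1$ were fixed, the table at the beginning of this section would identify the source with one of two groups over $k_1=K_1^{\theta_1}$. One is a split group of the folded type $\Phi_\rho$. The other is a quasi-split, non-split group of type ${}^2\Phi$. The field theorems quoted above (Steinberg for finite fields, Borel--Tits for infinite fields) distinguish these groups from a split group of type $\Phi$. Over finite fields this is done by orders and type, and no exceptional isomorphisms occur in our characteristics; over infinite fields it is done by the recovered absolute type and splitness. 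Hence $K_1$ is split. The case where $K_1$ is split follows symmetrically, by applying the same argument to $\alpha^{-1}$.

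\emph{Reduction to untwisted groups.} Write $K_i=k_i\times k_i$ with the switching involution. Put
$$
        \beta:=\operatorname{pr}_{2,0}\circ\alpha\circ\Delta_\rho:
        E_{\mathrm{ad}}(\Phi,k_1)\longrightarrow E_{\mathrm{ad}}(\Phi,k_2),
$$
which is an isomorphism by Lemma~\ref{lem:split-product-identification}. The field theorems give
$$
        \beta=i_{h}\circ(\rho^j)_*\circ(\nu_0)_*,
        \qquad h\in G_{\mathrm{ad}}(\Phi,k_2),\quad j\in\{0,1\},
$$
where $\nu_0:k_1\to k_2$ is a field isomorphism. Here we use that the only diagram automorphisms are the powers of $\rho$, and that the recovered special isogeny is central, hence an isomorphism of adjoint groups in these types and characteristics. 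The inner--diagonal factor lies in the full adjoint group $G_{\mathrm{ad}}(\Phi,k_2)$.

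\emph{Absorbing the diagram factor.} Set
$$
        \nu:=\theta_2^j\circ(\nu_0\times\nu_0),
$$
which is equivariant by the preceding discussion. The displayed identity $\operatorname{pr}_{2,0}\circ\nu_*\circ\Delta_\rho=(\rho^j)_*\circ(\nu_0)_*$ then gives
$$
        \beta=i_h\circ\operatorname{pr}_{2,0}\circ\nu_*\circ\Delta_\rho.
$$
Put $g:=\Delta_\rho(h)$, which lies in $G_{\sigma_2}(\Phi,K_2)$ by the full-group identification. Since $\operatorname{pr}_{2,0}$ is a group isomorphism, $\operatorname{pr}_{2,0}\circ i_g=i_h\circ\operatorname{pr}_{2,0}$. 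Composing with the inverse identifications therefore yields
$$
        \alpha=i_g\circ\nu_*.
$$

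\emph{Main obstacle.} The delicate step is the first one: excluding an isomorphism between a split model and a fixed model whose fixed field carries a different but accidentally compatible structure. Two points need care. Over finite fields one must rule out coincidences of orders, such as the folded type $B_n$ against $C_n$ or the quasi-split ${}^2\Phi$ against the split $\Phi$; Steinberg's isomorphism theorem (Theorem~37) settles this. Over infinite fields one must check that Borel--Tits recovers the absolute type $\Phi$ versus $\Phi_\rho$, and detects splitness through the Galois action on the Dynkin diagram. I would verify both points by quoting the abstract-isomorphism form of those theorems. That form requires that the algebraic groups are absolutely simple, isotropic and adjoint, and that the elementary subgroups are Zariski dense; all of these were recorded at the start of this section.
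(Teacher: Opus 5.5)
Your proposal is correct and follows the paper's proof essentially step for step. Both arguments first use the field theorems' separation of the three residual models to force both algebras to be split. Both then project to $E_{\mathrm{ad}}(\Phi,k_i)$, obtain $i_h\circ(\rho^j)_*\circ(\nu_0)_*$ from the field theorem, and absorb the diagram factor via $\nu=\theta_2^j\circ(\nu_0\times\nu_0)$ and $g=\Delta_\rho(h)$.
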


\begin{proof}
The distinction of the three residual models implies that both
algebras are split.
Write $K_i=k_i\times k_i$ and use first projection to obtain
$$
        \alpha_0:=\operatorname{pr}_{2,0}\circ\alpha\circ\Delta_\rho:
        E_{\mathrm{ad}}(\Phi,k_1)
                  \xrightarrow{\ \sim\ }E_{\mathrm{ad}}(\Phi,k_2).
$$
The field theorem gives
$$
        \alpha_0=i_h\circ(\rho^j)_*\circ(\nu_0)_*,
        \qquad h\in G_{\mathrm{ad}}(\Phi,k_2),\quad j\in\{0,1\},
$$
for a field isomorphism $\nu_0:k_1\xrightarrow{\sim}k_2$.
Set
$$
        g:=\Delta_\rho(h)\in G_{\sigma_2}(\Phi,K_2),\qquad
        \nu:=\theta_2^j\circ(\nu_0\times\nu_0).
$$
The preceding calculation shows that $i_g\circ\nu_*$ induces
$\alpha_0$ after projection. Since projection is an isomorphism,
$\alpha=i_g\circ\nu_*$.
\end{proof}

The two propositions prove $\mathrm{(LR1)}$, including preservation
of fixed and split residue algebras. Absolute type $D_4$ is treated
in Sections~\ref{sec:graph-automorphisms-2D4} and~\ref{sec:triality}.

\medskip


\section{Relative local normalization and root recognition}
\label{sec:relative-local-rigidity}

We prove $\mathrm{(LR2)}$ and $\mathrm{(LR4)}$ for
${}^2A_\ell$ ($\ell\geq5$), ${}^2D_\ell$ ($\ell\geq4$),
and ${}^2E_6$. The source ring is arbitrary: we normalize its
constant frame in a local target, then recognize arbitrary root
images from their relations with that frame. These are the local
inputs for Sections~\ref{sec:global-frame-normalization}
and~\ref{sec:recovery-of-coefficient-automorphism}.

For the classical types, the matrix arguments of the local papers
use source relations, prescribed residues, and the target form,
not injectivity or surjectivity. For ${}^2E_6$, the required
relative results are already
\cite[Propositions~6.2 and~6.7]{BuninaMakadiya2E6}.
A common reduction handles split targets through an auxiliary
local ring. We state it for both orders $m=2,3$, so that
Section~\ref{sec:triality} can use the same argument.

The type-dependent proofs here concern order-two twistings.
For ${}^2D_4$, residual data are assumed ordinary; graph removal
belongs to Section~\ref{sec:graph-automorphisms-2D4}.
The exceptional unitary types are treated in
Section~\ref{sec:exceptional-unitary-types}.

\subsection{Common reductions and split local models}
\label{subsec:common-local-reductions}

Let $(R,\theta_R)$ be arbitrary and $(A,\theta_A)$ a local model.
Put $J_A=\operatorname{Jac}(A)$ and $A_{\mathrm{res}}=A/J_A$,
and abbreviate $E(S)=E_{\sigma_S}(\Phi,S)$ and
$G(S)=G_{\sigma_S}(\Phi,S)$.
For the order-two normalization, the residual hypothesis is
$$
        \Psi:E(R)\longrightarrow E(A),\qquad
        (q_A)_*\Psi=i_{g_{\mathrm{res}}}\circ(\nu_{\mathrm{res}})_*,
$$
where $g_{\mathrm{res}}\in G(A_{\mathrm{res}})$ and
$\nu_{\mathrm{res}}:R\to A_{\mathrm{res}}$ is equivariant and
unital. Removing a frame transporter lifting $g_{\mathrm{res}}$
leaves a frame-fixing map with coefficient reduction, as required
for recognition.

\begin{lemma}[Descent of a local transporter]
\label{lem:descent-local-transporter}
Let $f:A\to S$ be equivariant and faithfully flat. If
$g_S\in G(S)$ satisfies
$$
        g_Sx_\lambda(1)g_S^{-1}
        =f_*\Psi(x_\lambda(1))\qquad(\lambda\in\Phi_\rho),
$$
then it descends to a unique transporter $g_A\in G(A)$, with
$$
        (q_A)_*(g_A)=g_{\mathrm{res}},\qquad
        (i_{g_A^{-1}}\Psi)_{\mathrm{res}}=(\nu_{\mathrm{res}})_*.
$$
No assumption on the residue algebra of $S$ is needed.
\end{lemma}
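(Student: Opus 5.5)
The plan is to treat the transport equations as an affine scheme and apply faithfully flat descent, in the same way as in Proposition~\ref{prop:unique-affine-gluing}. Consider the closed affine $A$-scheme $\mathcal T$ whose $T$-points are the $c\in G_{\mathrm{ad}}(\Phi,T)$ with $cu_{\lambda,T}c^{-1}=v_{\lambda,T}$ for all $\lambda\in\Phi_\rho$. Here $u_\lambda=x_\lambda(1)$ and $v_\lambda=\Psi(x_\lambda(1))\in E(A)$. By frame rigidity $\mathrm{(LR3)}$, as in Lemma~\ref{lem:uniqueness-transporter}, $\mathcal T(T)$ has at most one point for every $A$-algebra $T$. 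The given $g_S$ is a point of $\mathcal T(S)$, since $f_*$ carries the frame of $A$ to the frame of $S$.

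Its two pullbacks to $S\otimes_AS$ are both transporters, so they coincide. By Lemma~\ref{lem:affine-descent-and-local-detection}(2), $g_S$ descends to a unique $g_A\in\mathcal T(A)\subseteq G_{\mathrm{ad}}(\Phi,A)$. To see that $g_A$ is twisted-invariant, note that $\sigma_A(g_A)$ is again a transporter, because $\sigma_A$ fixes the frame of $A$ and fixes $v_\lambda\in E(A)$. Uniqueness then gives $\sigma_A(g_A)=g_A$, so $g_A\in G(A)$. Uniqueness of $g_A$ as a transporter in $G(A)$ is again $\mathrm{(LR3)}$. No hypothesis on the residue algebra of $S$ enters at any point, since only $A$ and its faithfully flat cover are used.

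For the residual statements, reduce along $q_A$. The element $(q_A)_*(g_A)$ transports the frame of $A_{\mathrm{res}}$ to $(q_A)_*\Psi(x_\lambda(1))$. By hypothesis this equals $i_{g_{\mathrm{res}}}((\nu_{\mathrm{res}})_*x_\lambda(1))=g_{\mathrm{res}}x_\lambda(1)g_{\mathrm{res}}^{-1}$, because the unital map $\nu_{\mathrm{res}}$ fixes the constant frame. So $g_{\mathrm{res}}$ is also a transporter over $A_{\mathrm{res}}$, and $\mathrm{(LR3)}$ over $A_{\mathrm{res}}$ yields $(q_A)_*(g_A)=g_{\mathrm{res}}$. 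Then
$$
(q_A)_*\circ i_{g_A^{-1}}\circ\Psi=i_{g_{\mathrm{res}}^{-1}}\circ(q_A)_*\circ\Psi=(\nu_{\mathrm{res}})_*,
$$
which is the final assertion.

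The one step I expect to need care is verifying that $\mathcal T$ is really defined over $A$, so that the descent lemma applies. This requires the targets $v_\lambda$ to be $A$-points, which holds because $\Psi$ lands in $E(A)$. It also requires the pullback of $g_S$ along the two maps $S\rightrightarrows S\otimes_AS$ to be compared inside $\mathcal T(S\otimes_AS)$, where uniqueness is available because $\mathrm{(LR3)}$ holds over arbitrary rings with $2$ invertible (and $3$ when $m=3$). Everything else is formal.
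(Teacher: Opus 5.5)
Your proposal is correct and follows the paper's proof step for step. Uniqueness of transporters over $S\otimes_A S$, from frame rigidity, descends $g_S$ to $g_A\in G_{\mathrm{ad}}(\Phi,A)$; uniqueness again gives twisting invariance; and $(q_A)_*(g_A)=g_{\mathrm{res}}$ because both transport the same residual frame. The only difference is that you write out the transporter scheme and the final conjugation identity explicitly, which the paper leaves implicit.
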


\begin{proof}
The two pullbacks of $g_S$ to $S\otimes_A S$ transport the same
frames, so frame rigidity and affine descent give a unique
$g_A\in G_{\mathrm{ad}}(\Phi,A)$ with the transport identities.
Twisting those identities gives another transporter, hence
$g_A\in G(A)$. Its reduction and $g_{\mathrm{res}}$ transport
the same residual frame and therefore agree. Removing this
conjugator gives the asserted residual map.
\end{proof}

An ambient adjoint transporter over $S$ is automatically
twisting-fixed by the same uniqueness argument. Thus no separate
form of the descent lemma is needed for ambient transporters.

\smallskip
\noindent\emph{Projection of a split model.}
For $m=\operatorname{ord}(\rho)\in\{2,3\}$, write $A=B^m$
with cyclic coefficient operator, where $B$ is local, and put
$k=B/\operatorname{Jac}(B)$.
The following is an immediate consequence of
Lemma~\ref{lem:split-product-identification}.

\begin{lemma}
\label{lem:split-local-model-identification}
\label{lem:split-local-reduction}
First projection identifies $G(A),E(A)$ with
$G_{\mathrm{ad}}(\Phi,B),E_{\mathrm{ad}}(\Phi,B)$, respectively,
with inverse
$$
        \Delta_\rho(c)=(c,\rho(c),\ldots,\rho^{m-1}(c)).
$$
For $\Psi:E(R)\to E(A)$, put
$$
        \Psi_{\mathrm{pr}}=\operatorname{pr}_0\Psi,\qquad
        Y_\lambda(B)=\operatorname{pr}_0(X_\lambda(A)).
$$
A transporter of the projected frames lifts by $\Delta_\rho$
to a transporter over $A$, with the corresponding residual value.
Moreover,
$$
        \Psi(X_\lambda(R))\subseteq X_\lambda(A)
        \quad\Longleftrightarrow\quad
        \Psi_{\mathrm{pr}}(X_\lambda(R))\subseteq Y_\lambda(B),
$$
where $Y_\lambda(B)$ has independent absolute root coordinates
in the class $\lambda$.
\end{lemma}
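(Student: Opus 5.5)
The statement reduces to Lemma~\ref{lem:split-product-identification} applied with $B$ local. I would check its three assertions in turn, using only that lemma, frame rigidity, and the fixed root coordinates.

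First, the identification of $G(A)$ and $E(A)$ with $G_{\mathrm{ad}}(\Phi,B)$ and $E_{\mathrm{ad}}(\Phi,B)$ is exactly Lemma~\ref{lem:split-product-identification}. Its proof needs no locality, and for $m=3$ the same fixed-point argument $g_j=\rho(g_{j-1})$ works. The inverse map is $\Delta_\rho$.

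Second, for the lifting of transporters, let $c\in G_{\mathrm{ad}}(\Phi,B)$ satisfy
$$cx_\lambda(1)_Bc^{-1}=\operatorname{pr}_0\Psi(x_\lambda(1)).$$
The frame over $A$ is $\sigma_A$-fixed, and its first projection is the frame over $B$; for an $A_2$-class the parameter $(1,1/2)$ projects to $a=b=1$ and $c=1/2$. Put $g=\Delta_\rho(c)\in G(A)$. Since $\operatorname{pr}_0$ is an isomorphism on $G(A)$ and
$$\operatorname{pr}_0(gx_\lambda(1)g^{-1})=cx_\lambda(1)c^{-1}=\operatorname{pr}_0\Psi(x_\lambda(1)),$$
the map $g$ transports the frames. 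Reduction commutes with $\operatorname{pr}_0$ and $\Delta_\rho$, because the residue algebra of $A$ is $k^m$ with the same cyclic operator. Hence $(q_A)_*(g)=\Delta_\rho(\bar c)$, and $\Delta_\rho(\bar c)$ is the residual value corresponding to $\bar c$. Frame rigidity makes this value unique.

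Third, the equivalence for root subgroups. The forward direction follows from $Y_\lambda(B)=\operatorname{pr}_0(X_\lambda(A))$. For the converse, if $\operatorname{pr}_0\Psi(x)\in Y_\lambda(B)$, write $\operatorname{pr}_0\Psi(x)=\operatorname{pr}_0(y)$ with $y\in X_\lambda(A)$. Injectivity of $\operatorname{pr}_0$ on $E(A)$ then gives $\Psi(x)=y$. Finally, the proof of Lemma~\ref{lem:split-product-identification} shows that $Y_\lambda(B)$ is the product of the absolute root subgroups in the class, with independent coordinates in the fixed order.

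I expect no real obstacle. The only point needing care is the $A_2$ frame element and the signs $N$, which Lemma~\ref{lem:split-product-identification} already handles.
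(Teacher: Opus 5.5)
Your proposal is correct and follows the paper's route. The paper likewise deduces everything from Lemma~\ref{lem:split-product-identification}, in one line: the projection identifications over $B$ and over $k$ preserve the frames, root subgroups, and reduction maps. Your write-up spells out the checks the paper leaves implicit: that the frame projects to the frame (including the $A_2$ parameter $(1,1/2)$), that $\operatorname{pr}_0$ is injective on $G(A)$, and that $\Delta_\rho$ is compatible with reduction.
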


\begin{proof}
The projection identifications over $B$ and $k$ preserve the
frames, root subgroups, and reduction maps.
\end{proof}

Projection identifies the normalization problem but does not solve
it. The next lemma replaces the cyclic product by a local ring
whose components have the same residue. It needs the residual
coefficient rule only on the fixed source.

\begin{lemma}[From fixed to split frame normalization]
\label{lem:fixed-to-split-frame-normalization}
Let $m=\operatorname{ord}(\rho)\in\{2,3\}$ and $A=B^m$ be a split local model with
$k=B/\operatorname{Jac}(B)$, and $\Psi:E(R)\to E(A)$ a
homomorphism. Put $R_0=R^{\theta_R}$ and
$E_0=E_\rho(\Phi,R_0)$, with trivial coefficient operator,
and let $j_*:E_0\to E(R)$ be induced by inclusion.
Suppose that
$$
        (q_A)_*\Psi j_*
          =i_{g_{\mathrm{res}}}\circ(\Delta_k\circ f)_*,
        \qquad g_{\mathrm{res}}\in G(k^m),
$$
where $f:R_0\to k$ is unital and $\Delta_k:k\to k^m$ is diagonal.
Assume that $E_0$ is perfect and that fixed-target frame
normalization holds for maps from $E_0$ with coefficient
reduction and trivial operator on the target residue field.
Then a unique $g_A\in G(A)$ lifts $g_{\mathrm{res}}$ and satisfies
$$
        g_Ax_\lambda(1)g_A^{-1}=\Psi(x_\lambda(1))
        \qquad(\lambda\in\Phi_\rho).
$$
\end{lemma}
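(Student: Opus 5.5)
The plan is to reduce the split target to a target with trivial coefficient operator, apply the assumed fixed-target frame normalization there, and then transport the result back to $A$ along $\Delta_\rho$. Uniqueness is immediate: two transporters over $A$ differ by an element centralizing the frame, so frame rigidity $\mathrm{(LR3)}$ forces them to coincide. For existence, the first observation is that the whole standard frame already lies in $j_*(E_0)$. Its elements are defined over $\mathbb Z[1/2]$ and have fixed parameters $1$ or $(1,1/2)$, so they are points of $E_\rho(\Phi,R_0)$. It therefore suffices to find a transporter for the frame images under $\Psi j_*$, and only the hypothesis on $(q_A)_*\Psi j_*$ is needed.

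Next I use Lemma~\ref{lem:split-local-model-identification} to replace $\Psi j_*$ by the projected homomorphism
$$
        \Psi_0:=\operatorname{pr}_0\circ\Psi\circ j_*:
        E_0\longrightarrow E_{\mathrm{ad}}(\Phi,B).
$$
Its residual reduction is $\operatorname{pr}_0\circ i_{g_{\mathrm{res}}}\circ(\Delta_k f)_*$. This equals $i_{h}\circ f_*$, where $h=\operatorname{pr}_0(g_{\mathrm{res}})\in G_{\mathrm{ad}}(\Phi,k)$ and $f_*$ is coefficient change from the folded group over $R_0$ into the folded subgroup of $E_{\mathrm{ad}}(\Phi,k)$. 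Projecting $\Delta_k f$ kills the diagonal, so no diagram factor appears. I then view $B$ with the trivial operator. This gives a fixed local model whose residue field $k$ carries the trivial operator, and $\Psi_0$ becomes a map from $E_0$ with coefficient reduction $f_*$ and conjugator $h$.

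At this point the assumed fixed-target normalization applies. It yields $c\in G_{\mathrm{ad}}(\Phi,B)$ with reduction $h$ and $c\,x_\lambda(1)c^{-1}=\Psi_0(x_\lambda(1))$ for all $\lambda\in\Phi_\rho$. If the normalization is only available after a faithfully flat local extension of $B$, I return to $B$ by Lemma~\ref{lem:descent-local-transporter}. Finally I set $g_A:=\Delta_\rho(c)$. Lemma~\ref{lem:split-local-model-identification} says that $g_A\in G(A)$ transports the frame to $\Psi(x_\lambda(1))$, since projection is injective on $G(A)$. Its reduction is $\Delta_\rho(h)=g_{\mathrm{res}}$, because $\Delta_\rho$ commutes with reduction.

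I expect the main obstacle to be matching the hypothesis of the fixed-target normalization. The image of $\Psi_0$ need not lie in the $\rho$-fixed (folded) subgroup over $B$; only its residue does. The fixed-target statement therefore has to be applied with the folded source $E_0$ and a target whose reduction is folded-coefficient. Perfectness of $E_0$ is what I would use here. Two lifts of the residual data differ by a homomorphism from $E_0$ into a congruence subgroup, and the relevant obstruction is abelian, in fact central modulo $J_B$. Perfectness kills such homomorphisms, so the normalization over the trivial-operator target is well posed. I would first try to phrase this as: the frame-transport equations over $B$ depend only on $\Psi_0$ restricted to the frame and on the residual identity. Both of these are exactly what the fixed-target hypothesis consumes.
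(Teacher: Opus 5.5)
Your uniqueness argument, the remark that the whole frame lies in $j_*(E_0)$, and the final transfer by $\Delta_\rho$ are all correct. The central step fails, however: the fixed-target hypothesis cannot be applied to $\Psi_0$. That hypothesis concerns a homomorphism from $E_0$ into the twisted elementary group $E(A')$ of a fixed local model $(A',\theta_{A'})$, with residual conjugator in $G(A'_{\mathrm{res}})$. Giving $B$ the trivial operator makes $E(B)=E_\rho(\Phi,B)$ the folded group, but $\Psi_0$ takes values in the untwisted group $E_{\mathrm{ad}}(\Phi,B)$. Moreover, $h=\operatorname{pr}_0(g_{\mathrm{res}})$ is an arbitrary element of $G_{\mathrm{ad}}(\Phi,k)$, not of its $\rho$-fixed subgroup. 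So, contrary to what you say, even the residue $h f_*(z)h^{-1}$ of $\Psi_0(z)$ is in general not $\rho$-fixed. Your perfectness remark does not close this gap. For two homomorphisms the pointwise quotient is a homomorphism only if it is central, and the congruence subgroup of $G_{\mathrm{ad}}(\Phi,B)$ is not abelian. Also, the local normalization arguments consume a homomorphism into the twisted group together with all source relations, not merely the frame images and a residual identity.

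The missing idea is to construct a genuine fixed local model that receives a map from $E_0$.

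First, lift $h$ to $h_B\in G_{\mathrm{ad}}(\Phi,B)$ by split Bruhat decomposition over the local ring $B$, and put $\beta_0=i_{h_B^{-1}}\Psi_0$. The reduction of $\beta_0$ is now $f_*$, which is $\rho$-fixed.

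Next, pass to $D_m=B^m\times_{k^m}k$, the ring of $m$-tuples with equal residues, with the cyclic operator. It is local, and its residue field $k$ carries the trivial operator. Because the residues agree, $\Gamma(z)=(\beta_0(z),\rho(\beta_0(z)),\dots,\rho^{m-1}(\beta_0(z)))$ defines a homomorphism $E_0\to G(D_m)$ reducing to $f_*$.

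This is where perfectness is genuinely needed. The big cell over the local ring $D_m$ puts $\Gamma(E_0)$ in $E(D_m)H_D$, where $H_D$ is the fixed torus and $E(D_m)H_D/E(D_m)$ is abelian. Perfectness then gives $\Gamma(E_0)\subseteq E(D_m)$.

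Finally, the fixed-target hypothesis applies to $\Gamma$ with residual conjugator $1$. If $c_B$ is the first component of its transporter, then $c_B\equiv1$ modulo $\operatorname{Jac}(B)$, and $g_A=\Delta_\rho(h_Bc_B)$ is the required element.
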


\begin{proof}
Lift $\operatorname{pr}_0(g_{\mathrm{res}})$ to
$h_B\in G_{\mathrm{ad}}(\Phi,B)$ by split Bruhat decomposition:
root parameters and torus units lift, and Weyl representatives
are constant. Set
$$
        \beta=i_{h_B^{-1}}\Psi_{\mathrm{pr}},\qquad
        \beta_0=\beta j_*.
$$
Normality of the split elementary subgroup makes these maps
into $E_{\mathrm{ad}}(\Phi,B)$. The reduction of $\beta_0$ is
$f_*:E_0\to E_\rho(\Phi,k)$ and is therefore fixed by $\rho$.

\smallskip
\noindent\emph{A local ring with diagonal residue.}
Put $J=\operatorname{Jac}(B)$ and
$$
 D_m:=B^m\times_{k^m}k
     =\{(b_0,\ldots,b_{m-1})\in B^m\mid
                         b_0+J=\cdots=b_{m-1}+J\},
$$
where $k\to k^m$ is diagonal. A tuple is a unit exactly when
its common residue is nonzero. Thus $D_m$ is local with radical
$J\times\cdots\times J$, and cyclic permutation gives a coefficient operator
$\theta_{D_m}$ inducing the identity on its residue field $k$.
Affineness identifies group points over $D_m$ with tuples of
points over $B$ having the same reduction. Hence
$$
        \Gamma(z)=\bigl(\beta_0(z),\rho(\beta_0(z)),\ldots,
                                      \rho^{m-1}(\beta_0(z))\bigr)
$$
defines a homomorphism $E_0\to G(D_m)$ with reduction $f_*$.

\smallskip
\noindent\emph{The image is elementary.}
Write $E_D=E(D_m)$ and $H_D=\mathbf T(D_m)^{\rho\theta_{D_m}}$.
The fixed torus normalizes the relative root subgroups, so
$E_DH_D/E_D$ is abelian. For a root generator $z$ of $E_0$,
the residue of $\Gamma(z)$ is a root element and lies in the
absolute big cell $\mathbf U^+\mathbf T\mathbf U^-$;
see \cite[Theorem~5.1.13]{ConradReductive}.
Since $D_m$ is local, $\Gamma(z)$ itself lies in this cell.
Its unique factors are twisting-fixed. Collecting the absolute
unipotent coordinates by relative classes puts the unipotent
factors in $E_D$ and the torus factor in $H_D$.
For an $A_2$-class this uses its complete three-root subgroup
and the equation $t\overline t=u+\overline u$.
Thus $\Gamma(E_0)\subseteq E_DH_D$; perfectness of $E_0$
then gives $\Gamma(E_0)\subseteq E_D$.

\smallskip
\noindent\emph{The transporter.}
The fixed-target input applies to $\Gamma$ with residual
conjugator $1$. Let $d_D\in G(D_m)$ be its transporter
reducing to $1$, and let $c_B$ be its first component.
The image of $j_*$ contains every constant frame element, so
$$
        c_B\operatorname{pr}_0(x_\lambda(1))c_B^{-1}
            =\beta(x_\lambda(1)),\qquad c_B\equiv1\pmod J.
$$
Therefore $g_A=\Delta_\rho(h_Bc_B)$ has the required frame
identities and reduction. Frame rigidity gives uniqueness.
\end{proof}

For the order-two types treated here, equivariance of
$\nu_{\mathrm{res}}$ gives its diagonal restriction to $R_0$.
The folded groups $E_0$ have types $C_r$, $B_r$, $B_{\ell-1}$,
or $F_4$, of rank at least three, and are perfect by the
rank-two identities with $2$ invertible. Thus fixed-target
$\mathrm{(LR2)}$ implies split-target $\mathrm{(LR2)}$.
For triality the fixed source has type $G_2$ and is perfect with
$2,3$ invertible. A trace-preserving residual map also sends fixed
coefficients to the diagonal; Section~\ref{sec:triality} will apply
the lemma without assuming equivariance on the whole source.

Root recognition below is carried out over the entire coefficient
pair $(A,\theta_A)$, including split models. The form equations
then relate components, rather than restricting either separately.
A common block argument is
$$
        U^2=I,\quad U\equiv I\pmod{J_A}
        \quad\Longrightarrow\quad U=I:
$$
indeed, $U+I$ has invertible reduction and
$(U-I)(U+I)=0$.

\subsection[The unitary types]{The unitary types \texorpdfstring{${}^2A_\ell$}{2A(l)}}
\label{subsec:relative-local-rigidity-unitary}

Let $\ell\geq5$ and put $N=\ell+1$.
We use the natural projective realization of
\cite[Sections~4--8]{BuninaMakadiya2ALarge}. After normalizing
the frame, it suffices to recognize one $A_1^2$-subgroup; the
remaining root types follow from rank-two relations.

\begin{proposition}
\label{prop:relative-local-rigidity-unitary-stable}
For ${}^2A_\ell$ with $\ell\geq5$, relative frame normalization
$\mathrm{(LR2)}$ and root-subgroup recognition $\mathrm{(LR4)}$
hold over every local model.
\end{proposition}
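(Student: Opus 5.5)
We first reduce to fixed local models, using Lemma~\ref{lem:fixed-to-split-frame-normalization}. If $A=B^2$ is split, restrict $\Psi$ to the folded subgroup $E_0=E_\rho(A_\ell,R_0)$. This group has type $C_r$ or $B_r$ of rank at least three, hence is perfect. Equivariance of $\nu_{\mathrm{res}}$ gives the diagonal residual rule on $R_0$. The lemma then produces the split transporter from the fixed-target statement. For recognition on a split model, Lemma~\ref{lem:split-local-model-identification} converts the inclusion $\Psi(X_\lambda(R))\subseteq X_\lambda(A)$ into one for the projected subgroups $Y_\lambda(B)$. It therefore suffices to treat a local target $(A,\theta_A)$ with fixed maximal ideal, while retaining the form equations when we argue over the whole pair.

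For $\mathrm{(LR2)}$ over a fixed local $A$, lift the residual conjugator $h$ to some $h_A\in G(A)$. Over a local ring, lift its Bruhat-cell factors: root parameters and torus units lift, and Weyl representatives are constant. Then replace $\Psi$ by $i_{h_A^{-1}}\Psi$, whose reduction is $(\nu_{\mathrm{res}})_*$. Each frame element now has image congruent to itself modulo $J_A$. Using the natural projective realization of \cite[Sections~4--8]{BuninaMakadiya2ALarge}, we run the matrix normalization of that paper in the present relative setting. Its first step uses the involution $w_\lambda(1)^2$ and the square-zero relations $(U_\lambda-I)^2=0$ satisfied by the images. Both hold in the source, so no injectivity is needed. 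These give idempotents congruent to the standard diagonal units $E_{ii}$. Lifting a complete orthogonal family of idempotents over the local ring yields a conjugator that diagonalizes all of them simultaneously. The remaining relations among the frame images are of two kinds: commutation and rank-two commutator identities, and the unitary form equations. In an adapted basis they force the frame images to be $D\,U_\lambda D^{-1}$ for a diagonal $D$ that is fixed by the form up to scalar. Absorbing $D$ into $h_A$ gives the transporter $g_A$. Uniqueness follows from $\mathrm{(LR3)}$, and $\sigma$-invariance follows from uniqueness, as in Lemma~\ref{lem:descent-local-transporter}.

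For $\mathrm{(LR4)}$, assume the frame is fixed and the reduction is coefficientwise. We first recognize one $A_1^2$-subgroup $X_\lambda$ with $\lambda=[\varepsilon_1-\varepsilon_2]$. An element $x_\lambda(t)$ commutes with every frame element whose root is orthogonal to $\lambda$, or whose sum with $\lambda$ is not a root. It also commutes with the frame elements of $X_\lambda$ and of the roots adjacent to it with positive sum. Because $\ell\ge5$, these centralizing frame elements generate a matrix algebra that pins down the support of $\Psi(x_\lambda(t))$. The image lies in the centralizer of this algebra, which consists of matrices supported on the entries $E_{12}$ and $E_{N-1,N}$ together with the identity, up to a unit scalar. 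The scalar is removed by Lemma~\ref{lem:projective-scalar-square-zero}, using the congruence to a unipotent element. The form equation then forces the two entries to be $\overline{\cdot}$-related. Hence $\Psi(x_\lambda(t))\in X_\lambda(A)$. Every other $A_1^2$-subgroup is obtained by conjugating with frame Weyl words, which $\Psi$ fixes. The $A_1$-groups, and the $A_2$-groups when $\ell$ is even, are generated inside $X_\beta$ by commutators of $A_1^2$-elements with frame elements. Examples are the formulas $[x_\beta(t),x_\alpha(1)]$ and $[x_\beta(t),x_{\alpha+\beta}(1)]$ of Section~\ref{sec:recovery-of-coefficient-automorphism}, together with their $A_2$ analogues in the five-dimensional chart. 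Since the image of each commutator lies in the corresponding root subgroup, the recognition extends to all $\lambda$.

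The main obstacle is the idempotent-lifting step of $\mathrm{(LR2)}$ in the relative setting. The source ring is arbitrary, so one cannot appeal to surjectivity. One must check that the local argument of \cite{BuninaMakadiya2ALarge} uses only relations that hold in the frame, and that the diagonal correction respects the hermitian form when $\theta_A$ is nontrivial on $A$ but trivial on its residue field. We would handle this by working with the form-preserving lifts directly and verifying the $\theta$-compatibility through local detection.
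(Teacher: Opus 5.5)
Your treatment of split targets in $\mathrm{(LR2)}$ via Lemma~\ref{lem:fixed-to-split-frame-normalization} matches the paper. However, the recognition step has a genuine gap.

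Take $\lambda=[\varepsilon_1-\varepsilon_2]$ and $N=\ell+1$. Every root $\mu$ whose root subgroup commutes with $X_\lambda$ has nonnegative $\varepsilon_1$-coefficient and nonpositive $\varepsilon_2$-coefficient. Hence $E_{1N}$ and $E_{N-1,2}$ commute with all the frame matrices you use, and the centralizer you obtain is strictly larger than $AI_N+AE_{12}+AE_{N-1,N}$. (Orthogonality is also not the right commutation criterion: $[\varepsilon_1-\varepsilon_2]$ and $[\varepsilon_1+\varepsilon_2]$ are orthogonal, but their commutator lies in the $2\varepsilon_1$-direction.)

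Concretely, for odd $\ell$ consider $x_\lambda(b)\,x_{2\varepsilon_1}(a)\,x_{-2\varepsilon_2}(d)$ with $a,d\in J_A\cap A^{\theta_A}$. It is unitary, has the prescribed residue, and commutes with every frame element that commutes with $x_\lambda(t)$. Yet it lies outside $X_\lambda(A)$ unless $a=d=0$, and the form equation cannot force that. For even $\ell$, the central coordinates of $X_{[\varepsilon_1]}$ and $X_{[-\varepsilon_2]}$ play the same roles.

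The paper removes these parameters with a relation that is not a commutation. The element $h_{[\alpha_2]}(-1)$ inverts $X_{[\alpha_1]}$ but centralizes the $E_{1N}$ and $E_{N-1,2}$ directions, so $HYH^{-1}=Y^{-1}$ gives $2a=2d=0$. The same relation also makes the complementary blocks the identity. To make it exact one must remove a projective scalar. With determinant-one representatives, $\xi^N=1$ and $\xi\in1+J_A$, so $\xi$ has odd order, and one rescales by some $\zeta\in\langle\xi\rangle$ with $\zeta^2=\xi^{-1}$. Lemma~\ref{lem:projective-scalar-square-zero} only makes your commutations with frame matrices exact; it does not supply this step.

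For $\mathrm{(LR2)}$ on a fixed model, you assert that the images satisfy $(U_\lambda-I)^2=0$ because the source matrices do. This is not valid: it is a matrix identity, not a group relation, and it is essentially what is being proved. An abstract $\Psi$ transports only group words. Even those hold for representatives in $\mathrm{GL}_N(A)$ only up to scalars, which need not be squares over a non-Henselian local ring. So you cannot directly manufacture honest involutions, idempotents, or exact rank-two identities. Your sentence that the relations ``force $DU_\lambda D^{-1}$'' cites the local computation without the input that makes it run over an arbitrary source.

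That input is the relative version of the representative construction of the local Lemma~4.2. Write source elements of the simply connected elementary group as products of commutators and lift the images of the factors. The residual discrepancies are central and cancel in the commutators, giving representatives with prescribed unit residues. Only on these do the local normalizations of toral involutions, Weyl elements, and parameter-one roots apply, yielding $C\equiv I_N\pmod{M_N(J_A)}$.

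The form compatibility you call the main obstacle is not one. The element $u[C]$ is an ambient transporter, and frame rigidity puts it in $G(A)$.

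Two smaller points in your recognition argument:
\begin{itemize}
\item Lemma~\ref{lem:split-local-model-identification} does not reduce recognition on $B\times B$ to a fixed model. The paper runs recognition over the whole pair, where the form gives $c=(b_1,b_0)$ with independent components.
\item For even $\ell$, the commutator $[x_1(t),q(1)]=x_3(t,t\overline t/2)x_4(-t/2)$ does not lie in a single root subgroup. So $X_4$ must first be recognized with the same additive map, via $v_2x_1(2t)v_2^{-1}=x_4(t)$, before the $x_4$-factor cancels.
\end{itemize}
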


\begin{proof}
Write $\Psi:E(R)\to E(A)$, with the residual data fixed above.

\smallskip
\noindent\emph{Frame normalization.}
Suppose first that $A$ is fixed. The Bruhat lifting argument in
\cite[Section~4]{BuninaMakadiya2ALarge} gives a lift
$u\in G(A)$ of $g_{\mathrm{res}}$: root parameters and torus
units lift, using averaging for fixed coordinates. Put
$\Psi_1=i_{u^{-1}}\Psi$, whose reduction is
$(\nu_{\mathrm{res}})_*$.

The representative construction of the local Lemma~4.2 applies
with arbitrary source. Write a source element in the natural
simply connected elementary group as a product of commutators,
and lift the images of their factors to the corresponding target
group. The residual discrepancies are central and disappear in
the commutators, giving the prescribed residues.

The normalizations of toral involutions, Weyl elements, and
parameter-one roots in the local Subsections~5.1--5.2, 6.1--6.3,
and 7.1--7.3 now apply. They use the constant source relations and
entries with prescribed unit residues, and give
$$
        C\in\mathrm{GL}_N(A),\qquad C\equiv I_N\pmod{M_N(J_A)},
        \qquad [C]x_\lambda(1)[C]^{-1}=\Psi_1(x_\lambda(1)).
$$
Since $G_{\mathrm{ad}}(A_\ell,A)=\mathrm{PGL}_N(A)$, the element
$g_A=u[C]$ is an ambient adjoint transporter. Both frames are
fixed by the twisting, so frame rigidity gives $g_A\in G(A)$.
Its reduction is $g_{\mathrm{res}}$.
Lemma~\ref{lem:fixed-to-split-frame-normalization} gives the split
case.

For recognition, assume from now on that $\Psi$ fixes the frame
and has reduction $(\nu_{\mathrm{res}})_*$; $A$ may be fixed or
split. The standard toral and Weyl words are then fixed as well.

\smallskip
\noindent\emph{Roots of type $A_1^2$.}
Consider $\Psi(x_{[\alpha_1]}(t))$, $t\in R$.
Choose a determinant-one representative $Y$ with prescribed
residue as in \cite[Subsection~8.1]{BuninaMakadiya2ALarge}.
The inverse relation with $h_{[\alpha_2]}(-1)$ initially holds
up to a scalar $\xi$, where $\xi^N=1$ and $\xi\in1+J_A$.
Its $2$-primary part is trivial, so $\xi$ has odd order.
Rescaling by $\zeta\in\langle\xi\rangle$ with $\zeta^2=\xi^{-1}$
makes the relation exact without changing the determinant or
residue. This uses no invertibility of $N$.

Write $\ell=2r-1$ or $2r$ and use
$V_j=\langle e_j,e_{N+1-j}\rangle$; in the second case add the
middle line. The equations in the local Subsubsections~8.2.1
and~8.3.1 separate $V_1\oplus V_2$ from the complementary blocks.
The diagonal involutions suffice except for the even cases
$r=3,4$, where the additional fixed signed root elements on the
complementary three- or five-dimensional space are used.
The inverse relation makes each complementary block the identity.
Commuting with the fixed elements at
$[\alpha_1]$, $[\alpha_1]+[\alpha_2]$, and $-[\alpha_2]$ then gives
$$
        Y=\begin{pmatrix}
        1&a&b&0\\
        0&1&0&0\\
        0&0&1&0\\
        0&c&d&1
        \end{pmatrix}\oplus I_{N-4}.
$$
The projective scalars in these commutations are removed by
comparison of the unchanged invertible entries.
The inverse relation gives $2a=2d=0$.
If $Q_N$ is the standard unitary form matrix, the target condition is
$$
        YQ_N\overline Y^{\,t}=\lambda Q_N.
$$
A nondegenerate complementary paired summand gives $\lambda=1$;
the remaining form entries give $c=\overline b$.
Thus $[Y]=x_{[\alpha_1]}(b)$, and fixed Weyl conjugation recognizes
all $A_1^2$-subgroups.

For $A=B\times B$, the same equation says that
$b=(b_0,b_1)$ and $c=(b_1,b_0)$.
The two absolute parameters remain independent, as required by
the projected root family.

\smallskip
\noindent\emph{The remaining roots for odd $\ell$.}
The relative system is $C_r$, and these roots have type $A_1$.
Choose a $C_2$-subsystem with $\gamma,\zeta\sim A_1^2$ and
$\gamma+\zeta=\lambda\sim A_1$.
The commutator formula in
\cite[Subsubsection~8.2.3]{BuninaMakadiya2ALarge} expresses
$x_\lambda(s)$, $s\in R^{\theta_R}$, as a commutator of an
$X_\gamma$-element of parameter $\pm s/2$ with a fixed
$X_\zeta$-element. Since
$[X_\gamma(A),X_\zeta(A)]\subseteq X_\lambda(A)$, recognition
of the two $A_1^2$-subgroups gives the required inclusion.

\smallskip
\noindent\emph{The remaining roots for even $\ell$.}
Use a terminal $B_2$-subsystem and the five-dimensional charts of
\cite[Subsection~2.2]{BuninaMakadiya2ASmall}, as in
Subsection~\ref{subsec:common-parameter-map}. Write
$$
        x_1=x_\alpha,\quad x_2=x_\beta,\quad
        x_3=x_{\alpha+\beta},\quad x_4=x_{\alpha+2\beta},
$$
where $\alpha\sim A_1^2$, $\beta\sim A_2$, and put
$q(t)=x_2(t,t\overline t/2)$.
For the fixed words $w_1=w_\alpha(1)$ and $v_2=w_\beta(1,1/2)$,
$$
\begin{aligned}
 w_1x_1(t)w_1^{-1}&=x_{-1}(-t),&
 v_2x_1(2t)v_2^{-1}&=x_4(t),\\
 w_1x_2(t,u)w_1^{-1}&=x_3(t,u).&&
\end{aligned}
$$
Recognition of $X_1$ gives an additive map $\eta:R\to A$ with
$\eta(1)=1$. The first two Weyl formulas give the same map on
$X_{-1}$ and $X_4$, since $\eta(t/2)=\eta(t)/2$.
Now use the identities
$$
\begin{aligned}
        {}[x_1(t),q(1)]
          &=x_3(t,t\overline t/2)x_4(-t/2),\\
        [x_{-1}(t),x_4(1/2)]
          &=x_2(0,(t-\overline t)/2).
\end{aligned}
$$
Apply $\Psi$ to the first. The factor $x_4(-\eta(t)/2)$ occurs
on both sides and cancels; conjugating by $w_1^{-1}$ gives
$$
        \Psi(q(t))
        =x_2\bigl(\eta(t),\eta(t)\overline{\eta(t)}/2\bigr).
$$
For $z\in R^-_{\theta_R}$, the second identity gives
$$
        \Psi(x_2(0,z))
        =x_2\bigl(0,(\eta(z)-\overline{\eta(z)})/2\bigr).
$$
Both values lie in $X_2(A)$ without using multiplicativity or
equivariance of $\eta$. The factorization
$x_2(t,u)=q(t)x_2(0,u-t\overline t/2)$ recognizes the whole
subgroup, and fixed Weyl conjugation recognizes the other
$A_2$-subgroups. These identities also hold over split models,
completing $\mathrm{(LR4)}$.
\end{proof}

In the excluded ranks ${}^2A_3$ and ${}^2A_4$, a nilpotent
anti-invariant parameter may remain in the frame. Its extraction
and removal are treated in Section~\ref{sec:exceptional-unitary-types}.

\subsection[The orthogonal types]{The orthogonal types \texorpdfstring{${}^2D_\ell$}{2D(l)}}
\label{subsec:relative-local-rigidity-orthogonal}

We use compatible matrix representatives to retain exact source
relations, then normalize the frame and recognize one root of each
length. The relative system is $B_{\ell-1}$: long parameters are
fixed and short parameters lie in the whole ring.
Use the natural model of
\cite[Subsections~2.2--2.5]{BuninaMakadiya2D}, with
$$
 S_{2\ell}=\begin{pmatrix}0&I_\ell\\I_\ell&0\end{pmatrix},
 \qquad V_j=\langle e_j,e_{-j}\rangle.
$$
The graph matrix $Q$ interchanges $e_\ell,e_{-\ell}$ and fixes
all other basis vectors. Thus the twisting is
$M\mapsto Q\overline M Q^{-1}$.
Write $U_\lambda(t)$ for a standard orthogonal root matrix,
$x_\lambda(t)$ for its adjoint class, and $H_i$ for the matrix
of $h_{[\alpha_i]}(-1)$.

\begin{proposition}
\label{prop:relative-local-rigidity-orthogonal}
For ${}^2D_\ell$ with $\ell\geq4$, properties $\mathrm{(LR2)}$
and $\mathrm{(LR4)}$ hold over every local model under their
ordinary equivariant residual hypotheses.
\end{proposition}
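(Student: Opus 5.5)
We follow the pattern of Proposition~\ref{prop:relative-local-rigidity-unitary-stable}: first normalize the frame over a fixed local model, then deduce the split case, and finally recognize root subgroups for frame-fixing maps over either kind of model.

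\emph{Frame normalization.} Let $A$ be fixed, and write $\Psi:E(R)\to E(A)$ with residual data $(g_{\mathrm{res}},\nu_{\mathrm{res}})$.
\begin{enumerate}
\item Lift $g_{\mathrm{res}}$ to $u\in G(A)$ by the Bruhat argument. Root parameters and torus units lift, with averaging for the fixed long coordinates. Replace $\Psi$ by $\Psi_1=i_{u^{-1}}\Psi$, whose reduction is $(\nu_{\mathrm{res}})_*$.
\item Choose matrix representatives in $\mathrm{SO}(S_{2\ell})$ with prescribed residues. Write source elements of the simply connected elementary group as products of commutators. The central residual discrepancies, here $\pm I$, then cancel, giving exact source relations, as in the local paper \cite{BuninaMakadiya2D}.
\item Run the local normalizations of the toral involutions $H_i$, the Weyl elements, and the parameter-one root elements. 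This produces $C\equiv I\pmod{J_A}$ with $[C]x_\lambda(1)[C]^{-1}=\Psi_1(x_\lambda(1))$.
\end{enumerate}
The inputs are the block argument ``$U^2=I$, $U\equiv I$ implies $U=I$'' and comparison of entries with unit residue. In $D_\ell$ the projective ambiguity is only $\pm1$, which the residue $\equiv I$ removes. Then $g_A=u[C]$ is an ambient transporter. Frame rigidity (Proposition~\ref{prop:rigidity-type-2D}) forces it to be twisting-fixed, and its reduction is $g_{\mathrm{res}}$. The split case follows from Lemma~\ref{lem:fixed-to-split-frame-normalization}, since the folded group of type $B_{\ell-1}$ is perfect.

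\emph{Recognition.} Now assume $\Psi$ fixes the frame and has coefficient reduction. The toral and Weyl words are then fixed too, and Weyl conjugation reduces us to one long and one short root.
\begin{enumerate}
\item \textbf{Long root, all $\ell\ge4$.} Take a long root supported on $V_1\oplus V_2$ and a representative $Y$ of $\Psi(x_\lambda(s))$.
 \begin{itemize}
 \item Commutation with $H_i$ for $i\ge3$ block-diagonalizes $Y$ along $V_1\oplus V_2$ and the complement.
 \item On the complement, $Y$ squares to the identity modulo sign, coming from the inverse relation with a toral involution. It is $\equiv I$, so it is exactly $I$.
 \item Commutation with the fixed root elements at $[\alpha_1]$ and the adjacent roots cuts the $4\times4$ block down to the unipotent shape $U_\lambda(s')$.
 \item The orthogonality equation $Y^tS Y=S$ ties the remaining entries and forces $s'\in A^{\theta}$.
 \end{itemize}
\item \textbf{Short roots.} Get these through a relative $B_2$ commutator $[x_\beta(t),x_\alpha(s)]=x_{\alpha+\beta}(\pm st)x_{\alpha+2\beta}(\cdots)$. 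This cannot be done directly, since $t$ is arbitrary in $R$. Instead, analyze one short-root image on $V_i\oplus V_\ell$ directly:
 \begin{itemize}
 \item Using the block $U$ of Proposition~\ref{prop:rigidity-type-2D}, the frame relations confine $Y$ to that block.
 \item The twisting equation $Y=Q\overline YQ^{-1}$ and orthogonality then give the short-root shape.
 \end{itemize}
 Over a split model $A=B\times B$, these equations relate the two components rather than constraining each separately, so the two absolute parameters stay independent.
\end{enumerate}

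The main obstacle is $\ell=4$. There are too few complementary blocks for the diagonal involutions alone to separate the supports, and the short-root block meets $V_4$, where $Q$ acts. There I would first try the additional fixed signed root elements on the complement, as in the small-rank unitary cases. Failing that, I would use the centralizer computation of Proposition~\ref{prop:rigidity-type-2D} relative to the subframe, whose $pI+qQ$ ambiguity is killed by determinant one through Lemma~\ref{lem:orthogonal-two-parameter-centralizer}. This is exactly where the ordinary residual hypothesis, excluding graph factors, is needed.
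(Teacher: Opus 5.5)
The overall structure (fixed-target normalization, the split case via Lemma~\ref{lem:fixed-to-split-frame-normalization}, recognition over the whole model) matches the paper. However, your $\mathrm{(LR2)}$ step has a genuine gap at ``Then $g_A=u[C]$ is an ambient transporter.'' You are following the unitary template, where $G_{\mathrm{ad}}(A_\ell,A)=\mathrm{PGL}_N(A)$ makes every invertible intertwiner admissible. Here the toral, Weyl and root normalizations only give some $C\in\mathrm{GL}_{2\ell}(A)$ with $C\equiv I_{2\ell}\pmod{M_{2\ell}(J_A)}$. The class $[C]$ lies in $G_{\mathrm{ad}}(D_\ell,A)$ only if $C$ is, up to a scalar, a proper similitude of $S_{2\ell}$, and nothing in your argument provides this. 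What the intertwining relations do give is that $C^tS_{2\ell}C$ is frame-invariant, so $S_{2\ell}^{-1}C^tS_{2\ell}C=pI+qQ$ lies in the two-parameter matrix centralizer of the frame.

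This same centralizer breaks your other two steps. First, your use of frame rigidity fails: in $\mathrm{PGL}_{2\ell}(A)$ the frame is centralized by every $[pI+qQ]$, so $\sigma([C])$ and $[C]$ need not agree. Second, without a similitude the scalar in $CU_\lambda C^{-1}=\xi Y_1(U_\lambda)$ is not ``only $\pm1$''. The determinant gives just $\xi^{2\ell}=1$, which permits $\xi\ne1$ in $1+J_A$ when the residue characteristic divides $\ell$.

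The paper closes this gap in three steps:
\begin{enumerate}
\item It forces $\xi^2=1$ using a long-root toral word that inverts the root, so $\xi=1$.
\item It averages to $C_1=(C+Q\overline CQ^{-1})/2$.
\item It invokes \cite[Proposition~3.4]{BuninaMakadiya2D} to get $d\in J_A\cap A^{\theta_A}$ with $C_1(I_{2\ell}+dQ)$ a twisting-fixed proper similitude.
\end{enumerate}
Since $I+dQ$ centralizes the frame, the transport identities survive. Such a $d$ must balance the two eigenspaces of $Q$, and the needed square root of $(p-q)/(p+q)$ comes from $\det C$.

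Your $\mathrm{(LR4)}$ long-root step also fails as written. Commuting with $H_i$, $i\geq3$, does not split off $V_1\oplus V_2$. For $\ell=4$ the only such element is the short-root $H_3$, which is trivial in the natural representation. For $\ell\geq5$, every $H_i$ with $3\leq i\leq\ell-2$ is $+1$ on $V_1$, $V_2$ and $V_\ell$. The correct involution is $H_1$, which commutes with $X_{[\alpha_1]}$.

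Moreover, the toral involution that inverts $X_{[\alpha_1]}$ is $H_2$, which is $-1$ on $V_3$ and $+1$ on $V'=V_4\oplus\cdots\oplus V_\ell$. On the complement the inverse relation therefore only gives $(Y_cH_2)^2=I$. Before you can conclude that the two diagonal blocks are involutions with identity residue, you must eliminate the blocks $V'\to V_3$ and $V_3\to V'$. The paper does this by commuting with the fixed root matrices at $[\alpha_1]+[\alpha_2]$ and $-[\alpha_2]$.

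In the short-root case, twisting and orthogonality do not force the $2\times2$ block $K$ on $V_\ell$ to be $I$. For instance, $\mathrm{diag}(u,u^{-1})$ with $u\overline u=1$ and $u\equiv1$ survives both. The paper uses commutation with $U_{-[\alpha_{\ell-2}]}(1)$ for the shape, and the inverse relation with $H_{\ell-2}$ to get $K^2=I$. Your first option for $\ell=4$ is exactly the paper's argument: the fixed root elements at $\pm[\alpha_1],\pm[\widetilde\alpha_1]$ have scalar centralizer on $V_1\oplus V_2$, and then $a^2=1$ forces $a=1$.
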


\begin{proof}
We first adapt the compatible representatives of
\cite[Lemma~3.2]{BuninaMakadiya2D} to an arbitrary source.

\smallskip
\noindent\emph{Matrix representatives.}
Suppose that $\Psi$ has coefficient reduction
$(\nu_{\mathrm{res}})_*$, as it does after removal of the residual
conjugator. For a coefficient pair $S$, let $E_{\mathrm{nat}}(S)$
be the elementary group in the natural orthogonal representation,
and let $\pi_S:E_{\mathrm{nat}}(S)\to E(S)$ be its projection.
There is a unique homomorphism
$Y:E_{\mathrm{nat}}(R)\to E_{\mathrm{nat}}(A)$ satisfying
$$
\begin{aligned}
        \pi_A(Y(z))&=\Psi(\pi_R(z)),\\
        q_A(Y(z))&=(\nu_{\mathrm{res}})_{\mathrm{nat},*}(z)
                   \qquad(z\in E_{\mathrm{nat}}(R)),
\end{aligned}
$$
where the right-hand map applies $\nu_{\mathrm{res}}$ to matrix entries.

To obtain $Y(z)$, write $z$ as a product of commutators and lift
the images of their factors through $\pi_A$.
The source group is perfect: each root subgroup is inverted by
a long-root toral element, so division of its parameter by $2$
expresses each generator as a commutator.
The discrepancies between the reductions of the chosen lifts
and the coefficient images are central, and hence disappear in
these commutators. This gives the required residue.
Two resulting representatives differ by a scalar $\varepsilon$
with $\varepsilon^2=1$ and $\varepsilon\in1+J_A$, so they are
equal. Uniqueness makes $Y$ a homomorphism.

Thus every source matrix identity remains exact, over fixed
and split targets alike. After a change of basis $C$, the matrices
$C^{-1}Y(z)C$ preserve $C^tS_{2\ell}C$; no normalization of the
whole target group is assumed.

\smallskip
\noindent\emph{Frame normalization.}
For a fixed target, lift $g_{\mathrm{res}}$ to $u\in G(A)$ by
\cite[Subsection~3.1]{BuninaMakadiya2D} and put
$\Psi_1=i_{u^{-1}}\Psi$. Use its homomorphic representatives
$Y_1$ just constructed. The toral, Weyl, and root normalizations
in the local Sections~4--6 use only their prescribed residues
and exact source relations, and therefore apply to $Y_1$.
This includes the rank-four support calculation in Section~6.
They give a matrix $C\equiv I_{2\ell}\pmod{M_{2\ell}(J_A)}$
whose projective conjugation transports the complete frame.

To make the final frame identities exact, choose for each root
a long-root toral word $h$ inverting its root subgroup.
Its projective image is fixed. The exact relation
$hxh^{-1}=x^{-1}$ then forces the remaining scalar $\xi$ on
the image of $x=x_\lambda(1)$ to satisfy $\xi^2=1$.
Since $\xi\in1+J_A$, it is one.
Thus $Y_1(U_\lambda(1))C=CU_\lambda(1)$ for every $\lambda$.
Both matrix families are twisting-fixed, so simultaneous averaging
by the local Lemma~2.1 replaces $C$ by
$$
        C_1=\frac{C+Q\overline C Q^{-1}}2,
        \qquad
        C_1\equiv I_{2\ell}\pmod{M_{2\ell}(J_A)},
$$
with the same exact intertwining identities and
$Q\overline{C_1}Q^{-1}=C_1$.

By \cite[Proposition~3.4]{BuninaMakadiya2D}, there is
$d\in J_A\cap A^{\theta_A}$ such that
$C_2=C_1(I_{2\ell}+dQ)$ is a twisting-fixed proper similitude:
$$
        C_2^tS_{2\ell}C_2=\lambda S_{2\ell},\qquad
        \det C_2=\lambda^\ell.
$$
The correcting factor centralizes the frame by the local
Lemma~3.3. Thus $g_A=u[C_2]\in G(A)$ has the required transport
identities and reduction. The correction uses only conjugates of
the frame, not normalization of the entire target group.
Lemma~\ref{lem:fixed-to-split-frame-normalization} gives the split
case, including $\ell=4$.

For recognition, assume that $\Psi$ fixes the frame and has
coefficient reduction. Its homomorphic lift $Y$ then fixes the
frame and its Weyl and toral words as matrices, by uniqueness of
the representatives. Every root image satisfies
$$
        Y^tS_{2\ell}Y=S_{2\ell},\qquad
        Q\overline YQ^{-1}=Y.
$$
We apply the calculations of the local Subsections~7.1--7.2
over the entire local model $A$.

\smallskip
\noindent\emph{A long root.}
Let $Y$ represent $\Psi(x_{[\alpha_1]}(t))$ for
$t\in R^{\theta_R}$. Commutation with $H_1$ separates
$V_1\oplus V_2$ from its complement. Write
$V'=V_4\oplus\cdots\oplus V_\ell$.
Commutation with the fixed root matrices at
$[\alpha_1]+[\alpha_2]$ and $-[\alpha_2]$ eliminates the blocks
$F:V'\to V_3$ and $G:V_3\to V'$. Indeed, the equations are
$$
        P_1F=P_2F=0,\qquad GP_1=GP_2=0,
        \qquad
        P_1=\begin{pmatrix}1&0\\0&0\end{pmatrix},\quad
        P_2=\begin{pmatrix}0&0\\0&1\end{pmatrix}.
$$
Since $P_1+P_2=I_2$, both blocks vanish.
The inverse relation $H_2YH_2^{-1}=Y^{-1}$ makes the blocks on
$V_3$ and $V'$ involutions with identity reduction, so both are
identities. This support argument works for every $\ell\geq4$.
The same commutations leave
$$
        Y=\begin{pmatrix}
        1&a&b&0\\
        0&1&0&0\\
        0&0&1&0\\
        0&c&d&1
        \end{pmatrix}\oplus I_{2\ell-4}.
$$
The inverse relation gives $2a=2d=0$, orthogonality gives $c=-b$,
and twisting gives $b=\overline b$. Thus
$Y=U_{[\alpha_1]}(b)$ with $b\in A^{\theta_A}$.

\smallskip
\noindent\emph{A short root.}
Let $Y$ represent $\Psi(x_{[\alpha_{\ell-1}]}(t))$, $t\in R$.
For $\ell\geq5$, the involutions $H_i$, $1\leq i\leq\ell-3$,
separate $V_1,\ldots,V_{\ell-2}$ from the terminal block
$V_{\ell-1}\oplus V_\ell$. The relation
$H_{\ell-2}YH_{\ell-2}^{-1}=Y^{-1}$ makes each complementary
block an involution with identity reduction, hence the identity.
For $\ell=4$, $H_1$ gives two four-dimensional blocks.
The fixed root elements at
$$
        \pm[\alpha_1],\qquad\pm[\widetilde\alpha_1],\qquad
        [\widetilde\alpha_1]=[\alpha_1]+2[\alpha_2]+2[\alpha_3]
$$
commute with the source short-root subgroup and have scalar
simultaneous centralizer on $V_1\oplus V_2$, as in the local
Section~6. The complementary block is therefore $aI_4$, with
$a\in1+J_A$; the inverse relation gives $a^2=1$, hence $a=1$.

In every rank, commutation with $U_{-[\alpha_{\ell-2}]}(1)$
now gives
$$
        Y=I_{2\ell-4}\oplus Z,\qquad
        Z=\begin{pmatrix}
        1&a&b&c\\
        0&1&0&0\\
        0&d&u&v\\
        0&e&w&z
        \end{pmatrix},\qquad
        K=\begin{pmatrix}u&v\\w&z\end{pmatrix}\equiv I_2\pmod{J_A}.
$$
The inverse relation is
$Z\operatorname{diag}(-I_2,I_2)Z=\operatorname{diag}(-I_2,I_2)$.
Its lower-right block gives $K^2=I_2$, so $K=I_2$.
Orthogonality then gives $e=-b$, $d=-c$, and $a=-bc$;
twisting gives $c=\overline b$. Hence
$$
        Z=\begin{pmatrix}
        1&-b\overline b&b&\overline b\\
        0&1&0&0\\
        0&-\overline b&1&0\\
        0&-b&0&1
        \end{pmatrix},
$$
which is $U_{[\alpha_{\ell-1}]}(b)$ on the terminal block.

Fixed Weyl conjugation recognizes every root of the same length.
On a split model, a long parameter is diagonal, whereas a short
parameter $b=(b_0,b_1)$ retains two independent components.
The short-root matrix projects to the product of the two absolute
root elements with parameters $b_0,b_1$. Thus recognition holds
for both kinds of local model.
\end{proof}

We retain the following explicit rank-four formulation for use
after local graph removal in
Section~\ref{sec:graph-automorphisms-2D4}.

\begin{lemma}[Relative local rigidity in type ${}^2D_4$]
\label{lem:relative-local-rigidity-2D4}
Let $\Psi:E(R)\to E(A)$ in type ${}^2D_4$ satisfy the ordinary
residual hypothesis at the start of this section, with arbitrary
source and a local model as target.
There is a unique frame transporter $g_A\in G(A)$ lifting
$g_{\mathrm{res}}$. The normalized map $\Psi^0=i_{g_A^{-1}}\Psi$
has reduction $(\nu_{\mathrm{res}})_*$ and satisfies
$$
        \Psi^0(X_\lambda(R))\subseteq X_\lambda(A)
        \qquad(\lambda\in\Phi_\rho).
$$
\end{lemma}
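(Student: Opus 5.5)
This lemma is the case $\ell=4$ of Proposition~\ref{prop:relative-local-rigidity-orthogonal}, stated separately so that Section~\ref{sec:graph-automorphisms-2D4} can invoke it after local graph removal. The plan is to check that every step of that proof uses only the ordinary residual hypothesis. None of the steps uses $\ell\geq5$, and none uses residual standardness $\mathrm{(LR1)}$. We proceed in three steps: construct a transporter, prove its uniqueness and residual property, and then recognize root subgroups.

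\emph{Existence of the transporter.} Assume first that $A$ is fixed. Lift $g_{\mathrm{res}}$ to $u\in G(A)$ and set $\Psi_1=i_{u^{-1}}\Psi$; its reduction is $(\nu_{\mathrm{res}})_*$. Then build the homomorphic natural representatives $Y_1$ as in the proof of Proposition~\ref{prop:relative-local-rigidity-orthogonal}. These use perfectness of the source, through long-root toral inversion and division by $2$, together with uniqueness of square-one scalars in $1+J_A$. Next apply the toral, Weyl and root normalizations, including the rank-four support calculation. This yields $C\equiv I_8$ modulo $J_A$ intertwining the frame projectively. The scalar $\xi$ is removed by the inverting toral word. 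We then average to $C_1=(C+Q\overline CQ^{-1})/2$ and correct by $I_8+dQ$, which centralizes the frame, to obtain a proper similitude $C_2$. Finally set $g_A=u[C_2]$. For split $A$, apply Lemma~\ref{lem:fixed-to-split-frame-normalization} with $m=2$. Its hypotheses hold because the folded source $E_\rho(B_3,R_0)$ is perfect with $2$ invertible, and equivariance of $\nu_{\mathrm{res}}$ gives its diagonal restriction to $R_0$.

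\emph{Uniqueness and the residual property.} Uniqueness of $g_A$ follows from frame rigidity (Proposition~\ref{prop:rigidity-type-2D}). The reduction of $g_A$ and $g_{\mathrm{res}}$ transport the same residual frame, so they coincide, again by frame rigidity. Hence $\Psi^0=i_{g_A^{-1}}\Psi$ has reduction $i_{g_{\mathrm{res}}^{-1}}\circ i_{g_{\mathrm{res}}}\circ(\nu_{\mathrm{res}})_*=(\nu_{\mathrm{res}})_*$, exactly as in Lemma~\ref{lem:descent-local-transporter}.

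\emph{Recognition.} Now $\Psi^0$ fixes the frame and has coefficient reduction. We run the long-root and short-root calculations with $\ell=4$.
\begin{itemize}
\item For the long root, $H_1$ separates $V_1\oplus V_2$ from its complement. Commutation eliminates the blocks between $V_3$ and $V'=V_4$. The inverse relation makes the remaining blocks identities, which leaves $U_{[\alpha_1]}(b)$ with $b$ fixed.
\item For the short root, $H_1$ gives two four-dimensional blocks. The fixed elements at $\pm[\alpha_1]$ and $\pm[\widetilde\alpha_1]$ have scalar simultaneous centralizer, so the complementary block is $aI_4$ with $a=1$. The terminal block computation $K^2=I_2\Rightarrow K=I_2$, together with orthogonality and twisting, then yields $U_{[\alpha_3]}(b)$.
\end{itemize}
Fixed Weyl conjugation extends recognition to all roots of each length. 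The split case is read off componentwise as before.

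The main obstacle is the rank-four short-root block. There the diagonal involutions alone do not isolate the terminal block, so the centralizer argument of \cite[Section~6]{BuninaMakadiya2D} must be verified for an arbitrary, possibly non-injective source. It uses only the fixed frame conjugates and exact relations, so I expect it to go through unchanged.
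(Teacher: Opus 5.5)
Your proposal is correct and takes the paper's route: the lemma is the case $\ell=4$ of Proposition~\ref{prop:relative-local-rigidity-orthogonal}, whose proof is already written for all $\ell\geq4$ under the ordinary residual hypothesis, including the rank-four short-root support step and the split case via Lemma~\ref{lem:fixed-to-split-frame-normalization}, with uniqueness of $g_A$ coming from frame rigidity. Your step-by-step re-verification of that proof is sound but more than the paper needs; the paper's proof simply cites the proposition and notes that no global graph datum enters.
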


\begin{proof}
Apply the proposition with $\ell=4$ and use frame rigidity for
uniqueness. No global graph datum is used: the residual action
is already ordinary.
\end{proof}

\subsection[The type 2E6]{The type \texorpdfstring{${}^2E_6$}{2E6}}
\label{subsec:relative-local-rigidity-2E6}

Here the relative results of the local paper already have the required
arbitrary-source hypotheses. Use the faithful $78$-dimensional
adjoint module, bracket, and graph matrix $Q$ of
Subsection~\ref{subsec:rigidity-type-2E6}, with the coordinates of
\cite[Subsection~1.1]{BuninaMakadiya2E6}.
Thus $\pi_\pm=(I_{78}\pm Q)/2$, $L_A^\pm=\pi_\pm L_A$,
and the twisting is $M\mapsto Q\overline M Q^{-1}$.
The relative system is $F_4$, with long parameters fixed and
short parameters in the whole ring.

\begin{proposition}
\label{prop:relative-local-rigidity-2E6}
For ${}^2E_6$, relative frame normalization $\mathrm{(LR2)}$ and
root-subgroup recognition $\mathrm{(LR4)}$ hold over every local model.
\end{proposition}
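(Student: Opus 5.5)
The plan follows the pattern of Propositions~\ref{prop:relative-local-rigidity-unitary-stable} and~\ref{prop:relative-local-rigidity-orthogonal}. The type-specific matrix work is delegated to \cite[Propositions~6.2 and~6.7]{BuninaMakadiya2E6}; only their fit with the hypotheses of $\mathrm{(LR2)}$ and $\mathrm{(LR4)}$ has to be checked. Write $\Psi:E(R)\to E(A)$ with $(q_A)_*\Psi=i_{g_{\mathrm{res}}}\circ(\nu_{\mathrm{res}})_*$.

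\emph{Fixed targets.} First lift $g_{\mathrm{res}}$ to $u\in G(A)$ by Bruhat lifting. Root parameters and torus units lift, and fixed coordinates are obtained by averaging, since $2\in A^\times$. Replace $\Psi$ by $\Psi_1=i_{u^{-1}}\Psi$, whose reduction is $(\nu_{\mathrm{res}})_*$. Because the adjoint $78$-dimensional representation is faithful on $G_{\mathrm{ad}}(E_6,\cdot)$, no lifting of matrix representatives is needed, unlike the classical cases. Every source relation is therefore an exact matrix identity in $\operatorname{GL}(L_A)$, preserving the bracket and commuting with $Q\overline{(\ \cdot\ )}Q^{-1}$.

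\cite[Proposition~6.2]{BuninaMakadiya2E6} then gives $C\equiv I_{78}\pmod{J_A}$ with $C x_\lambda(1) C^{-1}=\Psi_1(x_\lambda(1))$ for all $\lambda\in\Phi_\rho$. I would check that it uses only the constant frame relations, the toral and Weyl words in the frame, and the prescribed residues, and not injectivity, surjectivity or locality of the source. If $C$ is only a linear intertwiner, it can be averaged to $(C+Q\overline CQ^{-1})/2$. It can then be corrected by an element of the centralizer $A\pi_+\oplus A\pi_-$ of Proposition~\ref{prop:rigidity-type-2E6} with residue $1$, so that it preserves the bracket and hence lies in $G_{\mathrm{ad}}(E_6,A)$. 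I expect this correction to be the main obstacle: one must see that the bracket-preserving condition can be met inside $1+J_A$ by an element of $A\pi_+\oplus A\pi_-$. I would first try to extract it directly from the proof of \cite[Proposition~6.2]{BuninaMakadiya2E6}, which already produces an adjoint element. Frame rigidity $\mathrm{(LR3)}$ together with twisting-invariance of both frames then shows $g_A=uC\in G(A)$, with reduction $g_{\mathrm{res}}$.

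\emph{Split targets.} Apply Lemma~\ref{lem:fixed-to-split-frame-normalization}. The folded source $E_0$ has type $F_4$, of rank at least three, and is perfect with $2$ invertible. Equivariance of $\nu_{\mathrm{res}}$ makes its restriction to $R_0$ diagonal. Hence fixed-target $\mathrm{(LR2)}$ gives split-target $\mathrm{(LR2)}$.

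\emph{Recognition.} Assume $\Psi$ fixes the frame and has reduction $(\nu_{\mathrm{res}})_*$, so all toral and Weyl words in the frame are fixed. \cite[Proposition~6.7]{BuninaMakadiya2E6} treats one long root ($A_1$) and one short root ($A_1^2$) by commutation with fixed root elements, inverse relations with $h(-1)$-elements, and the block argument $U^2=I$, $U\equiv I\Rightarrow U=I$ on complementary summands. It shows that the image of a long-root element is $x_\lambda(b)$ with $b\in A^{\theta_A}$, and that the image of a short-root element is $x_\lambda(b)$ with $b\in A$. I would run this over the whole pair $(A,\theta_A)$, so that for $A=B\times B$ the twisting equation relates the components and leaves two independent short parameters, as required by Lemma~\ref{lem:split-local-model-identification}. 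Conjugation by fixed Weyl representatives then gives recognition for every relative root of each length, which is $\mathrm{(LR4)}$.
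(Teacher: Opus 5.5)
Your three-step outline matches the paper's. Fixed targets come from the local ${}^2E_6$ paper. Split targets follow from Lemma~\ref{lem:fixed-to-split-frame-normalization}, using the perfect $F_4$ source and the diagonal residual restriction. Recognition comes from the local paper over the whole pair, including $A=B\times B$.

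However, you have interchanged the two citations, and with the roles you assign them the fixed-target normalization is unsupported. In the paper, \cite[Proposition~6.7]{BuninaMakadiya2E6} is the relative normalization theorem. For arbitrary source $R$ and fixed local target $A$ it gives $g_A\in G(A)$ and a unital equivariant $f:R\to A$ with $\Psi=i_{g_A}\circ f_*$, $(q_A)_*(g_A)=g_{\mathrm{res}}$ and $q_Af=\nu_{\mathrm{res}}$. Since $f_*$ fixes the frame, $g_A$ is already the required transporter in $G(A)$. The Bruhat pre-lift, the averaging and the bracket correction are therefore unnecessary. By contrast, \cite[Proposition~6.2]{BuninaMakadiya2E6} concerns frame-fixing homomorphisms $E(R)\to G(A)$ over arbitrary rings with involution and $2$ invertible, split targets included. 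It gives $\Psi(x_\lambda(t))=x_\lambda(f(t))$ directly. It cannot produce a transporter for a map that moves the frame. It does, however, yield $\mathrm{(LR4)}$ outright, even without the residual hypothesis, so no long/short-root block argument needs to be rerun.

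The fallback you flag as the main obstacle would not close the gap by itself. Every linear intertwiner has the form $C(a\pi_++b\pi_-)$. Suppose $\beta(x,y)=C^{-1}\{Cx,Cy\}$ differs from $\{\ ,\ \}$ only by factors $c_1,c_2,c_3\in1+J_A$ on $L^+\times L^+$, $L^+\times L^-$ and $L^-\times L^-$. Rescaling replaces these factors by $c_1a$, $c_2a$ and $c_3b^2/a$. You would therefore need a square root of $(c_1c_3)^{-1}$ in $1+J_A$, which a non-henselian local ring such as $\mathbb Z_{(p)}$ need not contain.

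Adjoining that square root and descending by frame rigidity does not help either. The resulting bracket-preserving intertwiner can reduce to $Q$ on one of the two residue factors. Although $Q$ preserves the bracket and centralizes the frame, it lies outside $G_{\mathrm{ad}}(E_6,\cdot)$ by Proposition~\ref{prop:rigidity-type-2E6}. Even with residue $1$, passing from bracket preservation to membership in $G_{\mathrm{ad}}(E_6,A)$ requires identifying the automorphism scheme of $L_A$. The existence of an adjoint transporter has to come from the cited normalization theorem.
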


\begin{proof}
For a fixed local target $A$, apply
\cite[Proposition~6.7]{BuninaMakadiya2E6} with source $R$ and
target $A$. It gives $g_A\in G(A)$ and a unital equivariant ring
homomorphism $f:R\to A$ satisfying
$$
        \Psi=i_{g_A}\circ f_*,\qquad
        (q_A)_*(g_A)=g_{\mathrm{res}},\qquad
        q_Af=\nu_{\mathrm{res}}.
$$
The element $g_A$ transports the frame because $f_*$ fixes it.
Split-target normalization follows from
Lemma~\ref{lem:fixed-to-split-frame-normalization}.

For recognition, use
\cite[Proposition~6.2]{BuninaMakadiya2E6}. It applies to every
frame-fixing homomorphism $E(R)\to G(A)$ over arbitrary rings
with involution and $2$ invertible, and gives a unique unital
equivariant ring homomorphism $f:R\to A$ with
$$
        \Psi(x_\lambda(t))=x_\lambda(f(t))
        \qquad(\lambda\in\Phi_\rho,\ t\in R_\lambda).
$$
This proves $\mathrm{(LR4)}$, in fact without its residual
hypothesis. The result includes $A=B\times B$ with the exchange
involution: a short parameter $(b_0,b_1)$ projects to
$x_\beta(b_0)x_{\rho\beta}(b_1)$, with no equality imposed on its
two components. Thus both properties hold over every local model.
\end{proof}

The three type-dependent propositions establish the local inputs
for the ordinary global argument. The rank-four orthogonal result
also supplies the local step used after graph removal in
Section~\ref{sec:graph-automorphisms-2D4}.
\medskip


\section{The exceptional unitary types \texorpdfstring{${}^2A_3$}{2A3} and \texorpdfstring{${}^2A_4$}{2A4}}
\label{sec:exceptional-unitary-types}

We classify the automorphisms in the two unitary ranks excluded from
Section~\ref{sec:relative-local-rigidity}. Throughout,
$n\in\{3,4\}$, $\theta^2=\operatorname{id}_R$, $2\in R^\times$,
and
$$
        E^{(n)}(R)=E_\sigma(A_n,R),\qquad
        G^{(n)}(R)=G_\sigma(A_n,R).
$$
The local normalization of \cite{BuninaMakadiya2ASmall} may leave
an anti-invariant nilpotent parameter. We construct its automorphism,
then recover and glue the parameter and conjugating element.
The additional step uses the image equality inherited from a global
automorphism, not surjectivity onto the local target. Removing these
factors reduces the problem to
Section~\ref{sec:recovery-of-coefficient-automorphism}.

\subsection{The exceptional parameter module}
\label{subsec:global-exceptional-parameters}

Put
$$
\mathfrak b_\theta(R)
 :=\langle 3,\ r-\overline r,\ r^3-r\mid r\in R\rangle_R,
\qquad
\mathcal E_{\mathrm{glob}}(R,\theta)
 :=R^-_\theta\cap\operatorname{Ann}_R\mathfrak b_\theta(R).
$$
The ideal $\mathfrak b_\theta(R)$ is invariant; its quotient is the
largest one satisfying $\theta=1$, $3=0$, and $r^3=r$.
The parameter space is an ideal of $R$, since
$\overline{a\delta}=-\overline a\delta=-a\delta$ for every $a\in R$.
Its scalar action factors through $R/\mathfrak b_\theta(R)$.

\begin{lemma}
\label{lem:global-exceptional-basic-relations}
Every $\delta\in\mathcal E_{\mathrm{glob}}(R,\theta)$ satisfies
$$
        3\delta=\delta^2=0,\qquad \delta R^-_\theta=0.
$$
In particular, $\mathcal E_{\mathrm{glob}}(R,\theta)^2=0$.
Moreover, $\delta(u^2-1)=0$ for every $u\in(R^\theta)^\times$.
\end{lemma}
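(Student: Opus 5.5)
The four relations follow directly from the generators of $\mathfrak b_\theta(R)$ together with the anti-invariance $\overline\delta=-\delta$. Fix $\delta\in\mathcal E_{\mathrm{glob}}(R,\theta)$, so that $\overline\delta=-\delta$ and $\delta b=0$ for every $b\in\mathfrak b_\theta(R)$. Since $3\in\mathfrak b_\theta(R)$, we get $3\delta=0$ at once.

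For $\delta R^-_\theta=0$, take $z\in R^-_\theta$. Then $z-\overline z=2z$ lies in $\mathfrak b_\theta(R)$, so $2\delta z=0$. Since $2$ is invertible, $\delta z=0$. The relation $\delta^2=0$ is the special case $z=\delta\in R^-_\theta$. The same argument gives $\delta\delta'=0$ for any two parameters $\delta,\delta'$, because $\delta'\in R^-_\theta$; hence $\mathcal E_{\mathrm{glob}}(R,\theta)^2=0$.

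For the unit relation, let $u\in(R^\theta)^\times$. The generator $u^3-u=u(u^2-1)$ lies in $\mathfrak b_\theta(R)$, so $\delta u(u^2-1)=0$. Multiplying by $u^{-1}$ gives $\delta(u^2-1)=0$. The fixedness of $u$ is not actually needed here; invertibility alone suffices, but the statement is only required for fixed units.

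Every step is a one-line manipulation, so there is no real obstacle. The only point needing care is to use the generator $r-\overline r$ with $r=z$ anti-invariant, and then invert $2$, rather than trying to deduce $\delta z=0$ from the cubic relation.
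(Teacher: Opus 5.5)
Your proposal is correct and follows the paper's argument essentially verbatim: $3\delta=0$ comes from the generator $3$, and $\delta R^-_\theta=0$ comes from $\delta(a-\overline a)=2\delta a=0$ with $2$ invertible. Since parameters are anti-invariant, this yields $\delta^2=0$ and $\mathcal E_{\mathrm{glob}}(R,\theta)^2=0$, and the unit relation follows by cancelling $u$ in $\delta(u^3-u)=0$; your remark that fixedness of $u$ is not needed is also right.
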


\begin{proof}
By definition, $3\delta=0$ and
$2\delta a=\delta(a-\overline a)=0$ for $a\in R^-_\theta$.
Thus $\delta R^-_\theta=0$, including products of any two exceptional
parameters. For a fixed unit $u$, multiply $\delta(u^3-u)=0$
by $u^{-1}$.
\end{proof}

\begin{lemma}
\label{lem:exceptional-support-localization}
For every $\theta$-invariant multiplicative set $\Sigma$,
$$
        \mathfrak b_\theta(\Sigma^{-1}R)
        =\Sigma^{-1}\mathfrak b_\theta(R).
$$
In particular, localization carries exceptional parameters to
exceptional parameters.
\end{lemma}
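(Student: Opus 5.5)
We must show $\mathfrak b_\theta(\Sigma^{-1}R)=\Sigma^{-1}\mathfrak b_\theta(R)$ as ideals of $\Sigma^{-1}R$, with $\theta$ acting by $\theta(r/s)=\theta(r)/\theta(s)$. One inclusion is immediate: each generator $3$, $r-\overline r$, $r^3-r$ of $\mathfrak b_\theta(R)$ maps to the generator of the same shape for $r/1$. Hence $\Sigma^{-1}\mathfrak b_\theta(R)\subseteq\mathfrak b_\theta(\Sigma^{-1}R)$.

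For the reverse inclusion it suffices to show that every generator of $\mathfrak b_\theta(\Sigma^{-1}R)$ lies in $\Sigma^{-1}\mathfrak b_\theta(R)$. The element $3$ is clear. The plan is to reduce to fixed denominators, as in Subsection~\ref{subsec:local-models}. Given $r/s$, replace $s$ by the fixed element $s'=s\overline s\in\Sigma\cap R_0$, writing $r/s=r\overline s/s'$; this uses invariance of $\Sigma$. For a fixed denominator $s$ one has
$$
        \frac rs-\overline{\Bigl(\frac rs\Bigr)}=\frac{r-\overline r}{s},
$$
which lies in $\Sigma^{-1}\mathfrak b_\theta(R)$. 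For the cubic generator, write
$$
        \Bigl(\frac rs\Bigr)^3-\frac rs=\frac{r^3-rs^2}{s^3}.
$$
The goal is then to show $r^3-rs^2\in\mathfrak b_\theta(R)$ modulo nothing worse. Work in $\bar R=R/\mathfrak b_\theta(R)$, where $3=0$, $\theta=\mathrm{id}$, and every element satisfies $x^3=x$. Polarizing $x^3=x$ with $x=a+b$, using $3=0$, gives no extra relations, but the relation itself already suffices: $\bar R$ is a reduced $\mathbb F_3$-algebra in which every element satisfies $x^3=x$. In such a ring, $s^2$ is an idempotent for every $s$, and every $s$ that becomes a unit satisfies $s^2=1$.

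The main obstacle is that $s$ need not be a unit in $\bar R$, so one cannot conclude $r^3-rs^2\in\mathfrak b_\theta(R)$ directly. We avoid this by localizing: in $\Sigma^{-1}\bar R=\Sigma^{-1}R/\Sigma^{-1}\mathfrak b_\theta(R)$ (exactness of localization), the image of $s$ is a unit. This quotient is a localization of a ring satisfying $\theta=\mathrm{id}$, $3=0$, and $x^3=x$. Each of these identities persists under localization: for $\theta$, because $\theta$ fixes $\bar r$ and $\bar s$; for $x^3=x$, because $(\bar r/\bar s)^3=\bar r\bar s^{-3}=\bar r\bar s^{-1}$ using $\bar s^2=1$ there (from $\bar s^3=\bar s$ and $\bar s$ a unit). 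So $\Sigma^{-1}R/\Sigma^{-1}\mathfrak b_\theta(R)$ satisfies all defining relations. By the universal description of $\mathfrak b_\theta(\Sigma^{-1}R)$, stated just before Lemma~\ref{lem:global-exceptional-basic-relations} as the smallest ideal whose quotient satisfies these relations, this gives $\mathfrak b_\theta(\Sigma^{-1}R)\subseteq\Sigma^{-1}\mathfrak b_\theta(R)$. This argument makes the explicit generator computation above unnecessary except as motivation.

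For the final assertion, let $\delta\in\mathcal E_{\mathrm{glob}}(R,\theta)$. Then $\delta/1$ is anti-invariant, and it annihilates $\Sigma^{-1}\mathfrak b_\theta(R)$, since $\delta\mathfrak b_\theta(R)=0$. By the equality just proved, $\delta/1$ annihilates $\mathfrak b_\theta(\Sigma^{-1}R)$, so $\delta/1\in\mathcal E_{\mathrm{glob}}(\Sigma^{-1}R,\theta)$.
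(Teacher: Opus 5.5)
Your proof is correct and follows the paper's argument. You get the easy inclusion from the generators, then show that $\Sigma^{-1}\bigl(R/\mathfrak b_\theta(R)\bigr)$ still satisfies $\theta=1$, $3=0$, $x^3=x$, because inverted elements satisfy $s^2=1$, and you finish by localizing the annihilator identity. As you note yourself, the preliminary fixed-denominator computation is unnecessary.
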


\begin{proof}
The generators give $\Sigma^{-1}\mathfrak b_\theta(R)$ contained
in the left-hand ideal. Conversely, the defining identities of
$R/\mathfrak b_\theta(R)$ survive localization: an inverted element
satisfies $s^3=s$, hence $s^2=1$, so $(c/s)^3=c/s$.
Thus every generator on the left vanishes modulo the ideal on the
right. Localizing the annihilator identities proves the last assertion.
\end{proof}

\begin{proposition}
\label{prop:exceptional-support-local-model}
For a fixed or split local model $(A,\theta)$, with
$J_A:=\operatorname{Jac}(A)$,
$$
\mathfrak b_\theta(A)=
\begin{cases}
J_A,&A\text{ fixed and }A/J_A\simeq\mathbb F_3,\\
A,&\text{otherwise}.
\end{cases}
$$
Consequently, $\mathcal E_{\mathrm{glob}}(A,\theta)$ is
$\operatorname{Ann}_A(J_A)\cap A^-_\theta$ in the first case
and zero otherwise. This agrees with the local exceptional
parameter space of \cite[Lemma~3.1]{BuninaMakadiya2ASmall}.
\end{proposition}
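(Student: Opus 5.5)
The plan is to compute $\mathfrak b_\theta(A)$ by asking when it is proper. Since it is an invariant ideal, $\mathfrak b_\theta(A)\ne A$ exactly when some maximal ideal contains it. In a fixed model it then lies in $J_A$; in a split model it lies in some factor's maximal ideal, and invariance moves it into all of them, so again into $J_A$. The proof therefore reduces to the residue algebra $A_{\mathrm{res}}=A/J_A$. The quotient $A/\mathfrak b_\theta(A)$ is the largest quotient with $\theta=1$, $3=0$ and $r^3=r$. The image of $\mathfrak b_\theta(A)$ in $A_{\mathrm{res}}$ is $\mathfrak b_\theta(A_{\mathrm{res}})$, because the generators are compatible with the equivariant surjection. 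Hence $\mathfrak b_\theta(A)\subseteq J_A$ holds if and only if $\mathfrak b_\theta(A_{\mathrm{res}})=0$.

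First treat the residue algebra. In the split case, $A_{\mathrm{res}}=k^m$ with $\theta$ permuting the factors. Then $r-\overline r$ for $r=(1,0,\dots)$ is a unit combination, so $\mathfrak b_\theta=A_{\mathrm{res}}$. In the fixed case, $A_{\mathrm{res}}$ is a field $K$, and $\mathfrak b_\theta(K)=0$ forces three things: $\theta=\mathrm{id}$, $\operatorname{char}K=3$, and every element a root of $x^3-x$. These together give $K=\mathbb F_3$. Conversely, if $K=\mathbb F_3$, then $\theta$ is trivial, since $\mathbb F_3$ has no nontrivial automorphism, and all three kinds of generators vanish. Thus $\mathfrak b_\theta(A)=A$ except in the fixed case with residue field $\mathbb F_3$, where $\mathfrak b_\theta(A)\subseteq J_A$.

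The main step is the reverse inclusion $J_A\subseteq\mathfrak b_\theta(A)$ in the exceptional case. I would show this using the generators $r^3-r$. For $j\in J_A$, put $r=1+j$, which is a unit. Then
$$
r^3-r=r(r-1)(r+1)=(1+j)\,j\,(2+j).
$$
Here $1+j$ is a unit, and $2+j$ is a unit because $2\in A^\times$ and $j\in J_A$. Hence $j\in\mathfrak b_\theta(A)$, and $\mathfrak b_\theta(A)=J_A$. This is the one step requiring an idea; the rest is bookkeeping.

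The consequence for the parameter module follows from its definition $\mathcal E_{\mathrm{glob}}=A^-_\theta\cap\operatorname{Ann}_A\mathfrak b_\theta(A)$. In the exceptional case it equals $A^-_\theta\cap\operatorname{Ann}_A(J_A)$. In every other case $\operatorname{Ann}_A(A)=0$, so the module is zero. Agreement with \cite[Lemma~3.1]{BuninaMakadiya2ASmall} is then a direct comparison: that lemma describes the local parameter space as the anti-invariant elements annihilated by the maximal ideal when the residue field is $\mathbb F_3$, and as zero otherwise.
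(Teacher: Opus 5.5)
Your proposal is correct and follows the paper's proof: reduce to the residue algebra, use $(1,0)-\theta(1,0)$ in the split case and the characterization of $\mathbb F_3$ in the fixed case, and obtain $J_A\subseteq\mathfrak b_\theta(A)$ from the cubic generators. The only cosmetic difference is that the paper takes $r=j$ itself, writing $j^3-j=j(j^2-1)$ with $j^2-1\in A^\times$, whereas your choice $r=1+j$ also uses $2\in A^\times$.
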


\begin{proof}
For a fixed model, every $x\in J_A$ satisfies
$$
        x^3-x=x(x^2-1),\qquad x^2-1\in A^\times,
$$
so $J_A\subseteq\mathfrak b_\theta(A)$.
The image of $\mathfrak b_\theta(A)$ in the residue field is zero
precisely when that field is $\mathbb F_3$: the identities require
$3=0$ and $y^3=y$, and its involution is then automatically trivial.
Otherwise this image contains a unit.
For a split model, $(1,0)-\theta(1,0)=(1,-1)$ is already a unit
in $\mathfrak b_\theta(A)$.
\end{proof}

Likewise, a prime containing $\mathfrak b_\theta(R)$ has quotient
$\mathbb F_3$ and is fixed by $\theta$, and the converse holds.
Thus $V(\mathfrak b_\theta(R))$ is precisely the set of these
maximal ideals and supports every exceptional parameter.

For example, $R=\mathbb F_3[\epsilon]/(\epsilon^2)$ with
$\theta(\epsilon)=-\epsilon$ has
$\mathcal E_{\mathrm{glob}}(R,\theta)=\mathbb F_3\epsilon$.
For $\theta=\operatorname{id}_R$ the module is zero: residue
characteristic three alone is not sufficient.

\subsection{Exceptional root formulas and automorphisms}
\label{subsec:global-exceptional-automorphisms}
\label{subsec:exceptional-coefficient-endomorphism}

We distinguish the polynomial root formulas, which can be written
for $\overline d=-d$, $d^2=3d=0$, from the automorphisms they
define when $d$ is admissible. Use the root numbering, matrices,
and Weyl words of \cite[Section~2]{BuninaMakadiya2ASmall}, with
the conventions of Subsection~\ref{subsec:relative-root-subgroups}.
Square brackets denote projective classes; root and Weyl symbols
in matrix identities denote their standard representatives.
Write $b^\pm=(b\pm\overline b)/2$. In type ${}^2A_3$, put
\begin{equation}
\label{eq:exceptional-A3-x1-general}
X_1^{(3),d}(b):=
\begin{pmatrix}
1&b&-b^+d&-(b^+)^2d\\
0&1&0&-b^+d\\
-b^+d&-(b^+)^2d&1&\overline b\\
0&-b^+d&0&1
\end{pmatrix},
\end{equation}
and, for $s\in R^\theta$,
\begin{equation}
\label{eq:exceptional-A3-x2-general}
X_2^{(3),d}(s):=
\begin{pmatrix}
1+2sd&0&0&0\\
0&1+sd&s-2s^2d&0\\
0&0&1+sd&0\\
0&0&0&1+2sd
\end{pmatrix}.
\end{equation}
In type ${}^2A_4$, put
\begin{equation}
\label{eq:exceptional-A4-x1-general}
X_1^{(4),d}(b):=D(b^+d)x_1(b),\qquad
D(c):=\operatorname{diag}(1+c,1+c,1-c,1+c,1+c),
\end{equation}
and, for $(t,u)\in\mathcal A(R)$ and $p=t^+$, put
\begin{equation}
\label{eq:exceptional-A4-x2-general}
X_2^{(4),d}(t,u):=
\begin{pmatrix}
1&0&-pd&p^2d&0\\
p^2d&1&t&u+p(p^2-1)d&p^2d\\
-pd&0&1&\overline t&-pd\\
0&0&0&1&0\\
0&0&-pd&p^2d&1
\end{pmatrix}.
\end{equation}
The other signed formulas use the fixed Weyl conjugations and their
parameter changes. All matrices are invertible, since they reduce to
standard root matrices modulo the square-zero ideal $Rd$.
Their definitions commute with equivariant base change.

\smallskip
\noindent\emph{The exceptional frame.}
Write $u_1=x_1(1)$ and let $u_2=x_2(1)$ for $n=3$, or
$u_2=x_2(1,1/2)$ for $n=4$.
Set
$$
\begin{aligned}
 X_1^{(n)}(d)&:=X_1^{(n),d}(1),\qquad
 X_2^{(3)}(d):=X_2^{(3),d}(1),\\
 X_2^{(4)}(d)&:=X_2^{(4),d}(1,1/2).
\end{aligned}
$$
Define the remaining positive frame matrices by
$$
\begin{aligned}
 X_3^{(3)}(d)&=w_2X_1^{(3)}(d)w_2^{-1},&
 X_4^{(3)}(d)&=w_1X_2^{(3)}(d)w_1^{-1},\\
 X_3^{(4)}(d)&=w_1X_2^{(4)}(d)w_1^{-1},&
 X_4^{(4)}(d)&=v_2\bigl(X_1^{(4)}(d)\bigr)^2v_2^{-1}.
\end{aligned}
$$
The negative elements are obtained by the standard fixed Weyl words.
Their projective classes form
$\mathcal F_d^{(n)}=(u_\lambda^{(n)}(d))_{\lambda\in\Phi_\rho}$,
with $\mathcal F_0^{(n)}=\mathcal F_\Phi$.
Unlike the arbitrary-parameter formulas, these constant matrices are
polynomials in $d$ over $\mathbb Z[1/2]$: they are defined over every
ring $T$ with $d^2=3d=0$, without an involution on $T$.
The standard Weyl words retain their values on these frames.

\begin{lemma}[Detection of an exceptional root parameter]
\label{lem:detection-exceptional-root-parameter}
Let $(T,\theta)$ be a ring with involution, $2\in T^\times$, and
$d\in T^-_\theta$ with $d^2=3d=0$.
The map $b\mapsto[X_1^{(n),d}(b)]$ is injective.
\end{lemma}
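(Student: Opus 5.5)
We must show that if $[X_1^{(n),d}(b)]=[X_1^{(n),d}(b')]$ in $\mathrm{PGL}_{n+1}(T)$, then $b=b'$. The plan is to pass from projective classes to matrices and then read $b$ off one entry.

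\emph{Step 1: passing to matrices.} Equality of projective classes means that, after a faithfully flat extension $T\to T'$, we have
$$X_1^{(n),d}(b')=\xi X_1^{(n),d}(b),\qquad \xi\in T'^\times.$$
Injectivity of $T\to T'$ lets us check the final equality $b=b'$ over $T'$. This is the same reduction as Lemma~\ref{lem:affine-descent-and-local-detection} applied to the affine quotient. No involution is needed on $T'$, since only matrix entries are compared.

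\emph{Step 2: killing the scalar.} Choose an entry that equals $1$ on both sides.
\begin{itemize}
\item For $n=3$, the $(2,2)$ entry of \eqref{eq:exceptional-A3-x1-general} is $1$, and so is the $(4,4)$ entry.
\item For $n=4$, write $X_1^{(4),d}(b)=D(b^+d)x_1(b)$. The standard root matrix $x_1(b)$ is unipotent with diagonal $1$, so the diagonal of the product is that of $D(b^+d)$, namely $1\pm b^+d$. The entries $(i,i)$ with sign $+$ give $1+b'^+d=\xi(1+b^+d)$. Also $x_1(b)$ has some off-diagonal zero entry $(i,j)$ whose row in $D$ has coefficient $1+c$, and the corresponding product entry vanishes. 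This alone does not fix $\xi$, so we use a different argument. The determinant gives $\xi^5=\det X(b')/\det X(b)$, which is a unit of the form $1+(\text{multiple of }d)$. Instead we compare the $(i,i)$ entry of $D$ with sign $+$ against its sign $-$ entry: $1-b'^+d=\xi(1-b^+d)$. Adding and subtracting these two equations gives $2=2\xi$, hence $\xi=1$, because $2$ is invertible.
\end{itemize}
In the case $n=3$, the $(2,2)$ entry directly gives $\xi=1$.

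\emph{Step 3: reading off $b$.} With $\xi=1$ the matrices are equal. Compare the entry that carries $b$ with no $d$-correction.
\begin{itemize}
\item For $n=3$, this is the $(1,2)$ entry of \eqref{eq:exceptional-A3-x1-general}, which gives $b=b'$ immediately.
\item For $n=4$, from $D(b^+d)x_1(b)=D(b'^+d)x_1(b')$ we first get $b^+d=b'^+d$ from the diagonal. Hence $D(b^+d)=D(b'^+d)$, and this matrix is invertible, so $x_1(b)=x_1(b')$. Injectivity of the ordinary root map (Subsection~\ref{subsec:relative-root-subgroups}) then gives $b=b'$.
\end{itemize}

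The main obstacle is Step 2 for $n=4$, specifically ensuring that the scalar is genuinely forced to be $1$. The relations $d^2=3d=0$ allow units such as $1+d$, which have cube $1$ and could survive a determinant argument. So one must rely on comparing two diagonal entries with opposite $d$-signs together with the invertibility of $2$, rather than on the determinant. If the diagonal of the product is not as claimed because $x_1(b)$ has nonzero diagonal perturbations, the fallback is to use an off-diagonal entry of $x_1(b)$ equal to $1$. Such an entry is scaled by a unit $1+c$ in the row and can be compared in the same way after first equating the diagonal.
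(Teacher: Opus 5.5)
Your proof is correct and takes essentially the paper's route: pass to matrix representatives, which differ by a unit scalar, and read $b$ off explicit entries. The paper never has to find the scalar, because it uses the scale-invariant ratio of the $(1,2)$-entry to the unit $(1,1)$-entry ($1$ or $1+b^+d$), whereas you first force $\xi=1$ (via an entry equal to $1$ for $n=3$, and via the diagonal entries $1\pm b^+d$ with $2\in T^\times$ for $n=4$); both work, and the base change in your Step~1 is harmless but unnecessary, since two matrices over $T$ with the same class in $\mathrm{PGL}_{n+1}(T)$ already differ by a scalar in $T^\times$.
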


\begin{proof}
The $(1,1)$-entry is $1$ or $1+b^+d$, hence a unit.
The projectively invariant ratio of the $(1,2)$-entry to it is $b$.
\end{proof}

\begin{proposition}[Global exceptional automorphisms]
\label{prop:global-exceptional-automorphisms}
Every $\delta\in\mathcal E_{\mathrm{glob}}(R,\theta)$ defines a
unique automorphism $\Theta_\delta^{(n)}$ of $E^{(n)}(R)$ with
$$
        \Theta_\delta^{(n)}(x_\lambda(t))
        =[X_\lambda^{(n),\delta}(t)].
$$
These automorphisms satisfy
$$
 \Theta_0^{(n)}=\operatorname{id},\qquad
 \Theta_\varepsilon^{(n)}\Theta_\delta^{(n)}
       =\Theta_{\delta+\varepsilon}^{(n)},\qquad
 (\Theta_\delta^{(n)})^{-1}=\Theta_{-\delta}^{(n)},
$$
and, for $\nu\in\operatorname{Aut}(R,\theta)$,
$$
        \nu_*\Theta_\delta^{(n)}\nu_*^{-1}
        =\Theta_{\nu(\delta)}^{(n)}.
$$
They commute with orbit localization and induce the identity
on every maximal congruence quotient $E^{(n)}(K_i)$.
\end{proposition}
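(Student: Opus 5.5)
The construction is local: verify everything over the orbit models $A_i$ (and over $R$ via local detection), where the exceptional formulas come from \cite{BuninaMakadiya2ASmall}. I would define $\Theta_\delta^{(n)}$ on generators by the displayed matrices and check that the defining relations of $E^{(n)}(R)$ are preserved. Since $E^{(n)}(R)$ is not given by a convenient presentation, I would not verify relations directly. Instead I would realize $\Theta_\delta^{(n)}$ as a composite of base change and a known local map. Put $T:=R$ itself and consider, for every $i\in\mathcal I$, the image $\delta_i:=\iota_i(\delta)\in\mathcal E_{\mathrm{glob}}(A_i,\theta)$, using Lemma~\ref{lem:exceptional-support-localization}. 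By Proposition~\ref{prop:exceptional-support-local-model}, $\delta_i=0$ unless $A_i$ is fixed with residue field $\mathbb F_3$, and in that case $\delta_i\in\operatorname{Ann}(J_{A_i})\cap A^-_{i,\theta}$. This is exactly the local exceptional parameter of \cite[Lemma~3.1]{BuninaMakadiya2ASmall}, where the corresponding local map $E^{(n)}(A_i)\to E^{(n)}(A_i)$ is shown to be a homomorphism (indeed an automorphism) given by the same matrices.

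The key step is to transfer well-definedness to $R$. Every word $w$ in the root generators $x_\lambda(t)$ that equals $1$ in $E^{(n)}(R)$ maps to a word equal to $1$ in each $E^{(n)}(A_i)$. Its image under the local exceptional map is the product of the matrices $[X_\lambda^{(n),\delta_i}(\iota_i t)]$, hence trivial. Since the matrix formulas commute with equivariant base change, the corresponding product of $[X_\lambda^{(n),\delta}(t)]$ over $R$ is a point of $G_{\mathrm{ad}}(A_n,R)$ that is trivial over every $A_i$. By Lemma~\ref{lem:affine-descent-and-local-detection}(1) it is trivial over $R$. This gives a well-defined homomorphism $\Theta_\delta^{(n)}:E^{(n)}(R)\to G_{\mathrm{ad}}(A_n,R)$. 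Uniqueness is immediate, because it is prescribed on generators.

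Three points then remain:
\begin{enumerate}
\item \emph{The image lies in $E^{(n)}(R)$.} I would show that each $[X_\lambda^{(n),\delta}(t)]$ is a product of genuine elementary elements. The expected route is the local factorization: the exceptional matrix equals $x_\lambda(t)$ times a toral/unipotent correction with entries in $R\delta$. From the explicit formulas, these corrections should be expressible as commutators of elementary elements with parameters in $R\delta$, using $\delta^2=3\delta=0$ and $\delta(u^2-1)=0$ from Lemma~\ref{lem:global-exceptional-basic-relations}.
\item \emph{The additivity $\Theta_\varepsilon\Theta_\delta=\Theta_{\delta+\varepsilon}$.} It suffices to check this on generators. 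There it is a matrix identity after base change to $A_i$, where it follows from the local paper. Alternatively, it can be checked directly: the correction terms are linear in the parameter and $\delta\varepsilon=0$, since $\mathcal E_{\mathrm{glob}}^2=0$. Taking $\varepsilon=-\delta$ gives the inverse, so each $\Theta_\delta$ is bijective.
\item \emph{Compatibility with $\nu_*$, orbit localization, and residues.} Compatibility with $\nu_*$ and with orbit localization holds because the formulas are polynomial in $(t,\delta)$ with $\mathbb Z[1/2]$ coefficients. The residual triviality holds because $q_i(\delta)=0$ in $K_i$: either $\delta\in\operatorname{Ann}(J)$ forces $\delta\in J$ (as $\delta^2=0$), or $\delta_i=0$ outright.
\end{enumerate}

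I expect the main obstacle to be point (1), i.e.\ membership of the images in the elementary subgroup over a nonlocal $R$. Local detection proves equalities but not membership in a non-closed subgroup. If the explicit commutator expression is awkward, my fallback is to observe that perfectness gives $\Theta_\delta(E)=\Theta_\delta([E,E])=[\Theta_\delta(E),\Theta_\delta(E)]$. I would then show directly that the $\Theta_\delta$-images of the rank-two commutator relations generating each root subgroup, as used in Subsection~\ref{subsec:common-parameter-map}, are products of root elements of $E^{(n)}(R)$. This reduces membership to finitely many polynomial identities in $\delta$, which can be verified over $\mathbb Z[1/2][d]/(d^2,3d)$ with an involution sending $d\mapsto -d$.
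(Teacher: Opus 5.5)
Your architecture matches the paper's. You prescribe values on root elements, check every relation over the models $A_i$ using the local automorphism of \cite{BuninaMakadiya2ASmall} and affine local detection, and get the addition law and the inverse $\Theta^{(n)}_{-\delta}$ the same way. Residual triviality comes from $\delta^2=0$ and naturality from the formulas. One small repair: for a split orbit, Lemma~\ref{lem:exceptional-support-localization} only puts $\delta/1$ in $\mathcal E_{\mathrm{glob}}(B_i,\theta)$, and you still need this module to vanish. The paper chooses $r\in R$ with residues $1,0$ at the two maximal ideals, so that $r-\overline r$ is a unit of $B_i$ killed by $\delta$.

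The real gap is step~(1). You flag it, and rightly note that local detection cannot prove membership, but you do not carry it out. Your expected route, explicit elementary factorization, is the right one; the paper closes the step with explicit words valid for admissible $\delta$:
\[
X_1^{(3),\delta}(a)=x_{-3}(a^+\delta)\,x_1(a)\,x_3(a^+\delta),\qquad
X_2^{(3),\delta}(s)=h_1(1+2s\delta)\,x_2(s).
\]
It adds $D(c)=h_1(1+c)\,w_2\bigl(1,\tfrac{1-c}{2}\bigr)\,v_2$ for $c=a^+\delta$, and a five-factor root word for $X_2^{(4),\delta}(t,u)$. Since the standard Weyl words keep their values on the exceptional frames, the remaining signed formulas are then elementary as well.

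Your fallback does not replace this step. The identity $\Theta_\delta(E)=[\Theta_\delta(E),\Theta_\delta(E)]$ gives no membership. A commutator of points of $G_{\mathrm{ad}}(A_n,R)$ not yet known to be elementary need not lie in $E^{(n)}(R)$; normality helps only when one factor is already elementary. So the fallback sends you back to exhibiting words.

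Those words also cannot be certified over $\mathbb Z[1/2][d]/(d^2,3d)$. For instance, the corner entry $-(a^+)^2\delta$ of $X_1^{(3),\delta}(a)$ appears in $x_{-3}(a^+\delta)x_1(a)x_3(a^+\delta)$ as $\pm a\,a^+\delta$. The two agree only because $\delta a^-=0$. The universal check must therefore include the root parameters together with the defining annihilator relations $\delta(r-\overline r)=0$ and $\delta(r^3-r)=0$. This is where the hypothesis $\delta\in\mathcal E_{\mathrm{glob}}(R,\theta)$ is used globally, not merely $\overline\delta=-\delta$ and $\delta^2=3\delta=0$.
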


\begin{proof}
\smallskip
\noindent\emph{Elementary values.}
For an admissible $\delta$ we have
$\delta r=\delta\overline r=\delta r^+$,
$\delta^2=3\delta=0$, and $\delta(r^3-r)=0$.
The elementary identities in
\cite[Subsection~3.2]{BuninaMakadiya2ASmall} give
$$
\begin{aligned}
 X_1^{(3),\delta}(a)
   &=x_{-3}(a^+\delta)x_1(a)x_3(a^+\delta),\\
 X_2^{(3),\delta}(s)
   &=h_1(1+2s\delta)x_2(s),\\
 D(c)&=h_1(1+c)w_2\left(1,\frac{1-c}{2}\right)v_2
                   \qquad(c=a^+\delta),\\
 X_2^{(4),\delta}(t,u)
   &=x_{-3}(-t\delta,0)x_{-1}(t^2\delta)x_2(t,u)
     x_3(-t\delta,0)x_4(t^2\delta).
\end{aligned}
$$
All correcting parameters are admissible, and $1+r\delta$ is
a unit. The torus and Weyl factors are elementary words.
Thus every prescribed value, including the Weyl conjugates,
belongs to $E^{(n)}(R)$.

\smallskip
\noindent\emph{Relations and composition.}
For each local model put $\delta_i=\iota_i(\delta)$.
On a fixed model, localization and
Proposition~\ref{prop:exceptional-support-local-model} make
$\delta_i$ admissible.
For a split orbit, choose $r\in R$ with residues $1,0$ at its
two maximal ideals. Then $r-\overline r$ is a unit in $B_i$;
the equality $\delta(r-\overline r)=0$ gives $\delta_i=0$.
Thus over every $A_i$ the prescribed values agree with the local
automorphism of \cite[Proposition~3.6]{BuninaMakadiya2ASmall},
or with the identity when $\delta_i=0$.

Let a word in relative root elements represent the identity in
$E^{(n)}(R)$. Replacing its factors by the prescribed elementary
values gives the identity over every $A_i$, and hence over $R$
by affine local detection. Thus all group relations are preserved,
and the formulas define an endomorphism satisfying
$$
        (\iota_i)_*\circ\Theta_\delta^{(n)}
        =\Theta_{\delta_i}^{(n)}\circ(\iota_i)_*.
$$
The local addition law and local detection give the global addition
law. The parameters $0$ and $-\delta$ give the identity and inverse,
so the endomorphism is an automorphism.
Since $\delta_i^2=0$, every residual parameter is zero; hence
$\Theta_\delta^{(n)}$ induces the identity on every $E^{(n)}(K_i)$.
Finally, an equivariant ring automorphism carries the formulas for
$\delta$ to those for $\nu(\delta)$.
Root generation proves the conjugation identity and uniqueness.
\end{proof}

\subsection{Orbitwise exceptional normalization}
\label{subsec:relative-exceptional-normal-form}

For $\Psi_i=(\iota_i)_*\varphi$, the preliminary local calculation
gives a candidate frame modulo a finitely generated defect ideal $I$.
The remaining argument follows the sequence
$$
 \text{frame modulo }I
 \ \longrightarrow\ \varepsilon\in\mu(R)
 \ \longrightarrow\ I/J_AI=0
 \ \longrightarrow\ I=0.
$$
The first lemma recovers coefficients for a prescribed frame.
The second proves admissibility and realizes the parameter by a
source root element, which supplies the final defect equation.

\begin{lemma}[Coefficients for a prescribed exceptional frame]
\label{lem:exceptional-one-sided-coefficients}
Let $(S,\theta_S)$ be a ring with involution and $(B,\theta_B)$ a
fixed or split local model, with $2$ invertible. Suppose that
$\chi:E^{(n)}(S)\to E^{(n)}(B)$ has reduction induced by an
equivariant unital $\nu_0:S\to B/\operatorname{Jac}(B)$,
$$
        \chi(u_j)=[X_j^{(n)}(\varepsilon)]\quad(j=1,2),
        \qquad \overline\varepsilon=-\varepsilon,
        \quad \varepsilon^2=3\varepsilon=0,
$$
and carries $w_1,w_2$ to the corresponding target words for $n=3$,
or $w_1,w_4,v_2$ for $n=4$.
Then a unique equivariant unital ring homomorphism $\mu:S\to B$
lifts $\nu_0$ and gives all root images by
$$
        \chi(x_\lambda(t))
        =[X_\lambda^{(n),\varepsilon}(\mu_\lambda(t))].
$$
Moreover,
$$
\begin{aligned}
 \chi(x_1(\xi))&=x_1(\mu(\xi)),\qquad
 \varepsilon\mu(\xi)=0 &&(\xi\in S^-_{\theta_S}),\\
 \varepsilon(\mu(s)^3-\mu(s))&=0 &&(s\in S).
\end{aligned}
$$
Every image element has a representative $W$ with
$W,W^{-1}\in M_{n+1}(P_0)$, where
$P_0:=\mu(S)+\varepsilon\mu(S)$.
No surjectivity is assumed; the annihilator identities hold here
only on $\mu(S)$.
\end{lemma}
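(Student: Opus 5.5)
The plan is to reduce to the frame-fixing coefficient recovery of Theorem~\ref{thm:coefficient-recovery-fixed-frame}, carried out over the local target $B$ rather than over the source. The twist is that the target frame is the exceptional frame $\mathcal F_\varepsilon^{(n)}$ instead of the standard one.

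First I will construct, over $B$ alone, the local exceptional automorphism attached to $\varepsilon$. Since $\varepsilon$ is anti-invariant with $\varepsilon^2=3\varepsilon=0$, the polynomial formulas \eqref{eq:exceptional-A3-x1-general}--\eqref{eq:exceptional-A4-x2-general} make sense over $B$. On a fixed model with residue field $\mathbb F_3$, I would try to use the local automorphism of \cite[Proposition~3.6]{BuninaMakadiya2ASmall}; on the other models $\varepsilon$ should be forced to vanish. That automorphism carries the standard frame and Weyl words to the prescribed ones. The catch is that it is only available for admissible $\varepsilon$, and admissibility, namely $\varepsilon J_B=0$, is not assumed. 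So instead I compose $\chi$ with nothing and argue directly.

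The argument runs root by root, exactly as in the proof of $\mathrm{(LR4)}$ in Proposition~\ref{prop:relative-local-rigidity-unitary-stable}. Take the image $Y$ of $x_1(t)$. It commutes with the images of those frame elements that commute with $x_1$ in the source. It is inverted by the image of the appropriate toral word $h(-1)$, which is a word in $w_1,w_2$ (or $w_1,w_4,v_2$) and hence prescribed. Finally, its reduction is $x_1(\nu_0(t))$. Working with matrix representatives of prescribed residue, exactly as in \cite[Section~3]{BuninaMakadiya2ASmall}, these commutation equations with the $\varepsilon$-frame should force $Y=[X_1^{(n),\varepsilon}(b)]$ for a unique $b\in B$, unique by Lemma~\ref{lem:detection-exceptional-root-parameter}. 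Setting $\mu(t):=b$ gives a map $S\to B$. It is additive because $\chi$ is a homomorphism and the formula $b\mapsto[X_1^{(n),\varepsilon}(b)]$ is additive modulo the relations $\varepsilon^2=3\varepsilon=0$. It lifts $\nu_0$, and $\mu(1)=1$.

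The other root subgroups then follow. The $A_1$-roots of ${}^2A_3$, and the $A_2$-roots of ${}^2A_4$, are obtained from rank-two commutator identities with the images of $x_1$. Conjugation by the fixed Weyl images handles the remaining roots; this is where the clause on $\chi(w_j)$ is used. Multiplicativity and equivariance of $\mu$ are recovered by the commutator and Weyl arguments of Theorem~\ref{thm:coefficient-recovery-fixed-frame}, read in the $\varepsilon$-deformed coordinates.

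The extra identities should come out of comparing two expressions for the same image. For $\xi$ anti-invariant we have $\xi^+=0$, so $X_1^{(n),\varepsilon}(\mu(\xi))$ is the standard $x_1(\mu(\xi))$ exactly when $\varepsilon\mu(\xi)^+=0$. That condition follows from equivariance, since then $\mu(\xi)^+=\mu(\xi^+)=0$. This gives $\chi(x_1(\xi))=x_1(\mu(\xi))$. A second route to the same image, via Weyl conjugation of $x_1(-\xi)$ or an inversion relation, produces an extra factor $1+\mu(\xi)\varepsilon$. Equating the two routes yields $\varepsilon\mu(\xi)=0$. The relation $\varepsilon(\mu(s)^3-\mu(s))=0$ should come the same way from the $A_1$/$A_2$ commutator formula, whose $\varepsilon$-corrections involve $(b^+)^2$ and $p(p^2-1)d$. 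The representative claim, that every image has a representative $W$ with $W,W^{-1}\in M_{n+1}(P_0)$, is then immediate: all root images have entries in $\mu(S)+\varepsilon\mu(S)$, and $P_0$ is a subring.

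The main obstacle will be the matrix support computation for $Y$ without admissibility of $\varepsilon$. The local paper may use $\varepsilon J_B=0$ to discard terms, so I would check each step of \cite[Section~3]{BuninaMakadiya2ASmall} and replace such uses by $\varepsilon^2=3\varepsilon=0$ together with the identities on $\mu(S)$ as they are established.
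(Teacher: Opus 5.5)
There is a genuine gap. You treat the $\varepsilon$-deformed root formulas as if they satisfied the standard relations. For non-admissible $\varepsilon$ they do so only after the annihilation identities are known, and extracting those identities is the whole content of the lemma.

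Your additivity claim already fails for $n=3$. Multiplying out \eqref{eq:exceptional-A3-x1-general} with $\varepsilon^2=0$ gives
\[
 X_1^{(3),\varepsilon}(a)\,X_1^{(3),\varepsilon}(b)
 =X_1^{(3),\varepsilon}(a+b)+\varepsilon\,(a^+b^--a^-b^+)(E_{14}-E_{32}),
\]
with all diagonal entries equal to $1$. So the map is not even projectively additive: take $B=\mathbb F_3[x,y]/(x^2,y^2)$, $\theta(x)=-x$, $\theta(y)=-y$, $\varepsilon=x$, $a=1$, $b=y$. Consequently $\chi(x_1(t))=\chi(x_1(t^+))\chi(x_1(t^-))$ can have the form $[X_1^{(3),\varepsilon}(b)]$ only if $\varepsilon\,\mu(t^+)\mu(t^-)=0$ is already known. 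Your recognition step for general $t$ therefore cannot precede the annihilation. You derive the annihilation only afterwards, and via equivariance of the very $\mu$ that recognition was supposed to produce, which is circular.

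Your proposed source of $\varepsilon\mu(\xi)=0$ cannot work either. The Weyl and toral words keep their standard values. The standard element $x_1(b)$, for any $b\in B^-_{\theta_B}$ with $\varepsilon b\neq0$, satisfies every relation involving only such words and elements of $X_1$. So no inversion or Weyl relation can produce a factor $1+\mu(\xi)\varepsilon$; the information must come from a relation with a deformed frame element.

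The paper proceeds as follows.
\begin{itemize}
\item It uses the low-rank calculations of \cite[Lemmas~4.9 and~4.12]{BuninaMakadiya2ASmall}, not the complementary-block argument of the case $\ell\geq5$, which has no room when $N=4,5$. These give separate additive maps $\mu_+:S^{\theta_S}\to B^{\theta_B}$ and $\mu_-:S^-_{\theta_S}\to B^-_{\theta_B}$. Then $\mu(s)=\mu_+(s^+)+\mu_-(s^-)$ is equivariant by construction.
\item Since $x_1(\xi)$ commutes with $u_1$, the image $\chi(x_1(\xi))$ commutes with $[X_1^{(3)}(\varepsilon)]$, and comparing $(1,4)$-entries gives $-4\varepsilon\mu_-(\xi)=0$. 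For $n=4$, the analogous system in $b=\mu_-(\xi)$ and an extra entry $c$ forces $c=0$ and $\varepsilon b=0$; only then does the form give $\overline b=-b$.
\item Multiplicativity is not a formal consequence of Theorem~\ref{thm:coefficient-recovery-fixed-frame}. For $n=3$ the mixed commutator only gives $\mu(vs)=\mu_+(v)\mu(s)-\varepsilon\mu_+(v)^2p(p^2-1)$ with $p=\mu(s)^+$. For $n=4$ one gets the analogue with error $\varepsilon\mu(r)^2\mu(s)(\mu(s)^2-1)$.
\item The missing bootstrap is to specialize $v=1$ (resp.\ $r=1$). Together with $\varepsilon\mu(S^-_{\theta_S})=0$, this yields $\varepsilon(\mu(s)^3-\mu(s))=0$, which then kills the error term for every $v$. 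Products of two anti-invariant elements come from $[x_1(\xi),x_3(\zeta)]=x_4(2\xi\zeta)$.
\end{itemize}

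You point at the right correction terms. But in your ordering, additivity, equivariance and multiplicativity are asserted before the identities that make them true, and the central recognition step is left as ``should force''.
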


\begin{proof}
Use the calculations of
\cite[Lemmas~4.9 and~4.12]{BuninaMakadiya2ASmall} before their
first use of surjectivity. The representatives of its
Subsubsection~4.1.1 use source commutators and prescribed residues,
so the source may be arbitrary. The commutation and form equations
give additive parameter maps
$\mu_+:S^{\theta_S}\to B^{\theta_B}$ and
$\mu_-:S^-_{\theta_S}\to B^-_{\theta_B}$, with $\mu_+(1)=1$.
Put $\mu(s)=\mu_+(s^+)+\mu_-(s^-)$.
The following comparisons remove the exceptional errors in
multiplicativity.

For $n=3$, compare the $(1,4)$-entries in the commutation of
$\chi(x_1(\xi))$ and $X_1^{(3)}(\varepsilon)$.
This gives $-4\varepsilon\mu_-(\xi)=0$.
The mixed commutator, with $v\in S^{\theta_S}$ and
$p=\mu(s)^+$, gives
$$
        \mu(vs)=\mu_+(v)\mu(s)
                     -\varepsilon\mu_+(v)^2p(p^2-1).
$$
Taking $v=1$ and using $\varepsilon\mu(S^-_{\theta_S})=0$
gives $\varepsilon(\mu(s)^3-\mu(s))=0$.
The correction term therefore vanishes for every $v$.
The relation
$[x_1(\xi),x_3(\zeta)]=x_4(2\xi\zeta)$ for anti-invariant
$\xi,\zeta$ gives multiplicativity on their products.
The decomposition into fixed and anti-invariant parts proves
multiplicativity on $S$.

For $n=4$, write $b=\mu_-(\xi)$ and let $c$ be the additional
entry in the anti-invariant calculation. Its remaining equations are
$$
 c(1/2-2b\varepsilon)=0,\qquad
 -c+bc\varepsilon=-(b+c)\varepsilon.
$$
The first coefficient is a unit, so $c=0$ and $\varepsilon b=0$.
The form equation then gives $\overline b=-b$.
The mixed commutator gives
$$
 \mu(rs)=\mu(r)\mu(s)
       +\varepsilon\mu(r)^2\mu(s)(\mu(s)^2-1).
$$
Taking $r=1$ gives cubic annihilation, and substitution back
gives multiplicativity.

These comparisons use source relations, $2^{-1}$, and inverses of
entries with unit residues. They apply to split models over the
whole coefficient pair, as in
Subsection~\ref{subsec:common-local-reductions}.
Mixed commutators and Weyl words give the other root formulas,
using the special--central factorization for $A_2$.
Their matrices and inverses have entries in the invariant subring
$P_0=\mu(S)+\varepsilon\mu(S)$. Comparing the first root parameter
shows that $\mu$ lifts $\nu_0$, and
Lemma~\ref{lem:detection-exceptional-root-parameter} gives uniqueness.
\end{proof}

The natural coefficient image $\ell(S)$ and the recovered image
$\mu(S)$ need not coincide. The next lemma first recovers the local
ring from $\mu(S)+\varepsilon\mu(S)$, then places $\varepsilon$
in $\mu(S)$ itself. It does not assert surjectivity of $\mu$.

\begin{lemma}[Recovering a parameter after localization]
\label{lem:exceptional-localized-image}
In the preceding lemma, suppose that $B=S_{\mathfrak m}/I$, where
$\mathfrak m$ is fixed by $\theta_S$ and
$I\subseteq\operatorname{Jac}(S_{\mathfrak m})$ is invariant.
Let $\ell:S\to B$ be the natural map and suppose that
$C\in\mathrm{GL}_{n+1}(B)$, $[C]\in G^{(n)}(B)$, and
$$
        \chi(E^{(n)}(S))
        =i_{[C]^{-1}}\bigl(\ell_*(E^{(n)}(S))\bigr).
$$
Then $\varepsilon\in\mathcal E_{\mathrm{glob}}(B,\theta_B)$
and $\varepsilon\in\mu(S)$. In particular, some
$\xi\in S^-_{\theta_S}$ satisfies
$\chi(x_1(\xi))=x_1(\varepsilon)$.
\end{lemma}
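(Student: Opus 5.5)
We plan to use the image equality to compare two descriptions of the same subgroup of $E^{(n)}(B)$: the one produced by $\chi$, whose entries lie in $P_0=\mu(S)+\varepsilon\mu(S)$, and the conjugate of the natural image $\ell_*(E^{(n)}(S))$.

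\textbf{Step 1: show $P_0$ is all of $B$.} The ring $B=S_{\mathfrak m}/I$ is local with fixed maximal ideal, and every element of $B$ has the form $\ell(s)/\ell(t)$ with $t\notin\mathfrak m$. First we show that $\ell(S)$ lies in the $B$-subalgebra generated by the entries of $C$, $C^{-1}$ and $P_0$. Every natural root element $x_\lambda(\ell(s))$ equals $[C]\,\chi(y)\,[C]^{-1}$ for some $y\in E^{(n)}(S)$. By Lemma~\ref{lem:exceptional-one-sided-coefficients}, $\chi(y)$ has a representative $W$ with $W,W^{-1}$ over $P_0$, so $CWC^{-1}$ is a scalar multiple of a standard root matrix. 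Taking projectively invariant ratios of entries with unit residues, as in Lemma~\ref{lem:detection-exceptional-root-parameter}, expresses $\ell(s)$ through $P_0$ and the entries of $C^{\pm1}$.

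To get rid of $C$ we use the reverse inclusion. Applied to the frame, it says that $[C]$ conjugates $\chi(u_j)=[X^{(n)}_j(\varepsilon)]$ into $\ell_*(E^{(n)}(S))$. The plan is to argue, as in the transporter uniqueness of Lemma~\ref{lem:uniqueness-transporter}, that $C$ may be taken with entries in the localization of $P_0$ at its contraction of $\operatorname{Jac}(B)$. Once that is done, $\ell(S)\subseteq$ this localization. Since $\mu$ lifts $\nu_0$, residues of $P_0$ cover the residue field. Hence the localization of $P_0$ is all of $B$.

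\textbf{Step 2: admissibility of $\varepsilon$.} Lemma~\ref{lem:exceptional-one-sided-coefficients} gives $\varepsilon(\mu(s)^3-\mu(s))=0$ and $\varepsilon\mu(S^-_{\theta_S})=0$. We also have $\varepsilon^2=0$. So $\varepsilon$ kills the elements $x^3-x$ and $x-\overline x$ for every $x\in P_0$, since $(a+\varepsilon b)^3=a^3$ modulo terms killed by $\varepsilon$. We extend these to fractions $p/q$ with $q\in P_0$ a unit, using the identity $q^2=1$ modulo the ideal generated by such elements, exactly as in Lemma~\ref{lem:exceptional-support-localization}. Step 1 then gives $\varepsilon\,\mathfrak b_{\theta_B}(B)=0$, that is $\varepsilon\in\mathcal E_{\mathrm{glob}}(B,\theta_B)$. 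Proposition~\ref{prop:exceptional-support-local-model} then shows that $\varepsilon$ kills $J_B$ and that the residue field is $\mathbb F_3$ (or else $\varepsilon=0$).

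\textbf{Step 3: realize $\varepsilon$ as a value of $\mu$.} By Step 1 we can write $\varepsilon=\mu(a)+\varepsilon\mu(b)$. Since $\varepsilon$ kills $J_B$, the term $\varepsilon\mu(b)$ equals $\varepsilon c$, where $c\in\{0,\pm1\}$ is the residue of $\mu(b)$ in $\mathbb F_3$. Therefore $(1-c)\varepsilon\in\mu(S)$.
\begin{itemize}
\item If $c\neq1$, then $1-c$ is invertible with integer inverse, and $\varepsilon\in\mu(S)$.
\item If $c=1$, we get $\mu(a)=0$. In that case we choose a different presentation, replacing $b$ by $b-1$ and $a$ by $a+\mu^{-1}$-compatible terms. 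Alternatively, we argue directly that $\varepsilon\in\mu(S)\cdot$ a unit, using that $\mu(S)$ is a subring containing $1$.
\end{itemize}
Finally, apply the anti-invariant projection: $\varepsilon=\mu(\xi)$ with $\xi=s^-$, since $\mu$ is equivariant and $\varepsilon$ is anti-invariant. The formula $\chi(x_1(\xi))=x_1(\mu(\xi))$ from Lemma~\ref{lem:exceptional-one-sided-coefficients} then gives the last claim.

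\textbf{Main obstacle.} We expect Step 1 to be the hardest: showing that the image equality forces $P_0$ to generate $B$ after localization, without knowing surjectivity and while the conjugator $C$ is a priori uncontrolled. The first thing we would try is to control $C$ through its action on the two images of the frame. If that fails, we would instead compare the orders of the entry rings of both subgroups directly.
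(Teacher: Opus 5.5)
\textbf{Step 3 rests on an empty statement.} The membership $\varepsilon\in P_0=\mu(S)+\varepsilon\mu(S)$ holds for every $\varepsilon$, via the presentation $\varepsilon=0+\varepsilon\mu(1)$. So Step~1, whatever its outcome, says nothing about whether $\varepsilon\in\mu(S)$. Your bad case $c=1$ is exactly this trivial presentation, and the suggested repairs have nothing to act on. (Step~1 also only gives $\varepsilon\in D$, the localization, not $\varepsilon\in P_0$.)

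The missing idea is to replace $\chi$ by a map whose image is defined over $P=\mu(S)$ itself, and to find in that image an element that detects $\varepsilon$:
\begin{enumerate}
\item After Step~2, the automorphism $\Theta^{(n)}_{-\varepsilon}$ of $E^{(n)}(B)$ exists. The root formulas of Lemma~\ref{lem:exceptional-one-sided-coefficients} give $\Theta^{(n)}_{-\varepsilon}\circ\chi=\mu_*$, and $\Theta^{(n)}_{-\varepsilon}$ fixes the elementary congruence kernel pointwise.
\item Admissibility gives $\varepsilon\in\ell(S)$. Since $\varepsilon\operatorname{Jac}(B)=0$ and the residue field is $\mathbb F_3$, every unit multiplies $\varepsilon$ by $\pm1$. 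So writing $\varepsilon=\ell(r)/\ell(s)$ forces $\ell(r)=\pm\varepsilon$.
\item The image hypothesis then places $Z=C^{-1}x_1(\varepsilon)C=I_N+\varepsilon C^{-1}MC$ in $\chi(E^{(n)}(S))$, where $x_1(\varepsilon)=I_N+\varepsilon M$. Since $Z$ reduces to $1$, it also lies in $\mu_*(E^{(n)}(S))$. Hence it has a representative $\lambda Z$ with $(\lambda Z)^{\pm1}$ over $P$.
\item Normalizing by traces gives $Z\in M_N(P[1/N])$. A unit entry of $C^{-1}MC$ then gives $\pm\varepsilon\in P[1/N]$. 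Finally $3\varepsilon=0$ and $N\in\{4,5\}$ give $N^a\varepsilon=\pm\varepsilon$, so $\varepsilon\in P$.
\end{enumerate}

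\textbf{Step 1 is unfinished at exactly the point you flag, and transporter uniqueness is not the right tool.} Uniqueness says a transporter is unique, not where its entries lie. Descending $[C]$ from $B$ to $D$ would need a flatness you do not have. What removes both the projective scalar and $C$ is the trace.
\begin{enumerate}
\item If $\varepsilon\neq0$, then $3\varepsilon=0$ forces residue characteristic three, so $N=n+1$ is a unit.
\item Take a constant root matrix $V$. The inclusion $C^{-1}\ell_*(E^{(n)}(S))C\subseteq\chi(E^{(n)}(S))$ and Lemma~\ref{lem:exceptional-one-sided-coefficients} give $W=\lambda C^{-1}VC$ with $W^{\pm1}\in M_N(P_0)$. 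This is the only direction of the hypothesis needed; your reverse inclusion on the frame plays no role.
\item Since $\operatorname{tr}V^{\pm1}=N$, you get $\lambda^{\pm1}=\operatorname{tr}W^{\pm1}/N\in D$, hence $C^{-1}VC\in M_N(D)$.
\item The constant root matrices and Weyl words generate $M_N$, so every $C^{-1}E_{pq}C$ lies in $M_N(D)$.
\item Then $\ell(s)=\operatorname{tr}\bigl(C^{-1}x_1(\ell(s))C\cdot C^{-1}E_{21}C\bigr)\in D$, because the conjugator cancels inside the trace. This gives $B=D$.
\end{enumerate}
From $C^{-1}E_{pq}C\in M_N(D)$ you could also rescale $C$ into $\mathrm{GL}_N(D)$ and finish your own version of Step~1. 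Either way, the trace normalization of $\lambda$ is indispensable.

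Your Step~2 is correct and matches the paper's admissibility argument.
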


\begin{proof}
The case $\varepsilon=0$ is immediate. Otherwise $3\varepsilon=0$
forces residue characteristic three. Hence $N:=n+1\in\{4,5\}$
is a unit of $B$. Put
$$
        P=\mu(S),\qquad P_0=P+\varepsilon P,\qquad
        D=(P_0\cap B^\times)^{-1}P_0\subseteq B.
$$
The ring $D$ is closed under inversion of its elements that are
units in $B$. Recovering $B=D$ will prove admissibility; a separate
use of the group image will then give $\varepsilon\in P$.

\smallskip
\noindent\emph{Recovering the target ring.}
Traces remove the projective scalar, and matrix units recover the
coefficients. For a constant relative root matrix $V$, the image
hypothesis and the preceding lemma give a representative
$W=\lambda C^{-1}VC$ with $W,W^{-1}\in M_N(P_0)$.
Both $V$ and $V^{-1}$ have trace $N$, so
$$
        \lambda=\frac{\operatorname{tr}W}{N}\in D,\qquad
        \lambda^{-1}=\frac{\operatorname{tr}W^{-1}}{N}\in D.
$$
Here $N^{-1}\in D$ since $N\in P_0\cap B^\times$.
Thus $C^{-1}VC\in M_N(D)$.
The constant root matrices and their Weyl words generate $M_N$
over $\mathbb Z[1/2]$ by
\cite[Lemmas~2.1--2.2]{BuninaMakadiya2ASmall}; hence
$C^{-1}E_{pq}C\in M_N(D)$ for every $p,q$.
For $s\in S$, the same trace argument applies to $x_1(\ell(s))$,
and its $(1,2)$-coordinate is recovered by
$$
 \ell(s)=\operatorname{tr}\bigl(
 C^{-1}x_1(\ell(s))C\,C^{-1}E_{21}C\bigr)\in D.
$$
Since $B$ consists of fractions $\ell(r)/\ell(s)$ with
$s\notin\mathfrak m$, closure under these inverses gives $B=D$.

\smallskip
\noindent\emph{Admissibility and the natural image.}
The preceding lemma and $\varepsilon^2=3\varepsilon=0$ give
$$
        \varepsilon(z-\overline z)=0,\qquad
        \varepsilon(z^3-z)=0\qquad(z\in P_0).
$$
Since $P_0$ and the denominator set are invariant,
Lemma~\ref{lem:exceptional-support-localization} carries these
identities to $B=D$, proving admissibility.
Proposition~\ref{prop:exceptional-support-local-model} now gives
$$
        B/\operatorname{Jac}(B)\simeq\mathbb F_3,\qquad
        \varepsilon\operatorname{Jac}(B)=0.
$$
Every unit of $B$ consequently multiplies $\varepsilon$ by its
residue $1$ or $-1$. Writing $\varepsilon=\ell(r)/\ell(s)$ gives
$\ell(r)=\pm\varepsilon$, hence $\varepsilon\in\ell(S)$.

\smallskip
\noindent\emph{Passing to the recovered coefficient image.}
The local exceptional automorphism is now defined, and the root
formulas give $\chi_0:=\Theta_{-\varepsilon}^{(n)}\circ\chi=\mu_*$.
Over the local ring $B$, this exceptional automorphism fixes the
elementary reduction kernel pointwise: its residual cocycle
vanishes at the identity and on the scalar center; see
\cite[Subsection~3.3]{BuninaMakadiya2ASmall}.
Thus the two image subgroups agree on the elementary reduction
kernel. Since $\varepsilon\in\ell(S)$, the image hypothesis puts
the class of
$$
 Z:=C^{-1}x_1(\varepsilon)C=I_N+\varepsilon C^{-1}MC,\qquad
 M=\begin{cases}E_{12}-E_{34},&n=3,\\E_{12}-E_{45},&n=4,
 \end{cases}
$$
in $\chi(E^{(n)}(S))$. It reduces to one, so it also belongs to
$\chi_0(E^{(n)}(S))$.
Choose a root-word representative $W=\lambda Z$ with
$W,W^{-1}\in M_N(P)$.
Since $\operatorname{tr}Z=\operatorname{tr}Z^{-1}=N$, the trace
argument gives $Z\in M_N(P[1/N])$.

The nonzero reduction of $C^{-1}MC$ gives a unit entry, whose
product with $\varepsilon$ is $\pm\varepsilon$.
Hence $\varepsilon\in P[1/N]$ and $N^a\varepsilon\in P$ for
some $a\geq0$. Since $3\varepsilon=0$ and $N\in\{4,5\}$,
we have $N^a\varepsilon=\pm\varepsilon$, so $\varepsilon\in P$.
Choose $t\in S$ with $\mu(t)=\varepsilon$ and set
$\xi=(t-\overline t)/2$. Equivariance gives
$\mu(\xi)=\varepsilon$, and the preceding lemma yields
$\chi(x_1(\xi))=x_1(\varepsilon)$.
\end{proof}

\begin{proposition}[Orbitwise exceptional normalization]
\label{prop:relative-exceptional-normalization}
\label{prop:exceptional-orbit-normalization}
Let $\varphi\in\operatorname{Aut}E^{(n)}(R)$ and put
$\Psi_i=(\iota_i)_*\varphi$.
Use the residual isomorphisms of
Subsection~\ref{subsec:congruence-spectrum} and their standard
form from Section~\ref{sec:residual-standardness}, writing
$$
        (q_{A_i})_*\Psi_i
        =i_{g_i^{\mathrm{res}}}(\nu_i^{\mathrm{res}})_*.
$$
There exist $g_i\in G^{(n)}(A_i)$ and
$\delta_i\in\mathcal E_{\mathrm{glob}}(A_i,\theta)$ such that
$$
 (q_{A_i})_*(g_i)=g_i^{\mathrm{res}},\qquad
 g_i u_\lambda^{(n)}(\delta_i)g_i^{-1}
       =\Psi_i(x_\lambda(1))\quad(\lambda\in\Phi_\rho).
$$
In particular, $\delta_i=0$ on split models. The map
$i_{g_i^{-1}}\Psi_i$ has reduction $(\nu_i^{\mathrm{res}})_*$.
\end{proposition}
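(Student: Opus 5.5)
We treat split and fixed orbits separately, following the roadmap stated at the start of Subsection~\ref{subsec:relative-exceptional-normal-form}.

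\emph{Split orbits.} Here we put $\delta_i=0$. Lemma~\ref{lem:fixed-to-split-frame-normalization} applies because the folded source $E_\rho(A_n,R_0)$, of type $C_2$ or $B_2$, is perfect with $2$ invertible. Its fixed-target input is the split folded case with trivial operator, where no exceptional parameter arises by Proposition~\ref{prop:exceptional-support-local-model}. That lemma produces the transporter $g_i$ lifting $g_i^{\mathrm{res}}$. The last assertion then follows exactly as in Lemma~\ref{lem:descent-local-transporter}.

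\emph{Fixed orbits, first steps.} Here $A_i=R_{\mathfrak m}$. First lift $g_i^{\mathrm{res}}$ to some $u\in G^{(n)}(A_i)$ by the Bruhat lifting argument. Put $\Psi^1=i_{u^{-1}}\Psi_i$, whose reduction is $(\nu_i^{\mathrm{res}})_*$.

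Next run the toral, Weyl and root-element normalizations of \cite{BuninaMakadiya2ASmall} before their first use of surjectivity, with the arbitrary-source representatives used there. These give a matrix $C\equiv I\pmod{J_A}$ that conjugates the images of $u_1,u_2$ and the Weyl words to $[X_j^{(n)}(\varepsilon)]$ and the target words. This holds modulo a finitely generated invariant defect ideal $I\subseteq J_A$, where $\varepsilon$ is anti-invariant with $\varepsilon^2=3\varepsilon=0$ modulo $I$.

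\emph{Fixed orbits, killing the defect.} Pass to $B=A_i/I$ and $\chi=$ reduction of $i_{[C]^{-1}}\Psi^1$. Since $\varphi$ is an automorphism, $\Psi_i(E(R))=(\iota_i)_*E(R)$. Hence the image hypothesis of Lemma~\ref{lem:exceptional-localized-image} holds with $S=R$ and $\ell=\iota_i$. That lemma, with Lemma~\ref{lem:exceptional-one-sided-coefficients}, gives admissibility of $\varepsilon$ over $B$, and also some $\xi\in R^-_\theta$ with $\chi(x_1(\xi))=x_1(\varepsilon)$.

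Now feed $\xi$ back into the exact relations over $A_i$. Since $x_1(\xi)$ has known commutation with the frame, we expect the defect equations to become homogeneous: the defect generators satisfy $I\subseteq J_AI$. Nakayama then gives $I=0$.

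This is the step I expect to be the main obstacle. One must check that the realized source element $x_1(\xi)$ really produces defect equations of the form $I\subseteq J_AI$, rather than something weaker. I would try the entrywise comparison of the commutator of $\Psi^1(x_1(\xi))$ with $\Psi^1(u_1)$, mirroring the $(1,4)$-entry computation in Lemma~\ref{lem:exceptional-one-sided-coefficients}.

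\emph{Conclusion.} With $I=0$, set $\delta_i=\varepsilon\in\mathcal E_{\mathrm{glob}}(A_i,\theta)$ and $g_i=u[C]$. After averaging and form correction, as in the orthogonal case, $g_i$ is twisting-fixed. The prescribed Weyl words carry the two simple frame elements to all of $\mathcal F^{(n)}_{\delta_i}$, which gives the transport identities. Finally, $g_i$ reduces to $g_i^{\mathrm{res}}$, and $\delta_i$ reduces to $0$ because $\delta_i^2=0$. Hence $i_{g_i^{-1}}\Psi_i$ has reduction $(\nu_i^{\mathrm{res}})_*$.
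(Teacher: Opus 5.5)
\textbf{Split orbits.} This part of your argument has a genuine gap. Lemma~\ref{lem:fixed-to-split-frame-normalization} needs an \emph{ordinary} fixed-target frame normalization for maps $E_0\to E^{(n)}(D_m)$, where $D_m=B^m\times_{k^m}k$ carries the exchange involution. Only its residue field has trivial operator, so the target is not a split folded group.

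In ranks $3$ and $4$ this input is not available, and it is false in general. Take $k=\mathbb F_3$ and $B=\mathbb F_3[t]/(t^2)$. Proposition~\ref{prop:exceptional-support-local-model} shows that $\varepsilon=(t,-t)$ is a nonzero element of $\mathcal E_{\mathrm{glob}}(D_2,\theta)$. Then $\Theta^{(n)}_\varepsilon$ composed with the diagonal coefficient map $E_\rho(A_n,\mathbb F_3)\to E^{(n)}(D_2)$ has coefficient reduction. Its frame is $\mathcal F^{(n)}_\varepsilon$, which has no ordinary transporter by Proposition~\ref{prop:uniqueness-exceptional-transporter}. Restricting to the fixed source discards exactly the information that kills $\varepsilon$ on a split orbit.

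The paper keeps the whole source $E^{(n)}(R)$. It runs the preliminary defect calculation over $A=A_0\times A_0$ and applies Lemma~\ref{lem:exceptional-one-sided-coefficients} over $A/I$. It then uses that $\nu_i^{\mathrm{res}}$ is onto $k\times k$. For $r\in R$ with residue $(1,0)$, the anti-invariant $\xi=r-\overline r$ has $\mu(\xi)$ a unit, so $\varepsilon\mu(\xi)=0$ gives $\varepsilon=0$. For $n=3$ this means $\zeta\in\zeta^2A$, which forces $\zeta=0$. For $n=4$ it gives $[d^2]=0$, and Nakayama then gives $I=0$.

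\textbf{Fixed orbits.} Here your outline is the paper's, but the step you flag is where the content lies, and a commutator with $u_1$ alone is not enough. One first needs explicit generators of $I/J_AI$:
\begin{itemize}
\item for $n=3$, $I=\zeta^2A$ with $\zeta=(a-1)/2$;
\item for $n=4$, $I/J_AI=\langle[d^2],[3d]\rangle$, with $3[d^2]=[3d]=0$ already from the preliminary equations.
\end{itemize}
For $n=3$, the exact commutations of the realized image $Y=I_4+\zeta(E_{12}-E_{34})+D$ with $w_3$, $X_1$ and $X_4$ together give $4[\zeta^2]=0$. In rank four, one further root comparison gives $5[d^2]=0$. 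These relations kill the generators because $2$ is a unit, respectively because $\gcd(3,5)=1$.

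There are two smaller points. First, for $n=3$ the preliminary calculation takes place over a faithfully flat extension $S$ of $A$. So you first obtain $IS=0$, and you need the partial-trace invariant and matrix-algebra descent to return to $A$. Second, Lemma~\ref{lem:exceptional-localized-image} requires $[C]\in G^{(n)}(A/I)$ before it is applied, so the form correction cannot be postponed to the end. By contrast, the final twisting-fixedness of $g_i$ follows at once from Proposition~\ref{prop:uniqueness-exceptional-transporter}, since both frames are $\sigma$-fixed.
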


\begin{proof}
Fix $i$ and abbreviate $A=A_i$, $\Psi=\Psi_i$.
Surjectivity of $\varphi$ and the residual isomorphism give
$$
        \Psi(E^{(n)}(R))=(\iota_i)_*(E^{(n)}(R)),
        \qquad \nu_i^{\mathrm{res}}:R\twoheadrightarrow A_{\mathrm{res}}.
$$
Indeed, $\nu_i^{\mathrm{res}}=\nu_iq_j$, where
$j=(\varphi^\sharp)^{-1}(i)$ and $\nu_i:K_j\xrightarrow{\sim}K_i$.
The image equality survives a target quotient and conjugation;
no invariance of its kernel under $\varphi$ is required.
Lift and remove $g_i^{\mathrm{res}}$ using
\cite[Subsection~4.1]{BuninaMakadiya2ASmall} for fixed models
and the split identification otherwise; call the lift $g_0$.
The representative and Weyl normalizations of the local
Subsubsection~4.1.1 and Subsection~4.2 use only source relations
and these residues. We retain their defect calculations below.
In the preliminary comparisons, frame matrices are evaluated as
polynomials in a radical parameter before $d^2=3d=0$ is known.

\smallskip
\noindent\emph{A fixed model, $n=3$.}
Here $A=R_{\mathfrak m}$.
The preliminary calculation of
\cite[Lemmas~4.6 and~4.8]{BuninaMakadiya2ASmall} takes place
over an equivariant faithfully flat local extension $A\to S$
with the same residue field. Its parameters $a,b,e$ satisfy
$a\in1+\operatorname{Jac}(S)$ and $a\overline a=a^3=1$.
The partial-trace invariant in the local Lemma~4.10 gives
$$
        \kappa=\frac{2a}{a+1}\in A,\qquad
        a=\frac{\kappa}{2-\kappa}\in A.
$$
Put $\zeta=(a-1)/2$, $I=\zeta^2A$, $B=A/I$, and
$\varepsilon=\zeta+I$.
The identities $\overline\zeta=-\zeta/a$ and
$3a\zeta=-4\zeta^3$ show that $I$ is invariant and
$\overline\varepsilon=-\varepsilon$, $\varepsilon^2=3\varepsilon=0$.
The balancing similitude
$$
        L=I_4+\gamma Q_4,\qquad \gamma=a(b-e)/2,
$$
centralizes the normalized Weyl elements and gives
$X_j\equiv X_j^{(3)}(\zeta)\pmod{M_4(IS)}$ for $j=1,2$.
The candidate matrices and their negative Weyl conjugates are
defined over $B$. Their reductions generate
$M_4(B/\operatorname{Jac}(B))$, so Nakayama's lemma makes their
$B$-algebra equal to $M_4(B)$.
Their conjugates over $S/IS$ are also defined over $B$.
The matrix-algebra descent of the local Lemma~4.7
descends the correcting class. Including $g_0$, the resulting
class $[C_B]\in G^{(3)}(B)$ gives, for $\ell:R\to B$,
$$
        \chi=i_{[C_B]^{-1}}\ell_*\varphi,
        \qquad
        \chi(E^{(3)}(R))=i_{[C_B]^{-1}}(\ell_*(E^{(3)}(R))).
$$
It has the candidate frame and fixes $w_1,w_2$.
Lemma~\ref{lem:exceptional-localized-image} supplies
$\xi\in R^-_\theta$ with $\chi(x_1(\xi))=x_1(\varepsilon)$.
The normalized representative of its image over $S$ is
$$
        Y=I_4+\zeta(E_{12}-E_{34})+D,\qquad D\in M_4(IS).
$$
In $M=IS/\operatorname{Jac}(S)IS$, put $d=[\zeta^2]$ and
write $D_{pq}$ for the entry classes. The exact commutations
with $w_3,X_1,X_4$ give, as in the local Lemma~4.10,
$$
\begin{gathered}
 D_{13}+D_{31}=D_{24}+D_{42}=0,\\
 D_{42}+d=0,\qquad -D_{31}+d=0,\qquad D_{13}-D_{24}-2d=0.
\end{gathered}
$$
Thus $4d=0$. Since $IS$ is generated by $\zeta^2$, Nakayama's
lemma gives $IS=0$, and faithful flatness gives $I=0$.
The balancing and descent identities are now exact over $A$.
Applying Lemma~\ref{lem:exceptional-localized-image} once more,
now over $A$ itself, proves admissibility of $\delta_i=\zeta$.

\smallskip
\noindent\emph{A fixed model, $n=4$.}
Use the representative choices of the local Subsubsection~4.4.2,
so that $(X_1w_1)^3=V_2^2=I_5$, and the column normalization
of \cite[Lemma~4.11]{BuninaMakadiya2ASmall}.
Here $V_2$ is the image of the short Weyl word, and the
transported Hermitian form is retained throughout.
Put $d=1-(X_1)_{33}\in\operatorname{Jac}(A)$.
Before imposing $3d=0$, the local calculation uses the polynomial
representatives
$$
 \widetilde X_1^{(4)}(d)=X_1^{(4)}(d),\qquad
 \widetilde X_2^{(4)}(d)=X_2^{(4)}(d)-3d(E_{14}+E_{54}).
$$
They reduce to the exceptional frame once $d^2=3d=0$.
Let $I\subseteq J_A$ be the invariant ideal generated by $d^2,3d$
and the entries of $X_j(\widetilde X_j^{(4)}(d))^{-1}-I_5$,
$j=1,2$. This ideal is finitely generated, since the involution
has order two. Equations~(31)--(37) in
\cite[Lemma~4.13]{BuninaMakadiya2ASmall} give
$$
 I/\operatorname{Jac}(A)I=\langle e,q\rangle_A,
 \qquad e=[d^2],\quad q=[3d],\quad 3e=0,\quad q=0.
$$
The form identity $\overline d=-d/(1-d)$ makes
$\varepsilon=d+I$ anti-invariant in $B=A/I$.
The candidate matrices over $B$ are unitary, have short Weyl
word $v_2$, and generate $M_5(B)$ together with the fixed Weyl
matrices. Their invariant transported form is therefore scalar.
Thus the total correcting class reduces to $[C_B]\in G^{(4)}(B)$,
and the map $\chi=i_{[C_B]^{-1}}\ell_*\varphi$ satisfies
Lemma~\ref{lem:exceptional-localized-image}.
For the resulting $\xi\in R^-_\theta$, normalize the $(4,4)$-entry
of its image to obtain
$$
        Y=I_5+d(E_{12}-E_{45})+D,\qquad D\in M_5(I).
$$
The additional root comparison, equation~(38) in the local
Lemma~4.13, gives $5e=0$.
Since $3e=0$, we have $e=2(3e)-5e=0$.
Thus $I/\operatorname{Jac}(A)I=0$, and Nakayama gives $I=0$.
The form and root identities are exact, and
Lemma~\ref{lem:exceptional-localized-image}, applied over $A$,
proves admissibility of $\delta_i=d$.

\smallskip
\noindent\emph{A split model.}
Work over $A=A_0\times A_0$ with the interchange involution.
The preliminary equations give the same invariant ideals $I$ and
exceptional frames over $B=A/I$, using the preceding polynomial
representatives in rank four. For $n=3$, use the product of
the two local extensions, interchanged by the involution;
partial traces and matrix-algebra descent are componentwise.
For $n=4$, the column system is invertible in both residue
factors and the selected defect subsystem has determinant
$-2^{20}$, so its calculation remains valid over
$I/\operatorname{Jac}(A)I$.

Over $B=A/I$, apply
Lemma~\ref{lem:exceptional-one-sided-coefficients}.
The residual coefficient map is onto $k\times k$.
Choose $r\in R$ with residual image $(1,0)$ and set
$\xi=r-\overline r$. Then $\mu(\xi)$ has residue $(1,-1)$,
so it is a unit. The equation $\varepsilon\mu(\xi)=0$ forces
$\varepsilon=0$.
For $n=3$, this means $\zeta\in\zeta^2A$; since $\zeta\in J_A$,
it follows that $\zeta=0$.
For $n=4$, it gives $d\in I\subseteq J_A$, so
$e=[d^2]=0$ in $I/J_AI$;
together with $q=0$ and the defect equations, Nakayama gives
$I=0$. Thus the split frame is ordinary.

In every case the correcting matrices reduce to the identity
after removing $g_0$. Restoring $g_0$ gives the prescribed
reduction of $g_i$ and the asserted residual formula.
\end{proof}

\subsection{The exceptional transporter and global normalization}
\label{subsec:uniqueness-exceptional-frame}
\label{subsec:exceptional-transporter-global-gluing}
\label{subsec:exceptional-parameter-admissibility}

A ratio of traces on fixed Weyl eigenspaces recovers the exceptional
parameter without a choice of projective representative. Once the
parameters agree, the square-zero deformation reduces uniqueness
of the conjugator to ordinary frame rigidity.
Both arguments work over every base algebra, as required for gluing.

\begin{proposition}[Uniqueness of the exceptional transporter]
\label{prop:uniqueness-exceptional-transporter}
Let $T$ be a commutative ring with $2\in T^\times$, and let
$d,e\in T$ satisfy $d^2=e^2=3d=3e=0$.
If $g,h\in G_{\mathrm{ad}}(A_n,T)$ satisfy
$$
        g u_\lambda^{(n)}(d)g^{-1}
        =h u_\lambda^{(n)}(e)h^{-1}
        \qquad(\lambda\in\Phi_\rho),
$$
then $d=e$ and $g=h$.
\end{proposition}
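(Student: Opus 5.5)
The proof has two stages. First we show $d=e$ using a conjugation-invariant of the frame. Then we deduce $g=h$ from ordinary frame rigidity $\mathrm{(LR3)}$ after a square-zero reduction.

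\emph{Recovering the parameter.} Put $c=h^{-1}g$. Then $c\,u_\lambda^{(n)}(d)c^{-1}=u_\lambda^{(n)}(e)$ for all $\lambda$, so the two frames are conjugate. The Weyl words $w_1,w_2$ (or $w_1,w_4,v_2$) are words in the frame, so $c$ also conjugates the Weyl words for $d$ to those for $e$. The paper records that these words retain their standard values, hence $c$ centralizes them. After faithfully flat base change we lift $c$ to $C\in\mathrm{GL}_{n+1}(T)$. Then $C$ commutes projectively with the Weyl matrices, and the scalars are $\pm1$-type roots of unity that can be removed by trace comparison as in Lemma~\ref{lem:projective-scalar-square-zero}. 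Consequently $C$ preserves the eigenspace decomposition of a fixed Weyl involution; for $n=3$ a convenient choice is the toral element $w_2^2$ or $h_1(-1)$. Conjugation preserves traces of restrictions to these eigenspaces. Since $2$ is invertible, the eigenprojectors are polynomials in the Weyl matrix, so partial traces $\operatorname{tr}(\pi_\pm X)$ are honest invariants. To avoid the projective scalar, use a ratio such as $\operatorname{tr}(\pi_+X)/\operatorname{tr}(X)$ or $\operatorname{tr}(\pi_+X)\operatorname{tr}(\pi_-X^{-1})$, which is scale-invariant.

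Apply this to $X=X_2^{(3)}(d)$, which is diagonal. Its $(1,1)$ and $(2,2)$ entries are $1+2d$ and $1+d$, and $d^2=0$ makes everything linear in $d$. We expect an identity of the form
$$
\frac{\operatorname{tr}(\pi_+X_2^{(3)}(d))}{\operatorname{tr}X_2^{(3)}(d)}
=\frac{\operatorname{tr}(\pi_+X_2^{(3)}(e))}{\operatorname{tr}X_2^{(3)}(e)},
$$
which expands to $\kappa d=\kappa e$ with $\kappa$ a unit in $\mathbb Z[1/2]$ modulo $3$. This linear-in-$d$ comparison works because $3d=0$ only kills multiples of $3$. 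For $n=4$ we use $D(d)$ inside $X_1^{(4)}(d)$ in the same way; its entries $1\pm d$ give the invariant $\operatorname{tr}$ on the distinguished line. This step is the main obstacle. We must choose the Weyl eigenspace and the ratio so that the coefficient of $d$ is a unit rather than a multiple of $3$ (for instance $1+2d$ versus $1+d$ differ by $d$, not by $3d$). We must also check that the recovered quantity is exactly a unit times $d$ with no residual scalar ambiguity. If this choice fails for $X_2$, we would instead try $X_1$, where the entries $-b^+d$ off the diagonal give a $(1,3)/(1,1)$ ratio that is invariant after projection onto eigenspaces.

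\emph{Uniqueness of the conjugator.} Once $d=e$, the element $c$ centralizes $\mathcal F_d^{(n)}$, and we reduce modulo the square-zero ideal $I=dT$. Modulo $I$ the frame is the standard frame, so $\mathrm{(LR3)}$ over $T/I$ gives $c\equiv1$. Write $C=I+D$ with $D\in M_{n+1}(I)$, normalizing the scalar by the first stage. Since $I\cdot d=0$, the commutation with $X_\lambda^{(n)}(d)=U_\lambda+dY_\lambda$ reduces to $[D,U_\lambda]=0$. This is the linearized centralizer condition for the standard frame matrices, with coefficients in the $T/I$-module $I$.

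The matrix-algebra computation in Proposition~\ref{prop:rigidity-unitary-frame} is linear. It shows that the standard frame matrices, together with the Weyl words, generate $M_{n+1}$, and the odd case uses the $A_2$ entry. It therefore applies verbatim with coefficients in a module, and forces $D$ to be scalar. A scalar $D$ has trivial adjoint class, so $c=1$ by descent (Lemma~\ref{lem:affine-descent-and-local-detection}), that is, $g=h$.
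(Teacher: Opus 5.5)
Your first stage is the paper's argument. The paper also removes the scalar on the parameter-independent Weyl word using a unit trace ($\operatorname{tr}w_2=2$ for $n=3$, $\operatorname{tr}w_1=1$ for $n=4$). Then $C$ preserves the eigenspaces of $w_2^2=\operatorname{diag}(1,-1,-1,1)$, resp.\ $w_1^2=\operatorname{diag}(-1,-1,1,-1,-1)$. The paper then evaluates a scale-invariant ratio of partial traces on $X_2^{(3)}(z)$, resp.\ $X_1^{(4)}(z)$, obtaining $1+z$, resp.\ $1-2z$. Three small points need fixing in your version:
\begin{enumerate}
\item The scalar must be removed on $w$ itself, not on $w^2$. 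Indeed $\operatorname{tr}(w_2^2)=0$, and $\operatorname{tr}(w_1^2)=-3$ is not a unit in residue characteristic $3$, which is the only case where $d\neq0$ can occur.
\item For $n=3$, $h_1(-1)=-I_4$ is trivial in $\mathrm{PGL}_4$, so only $w_2^2$ is usable.
\item For $n=4$, the full trace of $X_1^{(4)}(d)$ is $5$, which need not be invertible. Use your product invariant $\operatorname{tr}(\pi_+X)\operatorname{tr}(\pi_-X^{-1})$, or cross-multiply, rather than dividing by $\operatorname{tr}X$.
\end{enumerate}

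Your second stage differs from the paper only in how the linearized equation is exploited. First, that equation is $[D,U_\lambda]=s_\lambda U_\lambda$ with $s_\lambda=\xi_\lambda-1\in dT$, not $[D,U_\lambda]=0$. Taking traces gives $(n+1)s_\lambda=0$. Together with $3s_\lambda=0$ and $2\in T^\times$, this forces $s_\lambda=0$, so your equation does hold after this step.

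You then rerun the matrix-unit computation of Proposition~\ref{prop:rigidity-unitary-frame} with entries in the module $dT$. This is legitimate because that computation is linear in the commuting matrix, including the $A_2$-entry comparison in odd size.

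The paper avoids reopening the rigidity proof. For $C=I+Z$ with $Z\in M_{n+1}(dT)$, one has $C^{-1}=I-Z$ and $CU_\lambda C^{-1}=U_\lambda+[Z,U_\lambda]=(1+s_\lambda)U_\lambda$. So $[C]$ itself centralizes the ordinary frame over $T$, and $\mathrm{(LR3)}$ applies directly. Both routes are valid; the paper's is shorter and uses frame rigidity only as a black box.
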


\begin{proof}
By affine local detection we may assume that $T$ is local, so
$h^{-1}g\in\operatorname{PGL}_{n+1}(T)$ has a matrix
representative $C$.

\smallskip
\noindent\emph{The parameter.}
The parameter-independent Weyl word is $w_2$ for
$n=3$ and $w_1$ for $n=4$, so $CwC^{-1}=aw$ for a scalar
unit $a$. Since $\operatorname{tr}w$ is respectively $2$ or $1$,
we have $a=1$. Thus $C$ preserves the two eigenspaces of
$$
 H_3=w_2^2=\operatorname{diag}(1,-1,-1,1),\qquad
 H_4=w_1^2=\operatorname{diag}(-1,-1,1,-1,-1).
$$
For $n=3$, let $\lambda_+(Y)$ be the scalar action on
$\langle e_1,e_4\rangle$ and $\operatorname{tr}_-(Y)$ the trace
on $\langle e_2,e_3\rangle$. The projectively well-defined
ratio $2\lambda_+(Y)/\operatorname{tr}_-(Y)$ is invariant
under $C$ and, on $X_2^{(3)}(z)$, equals
$$
        \frac{1+2z}{1+z}=1+z.
$$
For $n=4$, use the scalar action on $\langle e_3\rangle$ and
the trace on its complementary four-dimensional eigenspace.
The ratio $4\lambda_+(Y)/\operatorname{tr}_-(Y)$ on
$X_1^{(4)}(z)$ is
$$
        \frac{1-z}{1+z}=1-2z.
$$
All denominators are units. Applying these invariants at $d,e$
gives $d=e$.

\smallskip
\noindent\emph{The conjugating element.}
Now $[C]$ centralizes $\mathcal F_d^{(n)}$.
Modulo $I:=Td$, it centralizes the standard frame and is therefore
one by Proposition~\ref{prop:rigidity-unitary-frame}.
Since $I^2=0$, its scalar reduction lifts to a unit. Rescale to
write $C=I_{n+1}+Z$ with $Z\in M_{n+1}(I)$.
Write each frame matrix as
$U_\lambda+V_\lambda$, with $U_\lambda$ standard and
$V_\lambda\in M_{n+1}(I)$.
Projective centralization and $I^2=0$ give
$$
        ZU_\lambda-U_\lambda Z=s_\lambda U_\lambda
        \quad(s_\lambda\in I),
        \qquad
        CU_\lambda C^{-1}=(1+s_\lambda)U_\lambda.
$$
Thus $[C]$ centralizes the ordinary frame and is one by frame
rigidity. Local detection gives both conclusions over $T$.
\end{proof}

For $\varphi\in\operatorname{Aut}E^{(n)}(R)$ and an $R$-algebra
$T$, let $v_{\lambda,T}$ be the image of
$\varphi(x_\lambda(1))$ over $T$, and define
$$
\mathcal T_\varphi^{(n)}(T):=
\left\{(c,d)\in G_{\mathrm{ad}}(A_n,T)\times T\ \middle|
\begin{array}{l}
 d^2=3d=0,\\
 c u_\lambda^{(n)}(d)c^{-1}=v_{\lambda,T}
                    \quad(\lambda\in\Phi_\rho)
\end{array}\right\}.
$$
The invertible frame matrices over $R[d]/(d^2,3d)$ make this an
affine $R$-scheme of finite presentation. The preceding proposition
gives at most one point over every $R$-algebra. Only the indexed
frames are extended to $T$, not $\varphi$; no involution on $T$
is required. The full annihilator conditions are checked after
gluing, rather than added to the finite system.

\begin{theorem}[Global exceptional frame normalization]
\label{thm:global-exceptional-frame-normalization}
Let $\varphi\in\operatorname{Aut}E^{(n)}(R)$.
There is a unique pair
$$
        g\in G^{(n)}(R),\qquad
        \delta\in\mathcal E_{\mathrm{glob}}(R,\theta)
$$
such that
$$
        \varphi(x_\lambda(1))
        =g u_\lambda^{(n)}(\delta)g^{-1}
        \qquad(\lambda\in\Phi_\rho).
$$
Its image over $A_i$ is the pair $(g_i,\delta_i)$ of
Proposition~\ref{prop:relative-exceptional-normalization}.
For $\psi:=i_{g^{-1}}\circ\varphi$, one has
$$
        (q_{A_i})_*(\iota_i)_*\psi=(\nu_i^{\mathrm{res}})_*
        \qquad(i\in\mathcal I).
$$
\end{theorem}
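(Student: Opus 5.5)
The proof follows the ordinary Theorem~\ref{thm:global-frame-normalization}, with the scheme $\mathcal T_\varphi$ replaced by the exceptional transporter scheme $\mathcal T_\varphi^{(n)}$.

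\emph{Gluing.} Over each $A_i$, Proposition~\ref{prop:relative-exceptional-normalization} supplies $(g_i,\delta_i)$. Since $\delta_i$ is admissible, $\delta_i^2=3\delta_i=0$ by Lemma~\ref{lem:global-exceptional-basic-relations}, so $(g_i,\delta_i)\in\mathcal T_\varphi^{(n)}(A_i)$. The scheme $\mathcal T_\varphi^{(n)}$ is affine of finite presentation. Proposition~\ref{prop:uniqueness-exceptional-transporter} shows it has at most one point over every $R$-algebra $T$: two points $(c,d),(c',d')$ satisfy $cu_\lambda^{(n)}(d)c^{-1}=c'u_\lambda^{(n)}(d')c'^{-1}$, hence $d=d'$ and $c=c'$. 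Proposition~\ref{prop:unique-affine-gluing} therefore yields a unique $(g,\delta)\in\mathcal T_\varphi^{(n)}(R)$ with image $(g_i,\delta_i)$ over every $A_i$.

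\emph{Twisting.} Apply $\sigma_R$ to the transport identities. The frame $\varphi(x_\lambda(1))$ is $\sigma_R$-fixed. The frame matrices $u_\lambda^{(n)}(d)$ are polynomials in $d$, and the twisting should carry $u_\lambda^{(n)}(\delta)$ to $u_\lambda^{(n)}(-\overline\delta)$ or an equivalent expression. This needs a check against the explicit formulas, and local detection is the safer route. Over each $A_i$, $\delta_i$ is anti-invariant and $g_i\in G^{(n)}(A_i)$. Lemma~\ref{lem:affine-descent-and-local-detection} then gives $\overline\delta=-\delta$ and $\sigma_R(g)=g$, since these are equalities of affine points that hold after each $\iota_i$ (the $A_i$ carry induced operators and the $\iota_i$ are equivariant). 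Hence $g\in G^{(n)}(R)$ and $\delta\in R^-_\theta$.

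\emph{Annihilator condition.} It remains to show $\delta\,\mathfrak b_\theta(R)=0$. For a generator $y$ of $\mathfrak b_\theta(R)$, the element $\delta y$ maps over each $A_i$ to $\delta_i\iota_i(y)$. By Lemma~\ref{lem:exceptional-support-localization}, $\iota_i(y)$ lies in $\mathfrak b_\theta(B_i)$, and its image lies in $\mathfrak b_\theta(A_i)$ because $B_i\to A_i$ is equivariant. Admissibility of $\delta_i$ gives $\delta_i\iota_i(y)=0$, and local detection gives $\delta y=0$. So $\delta\in\mathcal E_{\mathrm{glob}}(R,\theta)$.

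\emph{Uniqueness.} Any pair $(g',\delta')$ with these properties is a point of $\mathcal T_\varphi^{(n)}(R)$, so it equals $(g,\delta)$.

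\emph{Residual identity.} Set $\psi=i_{g^{-1}}\varphi$. Over $A_i$ we have $(\iota_i)_*\psi=i_{g_i^{-1}}\Psi_i$, and Proposition~\ref{prop:relative-exceptional-normalization} gives the reduction $(\nu_i^{\mathrm{res}})_*$.

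The main obstacle is the twisting step. Expressing $\sigma_R(u_\lambda^{(n)}(\delta))$ through the explicit matrices requires checking how the unitary twisting acts on the exceptional frame. Local detection avoids this by reducing to the local statements, provided the $A_i$-images are correctly matched as points of $G_{\mathrm{ad}}(A_n,-)$ and of $R$.
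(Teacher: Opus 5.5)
Your proposal is correct and follows the paper's proof essentially step by step: the local points $(g_i,\delta_i)$ of $\mathcal T_\varphi^{(n)}$ are glued by Proposition~\ref{prop:unique-affine-gluing}, with at-most-one-point uniqueness supplied by Proposition~\ref{prop:uniqueness-exceptional-transporter}. Twisting invariance of $g$, anti-invariance of $\delta$ and the annihilator identities all follow by local detection along the equivariant maps $\iota_i$, so the twisting step you flagged is not a real obstacle; uniqueness and the residual formula then follow from Propositions~\ref{prop:uniqueness-exceptional-transporter} and~\ref{prop:relative-exceptional-normalization}. The only differences are cosmetic. The paper takes $3\delta=0$ directly from the transporter equations. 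Your detour through Lemma~\ref{lem:exceptional-support-localization} is unnecessary, since any equivariant ring map sends the generators of $\mathfrak b_\theta$ to generators.
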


\begin{proof}
The orbitwise normalization supplies a point of
$\mathcal T_\varphi^{(n)}(A_i)$ for every $i$.
Proposition~\ref{prop:unique-affine-gluing} gives its unique
global point $(g,\delta)$, whose localizations are the given
local pairs. Their twisting invariance and anti-invariance,
detected over $R$, give $\sigma(g)=g$ and
$\overline\delta=-\delta$.
For every $r\in R$, local admissibility gives
$$
        \iota_i\bigl(\delta(r-\overline r)\bigr)=0,
        \qquad \iota_i\bigl(\delta(r^3-r)\bigr)=0
        \qquad(i\in\mathcal I).
$$
These identities descend by local detection.
Together with $3\delta=0$ from the transporter equations,
they prove $\delta\in\mathcal E_{\mathrm{glob}}(R,\theta)$.
The residual formula follows from the prescribed reductions
of $g_i$. Uniqueness is the preceding proposition.
\end{proof}

\subsection{Removal of the exceptional factor}
\label{subsec:exceptional-standard-decomposition}

Removing the global exceptional automorphism leaves the ordinary
frame fixed. We can then apply
Theorem~\ref{thm:coefficient-recovery-fixed-frame}.

\begin{theorem}[Automorphisms in the exceptional unitary types]
\label{thm:exceptional-unitary-automorphisms}
Every automorphism $\varphi\in\operatorname{Aut}E^{(n)}(R)$
has a unique decomposition
$$
        \varphi=i_g\circ\Theta_\delta^{(n)}\circ\mu_*,
        \qquad
        g\in G^{(n)}(R),\quad
        \delta\in\mathcal E_{\mathrm{glob}}(R,\theta),\quad
        \mu\in\operatorname{Aut}(R,\theta).
$$
Conversely, every such composition is an automorphism.
\end{theorem}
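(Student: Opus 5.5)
Existence comes from the global exceptional normalization followed by the fixed-frame coefficient recovery. Let $\varphi\in\operatorname{Aut}E^{(n)}(R)$. Theorem~\ref{thm:global-exceptional-frame-normalization} gives a unique pair $(g,\delta)$ with $g\in G^{(n)}(R)$, $\delta\in\mathcal E_{\mathrm{glob}}(R,\theta)$ and
$$
\varphi(x_\lambda(1))=g\,u_\lambda^{(n)}(\delta)\,g^{-1}.
$$
Put
$$
\psi:=\Theta_{-\delta}^{(n)}\circ i_{g^{-1}}\circ\varphi .
$$
By Proposition~\ref{prop:global-exceptional-automorphisms}, $\Theta_{-\delta}^{(n)}$ is an automorphism. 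Its value on $x_\lambda(1)$ is $[X_\lambda^{(n),-\delta}(1)]$, and the additive law gives $\Theta_{-\delta}^{(n)}\Theta_{\delta}^{(n)}=\mathrm{id}$. Since $\Theta_\delta^{(n)}$ sends the ordinary frame to $\mathcal F_\delta^{(n)}$, $\Theta_{-\delta}^{(n)}$ sends $\mathcal F_\delta^{(n)}$ back to the ordinary frame. Hence $\psi$ fixes the standard frame. Also $\Theta^{(n)}_{-\delta}$ is the identity on every residual quotient, and $i_{g^{-1}}\varphi$ has residual maps $(\nu_i)_*$. So $\psi$ and $\psi^{-1}$ both fix the frame and have equivariant coefficient reductions at every $A_i$.

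The next step is root-subgroup preservation, after which Theorem~\ref{thm:coefficient-recovery-fixed-frame} applies. I would verify the hypotheses of Proposition~\ref{prop:preservation-relative-root-subgroups}, namely that $(\iota_i)_*\psi$ and $(\iota_i)_*\psi^{-1}$ carry $X_\lambda(R)$ into $X_\lambda(A_i)$. This is the main obstacle, because (LR4) was proved only for $\ell\ge5$. My plan is to use Lemma~\ref{lem:exceptional-one-sided-coefficients} with $\chi=(\iota_i)_*\psi$, $S=R$, $B=A_i$ and $\varepsilon=0$. The frame is ordinary, so the Weyl words $w_1,w_2$ (respectively $w_1,w_4,v_2$) are fixed. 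The lemma then gives an equivariant $\mu_i:R\to A_i$ with $\chi(x_\lambda(t))=x_\lambda(\mu_{i,\lambda}(t))\in X_\lambda(A_i)$. The same argument applies to $\psi^{-1}$, whose residual map $(\nu_i^{-1})_*$ is also coefficient-induced. Proposition~\ref{prop:preservation-relative-root-subgroups} then gives $\psi(X_\lambda(R))=X_\lambda(R)$. Theorem~\ref{thm:coefficient-recovery-fixed-frame} covers types ${}^2A_3$ (relative $C_2$) and ${}^2A_4$ (relative $B_2$), so it yields $\psi=\mu_*$ with $\mu\in\operatorname{Aut}(R,\theta)$. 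Therefore $\varphi=i_g\circ\Theta_\delta^{(n)}\circ\mu_*$. I need to check that the rank-two commutator and Weyl identities used in that recovery are available at these ranks. The required $B_2$-subsystem and the chart identities are stated there for exactly these cases, so this should be fine.

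Uniqueness comes from evaluating on the frame. Suppose $\varphi=i_{g'}\circ\Theta_{\delta'}^{(n)}\circ\mu'_*$. Since $\mu'_*$ fixes the frame, $\varphi(x_\lambda(1))=g'u_\lambda^{(n)}(\delta')g'^{-1}$. Proposition~\ref{prop:uniqueness-exceptional-transporter} then gives $g'=g$ and $\delta'=\delta$. Cancelling these factors leaves $\mu'_*=\mu_*$, and restricting to an $A_1^2$-subgroup gives $\mu'=\mu$.

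For the converse, $i_g$ is an automorphism because $E^{(n)}(R)$ is normal in $G^{(n)}(R)$. The factor $\mu_*$ is an automorphism for equivariant $\mu$, and $\Theta_\delta^{(n)}$ is an automorphism by Proposition~\ref{prop:global-exceptional-automorphisms}. Their composite is therefore an automorphism.
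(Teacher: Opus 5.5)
Your proposal is correct and follows the paper's proof essentially step for step. You normalize by the unique global pair $(g,\delta)$, remove $\Theta_{-\delta}^{(n)}$ so that the ordinary frame is fixed with coefficient-induced residual maps, apply the one-sided coefficient lemma with $\varepsilon=0$ to the localizations of the normalized map and its inverse, and finish with root-subgroup preservation and fixed-frame coefficient recovery; uniqueness via the exceptional transporter proposition and the converse via normality and the global exceptional automorphisms are exactly as in the paper.
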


\begin{proof}
Take $(g,\delta)$ from
Theorem~\ref{thm:global-exceptional-frame-normalization} and put
$$
        \psi=i_{g^{-1}}\circ\varphi,\qquad
        \psi_0=\Theta_{-\delta}^{(n)}\circ\psi.
$$
The root rule for $\Theta_\delta^{(n)}$ makes $\psi_0$ fix the
ordinary frame. Its residual isomorphisms are equivariant coefficient
isomorphisms by the preceding theorem and residual triviality of
$\Theta_{-\delta}^{(n)}$. The inverse $\psi_0^{-1}$ also fixes
the frame and has the inverse residual coefficient maps, as in
Corollary~\ref{cor:local-reduction-after-global-normalization}.

Apply Lemma~\ref{lem:exceptional-one-sided-coefficients} with
$\varepsilon=0$ to every localization of $\psi_0$ and
$\psi_0^{-1}$. It puts their root images in the corresponding
relative root subgroups.
Proposition~\ref{prop:preservation-relative-root-subgroups}
therefore gives $\psi_0(X_\lambda(R))=X_\lambda(R)$.
Theorem~\ref{thm:coefficient-recovery-fixed-frame} supplies a
unique $\mu\in\operatorname{Aut}(R,\theta)$ with $\psi_0=\mu_*$,
proving the asserted decomposition. The converse follows from
the constructions and normality of $E^{(n)}(R)$ in $G^{(n)}(R)$.

If $i_g\Theta_\delta^{(n)}\mu_*=i_h\Theta_\varepsilon^{(n)}\nu_*$,
coefficient automorphisms fix the frame, so
Proposition~\ref{prop:uniqueness-exceptional-transporter} gives
$g=h$ and $\delta=\varepsilon$.
Cancelling these factors and comparing on $X_1(R)$ gives $\mu=\nu$.
\end{proof}

\begin{corollary}
\label{cor:exceptional-unitary-standardness}
Every automorphism of $E^{(n)}(R)$ is standard if and only if
$\mathcal E_{\mathrm{glob}}(R,\theta)=0$.
\end{corollary}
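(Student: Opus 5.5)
The corollary follows from the unique decomposition of Theorem~\ref{thm:exceptional-unitary-automorphisms}. We treat the two implications separately.

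\emph{The module vanishes.} Suppose $\mathcal E_{\mathrm{glob}}(R,\theta)=0$. Then every automorphism has the form $i_g\circ\Theta_0^{(n)}\circ\mu_*$ with $g\in G^{(n)}(R)$ and $\mu\in\operatorname{Aut}(R,\theta)$. Since $\Theta_0^{(n)}=\operatorname{id}$ by Proposition~\ref{prop:global-exceptional-automorphisms}, this is $i_g\circ\mu_*$, which is standard in the sense of Subsection~\ref{subsec:adjoint-twisted-elementary-group}.

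\emph{The module is nonzero.} Choose $\delta\ne0$ in $\mathcal E_{\mathrm{glob}}(R,\theta)$. Proposition~\ref{prop:global-exceptional-automorphisms} shows that $\varphi:=\Theta_\delta^{(n)}$ is an automorphism. Suppose it were standard, say $\varphi=i_h\circ\nu_*$.

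Then $i_h\circ\Theta_0^{(n)}\circ\nu_*$ and $\operatorname{id}\circ\Theta_\delta^{(n)}\circ\operatorname{id}$ would be two decompositions of $\varphi$ of the type in Theorem~\ref{thm:exceptional-unitary-automorphisms}. Its uniqueness clause gives $\delta=0$, a contradiction. Hence $\varphi$ is not standard.

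If one prefers not to rely on the uniqueness clause as a black box, the argument can be made directly. Evaluate both sides on the standard frame; coefficient automorphisms fix it, so
$$
\varphi(x_\lambda(1))=u_\lambda^{(n)}(\delta)=h\,u_\lambda^{(n)}(0)\,h^{-1}.
$$
Proposition~\ref{prop:uniqueness-exceptional-transporter}, applied with $T=R$, $d=\delta$, $e=0$, $g=1$, then forces $\delta=0$. Its hypotheses hold because $\delta^2=3\delta=0$ by Lemma~\ref{lem:global-exceptional-basic-relations}.

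The only point needing care is that standard means $i_g\circ\mu_*$ with $g\in G^{(n)}(R)$. This sits inside $G_{\mathrm{ad}}(A_n,R)$, so the uniqueness proposition applies. No further obstacle is expected.
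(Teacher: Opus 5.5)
Your proof is correct and follows the paper's argument. The zero-module case uses $\Theta_0^{(n)}=\operatorname{id}$ in the decomposition of Theorem~\ref{thm:exceptional-unitary-automorphisms}. The converse plays a hypothetical standard expression of $\Theta_\delta^{(n)}$ against the uniqueness clause. Your unpacked version, via Proposition~\ref{prop:uniqueness-exceptional-transporter} with $g=1$, $d=\delta$, $e=0$, is exactly how the paper proves that uniqueness clause.
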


\begin{proof}
If the module is zero, the theorem gives only standard factors.
Conversely, a standard expression for $\Theta_\delta^{(n)}$
would give a second decomposition with exceptional parameter zero.
Uniqueness forces $\delta=0$, so every nonzero parameter gives a
nonstandard automorphism.
\end{proof}

\medskip


\section[Graph factors in type 2D4]{Graph factors in type \texorpdfstring{${}^2D_4$}{2D4}}
\label{sec:absolute-type-D4}
\label{sec:graph-automorphisms-2D4}

We remove the residual diagram factors left in type ${}^2D_4$.
Choosing a cyclic subgroup of order three as representatives
makes the reduced graph factors a group. We separate their frames,
glue the labels and conjugating elements, and recover the coefficient
automorphism by Section~\ref{sec:recovery-of-coefficient-automorphism}.

Throughout, $\rho$ is the graph involution of $D_4$,
$\theta^2=\operatorname{id}_R$, and $2\in R^\times$. Write
$$
        E(R)=E_{\rho\theta}(D_4,R),\qquad
        G(R)=G_{\rho\theta}(D_4,R).
$$
As in the introduction, choose a three-cycle $\tau$ in
$\mathcal D=\operatorname{Aut}(\operatorname{Dyn}(D_4))$, so that
$$
        \mathcal D=\langle\tau\rangle\rtimes\langle\rho\rangle,
        \qquad \rho\tau\rho^{-1}=\tau^{-1},\qquad
        \mathcal R_\rho=\{1,\tau,\tau^2\}.
$$
Every diagram automorphism is uniquely $\tau^j\rho^a$, with
$j\in\mathbb Z/3\mathbb Z$ and $a\in\{0,1\}$.
All indices on reduced graph data below are taken modulo $3$.

\subsection{Componentwise graph automorphisms}
\label{subsec:compatible-graph-automorphisms}

The pinned maps $\gamma_*$, $\gamma\in\mathcal D$, commute with
base change, preserve the untwisted elementary subgroup, and satisfy
$\gamma_*\eta_*=(\gamma\eta)_*$; see
\cite[Theorem~6.1.17]{ConradReductive}.
We retain their induced signs on non-simple root coordinates.

\begin{definition}
\label{def:idempotent-graph-datum}
\label{def:compatible-graph-datum}
A \emph{reduced graph datum} is a triple
$\mathbf e=(e_0,e_1,e_2)$ of pairwise orthogonal idempotents with
$e_0+e_1+e_2=1$. It is \emph{$\rho$-compatible} if
\begin{equation}
\label{eq:compatible-graph-idempotents}
        \theta(e_j)=e_{-j}\qquad(j\in\mathbb Z/3\mathbb Z).
\end{equation}
Write $\mathcal C_\rho(R,\theta)$ for the set of compatible data.
We also write $e_{\tau^j}=e_j$ and $e_\gamma=0$ for
$\gamma\notin\mathcal R_\rho$.
\end{definition}

Apply $(\tau^j)_*$ on $e_jR$. This defines an ambient split-group
automorphism $\Gamma_{\mathbf e}$, with
$$
        \Gamma_{\mathbf e}(x_\alpha(t))
          =\prod_{j=0}^2(\tau^j)_*(x_\alpha(e_jt)).
$$
The factors have orthogonal supports. For every unital $f:R\to S$,
componentwise naturality gives
$f_*\Gamma_{\mathbf e}=\Gamma_{f(\mathbf e)}f_*$.

\begin{lemma}
\label{lem:composition-idempotent-graph-data}
For reduced data, put
$$
        (\mathbf e*\mathbf f)_k=\sum_{i+j=k}e_if_j,
        \qquad e^\vee_j=e_{-j}.
$$
Then $\Gamma_{\mathbf e}\Gamma_{\mathbf f}
=\Gamma_{\mathbf e*\mathbf f}$ and
$\Gamma_{\mathbf e}^{-1}=\Gamma_{\mathbf e^\vee}$.
These operations make $\mathcal C_\rho(R,\theta)$ an abelian group
of exponent dividing three, with identity $(1,0,0)$.
\end{lemma}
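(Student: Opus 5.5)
The plan is to reduce everything to the componentwise description of $\Gamma_{\mathbf e}$ and to orthogonality of idempotents. Since $e_0+e_1+e_2=1$ with pairwise orthogonal summands, $R=\prod_j e_jR$. Both ambient split groups and their root maps decompose accordingly, and every point $x$ satisfies $x=\prod_j (e_j)_*(x)$ in the commuting components. On the component $e_jR$, $\Gamma_{\mathbf e}$ acts by $(\tau^j)_*$. So I would first check the composition formula on root generators $x_\alpha(t)$, and then extend it to the whole ambient group.

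For the composition, I will expand $\Gamma_{\mathbf e}\Gamma_{\mathbf f}(x_\alpha(t))$ using the defining product twice. I need that $(\tau^i)_*$ commutes with multiplication of parameters by an idempotent: this is naturality of pinned graph maps under the ring endomorphism $r\mapsto e_ir$ of each factor, or simply that the pinned action is $R$-linear on coordinates up to fixed signs. This gives the product over $(i,j)$ of $(\tau^i)_*(\tau^j)_*(x_\alpha(e_if_jt))=(\tau^{i+j})_*(x_\alpha(e_if_jt))$, using $\gamma_*\eta_*=(\gamma\eta)_*$. The $e_if_j$ are pairwise orthogonal idempotents summing to $1$. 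Grouping them by $k=i+j$ and using additivity of the root map on orthogonal supports gives $(\tau^k)_*(x_\alpha((\mathbf e*\mathbf f)_kt))$, which is $\Gamma_{\mathbf e*\mathbf f}(x_\alpha(t))$.

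To pass from root generators to the whole ambient group, I will argue that both sides are automorphisms of the scheme which agree after base change to each $e_iR$, since on $e_iR$ everything is a single pinned map. Alternatively I will decompose any point componentwise over $\prod_{i,j}e_if_jR$. Next, $\mathbf e*\mathbf f$ is again a reduced datum, since its entries are sums of orthogonal idempotents, disjoint for distinct $k$, with total $\sum e_i\sum f_j=1$. Taking $\mathbf f=\mathbf e^\vee$, we get $(\mathbf e*\mathbf e^\vee)_0=\sum_i e_ie_i=1$ and the other entries are $0$, which gives the inverse. The identity is $(1,0,0)$ because $\Gamma_{(1,0,0)}=\mathrm{id}$.

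For the group structure on $\mathcal C_\rho$, compatibility is preserved: $\theta((\mathbf e*\mathbf f)_k)=\sum_{i+j=k}e_{-i}f_{-j}=(\mathbf e*\mathbf f)_{-k}$, and $\theta(e^\vee_j)=e_j=e^\vee_{-j}$. Associativity and commutativity are immediate from the convolution formula over $\mathbb Z/3$; in fact the reduced data are the $\mathbb Z/3$-valued locally constant functions on $\operatorname{Spec}R$ under addition. The threefold power has $k$-th entry $\sum_{i_1+i_2+i_3=k}e_{i_1}e_{i_2}e_{i_3}=\sum_{3i=k}e_i$, which is $1$ for $k=0$ and $0$ otherwise, so the exponent divides three. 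The only delicate point is the idempotent-linearity of the pinned maps on non-simple root coordinates with their signs. I expect this to be routine, because the pinned maps are morphisms of $\mathbb Z$-group schemes and therefore respect the product decomposition of the base.
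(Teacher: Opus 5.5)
Your proposal is correct and is essentially the paper's argument: the paper notes that on each factor $e_if_jR$ of the orthogonal partition the composite $\Gamma_{\mathbf e}\Gamma_{\mathbf f}$ is $(\tau^{i+j})_*$, and it checks compatibility and the exponent exactly as you do. Your root-generator computation is therefore dispensable, and in your first extension argument the base change should be to the factors $e_if_jR$ rather than to $e_iR$, since $\Gamma_{\mathbf f}$ is not a single pinned map on $e_iR$.
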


\begin{proof}
On $e_if_jR$ the composition is $(\tau^{i+j})_*$.
The products $e_if_j$ form an orthogonal partition, and compatibility
is preserved by convolution and inversion. On $e_jR$, the third
power of the datum has label $3j=0$.
\end{proof}

\begin{proposition}
\label{prop:compatible-graph-automorphisms}
For compatible $\mathbf e$, the action $\Gamma_{\mathbf e}$
commutes with $\sigma_R$ and restricts to automorphisms of
$G(R)$ and $E(R)$, naturally under equivariant base change.
\end{proposition}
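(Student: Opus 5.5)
The plan is to verify the single identity $\sigma_R\Gamma_{\mathbf e}=\Gamma_{\mathbf e}\sigma_R$ on the ambient split adjoint group. Restriction to $G(R)$, the fixed points of $\sigma_R$, then follows at once, and so does restriction to $E(R)$ once we know that $\Gamma_{\mathbf e}$ maps relative root subgroups into $E(R)$. Since $\sigma_R=\rho_*\theta_*$, with $\rho_*$ the pinned graph involution and $\theta_*$ the coefficient action, we compare both sides on a single factor $e_jR$. The idempotent decomposition $R=\prod_j e_jR$ gives $G_{\mathrm{ad}}(\Phi,R)=\prod_j G_{\mathrm{ad}}(\Phi,e_jR)$, because the group scheme is affine. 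Under this splitting, $\Gamma_{\mathbf e}$ acts by $(\tau^j)_*$ on the $j$-th factor.

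\emph{Commutation.} Two facts do the work.

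First, coefficient naturality of pinned maps gives $\theta_*(\tau^j)_*=(\tau^j)_*\theta_*$ on all of $G_{\mathrm{ad}}(\Phi,R)$. However, $\theta$ carries the factor $e_jR$ onto $\theta(e_j)R=e_{-j}R$. Hence $\theta_*\Gamma_{\mathbf e}$ acts on the image of the $j$-th factor, which is the $(-j)$-th factor, by $(\tau^j)_*$. Compare this with $\Gamma_{\mathbf e}\theta_*$, which acts there by $(\tau^{-j})_*$.

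Second, the pinned relation $\rho\tau\rho^{-1}=\tau^{-1}$ together with $\gamma_*\eta_*=(\gamma\eta)_*$ gives $\rho_*(\tau^j)_*=(\tau^{-j})_*\rho_*$.

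Combining the two, on the $(-j)$-th factor we get $\rho_*\theta_*\Gamma_{\mathbf e}=(\tau^{-j})_*\rho_*\theta_*=\Gamma_{\mathbf e}\rho_*\theta_*$. This is exactly where compatibility $\theta(e_j)=e_{-j}$ is needed: it makes the label change induced by $\theta$ match the inversion of $\tau$ under conjugation by $\rho$. The check is cleanest if written via the formula $\Gamma_{\mathbf e}(x_\alpha(t))=\prod_j(\tau^j)_*(x_\alpha(e_jt))$ on root elements, together with the factorwise identification of points. One should also note that the signs on non-simple root coordinates are handled automatically, because we only use the functorial identities of pinned maps and never explicit formulas.

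\emph{Elementary subgroup and bijectivity.} Bijectivity is immediate: by Lemma~\ref{lem:composition-idempotent-graph-data}, $\Gamma_{\mathbf e^\vee}$ is an inverse, and $\mathbf e^\vee$ is again compatible. So it suffices to show $\Gamma_{\mathbf e}(E(R))\subseteq E(R)$, and the same argument applied to $\mathbf e^\vee$ gives equality. The generators are $x_\lambda(t)$ for $\lambda\in\Phi_\rho$.

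Their images lie in $G(R)$ by the commutation just proved. They also lie in the untwisted elementary group, since pinned maps preserve it factorwise. The expected main obstacle is showing that these images lie in the twisted elementary group $E(R)$. My first attempt is to decompose $R$ further, using $\theta$-stable idempotents: $e_0$ is fixed, and $f:=e_1+e_2$ is fixed with $\theta$ swapping $e_1$ and $e_2$. Then $E(R)=E(e_0R)\times E(fR)$, because the relative root parameters split along fixed idempotents.

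On $e_0R$ the map is the identity. On $fR=e_1R\times e_2R$, the coefficient pair is split, so Lemma~\ref{lem:split-product-identification} identifies $E(fR)$ with $E_{\mathrm{ad}}(D_4,e_1R)$ via first projection. Under this identification, $\Gamma_{\mathbf e}$ becomes $(\tau)_*$ on the untwisted elementary group over $e_1R$, which preserves it. Pulling back by $\Delta_\rho$ then gives an element of $E(fR)$, so the image of each generator lies in $E(R)$.

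\emph{Naturality.} For an equivariant map $f:(R,\theta)\to(S,\theta)$, the identity $f_*\Gamma_{\mathbf e}=\Gamma_{f(\mathbf e)}f_*$ holds by componentwise naturality. Moreover, $f(\mathbf e)$ is compatible because $f$ is equivariant. This finishes the plan.
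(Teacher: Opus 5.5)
Your proposal is correct and follows the paper's own proof. Commutation with $\sigma_R=\rho_*\theta_*$ comes from $\theta$ carrying the label-$j$ factor to $\theta(e_j)R=e_{-j}R$, combined with $\rho\tau\rho^{-1}=\tau^{-1}$; preservation of $E(R)$ comes from splitting along the invariant idempotents $e_0$ and $e_1+e_2$, where Lemma~\ref{lem:split-product-identification} turns the action on the exchanged part into $\tau_*$ on $E_{\mathrm{ad}}(D_4,e_1R)$. One small wording fix: in the commutation step, ``on the $(-j)$-th factor'' should read ``on the $j$-th factor, with values in the $(-j)$-th''; the computation itself is right.
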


\begin{proof}
Conjugation by $\sigma_R=\rho_*\theta_*$ changes the idempotent
at label $j$ to $\theta(e_{-j})=e_j$, so preserves the action.
For elementary preservation, decompose by the invariant idempotents
$e_0$ and $p=e_1+e_2$. The action on $e_0R$ is the identity.
On $pR$, identify the exchanged factors with $B\times B$,
$B=e_1R$. By Lemma~\ref{lem:split-product-identification},
first projection identifies the twisted elementary group with
$E_{\mathrm{ad}}(D_4,B)$, and the action becomes $\tau_*$:
$$
        (x,\rho_*(x))\longmapsto
                  (\tau_*(x),\rho_*\tau_*(x)).
$$
It therefore preserves the elementary subgroup, as does its inverse.
\end{proof}

For any reduced datum over $T$, without an operator on $T$, put
$\mathcal F_{\Phi,\mathbf e}(T)
=(\Gamma_{\mathbf e}(x_\lambda(1)))_{\lambda\in\Phi_\rho}$.

\begin{corollary}
\label{cor:rigidity-graph-transformed-frame}
For every ring $T$ with $2\in T^\times$ and every reduced datum,
$$
        C_{G_{\mathrm{ad}}(D_4,T)}
                  (\mathcal F_{\Phi,\mathbf e}(T))=1.
$$
\end{corollary}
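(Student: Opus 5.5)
The plan is to reduce the statement to frame rigidity for the untransformed frame (Proposition~\ref{prop:rigidity-type-2D}, applied with $\ell=4$) by using the fact that $\Gamma_{\mathbf e}$ is an automorphism of the ambient split adjoint group $G_{\mathrm{ad}}(D_4,T)$.

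First, over an arbitrary ring $T$ with $2\in T^\times$, no operator is needed. The map $\Gamma_{\mathbf e}$ is defined componentwise on the decomposition $T=e_0T\times e_1T\times e_2T$. On $G_{\mathrm{ad}}(D_4,T)=\prod_j G_{\mathrm{ad}}(D_4,e_jT)$, it acts by the pinned automorphism $(\tau^j)_*$ on the $j$-th factor. Each $(\tau^j)_*$ is a group-scheme automorphism, and the product decomposition of points is an affine identification. Hence $\Gamma_{\mathbf e}$ is a group automorphism of $G_{\mathrm{ad}}(D_4,T)$, with inverse $\Gamma_{\mathbf e^\vee}$ by Lemma~\ref{lem:composition-idempotent-graph-data}. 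That lemma's composition identity uses only the componentwise definition, so it applies without compatibility.

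Second, suppose $c\in G_{\mathrm{ad}}(D_4,T)$ centralizes every $\Gamma_{\mathbf e}(x_\lambda(1))$. Then
$$
        \Gamma_{\mathbf e}^{-1}(c)\,x_\lambda(1)\,\Gamma_{\mathbf e}^{-1}(c)^{-1}
        =\Gamma_{\mathbf e}^{-1}\bigl(c\,\Gamma_{\mathbf e}(x_\lambda(1))\,c^{-1}\bigr)=x_\lambda(1).
$$
So $\Gamma_{\mathbf e}^{-1}(c)$ centralizes the standard frame $\mathcal F_\Phi(T)$. By Proposition~\ref{prop:rigidity-type-2D} it is trivial, and therefore $c=1$.

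The only point needing care is that Proposition~\ref{prop:rigidity-type-2D} concerns the frame built from the $\rho$-folding. That frame is a constant family over $\mathbb Z[1/2]$, defined over every $T$ without an operator, and the proposition holds over every such $T$. So it applies directly. If one prefers, one can also reduce to each factor $e_jT$ separately: centralizers of products are products, and on $e_jT$ the transformed frame is $(\tau^j)_*$ of the standard one. I expect no real obstacle beyond confirming that the pinned actions are honest automorphisms of the adjoint group scheme, which is \cite[Theorem~6.1.17]{ConradReductive}.
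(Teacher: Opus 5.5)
Your proposal is correct and is essentially the paper's own proof: transport the centralizer by the ambient automorphism $\Gamma_{\mathbf e}$ of $G_{\mathrm{ad}}(D_4,T)$ and apply the arbitrary-base frame rigidity of Proposition~\ref{prop:rigidity-type-2D}. Your remarks that the standard frame is constant over $\mathbb Z[1/2]$ and that no compatibility condition or operator on $T$ is needed are exactly the points the paper relies on.
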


\begin{proof}
Apply the ambient automorphism $\Gamma_{\mathbf e}$ to the
centralizer in Proposition~\ref{prop:rigidity-type-2D}.
\end{proof}

\begin{lemma}
\label{lem:powers-rho-absorbed-by-ring-automorphisms}
For a $\theta$-fixed idempotent $e$, the map
$$
        \mu_e(r)=(1-e)r+e\theta(r)
$$
is an involutive member of $\operatorname{Aut}(R,\theta)$.
Its action on $G(R)$ and $E(R)$ is $\rho_*$ on $eR$ and the
identity on $(1-e)R$.
\end{lemma}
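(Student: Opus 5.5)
We check the ring-level assertions first and then compute the action on root elements. Since $e$ is $\theta$-fixed, so is $1-e$, and $R=(1-e)R\times eR$ is a decomposition into $\theta$-invariant factors. On each factor $\mu_e$ restricts to $\operatorname{id}$ or $\theta$, both of which are ring automorphisms. Hence $\mu_e$ is a unital ring endomorphism, and it is additive and multiplicative because the supports are orthogonal: $\mu_e(rs)=(1-e)rs+e\theta(r)\theta(s)=\mu_e(r)\mu_e(s)$. Involutivity follows from $\theta^2=\operatorname{id}_R$ and $\theta(e)=e$, since
$$
\mu_e^2(r)=(1-e)r+e\theta(e\theta(r)+(1-e)r)=(1-e)r+er=r .
$$
Commutation with $\theta$ holds because $\theta(\mu_e(r))=(1-e)\theta(r)+er=\mu_e(\theta(r))$. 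Thus $\mu_e\in\operatorname{Aut}(R,\theta)$.

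For the group statement, we use the product decomposition of group points. The ambient points decompose as $G_{\mathrm{ad}}(D_4,R)=G_{\mathrm{ad}}(D_4,(1-e)R)\times G_{\mathrm{ad}}(D_4,eR)$, compatibly with $\sigma_R$ and with coefficient maps, because $e$ is fixed. The same holds for the twisted groups $G$ and, by root generation, for $E$. Under this decomposition, $(\mu_e)_*$ is the identity on the first factor and $\theta_*$ on the second. On the $\sigma$-fixed points over $eR$, the fixed-point equation $\rho_*\theta_*=\operatorname{id}$ gives $\theta_*=\rho_*^{-1}=\rho_*$, since $\rho$ is an involution. This is the same argument used in Proposition~\ref{prop:fixed-residue-standardness}. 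Hence the action is $\rho_*$ on $eR$ and the identity on $(1-e)R$, on $G(R)$ and therefore on $E(R)$.

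The only point requiring care is the compatibility of the pinned $\rho_*$ and $\theta_*$ with the factor decomposition, that is, naturality of both under the projections $R\to eR$ and $R\to(1-e)R$. This is supplied by the naturality of pinned graph maps recorded from \cite[Theorem~6.1.17]{ConradReductive} and by the commutation of $\rho_*$ and $\theta_*$ noted in Subsection~\ref{subsec:rings-with-operators}. Nothing else is an obstacle.
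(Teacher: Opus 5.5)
Your proposal is correct and follows the paper's own route: split $R$ along the $\theta$-invariant idempotent, note that $(\mu_e)_*$ is the identity on the $(1-e)R$ factor and $\theta_*$ on the $eR$ factor, and use $\rho_*=\theta_*$ on $\sigma$-fixed points. Your explicit checks that $\mu_e$ is a unital, $\theta$-equivariant ring involution, using $\theta(e)=e$ and $\theta^2=\operatorname{id}_R$, fill in details the paper leaves implicit.
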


\begin{proof}
Use the invariant product decomposition and
$\rho_* =\theta_*$ on the twisted group.
\end{proof}

Thus powers of $\rho$ contribute no separate graph factor.
For a split pair $A=B\times B$ with a designated first factor,
we will use
$$
        \Gamma_{j,A}:=\Delta_\rho\circ(\tau^j)_*
                                \circ\operatorname{pr}_0,
        \qquad \Delta_\rho(x)=(x,\rho_*(x)).
$$
This is the compatible action with labels $j,-j$ on the two factors;
its inverse is $\Gamma_{-j,A}$. Switching the designated factor
changes $j$ to $-j$. The action corresponding to $\tau^j\rho^a$
is $\Gamma_{j,A}\theta_{A,*}^a$, so its last factor is absorbed
into the coefficient map. These identities commute with reduction.

\subsection{Residual graph labels}
\label{subsec:residual-graph-standardness-D4}

We apply the field isomorphism theorems of
Section~\ref{sec:residual-standardness}, retaining only the exponent
of $\tau$ at each target field factor.

\begin{theorem}[Residual graph-standardness]
\label{thm:residual-graph-standardness-D4}
Let $\alpha:E(K_1)\xrightarrow{\sim}E(K_2)$ be an isomorphism
for residue algebras of ${}^2D_4$-local models.
The algebras are either both fixed or both split.
For fixed fields, the involutions have the same order and
$\alpha=i_g\circ\nu_*$, with $g\in G(K_2)$ and
$\nu:K_1\xrightarrow{\sim}K_2$ equivariant.
For split algebras, designate a target factor. Then
$$
        \alpha=i_g\circ\Gamma_{j,K_2}\circ\nu_*,
        \qquad g\in G(K_2),\quad j\in\mathbb Z/3\mathbb Z,
$$
where $\nu:K_1\xrightarrow{\sim}K_2$ is equivariant.
The label $j$ is independent of the decomposition and any source
factor choice. Switching the target factor replaces it by $-j$.
\end{theorem}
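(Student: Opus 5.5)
We follow the pattern of Section~\ref{sec:residual-standardness}. First we sort the residue algebras by the adjoint group over $k=K^\theta$, as in the table there. A fixed algebra with trivial involution gives the split group of type $B_3$. A fixed quadratic algebra gives the quasi-split group ${}^2D_4$ over $k$. A split algebra gives, by Lemma~\ref{lem:split-product-identification}, the untwisted group $E_{\mathrm{ad}}(D_4,k)$. The Steinberg and Borel--Tits isomorphism theorems separate these three models: $B_3$ has a different absolute type, and the other two differ in splitness. Under our standing assumptions, no exceptional finite isomorphism (such as $B_3$ versus $C_3$) can interfere, since $\operatorname{char}\ne2$. This shows that both algebras are fixed or both are split, and that two fixed involutions have the same order.

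For fixed fields we repeat the proof of Proposition~\ref{prop:fixed-residue-standardness}. In the trivial-involution case, $B_3$ has no diagram automorphism. In the quadratic case we have to look at the diagram factors. The abstract automorphisms of a quasi-split ${}^2D_4$ over $k$ involve only diagram automorphisms defined over $k$. These are the automorphisms commuting with the $*$-action $\rho$, and the centralizer of $\rho$ in $S_3$ is $\langle\rho\rangle$. So the only possible diagram factor is $\rho^j$, and as before $\rho_*=\theta_{2,*}$ absorbs it into an equivariant $\nu$. The Galois kernel argument recovers $K_i/k_i$ exactly as in that proposition.

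For split algebras we fix target and source designated factors and form $\alpha_0=\operatorname{pr}_{2,0}\circ\alpha\circ\Delta_\rho$. Applied to this, the field theorem gives $\alpha_0=i_h\circ\gamma_*\circ(\nu_0)_*$ with $\gamma\in\mathcal D$. We write $\gamma=\tau^j\rho^a$. Pulling back through $\Delta_\rho$ gives $\alpha=i_{\Delta_\rho(h)}\circ\Gamma_{j,K_2}\circ\theta_{2,*}^a\circ(\nu_0\times\nu_0)_*$. We then set $\nu=\theta_2^a\circ(\nu_0\times\nu_0)$, using the identity $\Gamma_{j,A}\theta_{A,*}^a$ for $\tau^j\rho^a$ recorded at the end of Subsection~\ref{subsec:compatible-graph-automorphisms}. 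Changing the source factor composes with $\theta_{1,*}$, which only modifies $\nu$. Switching the target factor conjugates by the swap, which replaces $j$ by $-j$, as noted before the theorem.

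The main step is independence of $j$. Suppose two decompositions give $i_g\Gamma_{j}\nu_*=i_{g'}\Gamma_{j'}\nu'_*$. After projecting to the designated factor, this becomes an equality $i_h\tau^j_*\nu_{0*}=i_{h'}\tau^{j'}_*\rho^{b}_*\nu'_{0*}$ in $\operatorname{Aut}E_{\mathrm{ad}}(D_4,k)$, and hence
$$
\tau^{j-j'}_*\rho^{-b}_*=i_{h^{-1}h'}\circ(\nu'_0\nu_0^{-1})_*.
$$
We use the uniqueness part of the field theorem: $\operatorname{Aut}E_{\mathrm{ad}}(D_4,k)$ is a semidirect product of $G_{\mathrm{ad}}(k)\rtimes\operatorname{Aut}(k)$ by $\mathcal D$. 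The graph component is detected by the induced permutation of conjugacy classes of maximal parabolics, that is, of the end nodes. Inner, diagonal and field automorphisms fix these classes. So $\tau^{j-j'}\rho^{-b}$ acts trivially on the end nodes, which forces $j=j'$ and $b=0$. We expect the delicate point to be verifying this parabolic-class detection for the abstract group $E_{\mathrm{ad}}(D_4,k)$ over finite fields. There we would use the Steinberg description of $\operatorname{Aut}$ together with the fact that graph automorphisms permute the classes of maximal parabolics of the three end-node types.
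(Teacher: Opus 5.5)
Your proposal is correct and follows the paper's proof essentially step by step. You separate the three field models ($B_3$, quasi-split ${}^2D_4$, split $D_4$) by the field theorems. In the fixed quadratic case you absorb the diagram factor into an equivariant $\nu$ using $C_{\mathcal D}(\rho)=\langle\rho\rangle$. In the split case you write $\gamma=\tau^j\rho^a$ with $g=\Delta_\rho(h)$ and $\nu=\theta_2^a\circ(\nu_0\times\nu_0)$, and you read off $j$ from the left coset $\gamma\langle\rho\rangle$: a source switch multiplies $\gamma$ by $\rho$ on the right, and a target switch multiplies it by $\rho$ on the left.

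For independence of $j$, the paper simply cites uniqueness of the diagram factor in the field theorems. Your parabolic-class argument is an optional expansion of that citation, and it is not actually delicate. All the maps being compared are explicit, and $i_h$ and $\nu_{0*}$ carry each standard parabolic of $E$ to a $G_{\mathrm{ad}}(k)$-conjugate of itself. So you only need the BN-pair fact that distinct standard parabolics are not $G_{\mathrm{ad}}(k)$-conjugate; no abstract characterization of parabolics is required.
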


\begin{proof}
The three field models are split $B_3$, genuine ${}^2D_4$, and
split $D_4$; the field theorems distinguish them.
For fixed algebras, the proof of
Proposition~\ref{prop:fixed-residue-standardness} applies:
$B_3$ has no diagram factor, while in the genuinely twisted case
the coefficient extension is recovered and the compatible diagram
factor lies in $C_{\mathcal D}(\rho)=\langle\rho\rangle$.
It is absorbed into $\nu$.

For split algebras $K_i=k_i^2$, first projection gives an untwisted
isomorphism $i_h\circ\gamma_*\circ(\nu_0)_*$, where
$h\in G_{\mathrm{ad}}(D_4,k_2)$ and
$\nu_0:k_1\xrightarrow{\sim}k_2$.
Write $\gamma=\tau^j\rho^a$, take $g=\Delta_\rho(h)$, and put
$\nu=\theta_2^a\circ(\nu_0\times\nu_0)$.
The split-pair identities give the required expression.
Uniqueness of the diagram factor in the field theorem determines
$j$: a source switch multiplies $\gamma$ on the right by $\rho$,
leaving $j$ unchanged, while a target switch multiplies it on the
left and replaces $j$ by $-j$.
\end{proof}

\begin{corollary}[Residual graph labels]
\label{cor:residual-graph-class-D4}
At each maximal ideal $\mathfrak n$ of the target residue algebra,
$\alpha$ has a unique label
$\operatorname{gr}_{\mathfrak n}(\alpha)\in\mathbb Z/3\mathbb Z$,
zero on a fixed field, satisfying
\begin{equation}
\label{eq:residual-graph-label-equivariance}
        \operatorname{gr}_{\theta_2(\mathfrak n)}(\alpha)
                   =-\operatorname{gr}_{\mathfrak n}(\alpha).
\end{equation}
All labels vanish exactly when $\alpha=i_g\circ\nu_*$
with $\nu$ equivariant.
\end{corollary}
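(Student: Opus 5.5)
We derive the corollary directly from Theorem~\ref{thm:residual-graph-standardness-D4}, handling fixed and split residue algebras separately.

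\emph{Fixed residue algebras.} There is a single maximal ideal $\mathfrak n=0$, and the theorem already gives $\alpha=i_g\circ\nu_*$. We set $\operatorname{gr}_{\mathfrak n}(\alpha)=0$. Equation~\eqref{eq:residual-graph-label-equivariance} holds trivially, since $\theta_2(\mathfrak n)=\mathfrak n$ and $0=-0$.

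\emph{Split residue algebras.} Write $K_2=k_2\times k_2$. The two maximal ideals $\mathfrak n_0,\mathfrak n_1$ are the kernels of the two projections, and $\theta_2$ interchanges them. We designate the factor corresponding to $\mathfrak n$ as the first target factor. The theorem then gives $\alpha=i_g\circ\Gamma_{j,K_2}\circ\nu_*$, and we define $\operatorname{gr}_{\mathfrak n}(\alpha):=j$. Well-definedness is exactly the independence clause of the theorem: $j$ does not depend on the decomposition or on any choice of source factor. The same theorem says that switching the designated target factor replaces $j$ by $-j$. Since that switch replaces $\mathfrak n$ by $\theta_2(\mathfrak n)$, this gives \eqref{eq:residual-graph-label-equivariance}.

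\emph{The characterization.} Suppose all labels vanish. In the fixed case there is nothing to prove. In the split case $j=0$ and $\Gamma_{0,K_2}=\Delta_\rho\circ\operatorname{pr}_0$ is the identity on $G(K_2)$ by Lemma~\ref{lem:split-product-identification}, so $\alpha=i_g\circ\nu_*$.

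Conversely, suppose $\alpha=i_g\circ\nu_*$ with $\nu$ equivariant. In the split case this is itself a decomposition of the form in the theorem with $j=0$, because $\Gamma_{0,K_2}=\mathrm{id}$. Uniqueness of the label then forces the label at the designated factor to be $0$, and hence, by the equivariance just proved, the label at the other factor is $-0=0$.

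The only genuinely delicate point is the uniqueness of $j$. It rests on uniqueness of the diagram factor $\gamma=\tau^j\rho^a$ in the field isomorphism theorem, together with the fact that the right factor $\rho^a$ is absorbed into $\nu$ while $\tau^j$ is not. That uniqueness is already asserted in Theorem~\ref{thm:residual-graph-standardness-D4}, so we only need to invoke it.
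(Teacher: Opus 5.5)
Your proposal is correct and is essentially the paper's argument: the paper's proof is simply ``use Theorem~\ref{thm:residual-graph-standardness-D4} and $\Gamma_{0,K_2}=\operatorname{id}$''. You have written out the bookkeeping behind that line: you designate the factor at $\mathfrak n$ as the first target factor, read the equivariance of the labels off the target-switch clause, and use the independence of $j$ for both directions of the characterization.
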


\begin{proof}
Use the theorem and $\Gamma_{0,K_2}=\operatorname{id}$.
\end{proof}

For $\varphi\in\operatorname{Aut}E(R)$, use its residual
isomorphisms from Subsection~\ref{subsec:congruence-spectrum}
and put
$$
        c_{\mathfrak m}
          :=\operatorname{gr}_{\mathfrak m/I_i}
                          (\varphi_i^{\mathrm{res}})
                    \qquad(\mathfrak m\in\mathcal O_i).
$$
Then $\varphi^\sharp$ preserves orbit sizes and
$c_{\theta(\mathfrak m)}=-c_{\mathfrak m}$.
We next prove that these pointwise labels come from idempotents;
local constancy is a conclusion, not an assumption.

\subsection{Separation and gluing of graph frames}
\label{subsec:global-graph-reduction}

The three frames are
$$
        \mathcal F_{\Phi,j}(T)
          :=\bigl((\tau^j)_*(x_\lambda(1))\bigr)_{\lambda\in\Phi_\rho}.
$$
To separate them, we lift to $\operatorname{Spin}_8$ over an
algebraically closed residue field. A fixed Weyl word in the lifted
frame squares to one of three distinct central involutions; inner
conjugation cannot interchange them.

\begin{lemma}[Separation of the reduced graph frames]
\label{lem:separation-D4-graph-frames}
Let $T\ne0$ be a commutative ring with $2\in T^\times$.
If $g,h\in G_{\mathrm{ad}}(D_4,T)$ and
$j,j'\in\mathbb Z/3\mathbb Z$ satisfy
$$
        g(\tau^j)_*(x_\lambda(1))g^{-1}
          =h(\tau^{j'})_*(x_\lambda(1))h^{-1}
                  \qquad(\lambda\in\Phi_\rho),
$$
then $j=j'$ and $g=h$.
\end{lemma}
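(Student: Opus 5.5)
The plan is to reduce first to equality of labels and then invoke frame rigidity. Suppose $j=j'$ has been shown. Then $h^{-1}g$ centralizes $\mathcal F_{\Phi,j}(T)=\mathcal F_{\Phi,\mathbf e}(T)$ for the constant datum $\mathbf e$ with $e_j=1$, and Corollary~\ref{cor:rigidity-graph-transformed-frame} gives $g=h$. So the real content is $j=j'$.

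Since $j,j'$ are constants in $\mathbb Z/3\mathbb Z$ and $T\ne0$, we may base change along $T\to\bar k$, an algebraic closure of the residue field at some maximal ideal. Its characteristic is not $2$, and the hypothesis survives base change. After replacing $g$ by $h^{-1}g$ and applying $(\tau^{-j'})_*$, we may assume $j'=0$, with $c\in G_{\mathrm{ad}}(D_4,\bar k)$ conjugating $(\tau^j)_*$ of the standard frame to the standard frame. It suffices to show that this forces $j=0$.

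Next we lift to $\operatorname{Spin}_8(\bar k)$. The frame elements are unipotent, so each has a unique unipotent lift; for instance $x_\alpha(1)$ lifts to the simply connected root element. The automorphism $(\tau^j)_*$ is pinned and so lifts to $\operatorname{Spin}_8$. Over $\bar k$ the adjoint point $c$ lifts to $\tilde c\in\operatorname{Spin}_8(\bar k)$, and conjugation by $\tilde c$ carries the lifted frame $(\tau^j)_*(\tilde x_\lambda(1))$ to $\tilde x_\lambda(1)$ exactly, because unipotent lifts are unique.

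For the invariant, choose the fixed Weyl word $w$ attached to a short relative root $[\alpha_1]$, namely
$$
        w=\tilde x_{[\alpha_1]}(1)\,\tilde x_{-[\alpha_1]}(-1)\,\tilde x_{[\alpha_1]}(1).
$$
Its square is the product $h_{\alpha_1}(-1)h_{\alpha_3}(-1)$ over the two absolute roots of the class, taken in $\operatorname{Spin}_8$. This is a central element, one of the three nontrivial central involutions of $\operatorname{Spin}_8$; compare $h_{\alpha_1}(-1)h_{\alpha_3}(-1)$ with $h_{\alpha_4}(-1)$ and the other pairs via the center $\mu_2\times\mu_2$. Applying $(\tau^j)_*$ permutes these three involutions cyclically. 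Conjugation by $\tilde c$ fixes the center pointwise. Comparing $w^2$ computed from the two frames therefore gives $z=\tau^j(z)$ for a central involution $z$ moved by $\tau$ whenever $j\ne0$, hence $j=0$.

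The main obstacle is this explicit central computation. One must check that $w^2$ is central, which may require choosing the word carefully, perhaps using both short classes or a toral word. One must also check that $\tau$ acts on it freely, with the correct signs. I would verify it in the half-spin and vector representations: $w^2$ acts as $-1$ on exactly one of the three eight-dimensional representations, and $\tau$ permutes these.
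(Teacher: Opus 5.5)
Your proposal is correct and is essentially the paper's proof. It follows the same steps: reduce at a maximal ideal to an algebraic closure, lift the conjugator to $\operatorname{Spin}_8$ using uniqueness of unipotent lifts, square the Weyl word of a short relative root to get the central involution $h_{\alpha}(-1)h_{\overline\alpha}(-1)$ that $\tau$ cycles, and finish with Corollary~\ref{cor:rigidity-graph-transformed-frame}. Your closing hesitations are unnecessary, with one small correction: the square is central as written, since the two roots are orthogonal outer nodes, and each nontrivial central involution acts trivially on exactly one of the three eight-dimensional representations and as $-1$ on the other two (not $-1$ on exactly one), which still separates them.
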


\begin{proof}
Reduce at a maximal ideal and extend to an algebraic closure $k$.
Lift the image of $h^{-1}g$ to $\widetilde c\in\operatorname{Spin}_8(k)$.
Let $\widetilde u_\lambda$ be the standard relative root element
of parameter $1$ in the simply connected group.
The central isogeny has kernel $\boldsymbol\mu_2^2$.
Two unipotent lifts with the same adjoint image therefore differ
by a central semisimple element, which must be one by uniqueness
of Jordan decomposition. Hence the conjugacy identities lift exactly:
$$
        \widetilde c(\tau^j)_*(\widetilde u_\lambda)\widetilde c^{-1}
                    =(\tau^{j'})_*(\widetilde u_\lambda).
$$

Number the outer nodes so that $\rho$ exchanges $\alpha_3,\alpha_4$,
and put $\beta=[\alpha_3]$.
Their absolute rank-one subgroups commute, so the word
$\widetilde w=\widetilde u_\beta
\widetilde u_{-\beta}^{-1}\widetilde u_\beta$ satisfies
\begin{equation}
\label{eq:D4-frame-central-word}
        \widetilde w=w_{\alpha_3}(1)w_{\alpha_4}(1),\qquad
        \widetilde w^{\,2}
          =h_{\alpha_3}(-1)h_{\alpha_4}(-1)=:z_{34}.
\end{equation}
The three elements
$z_{rs}=h_{\alpha_r}(-1)h_{\alpha_s}(-1)$,
$\{r,s\}\subset\{1,3,4\}$, $r\ne s$, are central by the Cartan
pairings and distinct by their fundamental-weight characters.
The action of $\tau$ cycles them. Applying the lifted transport
identities to $\widetilde w^{\,2}$ gives
$$
        (\tau^j)_*(z_{34})
          =\widetilde c(\tau^j)_*(z_{34})\widetilde c^{-1}
          =(\tau^{j'})_*(z_{34}),
$$
so $j=j'$. Back over $T$, the quotient $h^{-1}g$ centralizes
this transformed frame and is one by
Corollary~\ref{cor:rigidity-graph-transformed-frame}.
\end{proof}

\smallskip
\noindent\emph{The affine normalization scheme.}
Put $u_\lambda=x_\lambda(1)$ and $v_\lambda=\varphi(u_\lambda)$.
For an $R$-algebra $T$, define
$$
 \mathcal T_{\varphi,j}(T)
   :=\{g\in G_{\mathrm{ad}}(D_4,T)\mid
      g(\tau^j)_*(u_{\lambda,T})g^{-1}=v_{\lambda,T}
                         \ (\lambda\in\Phi_\rho)\},
 \qquad
 \mathcal X_\varphi:=\coprod_{j\in\mathbb Z/3\mathbb Z}
                                      \mathcal T_{\varphi,j}.
$$
This finite disjoint union of closed affine transporters is affine
of finite presentation. A $T$-point is a pair $(g,\mathbf e)$,
where $\mathbf e$ is a reduced datum and $g$ transports
$\mathcal F_{\Phi,\mathbf e}(T)$ to $(v_{\lambda,T})_\lambda$.
Only the frames are extended to $T$, not $\varphi$.
No operator or compatibility condition on $T$ is imposed.

\begin{corollary}
\label{cor:uniqueness-D4-graph-transporter}
For every $R$-algebra $T$, the set $\mathcal X_\varphi(T)$ has
at most one point.
\end{corollary}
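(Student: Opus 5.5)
The plan is to reduce the statement to the separation Lemma~\ref{lem:separation-D4-graph-frames}, applied at each prime of $T$, and then to glue the resulting equalities. A $T$-point of $\mathcal X_\varphi$ is a pair $(g,\mathbf e)$: a reduced datum $\mathbf e=(e_0,e_1,e_2)$ of orthogonal idempotents summing to $1$, together with $g\in G_{\mathrm{ad}}(D_4,T)$ such that
$$
        g\,\Gamma_{\mathbf e}(u_{\lambda,T})g^{-1}=v_{\lambda,T}
        \qquad(\lambda\in\Phi_\rho).
$$
Let $(g,\mathbf e)$ and $(h,\mathbf f)$ be two such points. We must show $\mathbf e=\mathbf f$ and $g=h$.

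\emph{Step 1: refine the two partitions.} Put $p_{ij}=e_if_j$. These are orthogonal idempotents summing to $1$, so $T=\prod_{i,j}p_{ij}T$. Base change is natural for $\Gamma$, as noted after Definition~\ref{def:compatible-graph-datum}. Hence over the factor $T_{ij}:=p_{ij}T$ the datum $\mathbf e$ becomes the constant label $i$ and $\mathbf f$ becomes the constant label $j$. The two transport identities then give
$$
        g\,(\tau^i)_*(u_\lambda)g^{-1}=v_\lambda
        =h\,(\tau^j)_*(u_\lambda)h^{-1}
$$
over $T_{ij}$.

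\emph{Step 2: apply separation on each factor.} Suppose $T_{ij}\ne0$. Lemma~\ref{lem:separation-D4-graph-frames} then forces $i=j$ and $g=h$ over $T_{ij}$. Consequently $p_{ij}=0$ whenever $i\ne j$. This gives $e_i=e_i\sum_j f_j=e_if_i$, and symmetrically $f_i=e_if_i$, so $\mathbf e=\mathbf f$. The factors $T_{ij}$ with $T_{ij}=0$ impose nothing. On the surviving factors $T_{ii}$ we have $g=h$.

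\emph{Step 3: reassemble.} Points of the affine group scheme over a finite product of rings are tuples of points over the factors. Since $g$ and $h$ agree on every factor $T_{ii}$, we conclude $g=h$ over $T$. This step could equally be phrased through Lemma~\ref{lem:affine-descent-and-local-detection}, checking equality after every maximal localization.

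The main obstacle is only formal. One has to identify $\mathcal X_\varphi(T)$ with pairs $(g,\mathbf e)$, that is, to see that a $T$-point of the disjoint union $\coprod_j\mathcal T_{\varphi,j}$ is a decomposition of $\operatorname{Spec}T$ into three clopen pieces, each carrying a point of the corresponding transporter. The substantive content is Lemma~\ref{lem:separation-D4-graph-frames}, which needs only $T\ne0$ and is applied to the nonzero factors $T_{ij}$.
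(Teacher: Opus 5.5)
Your proposal is correct and follows the paper's proof: split $T$ along the idempotents $e_if_j$, apply Lemma~\ref{lem:separation-D4-graph-frames} on each nonzero factor to get $\mathbf e=\mathbf f$, and then identify the transporters factorwise. The paper obtains the last step from frame rigidity (Corollary~\ref{cor:rigidity-graph-transformed-frame}); you use the $g=h$ conclusion of the separation lemma, which rests on that same corollary.
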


\begin{proof}
For two points with data $\mathbf e,\mathbf f$, separation on each
nonzero factor $e_jf_rT$ forces $j=r$.
Summing gives $e_j=e_jf_j=f_j$; frame rigidity then identifies
the transporters on all factors.
\end{proof}

\begin{proposition}
\label{prop:local-D4-graph-transporter}
For every $i\in\mathcal I$, the set $\mathcal X_\varphi(A_i)$
is nonempty.
\end{proposition}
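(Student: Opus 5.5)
We split according to whether the orbit $\mathcal O_i$ is fixed or split, and in each case remove the residual graph factor locally before invoking Lemma~\ref{lem:relative-local-rigidity-2D4}. Write $A=A_i$, $\Psi=(\iota_i)_*\varphi$, and let $j=(\varphi^\sharp)^{-1}(i)$. By Theorem~\ref{thm:residual-graph-standardness-D4}, the residual isomorphism $\varphi_i^{\mathrm{res}}$ has the form $i_{h}\circ\nu_*$ on a fixed orbit. On a split orbit, after designating a target factor, it has the form $i_h\circ\Gamma_{c,K_i}\circ\nu_*$, with $c=c_{\mathfrak m}$ for the designated maximal ideal $\mathfrak m$.

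\emph{Fixed orbit.} Here the residual map is ordinary, so Lemma~\ref{lem:relative-local-rigidity-2D4} applies directly to $\Psi$. It gives $g_A\in G(A)$ transporting the standard frame to $(\Psi(x_\lambda(1)))_\lambda$. Hence $(g_A,(1,0,0))\in\mathcal X_\varphi(A)$, with the point lying in $\mathcal T_{\varphi,0}(A)$. This is consistent with the requirement that the label vanish on a fixed field.

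\emph{Split orbit.} Write $A=B\times B$ with the exchange involution, and let $\Gamma_{c,A}=\Delta_\rho\circ(\tau^c)_*\circ\operatorname{pr}_0$ be the compatible graph action with labels $c,-c$ on the two factors. By Proposition~\ref{prop:compatible-graph-automorphisms}, $\Gamma_{c,A}$ is an automorphism of $E(A)$ and $G(A)$ commuting with reduction. Reduction of $\Gamma_{c,A}$ gives $\Gamma_{c,K_i}$, but we need $\Gamma_{c,A}$ to intertwine the conjugation $i_h$ correctly. We therefore set
$$
        \Psi':=\Gamma_{c,A}^{-1}\circ i_{\tilde h}^{-1}\circ\Psi ,
$$
where $\tilde h\in G(A)$ is any lift of $h$, obtained by lifting the projected element through the split Bruhat decomposition as in Lemma~\ref{lem:fixed-to-split-frame-normalization}. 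Then
$$
        (q_A)_*\Psi'=\Gamma_{c,K_i}^{-1}\,i_h^{-1}\,i_h\,\Gamma_{c,K_i}\nu_*=\nu_*
$$
is ordinary. Lemma~\ref{lem:relative-local-rigidity-2D4} now gives a transporter $g'$ of the standard frame for $\Psi'$. Consequently
$$
        \Psi(x_\lambda(1))=\tilde h\,\Gamma_{c,A}(g')\,\Gamma_{c,A}(x_\lambda(1))\,\Gamma_{c,A}(g')^{-1}\tilde h^{-1}.
$$
Thus $g:=\tilde h\,\Gamma_{c,A}(g')$ transports the frame $\mathcal F_{\Phi,\mathbf e}(A)$, where $\mathbf e$ is the datum with idempotent $(1,0)$ at label $c$ and $(0,1)$ at label $-c$. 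When $c=-c$, that is $c=0$, these two idempotents are summed. This gives a point of $\mathcal X_\varphi(A)$, as required. The datum is automatically $\rho$-compatible, although compatibility is not needed for membership in $\mathcal X_\varphi$.

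The main technical point is checking that $\Gamma_{c,A}(x_\lambda(1))$ agrees, componentwise, with the definition of $\mathcal F_{\Phi,\mathbf e}(A)$ as $\Gamma_{\mathbf e}$ applied to the constant frame in the ambient split group. This follows from the formula $\Gamma_{j,A}=\Delta_\rho(\tau^j)_*\operatorname{pr}_0$ together with $\rho\tau\rho^{-1}=\tau^{-1}$. These give second component $\rho_*\tau^c_*(x)=\tau^{-c}_*\rho_*(x)$, which is the $\tau^{-c}$-action on the second factor applied to the $\sigma$-fixed frame element. A second point to verify is that lifting $h$ to $\tilde h$ does not disturb the residual hypothesis needed in Lemma~\ref{lem:relative-local-rigidity-2D4}; this holds because only the reduction of $\Psi'$ enters that lemma.
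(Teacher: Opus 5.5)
Your proof is correct and follows the paper's argument. You remove the local graph action $\Gamma_{c,A}$ (trivial on fixed models), apply Lemma~\ref{lem:relative-local-rigidity-2D4} to the resulting map with ordinary residual data, and transport back via $g=\tilde h\,\Gamma_{c,A}(g')$ with labels $c,-c$ on the two factors. The only difference is that you first lift $h$ by the split Bruhat argument and give the lemma a trivial residual conjugator, whereas the paper strips only $\Gamma_i$ and lets the lemma lift the conjugated conjugator $(\Gamma_i^{\mathrm{res}})^{-1}(h_i)$ itself; both routes give the same point of $\mathcal X_\varphi(A)$.
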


\begin{proof}
Put $A=A_i$, $\Psi=(\iota_i)_*\varphi$, and
$i'=(\varphi^\sharp)^{-1}(i)$, and identify $A_{\mathrm{res}}=K_i$.
The residual theorem gives
$$
        (q_A)_*\Psi=i_{h_i}\circ\Gamma_i^{\mathrm{res}}
                                    \circ(\nu_iq_{i'})_*,
$$
where $\nu_i:K_{i'}\xrightarrow{\sim}K_i$ is equivariant and
$\Gamma_i$ is the identity for a fixed model, or $\Gamma_{j,A}$
for a split model. The latter is an actual target automorphism
constructed above, without a graph datum over $R$.

The normalized map $\widetilde\Psi=\Gamma_i^{-1}\Psi$ has ordinary
residual data, with conjugator $(\Gamma_i^{\mathrm{res}})^{-1}(h_i)$.
By Lemma~\ref{lem:relative-local-rigidity-2D4}, it has a transporter
$\widetilde g_i\in G(A)$. Put $g_i=\Gamma_i(\widetilde g_i)$; then
$$
        (q_A)_*(g_i)=h_i,\qquad
        \Psi(u_\lambda)=g_i\Gamma_i(u_\lambda)g_i^{-1}.
$$
The associated local datum has label $0$ on a fixed model and
labels $j,-j$ on the two factors of a split model. Together with
$g_i$, it gives a point of $\mathcal X_\varphi(A)$.
\end{proof}

\begin{theorem}[Global graph reduction]
\label{thm:global-graph-reduction-2D4}
Every $\varphi\in\operatorname{Aut}E(R)$ determines a unique
$g\in G(R)$ and $\mathbf e\in\mathcal C_\rho(R,\theta)$ such that
$$
        \varphi(x_\lambda(1))
           =g\Gamma_{\mathbf e}(x_\lambda(1))g^{-1}
                         \qquad(\lambda\in\Phi_\rho).
$$
The idempotents realize the residual labels:
$$
        e_j\bmod\mathfrak m
           =\begin{cases}1,&j=c_{\mathfrak m},\\0,&j\ne c_{\mathfrak m}.
             \end{cases}
$$
The automorphism $\psi=\Gamma_{\mathbf e}^{-1}\circ i_{g^{-1}}
\circ\varphi$ fixes the frame, satisfies
$\psi^\sharp=\varphi^\sharp$, and has residual isomorphisms
$$
        \psi_i^{\mathrm{res}}=(\nu_i)_*,\qquad
        \nu_i:K_{i'}\xrightarrow{\ \sim\ }K_i\text{ equivariant},
        \qquad i'=(\varphi^\sharp)^{-1}(i).
$$
The residual maps of $\psi^{-1}$ are the inverse coefficient
isomorphisms.
\end{theorem}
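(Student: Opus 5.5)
The plan is to apply Proposition~\ref{prop:unique-affine-gluing} to the affine scheme $\mathcal X_\varphi$. Corollary~\ref{cor:uniqueness-D4-graph-transporter} gives at most one point over every $R$-algebra, and Proposition~\ref{prop:local-D4-graph-transporter} gives a point over every $A_i$. Hence there is a unique $(g,\mathbf e)\in\mathcal X_\varphi(R)$, where $g\in G_{\mathrm{ad}}(D_4,R)$ and $\mathbf e$ is a reduced datum. Its image over each $A_i$ is the local point constructed there. Uniqueness of the asserted pair then follows from the same corollary, applied with $T=R$.

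Next I will show that $\sigma(g)=g$ and that $\mathbf e$ is compatible, using the same uniqueness. Apply $\sigma_R=\rho_*\theta_*$ to the transport identities. Since the $v_\lambda$ and the standard frame are $\sigma_R$-fixed, we get
$$
        \sigma_R(g)\,\sigma_R\Gamma_{\mathbf e}\sigma_R^{-1}(x_\lambda(1))\,\sigma_R(g)^{-1}=v_\lambda .
$$
The conjugation computation in the proof of Proposition~\ref{prop:compatible-graph-automorphisms} shows that $\sigma_R\Gamma_{\mathbf e}\sigma_R^{-1}=\Gamma_{\mathbf e'}$, with $e'_j=\theta(e_{-j})$. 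This uses $\rho\tau^j\rho^{-1}=\tau^{-j}$, and no compatibility is assumed. Thus $(\sigma_R(g),\mathbf e')$ is another point of $\mathcal X_\varphi(R)$. Uniqueness gives $\mathbf e'=\mathbf e$, which is exactly \eqref{eq:compatible-graph-idempotents}, and $\sigma_R(g)=g$, so $g\in G(R)$. The lattice condition is vacuous here because we work in the adjoint form.

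For the residual labels, reduce modulo $\mathfrak m\in\mathcal O_i$ through $A_i$. The local datum has label $0$ on a fixed model and labels $j,-j$ on the two factors of a split model. By construction in Proposition~\ref{prop:local-D4-graph-transporter}, these are the labels $c_{\mathfrak m}$ of $\varphi_i^{\mathrm{res}}$ from Theorem~\ref{thm:residual-graph-standardness-D4} and Corollary~\ref{cor:residual-graph-class-D4}. Since the global datum localizes to the local one, $e_j\bmod\mathfrak m$ is $1$ exactly at $j=c_{\mathfrak m}$.

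Finally I treat $\psi=\Gamma_{\mathbf e}^{-1}i_{g^{-1}}\varphi$. It fixes the frame by construction. The reduction $(q_i)_*(g)$ equals $h_i$, because $(q_{A_i})_*(g_i)=h_i$. Moreover, $\Gamma_{\mathbf e}$ reduces modulo $I_i$ to the residual graph action $\Gamma_i^{\mathrm{res}}$, by naturality $f_*\Gamma_{\mathbf e}=\Gamma_{f(\mathbf e)}f_*$. Both $i_g$ and $\Gamma_{\mathbf e}$ are compatible with the reductions $(q_i)_*$, so they preserve every $N_i$. This gives $\psi^\sharp=\varphi^\sharp$. Cancelling $i_{h_i}$ and $\Gamma_i^{\mathrm{res}}$ in the residual decomposition leaves $\psi_i^{\mathrm{res}}=(\nu_i)_*$, as in Corollary~\ref{cor:local-reduction-after-global-normalization}; inverting gives the statement for $\psi^{-1}$.

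The main obstacle is identifying $\Gamma_{\mathbf e}$ mod $I_i$ with $\Gamma_i^{\mathrm{res}}$ on a split orbit. This requires matching the designated factor and the sign convention $j$ versus $-j$, and the switching rule in Theorem~\ref{thm:residual-graph-standardness-D4} is what makes this bookkeeping consistent.
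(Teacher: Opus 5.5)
Your proposal is correct and follows the paper's proof essentially step for step. You glue the unique point of $\mathcal X_\varphi$ and twist it to obtain the second point $(\sigma_R(g),(\theta(e_{-j}))_j)$, which gives $g\in G(R)$ and $\theta(e_j)=e_{-j}$. You then read off the residues of the $e_j$ from the localizations and cancel $i_{h_i}$ and the residual graph action to get $\psi_i^{\mathrm{res}}=(\nu_i)_*$.

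The designated-factor bookkeeping you single out as the main obstacle is in fact automatic. Indeed, $(\iota_i)_*\Gamma_{\mathbf e}=\Gamma_{\iota_i(\mathbf e)}(\iota_i)_*$, and $\iota_i(\mathbf e)$ is by construction the local datum defining $\Gamma_i$ for the factor designated in Proposition~\ref{prop:local-D4-graph-transporter}.
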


\begin{proof}
The preceding two statements and
Proposition~\ref{prop:unique-affine-gluing} give the unique point
$(g,\mathbf e)\in\mathcal X_\varphi(R)$.
Twisting its identities gives the point
$(\sigma_R(g),(\theta(e_{-j}))_j)$ of the same scheme.
Uniqueness therefore gives $g\in G(R)$ and
$\theta(e_j)=e_{-j}$. Its localizations are the constructed local
points, so the idempotents have the asserted residues.

Both $i_g$ and $\Gamma_{\mathbf e}$ preserve each $N_i$ by
naturality of reduction. Thus $\psi^\sharp=\varphi^\sharp$.
Over $A_i$, the graph action is $\Gamma_i$ and $g$ reduces to
$h_i$. Cancelling them in the residual identity gives
$$
        (q_{A_i})_*\circ(\iota_i)_*\circ\psi=(\nu_iq_{i'})_*.
$$
This proves the coefficient formula; inversion gives the assertion
for $\psi^{-1}$. Uniqueness follows from that of the global point.
\end{proof}

\subsection{The automorphism theorem}
\label{subsec:automorphism-theorem-D4}

After graph reduction, the ordinary recognition and coefficient
recovery results apply to the normalized automorphism and its inverse.
The cyclic choice of representatives also gives a semidirect product.

\begin{theorem}[Automorphisms of type ${}^2D_4$]
\label{thm:automorphisms-type-2D4}
Every automorphism of $E(R)$ has a unique decomposition
$$
        \varphi=i_g\circ\Gamma_{\mathbf e}\circ\mu_*,\qquad
        g\in G(R),\quad\mathbf e\in\mathcal C_\rho(R,\theta),\quad
        \mu\in\operatorname{Aut}(R,\theta).
$$
Conversely, every such composition is an automorphism.
\end{theorem}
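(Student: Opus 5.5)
Take $(g,\mathbf e)$ from Theorem~\ref{thm:global-graph-reduction-2D4} and put
$$
        \psi=\Gamma_{\mathbf e}^{-1}\circ i_{g^{-1}}\circ\varphi .
$$
Since $\mathbf e$ is compatible, $\Gamma_{\mathbf e}$ is an automorphism of $E(R)$ by Proposition~\ref{prop:compatible-graph-automorphisms}, so $\psi\in\operatorname{Aut}E(R)$. By the same theorem, $\psi$ fixes the standard frame. Its residual isomorphisms, and those of $\psi^{-1}$, are induced by equivariant coefficient isomorphisms.

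The plan is to verify the root-preservation hypotheses of Proposition~\ref{prop:preservation-relative-root-subgroups} for $\psi$ and then apply Theorem~\ref{thm:coefficient-recovery-fixed-frame}, which yields $\psi=\mu_*$ with $\mu\in\operatorname{Aut}(R,\theta)$. Fix $i$ and set $\Psi^0=(\iota_i)_*\circ\psi$. This map fixes the frame, and its reduction is $(\nu_iq_{i'})_*$, with ordinary residual data. Lemma~\ref{lem:relative-local-rigidity-2D4} gives a unique frame transporter lifting the residual conjugator $1$. Frame rigidity forces that transporter to be the identity, because $\Psi^0$ already fixes the frame. The lemma then gives $\Psi^0(X_\lambda(R))\subseteq X_\lambda(A_i)$. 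The same argument applies to $\psi^{-1}$. Hence $\psi(X_\lambda(R))=X_\lambda(R)$ for every $\lambda$, and $\psi=\mu_*$. Rearranging gives $\varphi=i_g\circ\Gamma_{\mathbf e}\circ\mu_*$.

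\emph{Uniqueness.} Suppose $i_g\Gamma_{\mathbf e}\mu_*=i_h\Gamma_{\mathbf f}\nu_*$. Coefficient automorphisms fix the frame, so both $(g,\mathbf e)$ and $(h,\mathbf f)$ are points of $\mathcal X_\varphi(R)$. Corollary~\ref{cor:uniqueness-D4-graph-transporter} then gives $g=h$ and $\mathbf e=\mathbf f$. Cancelling these factors leaves $\mu_*=\nu_*$, and comparing on a short root subgroup $X_\lambda(R)\simeq R$ gives $\mu=\nu$.

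\emph{Converse.} Each factor is an automorphism of $E(R)$. For $i_g$ this is normality of $E(R)$ in $G(R)$; for $\Gamma_{\mathbf e}$ it is Proposition~\ref{prop:compatible-graph-automorphisms}; for $\mu_*$ it is equivariance of $\mu$.

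The step needing care is the local recognition. The residual hypothesis in Lemma~\ref{lem:relative-local-rigidity-2D4} must hold exactly for $\Psi^0$, in the split case as well as the fixed case. This holds because over a split $A_i$ the restriction of $\Gamma_{\mathbf e}$ is the action $\Gamma_{j,A_i}$ used in Proposition~\ref{prop:local-D4-graph-transporter}, and that action commutes with reduction. After the graph factor is cancelled, no residual label remains.
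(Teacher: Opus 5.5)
Your proposal is correct and follows the paper's proof essentially step by step: you remove $(g,\mathbf e)$ via Theorem~\ref{thm:global-graph-reduction-2D4}, apply Lemma~\ref{lem:relative-local-rigidity-2D4} to the localizations of $\psi$ and $\psi^{-1}$ (frame rigidity makes the transporter trivial), and then conclude with Proposition~\ref{prop:preservation-relative-root-subgroups} and Theorem~\ref{thm:coefficient-recovery-fixed-frame}. Your uniqueness argument is also the paper's: the unique point of $\mathcal X_\varphi(R)$ identifies $(g,\mathbf e)$, and a short-root comparison identifies $\mu$.
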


\begin{proof}
Remove $g,\mathbf e$ by
Theorem~\ref{thm:global-graph-reduction-2D4}.
The resulting $\psi$ and $\psi^{-1}$ fix the frame and have
coefficient reductions. Apply
Lemma~\ref{lem:relative-local-rigidity-2D4} to their localizations:
the transporter is the identity, and root recognition applies.
Proposition~\ref{prop:preservation-relative-root-subgroups} gives
global root preservation, and
Theorem~\ref{thm:coefficient-recovery-fixed-frame} gives $\psi=\mu_*$.
Every displayed factor is an automorphism. In two decompositions,
global graph reduction identifies $g,\mathbf e$; cancellation and
one short-root parameterization then identify $\mu$.
\end{proof}

\begin{corollary}
\label{cor:automorphism-group-2D4}
There is an isomorphism
$$
        \operatorname{Aut}E(R)\simeq
        G(R)\rtimes\bigl(\mathcal C_\rho(R,\theta)
                           \rtimes\operatorname{Aut}(R,\theta)\bigr),
$$
where $\mu\cdot\mathbf e=(\mu(e_j))_j$ and
$(\mathbf e,\mu)\cdot g=\Gamma_{\mathbf e}(\mu_*(g))$.
\end{corollary}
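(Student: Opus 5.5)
The plan is to mimic Corollary~\ref{cor:automorphism-group}. Theorem~\ref{thm:automorphisms-type-2D4} gives a bijection
$$
        G(R)\times\mathcal C_\rho(R,\theta)\times\operatorname{Aut}(R,\theta)
        \longrightarrow\operatorname{Aut}E(R),\qquad
        (g,\mathbf e,\mu)\longmapsto i_g\circ\Gamma_{\mathbf e}\circ\mu_*.
$$
It therefore suffices to check two things: that the stated actions are well defined actions by group automorphisms, and that composition in $\operatorname{Aut}E(R)$ corresponds to the semidirect multiplication.

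\emph{Step 1: the inner semidirect product.} By naturality, $\mu_*\Gamma_{\mathbf e}=\Gamma_{\mu(\mathbf e)}\mu_*$, since $\Gamma_{\mathbf e}$ commutes with unital base change via $f_*\Gamma_{\mathbf e}=\Gamma_{f(\mathbf e)}f_*$. Because $\mu$ commutes with $\theta$, the triple $\mu(\mathbf e)$ is again compatible. Convolution $*$ is defined by sums of products, so $\mu$ acts on $\mathcal C_\rho(R,\theta)$ by group automorphisms. Lemma~\ref{lem:composition-idempotent-graph-data} then gives
$$
        (\Gamma_{\mathbf e}\mu_*)(\Gamma_{\mathbf f}\nu_*)
        =\Gamma_{\mathbf e*\mu(\mathbf f)}(\mu\nu)_*,
$$
which is the multiplication in $\mathcal C_\rho\rtimes\operatorname{Aut}(R,\theta)$. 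The map $(\mathbf e,\mu)\mapsto\Gamma_{\mathbf e}\mu_*$ is injective by the uniqueness in the theorem, so this is a subgroup $H$ of $\operatorname{Aut}E(R)$.

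\emph{Step 2: the action on $G(R)$.} Each $\Gamma_{\mathbf e}\mu_*$ restricts to an automorphism of $G(R)$, by Proposition~\ref{prop:compatible-graph-automorphisms} and coefficient change. It also satisfies
$$
        \beta\circ i_g\circ\beta^{-1}=i_{\beta(g)}\qquad
        (\beta=\Gamma_{\mathbf e}\mu_*),
$$
because $\beta$ is a group automorphism of $G(R)$ preserving $E(R)$. Thus $(\mathbf e,\mu)\cdot g=\Gamma_{\mathbf e}(\mu_*(g))$ is an action of $H$ on $G(R)$ by automorphisms, and
$$
        (i_g\beta)(i_h\beta')=i_{g\beta(h)}\beta\beta'.
$$

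\emph{Step 3: injectivity of $g\mapsto i_g$.} If $i_g=\operatorname{id}$, then $g$ centralizes the frame, so $g=1$ by Proposition~\ref{prop:rigidity-type-2D}. This is also covered by the uniqueness in Theorem~\ref{thm:automorphisms-type-2D4}. Hence $\{i_g\}\cong G(R)$ is a normal subgroup meeting $H$ trivially, and together they generate $\operatorname{Aut}E(R)$.

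The only delicate point is the naturality identity $\mu_*\Gamma_{\mathbf e}=\Gamma_{\mu(\mathbf e)}\mu_*$ on the twisted group. I would check it on root elements in the ambient split group, where it follows from $\mu_*(\tau^j)_*=(\tau^j)_*\mu_*$ and $\mu(e_jt)=\mu(e_j)\mu(t)$.
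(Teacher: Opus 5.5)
Your proposal is correct and follows the paper's proof. The paper combines naturality $\mu_*\Gamma_{\mathbf e}\mu_*^{-1}=\Gamma_{\mu(\mathbf e)}$, the convolution law of Lemma~\ref{lem:composition-idempotent-graph-data}, and the unique normal form of Theorem~\ref{thm:automorphisms-type-2D4} into exactly the multiplication rule you derive; you simply unpack it as an internal semidirect product, and checking naturality on root elements suffices, since an automorphism of the adjoint $G(R)$ is determined by its restriction to $E(R)$ by frame rigidity (and the pinned maps are in any case natural at the scheme level).
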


\begin{proof}
Naturality gives
$\mu_*\Gamma_{\mathbf e}\mu_*^{-1}=\Gamma_{\mu(\mathbf e)}$,
and both actions are defined on $G(R)$. Hence composition of the
unique normal forms is
$$
 (g,\mathbf e,\mu)(h,\mathbf f,\nu)
   =\bigl(g\Gamma_{\mathbf e}(\mu_*(h)),\,
                 \mathbf e*\mu(\mathbf f),\,\mu\circ\nu\bigr),
$$
which is the asserted semidirect-product law.
\end{proof}

A nonneutral graph factor requires nonzero orthogonal idempotents
exchanged by $\theta$. If $\theta$ fixes every idempotent, then
$e_1=e_2=0$ and every automorphism is standard. In particular, this
holds when $R$ has no nontrivial idempotents.
\medskip


\section[The triality type 3D4]{The triality type \texorpdfstring{${}^3D_4$}{3D4}}
\label{sec:triality}

We classify the adjoint elementary automorphisms in triality.
The root relations recover the ring together with its trace,
not necessarily with the chosen coefficient operator. Moreover,
normal subgroups need not have invariant ideal levels. We prove
the maximal-normal-subgroup statement needed for the global argument
using a regular function extracted from two long-root subgroups.

Throughout, $\rho$ has order three, $\theta^3=\operatorname{id}_R$,
and $2,3\in R^\times$. For any such coefficient pair $S$, write
$$
        E(S)=E_{\rho\theta_S}(D_4,S),\qquad
        G(S)=G_{\rho\theta_S}(D_4,S).
$$
These are the adjoint elementary and full twisted groups;
$G_{\mathrm{ad}}(D_4,S)$ denotes the ambient split group.
We use the arbitrary-target coefficient recovery and fixed-target
relative normalization of \cite{BuninaMakadiyaTriality}.
The split-target step is now supplied by
Lemma~\ref{lem:fixed-to-split-frame-normalization}.
Construction of the converse coefficient actions remains separate
from recovery for an existing homomorphism.

\subsection{The trace and coefficient recovery}
\label{subsec:triality-coefficient-automorphisms}

Use the Weyl-coherent short-root coordinates of
\cite[Subsections~2.1--2.3]{BuninaMakadiyaTriality}.
They differ from those of Subsection~\ref{subsec:relative-root-subgroups}
only by $t\mapsto\theta^j(t)$ and leave the frame unchanged;
Weyl transport between roots of the same length changes parameters
only by signs. Label the positive relative roots
$$
        b=\beta_s,\quad a=\beta_\ell,\quad c=b+a,\quad d=2b+a,
        \quad e=3b+a,\quad f=3b+2a.
$$
The roots $b,c,d$ are short, with parameters in $S$; $a,e,f$ are
long, with parameters in $S^{\theta_S}$.
All signs refer to these coordinates, with
$[x,y]=xyx^{-1}y^{-1}$. Put
$$
\begin{aligned}
 \operatorname{Tr}_\theta(t)&=t+\theta(t)+\theta^2(t),&
 t^\#&=\theta(t)\theta^2(t),\\
 N_\theta(t)&=tt^\#,&
 t\times u&=(t+u)^\#-t^\#-u^\#.
\end{aligned}
$$
The trace divided by $3$ is the projector onto $S^{\theta_S}$,
so $S=S^{\theta_S}\oplus\ker\operatorname{Tr}_{\theta_S}$ additively.
Preserving this projector need not preserve the operator on its
kernel, as the square-zero example below will show.

\begin{definition}
\label{def:triality-compatible-homomorphism}
A unital ring homomorphism $\nu:S\to T$ is \emph{trace-preserving},
or \emph{triality-compatible}, if
$$
        \nu\operatorname{Tr}_{\theta_S}
        =\operatorname{Tr}_{\theta_T}\nu.
$$
Write
$$
 \operatorname{Aut}_{\mathrm{tr}}(R,\theta)
   =\{\mu\in\operatorname{Aut}(R)\mid
         \mu\operatorname{Tr}_\theta=\operatorname{Tr}_\theta\mu\}.
$$
\end{definition}

\begin{lemma}
\label{lem:triality-compatible-basic-properties}
A trace-preserving $\nu:S\to T$ maps $S^{\theta_S}$ into
$T^{\theta_T}$ and preserves $t^\#$, $N_\theta(t)$, and $t\times u$.
If $\nu$ is an isomorphism, it maps the fixed subrings onto one another.
\end{lemma}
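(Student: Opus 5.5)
The statement is elementary algebra: everything will follow from $3\in S^\times$ and the fact that $\theta_S$ permutes the three terms of the trace. The plan is to realize the fixed subring as the image of a projector built from the trace, and then express $t^\#$, $N_\theta$ and $t\times u$ as polynomials in traces of powers of $t$.

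\emph{Fixed subrings.} Put $P_S=\tfrac13\operatorname{Tr}_{\theta_S}$. Since $\theta_S^3=\operatorname{id}$, we have $\theta_S\operatorname{Tr}_{\theta_S}=\operatorname{Tr}_{\theta_S}$, so $P_S$ takes values in $S^{\theta_S}$. For $s\in S^{\theta_S}$ we have $P_S(s)=s$. Hence
$$
S^{\theta_S}=\{s\in S\mid P_S(s)=s\}.
$$
A unital ring homomorphism sends $3^{-1}$ to $3^{-1}$, so $\nu P_S=P_T\nu$. Thus for $s$ fixed, $\nu(s)=\nu(P_S(s))=P_T(\nu(s))$, and $\nu(s)\in T^{\theta_T}$. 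If $\nu$ is an isomorphism, its inverse also preserves traces: apply $\nu^{-1}$ to $\nu\operatorname{Tr}_{\theta_S}\nu^{-1}=\operatorname{Tr}_{\theta_T}$. The same argument then gives the reverse inclusion, so the fixed subrings correspond.

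\emph{The cubic invariants.} Write $t_j=\theta^j(t)$. The elementary symmetric functions of $t_0,t_1,t_2$ are
$$
e_1=\operatorname{Tr}_\theta(t),\qquad e_2=\operatorname{Tr}_\theta(t^\#),\qquad e_3=N_\theta(t),
$$
and Newton's identities give
$$
2e_2=\operatorname{Tr}_\theta(t)^2-\operatorname{Tr}_\theta(t^2),\qquad
3e_3=\operatorname{Tr}_\theta(t^3)-e_1\operatorname{Tr}_\theta(t^2)+e_2e_1 .
$$
With $2,3$ invertible, both $e_2$ and $N_\theta(t)$ are universal $\mathbb Z[1/6]$-polynomials in $\operatorname{Tr}_\theta(t^k)$ for $k=1,2,3$. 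Since $\nu$ is a ring homomorphism commuting with $\operatorname{Tr}$, it commutes with each $\operatorname{Tr}(t^k)$, and therefore preserves $N_\theta$.

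\emph{The adjoint $t^\#$.} This is the one step needing an idea, since $t^\#$ is not itself fixed. I would use the Cayley--Hamilton-type identity
$$
t^\#=t^2-e_1t+e_2,
$$
which holds because $t^2-e_1t+e_2=t_0^2-(t_0+t_1+t_2)t_0+(t_0t_1+t_0t_2+t_1t_2)=t_1t_2$. The right-hand side is built from $t$ and traces of powers of $t$, so it is preserved by $\nu$. Finally, $t\times u=(t+u)^\#-t^\#-u^\#$ is preserved by additivity of $\nu$.
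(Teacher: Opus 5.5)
Your argument is correct and is essentially the paper's: the fixed subrings are handled by the same projector $\tfrac13\operatorname{Tr}_\theta$ (applied also to $\nu^{-1}$ in the isomorphism case), and your identity $t^\#=t^2-e_1t+e_2$ with $2e_2=\operatorname{Tr}_\theta(t)^2-\operatorname{Tr}_\theta(t^2)$ is a rearrangement of the paper's $2t^\#=(\operatorname{Tr}_\theta(t)-t)^2-(\operatorname{Tr}_\theta(t^2)-t^2)$, after which $t\times u$ follows by polarization. The one detour is the norm: since $N_\theta(t)=t\,t^\#$ by definition, its preservation follows at once from that of $t^\#$, so the Newton-identity step and its division by $3$ are unnecessary.
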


\begin{proof}
Compatibility with the projectors gives the first assertion;
apply it also to $\nu^{-1}$ when this exists. The identity
$$
 2t^\#=(\operatorname{Tr}_\theta(t)-t)^2
                  -(\operatorname{Tr}_\theta(t^2)-t^2)
$$
recovers $t^\#$ from the ring and trace. Multiplication by $t$
and polarization recover the norm and $\times$.
\end{proof}

For such a $\nu$, the notation $\Phi_\nu$ initially denotes only
$$
        x_\lambda(t)\longmapsto x_\lambda(\nu(t)).
$$
Long parameters are admissible by the lemma. This is not generally
an ambient coefficientwise action, so preservation of all group
relations requires proof. In the next proposition the group
homomorphism is already given.

\begin{proposition}[Coefficient recovery from the triality frame]
\label{prop:global-triality-coefficient-recovery}
\label{prop:oriented-coefficient-recovery-triality}
Let $S,T$ be coefficient pairs and let $\psi:E(S)\to E(T)$ be a
homomorphism carrying every constant frame element to the corresponding one.
Then there is a unique trace-preserving unital ring homomorphism
$\mu:S\to T$ such that
$$
        \psi(x_\lambda(t))=x_\lambda(\mu(t))
        \qquad(\lambda\in\Phi_\rho,\ t\in S_\lambda).
$$
If $\psi$ is an isomorphism, then so is $\mu$.
In particular, every frame-fixing automorphism of $E(R)$ is
$\Phi_\mu$ for a unique $\mu\in\operatorname{Aut}_{\mathrm{tr}}(R,\theta)$.
\end{proposition}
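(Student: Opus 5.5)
Since $\psi$ fixes the constant frame, it fixes every word in the frame, in particular the standard relative Weyl representatives $w_b,w_a$ and the toral elements $h_\lambda(-1)$. The plan has four steps. First, show that $\psi$ carries root subgroups into root subgroups. Second, use Weyl transport to reduce to one short and one long parameter map. Third, extract ring structure and the trace from the $G_2$ commutator formulas in the Weyl-coherent coordinates. Fourth, settle uniqueness and bijectivity.

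\emph{Step 1: root subgroups.} For the inclusion $\psi(X_\lambda(S))\subseteq X_\lambda(T)$ I would use the arbitrary-target recognition of \cite{BuninaMakadiyaTriality}. That result applies to frame-fixing homomorphisms with an arbitrary target, so no local hypothesis is needed. Alternatively, it can be derived from the centralizer computation behind Proposition~\ref{prop:rigidity-triality-frame}: $\psi(x_b(t))$ commutes with the fixed frame elements of those roots whose subgroups commute with $X_b(S)$, and it is inverted or fixed by the appropriate fixed toral involutions. Solving these matrix equations in the $8$-dimensional model pins the image down to $X_b(T)$, and the same argument applies to $X_a$. Root subgroups of the same length are then handled by the fixed Weyl words, since these change parameters only by signs. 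This gives additive maps $\mu:S\to T$ on the short roots and $\mu_0:S^{\theta_S}\to T^{\theta_T}$ on the long roots, with $\mu(1)=\mu_0(1)=1$, each a single map up to the sign bookkeeping in the chosen coordinates.

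\emph{Step 2: commutator formulas.} Apply $\psi$ to the $G_2$ commutator formulas and compare parameters using uniqueness of ordered factorizations. The formula for $[x_a(s),x_b(t)]$ has first factor $x_c(\pm st)$, so it yields $\mu(st)=\mu_0(s)\mu(t)$ for $s$ fixed. Taking $t=1$ gives $\mu_0=\mu|_{S^{\theta_S}}$. The remaining factors of that formula are $x_d(\pm st^\#)$ and $x_e,x_f$ with parameters built from $sN_\theta(t)$. Comparing the $x_d$ parameter gives $\mu(t^\#)=\mu(t)^{\#}$ for $s=1$. The formula $[x_b(t),x_c(u)]$ has leading term $x_d(\pm(t\times u))$ plus long corrections involving $\operatorname{Tr}_\theta(t\,\theta(u)\ldots)$; it shows that $\mu$ preserves $\times$ and, through the long-root factor, the trace of suitable products. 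Next, since $\mu$ maps fixed elements to fixed elements, take $t\in S$ and compare $x_e$ parameters with $s=1$; this gives $\mu(N_\theta(t))=N_\theta(\mu(t))$. Linearizing the norm in $t\mapsto t+1$ gives $\mu(\operatorname{Tr}_\theta t)=\operatorname{Tr}_\theta\mu(t)$, using $3\in T^\times$ and additivity. Trace preservation then yields $\mu(t^\#)=\mu(t)^\#$ and the polarized $\times$. Finally, multiplicativity on all of $S$ follows from the identity
\[
 2tu=\operatorname{Tr}_\theta(t)u+\operatorname{Tr}_\theta(u)t-\operatorname{Tr}_\theta(t)\operatorname{Tr}_\theta(u)+\operatorname{Tr}_\theta(tu)+t\times u\cdot(\text{sign}),
\]
which is the cubic analogue of the identity used in Theorem~\ref{thm:coefficient-recovery-fixed-frame}. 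The exact form should be $t\times u=\operatorname{Tr}(t)\operatorname{Tr}(u)-\operatorname{Tr}(t)u-\operatorname{Tr}(u)t-\operatorname{Tr}(tu)+tu+ut$ on the relevant part. Every term on the right is either a product with a fixed factor or an expression already shown to be preserved.

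\emph{Step 3: uniqueness and bijectivity.} Uniqueness of $\mu$ is immediate from injectivity of $x_b$. If $\psi$ is an isomorphism, $\psi^{-1}$ also fixes the frame, and the same argument gives $\mu'$ with $\mu'\mu=\operatorname{id}$ on short parameters. Hence $\mu$ is bijective, and the automorphism statement follows from root generation.

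\emph{Main obstacle.} I expect the bottleneck to be the sign and $\theta$-twist bookkeeping in the commutator formulas. In particular, one must get the trace out of group data without knowing that $\mu$ commutes with $\theta$: I have only $\#$ and $\times$, which are $\theta$-symmetric combinations. The fallback is to obtain $\mu\operatorname{Tr}=\operatorname{Tr}\mu$ from the long-root factor $x_e(\pm\operatorname{Tr}_\theta(\ldots))$ in $[x_b(t),x_d(u)]$, setting $u=1$.
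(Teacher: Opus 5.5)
Your proposal follows essentially the paper's route. The steps are the same: arbitrary-target root recognition from \cite{BuninaMakadiyaTriality}, Weyl transport to one short map $\mu$ and one long map $\mu_0$, and the $X_c$-coordinate of $[x_a(s),x_b(t)]$ giving $\mu(st)=\mu_0(s)\mu(t)$. Your use of the $X_d$-coordinate of $[x_a(1),x_b(t)]$ to get $\mu(t^\#)=\mu(t)^\#$ is equivalent, by polarization, to the paper's $\mu(t\times u)=\mu(t)\times\mu(u)$ from $[x_c(t),x_b(u)]$. Trace preservation then follows from $t\times 1=\operatorname{Tr}_\theta(t)-t$. Your identity for $2tu$ is the paper's identity, since $\operatorname{Tr}_\theta(t)\operatorname{Tr}_\theta(u)-\operatorname{Tr}_\theta(tu)=\operatorname{Tr}_\theta(t\times u)$.

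One justification in the last step is wrong as written. The term $\operatorname{Tr}_\theta(tu)$ is not ``already shown to be preserved'': trace preservation only gives $\mu(\operatorname{Tr}_\theta(tu))=\operatorname{Tr}_\theta(\mu(tu))$, which still contains the unknown $\mu(tu)$. There are two easy repairs.
\begin{enumerate}
\item Group that term with $-\operatorname{Tr}_\theta(t)\operatorname{Tr}_\theta(u)$ into $-\operatorname{Tr}_\theta(t\times u)$, which is preserved because $\mu$ preserves both $\times$ and the trace.
\item Keep your form and apply $\mu$. Then $D=\mu(tu)-\mu(t)\mu(u)$ satisfies $2D=\operatorname{Tr}_\theta(D)$. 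Applying $\operatorname{Tr}_\theta$ gives $2\operatorname{Tr}_\theta(D)=3\operatorname{Tr}_\theta(D)$, hence $\operatorname{Tr}_\theta(D)=0$ and $D=0$.
\end{enumerate}

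Your sentence ``trace preservation then yields $\mu(t^\#)=\mu(t)^\#$'' would need multiplicativity, since the identity expressing $t^\#$ through traces involves squares. It is harmless only because you had already obtained that equality from the $X_d$-coordinate.
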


\begin{proof}
This is \cite[Theorem~4.2]{BuninaMakadiyaTriality}.
We retain the short coefficient argument explaining the role of
the trace. Its Lemma~4.1 recognizes the root images over arbitrary
targets. Additivity and the fixed Weyl words give additive maps
$\mu:S\to T$ on short roots and
$\mu_0:S^{\theta_S}\to T^{\theta_T}$ on long roots, both fixing $1$.
Comparing the $X_c$-coordinate of $[x_a(v),x_b(t)]$ and the
$X_d$-coordinate of $[x_c(t),x_b(u)]$ gives
$$
\begin{aligned}
        \mu(vt)&=\mu_0(v)\mu(t)
                     &&(v\in S^{\theta_S},\ t\in S),\\
        \mu(t\times u)&=\mu(t)\times\mu(u)
                     &&(t,u\in S).
\end{aligned}
$$
Taking $t=1$ in the first identity gives
$\mu_0=\mu|_{S^{\theta_S}}$.
Taking $u=1$ in the second and using
$t\times1=\operatorname{Tr}_{\theta_S}(t)-t$ gives trace preservation.
Arbitrary multiplication is then recovered by
$$
 2tu=t\times u+\operatorname{Tr}_{\theta_S}(t)u
       +\operatorname{Tr}_{\theta_S}(u)t
       -\operatorname{Tr}_{\theta_S}(t\times u).
$$
The traced factors are fixed, so applying $\mu$ gives
$2\mu(tu)=2\mu(t)\mu(u)$. Cancelling $2$ proves multiplicativity.
Root generation gives the asserted action, and one short-root
parameterization gives uniqueness. Applying recovery to $\psi^{-1}$
when it exists produces the inverse ring map. No locality or residual
hypothesis is needed.
\end{proof}

\smallskip
\noindent\emph{Orbit localization.}
Trace preservation must also recover the maximal-ideal orbits,
without using an induced group action. Ideals are transported by
direct image, $\mu(\mathfrak a)=\{\mu(a):a\in\mathfrak a\}$.

\begin{lemma}[Triality automorphisms and orbit neighborhoods]
\label{lem:triality-orbit-localization}
Every $\mu\in\operatorname{Aut}_{\mathrm{tr}}(R,\theta)$ induces
a permutation $\mu^\sharp$ of the orbit index set and trace-preserving
ring isomorphisms
$$
\begin{gathered}
 \mu(\mathcal O_i)=\mathcal O_{\mu^\sharp(i)},\qquad
 \mu_i^B:B_i\xrightarrow{\sim}B_{\mu^\sharp(i)},\qquad
 \mu_i^A:A_i\xrightarrow{\sim}A_{\mu^\sharp(i)},\\
 \mu_i^A\iota_i=\iota_{\mu^\sharp(i)}\mu.
\end{gathered}
$$
\end{lemma}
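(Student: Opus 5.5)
The approach is to show that $\mu$ carries $\theta$-orbits of maximal ideals to $\theta$-orbits, using only the ring structure and the trace. The localization statements then follow formally.

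The key observation is that a $\theta$-invariant ideal can be recognized through the trace projector. An ideal $\mathfrak a$ is $\theta$-invariant if and only if $\operatorname{Tr}_\theta(\mathfrak a r)\subseteq\mathfrak a$ for all $r\in R$. For the converse, take $a\in\mathfrak a$ and $r\in\ker\operatorname{Tr}_\theta$; then $\theta(a)$ can be recovered from traces of $a$ and $a$-multiples, since $\theta(a)$ is a $\mathbb Z[1/3]$-combination of $a$, $\operatorname{Tr}_\theta(a)$ and elements of the form $\operatorname{Tr}_\theta(a\,\cdot)$. Rather than rely on this, I would use a cleaner route: the orbit intersection $I_i$ is the largest ideal contained in $\mathfrak m$ that is stable under $\theta$.

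\begin{itemize}
\item I claim $I_i=\{a\in\mathfrak m\mid \operatorname{Tr}_\theta(ar)\in\mathfrak m\ \forall r\in R\}$, or a suitable variant.
\item The inclusion $\subseteq$ is clear, since $\theta^j(ar)\in\theta^j(\mathfrak m)$ and the orbit intersection is $\theta$-stable.
\item For the reverse inclusion, pass to $K_i=\prod_{\mathfrak n\in\mathcal O_i}R/\mathfrak n$ and use the Chinese remainder theorem.
\item Choose $r$ with prescribed residues in the factors. In the split case, take $r\equiv1$ at $\theta^{-j}(\mathfrak m)$ and $0$ at the others. Then $\operatorname{Tr}_\theta(ar)\bmod\mathfrak m$ equals the residue of $\theta^j(a)$, with $3\in R^\times$ used to separate the fixed case.
\item It follows that $a\in\theta^{-j}(\mathfrak m)$ for every $j$.
\end{itemize}

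This description involves only multiplication and $\operatorname{Tr}_\theta$, both of which $\mu$ preserves. Hence $\mu(I_{\mathfrak m})=I_{\mu(\mathfrak m)}$, where $\mu(\mathfrak m)$ is again maximal because $\mu$ is a ring automorphism. Combined with Lemma~\ref{lem:maximal-invariant-ideals}, which says an orbit is determined by its intersection, this shows that $\mu$ maps $\mathcal O_i$ onto the orbit of $\mu(\mathfrak m)$. That defines $\mu^\sharp$, which is a bijection because $\mu^{-1}$ is also trace-preserving.

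With the orbits matched, $\mu(\Sigma_i)=\Sigma_{\mu^\sharp(i)}$ follows from $\mu$ permuting the maximal ideals and carrying $\mathcal O_i$ to $\mathcal O_{\mu^\sharp(i)}$. This gives the ring isomorphism $\mu_i^B$ of the localizations. For each $\mathfrak n\in\mathcal O_i$ we get $R_{\mathfrak n}\simeq R_{\mu(\mathfrak n)}$, and taking the product over the orbit gives $\mu_i^A$; the identity $\mu_i^A\iota_i=\iota_{\mu^\sharp(i)}\mu$ holds by construction.

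It remains to check that these isomorphisms are trace-preserving for the induced operators. On $B_i$ the trace of $r/s$ with fixed $s\in R_0$ is $\operatorname{Tr}_\theta(r)/s$; fractions can always be written with fixed denominators by the norm trick from Section~\ref{sec:preliminaries-local-models}. Since $\mu$ maps $R_0$ onto $R_0$ by Lemma~\ref{lem:triality-compatible-basic-properties}, it commutes with this trace. For $A_i$, faithful flatness is not needed. Instead I would write $\operatorname{Tr}$ on the cyclic product factorwise as the sum of the permuted components, and express it through the trace on $B_i$ together with the orbit idempotents. A trace-preserving $\mu$ respects these idempotents, because they are images of elements of $B_i$ under localization. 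This last step is the one I expect to be the main obstacle.

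The other point needing care is the fixed-orbit case of the $I_i$ characterization, where $\theta$ acts on $R/\mathfrak m$ possibly nontrivially. There I would argue directly that $I_i=\mathfrak m$ and that $\mu(\mathfrak m)$ is again $\theta$-fixed, because the residue trace condition characterizes fixedness.
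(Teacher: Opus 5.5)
Your orbit argument is correct and takes a different route from the paper. The paper puts $\kappa=\mu\theta\mu^{-1}$ and shows that the $\kappa$- and $\theta$-orbits of a maximal ideal coincide. You instead characterize the orbit intersection intrinsically as $\{a\in\mathfrak m\mid\operatorname{Tr}_\theta(aR)\subseteq\mathfrak m\}$, which $\mu$ visibly transports. Your CRT computation handles split orbits. For fixed orbits the two inclusions already give equality, so $3\in R^\times$ plays no role there. The construction of $\mu_i^B,\mu_i^A$ and trace compatibility on $B_i$ via fixed denominators (hence on $A_i=B_i$ for fixed orbits) are also fine. Your discarded opening claim is false: in the paper's square-zero example, $L=\mathbb Q(1,0)$ satisfies $\operatorname{Tr}_\theta(LR)=0\subseteq L$ but $\theta(L)\neq L$. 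You rightly do not use it.

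The gap is exactly the step you flag: trace compatibility of $\mu_i^A$ on a split orbit. Your proposed mechanism fails, because the factor idempotents of $A_i$ are in general not images of elements of $B_i$. The map $B_i\to A_i$ is injective, and $B_i$ can be a domain (e.g.\ $R=\mathbb C[x]$, $\theta(x)=\omega x$, and the orbit of $(x-1)$).

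More importantly, the step is not formal. Choose $\epsilon=\pm1$ so that $\kappa$ and $\theta^\epsilon$ induce the same permutation of the target orbit. On an element supported on a single factor, the trace returns its three transported components in three separate factors. So trace compatibility of $\mu_i^A$ is equivalent to $\kappa(r)=\theta^\epsilon(r)$ in $R_{\mathfrak n}$ for every $r\in R$ and every $\mathfrak n$ in that orbit. The equality of sums $1+\kappa+\kappa^2=1+\theta+\theta^2$ on $R$ or $B_i$ does not give this without a further argument.

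The missing idea, as in the paper, runs as follows:
\begin{itemize}
\item The trace identities at $r$ and $r^2$, with $2\in R^\times$, give $\kappa(r)\kappa^2(r)=\theta(r)\theta^2(r)$. So $\kappa(r)$ is a root of the same quadratic as $\theta(r)$ and $\theta^2(r)$, i.e.\ $D(r)F(r)=0$ with $D=\kappa-\theta^\epsilon$ and $F=\kappa-\theta^{-\epsilon}$.
\item Polarizing gives $D(r)F(t)+D(t)F(r)=0$.
\item Take a CRT element $t$ with residues $0$ at $\theta^{-\epsilon}(\mathfrak m)$ and $1$ at $\theta^{\epsilon}(\mathfrak m)$. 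Then $F(t)$ is a unit in $R_{\mathfrak m}$, which forces $D(t)=0$ and then $D=0$ there.
\end{itemize}
This yields $\mu_i^A\theta_{A_i}(\mu_i^A)^{-1}=\theta^\epsilon$ on $A_{\mu^\sharp(i)}$, and $\theta^{\pm1}$ have the same trace.
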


\begin{proof}
Put $\kappa=\mu\theta\mu^{-1}$. Then
$1+\kappa+\kappa^2=1+\theta+\theta^2$ as additive operators.

\smallskip
\noindent\emph{Recovering the orbits.}
The $\kappa$- and $\theta$-orbits of a maximal ideal
$\mathfrak m$ coincide. Otherwise the Chinese remainder theorem
gives an element with residue $1$ at an ideal belonging to just
one orbit and $0$ at all other ideals in their union.
The chosen ideal differs from $\mathfrak m$ and occurs exactly
once in its three-element orbit, so evaluation of the trace identity
modulo $\mathfrak m$ gives $1=0$.
Since $\kappa^j(\mu(\mathfrak m))=\mu(\theta^j(\mathfrak m))$,
this proves the orbit permutation. In particular,
$\mu(\Sigma_i)=\Sigma_{\mu^\sharp(i)}$, giving the asserted
localization maps and their compatibility with $\iota_i$.

\smallskip
\noindent\emph{Preserving the localized traces.}
In an invariant localization,
$$
        \operatorname{Tr}_\theta(r/s)
          =\frac{\operatorname{Tr}_\theta(rs^\#)}{N_\theta(s)}.
$$
Preservation of $\#$ and $N$ proves trace compatibility on $B_i$
and on $A_i=B_i$ for fixed orbits.
For a three-element target orbit, choose $\epsilon\in\{1,-1\}$
so that $\kappa$ and $\theta^\epsilon$ induce the same permutation.
To compare their coefficient actions, put
$$
 D(r)=\kappa(r)-\theta^\epsilon(r),\qquad
 F(r)=\kappa(r)-\theta^{-\epsilon}(r).
$$
The trace identities at $r$ and $r^2$ give
$\kappa(r)\kappa^2(r)=\theta(r)\theta^2(r)$, hence
$$
        D(r)F(r)=0,\qquad D(r)F(t)+D(t)F(r)=0.
$$
Fix $\mathfrak m$ in this orbit and choose $t$ with residues
$0$ at $\theta^{-\epsilon}(\mathfrak m)$ and $1$ at
$\theta^\epsilon(\mathfrak m)$.
Then $F(t)$ is a unit in $R_{\mathfrak m}$; the first identity
gives $D(t)=0$ there, and the second gives $D(r)=0$ for every $r$.
Doing this at all three ideals and localizing proves
$$
        \mu_i^A\theta_{A_i}(\mu_i^A)^{-1}
             =\theta_{A_{\mu^\sharp(i)}}^\epsilon.
$$
This gives trace compatibility on the split model as well.
\end{proof}

\subsection{Maximal normal subgroups}
\label{subsec:triality-maximal-normal-subgroups}
\label{subsubsec:triality-maximal-normal-subgroups}

We identify the maximal congruence quotients required in
Subsection~\ref{subsec:congruence-spectrum}.
The argument does not assign an invariant ideal to every normal
subgroup; such a classification fails even for a local ring.

\smallskip
\noindent\emph{A square-zero example.}
Let $R=\mathbb Q\oplus V$, $V=\mathbb Q^2$, with
$$
 (q,v)(q',v')=(qq',qv'+q'v),\qquad
 \theta(q,v)=(q,Av),\qquad
 A=\begin{pmatrix}0&-1\\1&-1\end{pmatrix}.
$$
By \cite[Example~7.2 and Subsection~7.1]{BuninaMakadiyaTriality},
$$
        E(R)=G(R)\simeq
        (W_7\otimes_{\mathbb Q}V)\rtimes G_2(\mathbb Q),
$$
where $W_7$ is the seven-dimensional irreducible module and the
constant group acts only on that factor. In these multiplicity
coordinates, radical short-root maps are signed copies of
$v\mapsto w_\lambda\otimes v$.
For $L=\mathbb Q(1,0)$, the subgroup $H=W_7\otimes L$ is normal,
with short-root parameter groups $L$, although $\theta(L)\ne L$.
The only invariant ideals are $0,V,R$, because $X^2+X+1$ is
irreducible over $\mathbb Q$. None gives
$E(R,I)\leq H\leq N_I$: for $I=0$ the upper group is trivial;
for $I=V$ the lower group contains all short parameters in $V$;
for $I=R$ it is $E(R)$.
Thus the triality assertion of
\cite[Theorem~1.4]{GargeMakadiyaNormal}, in arXiv:2502.04766v2,
cannot be used in this generality.
Also $\operatorname{Tr}_\theta(q,v)=(3q,0)$, so every linear
automorphism of $V$ preserves the trace, whereas not all normalize
$\langle A\rangle$.

\smallskip
\noindent\emph{The argument for maximal normal subgroups.}
For a proper $H\lhd E(R)$, its long-root parameters generate a
proper invariant ideal contained in some $I_i$.
We construct a regular function $F$ with
$x_f(F(h))\in H$ for every $h\in H$ and $F(\dot w)=-1$ for
a fixed Weyl element. If $H$ had nontrivial image in the simple
quotient $E(K_i)$, a lift of $\dot w$ in $H$ would contradict
these two properties.

\begin{lemma}[The long-root level]
\label{lem:triality-long-root-level}
For $H\lhd E(R)$, put
$$
        \mathfrak a_0(H)=\{v\in R_0\mid x_a(v)\in H\}.
$$
It is an ideal of $R_0$ and the parameter level of $H$ on every
long root. If $H$ is proper, then $R\mathfrak a_0(H)$ is a proper
invariant ideal of $R$.
\end{lemma}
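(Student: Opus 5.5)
We prove the three assertions in turn: ideal property, independence of the long root, properness.

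\emph{Ideal property.} Additivity of $X_a$ shows that $\mathfrak a_0(H)$ is an additive subgroup of $R_0$. For $v\in\mathfrak a_0(H)$ and $u\in R_0^\times$, conjugation by the fixed toral element $h_a(u)$ lies in $E(R)$ up to the torus normalizing $E(R)$. More simply, I would use the rank-two commutator relations inside the $G_2$-subsystem. Conjugating $x_a(v)$ by $x_b(t)$ with $t\in R$ gives a product of root elements in $X_c,X_d,X_e,X_f$ with parameters polynomial in $t,v$; in particular an $X_f$-component of the form $\pm v\,N_\theta(t)$-type expression. Taking further commutators with suitable fixed root elements isolates a single long-root element. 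Then I would Weyl-transport it back to $X_a$, which changes parameters only by signs in the Weyl-coherent coordinates. This yields $x_a(sv)\in H$ for $s$ ranging over elements such as $\operatorname{Tr}_\theta(t)$ and $N_\theta(t)$. These generate $R_0$ additively: since $3\in R^\times$, $s=\operatorname{Tr}_\theta(s/3)$ for $s\in R_0$. Hence $\mathfrak a_0(H)$ is an $R_0$-submodule.

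\emph{Independence of the long root.} The long roots $a,e,f$ and their negatives form one Weyl orbit. The fixed Weyl words lie in $E(R)$ and act on long parameters by signs, and normality of $H$ gives the same level on every long root.

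\emph{Properness.} Suppose $H$ is proper. Then $1\notin\mathfrak a_0(H)$, for otherwise $H$ contains the long frame elements, and hence, by the above, all long root subgroups $x_a(R_0)$. The long-root subgroups generate a normal subgroup containing, through the commutator $[x_a(v),x_b(t)]$ and the recovery of short parameters, all short root elements. The delicate point is to extract $x_b(t)$ itself. For this I would use $[x_a(1),x_b(t)]$ and peel off the higher-root factors, which already lie in the normal closure once all long parameters and the $c,d$ levels are shown to be everything. So $H=E(R)$, a contradiction. Finally, $R\mathfrak a_0(H)$ is $\theta$-invariant because its generators are fixed. It is proper because $R=R_0\oplus\ker\operatorname{Tr}_\theta$ is a direct sum of $R_0$-modules, which gives
\[
R\mathfrak a_0(H)\cap R_0=\mathfrak a_0(H)\ne R_0 .
\]
Indeed, applying $\operatorname{Tr}_\theta/3$ to $\sum r_kv_k$ gives $\sum \tfrac13\operatorname{Tr}_\theta(r_k)v_k\in\mathfrak a_0(H)$.

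The main obstacle is the isolation step in the ideal property: explicitly computing the $G_2$ commutator formulas to obtain a clean long-root element $x_f(\pm sv)$ with $s$ ranging over a generating set of $R_0$. I would first try $[x_a(v),x_{-a}\cdots]$-free routes, using only $[x_b(t),x_e(v)]$. In that relation the long parameter enters linearly and the output contains $x_f(\pm \operatorname{Tr}_\theta(t)v)$-type terms after cancelling the short factors by a second commutator.
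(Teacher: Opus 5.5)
You have the same outline as the paper: additivity, Weyl transport of the long levels, and properness via the projector $\operatorname{Tr}_\theta/3$. Your final step, invariance of $R\mathfrak a_0(H)$ and $R\mathfrak a_0(H)\cap R_0=\mathfrak a_0(H)$, is correct. But the two computational steps everything rests on are not carried out, and the concrete route you suggest for the first one fails.

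\textbf{The ideal property.} The groups $x_b(t)$ and $x_e(v)$ commute, since $b+e=4b+a$ is not a root of $G_2$. So $[x_b(t),x_e(v)]$ yields nothing. In $[x_b(t),x_a(v)]$, the $X_f$-parameter is quadratic in $v$, and the linear long term lies in $X_e$ alongside the $X_c,X_d$ factors. Isolating a clean $x_f(\pm sv)$ from it is therefore not routine.

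The missing idea is that the long roots form an $A_2$-subsystem with $a+e=f$. For $v\in\mathfrak a_0(H)$ and $w\in R_0$, normality gives $[x_a(v),x_e(w)]=x_f(\pm vw)\in H$. Long-root Weyl transport then gives $vw\in\mathfrak a_0(H)$. Your properness argument uses the ideal property to pass from $1\in\mathfrak a_0(H)$ to all long root groups lying in $H$. It is unsupported until this step is supplied.

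\textbf{The extraction of short-root elements.} As written this step is circular. You peel off the higher factors of $[x_a(1),x_b(t)]$ once the $c,d$ levels are \emph{shown to be everything}, but that is exactly what must be proved. Only the long factors need to be removed. From
$$[x_a(1),x_b(t)]=x_c(t)x_d(t^\#)x_e(N_\theta(t))x_f(-N_\theta(t))$$
and the long root groups lying in $H$, you get $Q(t)=x_c(t)x_d(t^\#)\in H$. Because $(-t)^\#=t^\#$, the $X_d$ factors are adjacent and cancel in $Q(t/2)Q(-t/2)^{-1}=x_c(t)$. The nontrivial commutator $[X_c,X_d]\subseteq X_f$ never enters. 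Weyl transport then gives all short root groups, hence $H=E(R)$.

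With these two relations inserted, your remaining steps go through exactly as in the paper.
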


\begin{proof}
Additivity and Weyl transport identify the long-root levels, and
$[x_a(v),x_e(w)]=x_f(vw)$ proves closure under multiplication
by $w\in R_0$.
If $1\in\mathfrak a_0(H)$, all long-root groups lie in $H$.
The relation
$$
 [x_a(1),x_b(t)]
   =x_c(t)x_d(t^\#)x_e(N_\theta(t))x_f(-N_\theta(t))
$$
then gives $Q(t)=x_c(t)x_d(t^\#)\in H$.
Since $(-t)^\#=t^\#$, cancellation gives
$$
        Q(t/2)Q(-t/2)^{-1}=x_c(t).
$$
Weyl transport supplies all short-root groups, so $H=E(R)$.
Finally, applying $\operatorname{Tr}_\theta/3$ to a hypothetical
$1=\sum_k r_kv_k$, $v_k\in\mathfrak a_0(H)$, would give
$$
        1=\sum_k\frac{\operatorname{Tr}_\theta(r_k)}3v_k
            \in\mathfrak a_0(H).
$$
Thus extension to $R$ preserves properness; its generators are fixed,
so the extended ideal is invariant.
\end{proof}

\smallskip
\noindent\emph{A universal element.}
Work over the coordinate ring
$$
        \mathbf G=G_{\mathrm{ad}}(D_4)_{\mathbb Z[1/6]},\qquad
        C=\mathbb Z[1/6][\mathbf G],\qquad
        \theta_C=(\rho^*)^{-1}.
$$
The universal point $\mathbf g\in\mathbf G(C)$ is fixed by
$\rho\theta_C$, and evaluation at $h\in G(R)$ is equivariant.
Fixed regular functions therefore evaluate in $R_0$.
Put
$$
 n_\gamma=x_\gamma(1)x_{-\gamma}(-1)x_\gamma(1)
       \quad(\gamma=a,b),\qquad \dot w=(n_bn_a)^3.
$$
This $\rho$-fixed representative of the longest absolute Weyl
element satisfies $\dot w^{-1}x_a(t)\dot w=x_{-a}(-t)$.

\begin{lemma}[A universal long-root parameter]
\label{lem:triality-universal-long-root-parameter}
There is $F\in C^{\theta_C}$ with $F(\dot w)=-1$ such that
$$
        x_f(F(h))\in H\qquad(h\in H\lhd E(R))
$$
for every coefficient pair $(R,\theta)$.
\end{lemma}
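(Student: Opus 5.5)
The plan is to produce $F$ from an explicit word in $h$ and constant frame elements, so that $H\lhd E(R)$ contains the resulting element automatically. The real content is then an identity in the universal group $\mathbf G(C)$ saying that this element lies in the long root subgroup $X_f$ with a regular parameter. Since evaluation $C\to R$ at $h\in G(R)$ is equivariant and compatible with the root maps, a universal identity over $C$ specializes to every coefficient pair. Thus nothing has to be checked ring by ring.

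\emph{Construction of the word.} The highest root $f=3b+2a$ is long, $\rho$-fixed and extremal. $X_f(R)$ is central in the positive unipotent subgroup, and $[X_a,X_e]\subseteq X_f$ via $[x_a(v),x_e(w)]=x_f(vw)$. For $h\in H$ put
$$
        y(h):=h\,x_{-f}(1)\,h^{-1}x_{-f}(-1)\in H,
$$
and apply to it successive commutators with constant elements of positive long root groups. The first candidate I would try is
$$
        k(h):=\bigl[x_f(1),\bigl[x_f(1),y(h)\bigr]\bigr]\in H.
$$
This is modeled on the $\mathrm{SL}_2$ computation: two commutators with a fixed element of $X_f$ kill all components except the top $\mathfrak{sl}_2$-weight. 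If $k(h)$ still has components along $X_{-f}$ or the torus, I would replace it by a further commutator with $x_e(1)$ and $x_a(1)$. In the Chevalley grading by $f$-height, the $(+2)$-graded piece of any element generates, under commutation with $X_f$, only $X_f$.

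Concretely, I would compute $k(\mathbf g)$ in the $28$-dimensional adjoint module of $D_4$ over $C$. The highest weight vector $v_f$ spans a line, and the universal identity to prove is
$$
        k(\mathbf g)=x_f\bigl(F(\mathbf g)\bigr),
$$
where $F(\mathbf g)$ is, up to sign, a product of matrix coefficients $\langle v_f^*,\mathbf g\,v_{-f}\rangle$ of the universal point, possibly squared. Once the identity holds in $\mathbf G(C)$, twisting-invariance of $\mathbf g$, $\rho(f)=f$ and $\theta_C=(\rho^*)^{-1}$ give $\rho\theta_C(k(\mathbf g))=k(\mathbf g)$. Injectivity of $x_f$ then yields $F\in C^{\theta_C}$. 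If $F$ fails to be fixed for the first candidate, I would replace it by $\operatorname{Tr}_{\theta_C}(F)/3$, realized as a product of the three Galois-transported words. Each of these words lies in $H$ because $x_f$ is additive and $f$ is fixed.

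\emph{Evaluation at $\dot w$.} Since $\dot w^{-1}x_a(t)\dot w=x_{-a}(-t)$ and $\dot w$ represents the longest element, $\dot w$ sends $X_{-f}$ to $X_f$ with a sign. Hence $y(\dot w)=x_f(\pm1)x_{-f}(-1)$. The nested commutator then reduces to a rank-one computation in the $\mathrm{SL}_2$ of $\pm f$. I would fix the sign conventions, and if needed the number of nestings or the sign of the constant $x_f(\pm1)$, so that $F(\dot w)=-1$ exactly. This is a finite $2\times2$ matrix check.

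\emph{Main obstacle.} The hard step is proving that $k(\mathbf g)$ lies \emph{exactly} in $X_f(C)$ universally, not merely modulo lower filtration pieces. I would first prove it over the generic fibre by a weight argument in the adjoint module. The commutator with $x_f(1)$ acts on $\operatorname{Lie}$ as $\exp(\operatorname{ad}e_f)$, which has $\operatorname{ad}$-nilpotency degree $3$ on the $f$-grading $\{-2,\dots,2\}$. The double commutator therefore annihilates every graded piece except the projection to degree $2$, which is $\mathbb Z e_f$. Then I would descend to $C$, since $C$ is a flat domain over $\mathbb Z[1/6]$, so closed conditions can be checked generically. If the naive double commutator leaves a torus component, I would fall back on the filtration argument with $x_e(1),x_a(1)$ and the relation $[x_a(v),x_e(w)]=x_f(vw)$.
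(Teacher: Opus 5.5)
Your central identity $k(\mathbf g)=x_f(F(\mathbf g))$ is false, so the proposal has no working core. It already fails inside the rank-one subgroup of $\pm f$.

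Realize $x_{\pm f}$ by unitriangular matrices in $\mathrm{SL}_2$. A direct $2\times2$ computation gives $\bigl([x_f(\epsilon),M]\bigr)_{21}=\epsilon\,(M_{21})^2$ for every $M\in\mathrm{SL}_2$. Now $y(\dot w)=x_f(\pm1)x_{-f}(-1)$ has $(2,1)$-entry $\pm1$. Hence $k(\dot w)$, and every further nesting, has a unit $(2,1)$-entry and does not lie in $X_f$. Likewise, $h=x_f(t)$ over $\mathbb Q$ gives $y(h)_{21}=t$ and $k(h)_{21}=t^4\neq0$. No choice of signs or number of nestings changes this. Since the failure occurs over a field, the generic-fibre weight argument cannot rescue it.

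The $\exp(\operatorname{ad}e_f)$ heuristic with the $f$-grading describes the linear operator $\operatorname{Ad}(x_f(1))-1$ near the identity. The group commutator $[x_f(1),z]=x_f(1)\cdot zx_f(-1)z^{-1}$ of a non-infinitesimal $z$ is governed instead by $\operatorname{Ad}(z)e_f$, and it truncates nothing.

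Separately, your trace fallback is wrong as stated. The transported words are $\theta_*^j(k(h))\in\theta_*^j(H)$, and $H$ need not be $\theta_*$-stable; in the square-zero example, $\theta_*$ carries the short-root parameter group $L$ to $\theta(L)\neq L$. Fixedness of $F$ is automatic from twisting invariance anyway.

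What is missing is a step that first replaces $h$ by an element of $H$ lying in a subgroup that normalizes a small abelian unipotent group. Only then do commutators with constant root elements land exactly in a root subgroup. The paper obtains this from Stepanov's universal element:
\begin{itemize}
\item Over $C_s$, with $s$ fixed and $s(\dot w)=1$, factor $\mathbf g=utv\dot w$ in the big cell. Clearing denominators gives $\xi=ux_a(s^n)u^{-1}\in E(C)$.
\item The element $\mathbf z=\mathbf g^{-1}\xi\mathbf g\xi^{-1}$ lies in the parabolic $\mathbf P$ with Levi roots $\pm a$, because $\dot w^{-1}\mathbf P^-\dot w=\mathbf P$. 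Also $\mathbf z(h)\in H$ by normality, since $\xi(h)\in E(R)$.
\item Because $\mathbf P$ normalizes $\mathbf V=\mathbf X_e\mathbf X_f$, one has exactly $[\mathbf z,x_e(1)]=x_e(P)x_f(F)$.
\item The identity $[x_{-a}(1),x_e(p)x_f(q)]=x_e(q)$ isolates $F$, and Weyl transport moves it to $X_f$.
\item At $\dot w$, $\mathbf z(\dot w)=x_{-a}(-1)x_a(-1)$ gives $F(\dot w)=-1$.
\end{itemize}

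Your fallback of commuting with $x_e(1)$ and using $[x_a(v),x_e(w)]=x_f(vw)$ is the right final step. It is exact only after this passage into $\mathbf P$, and nothing in your word, built from $h$ and constant conjugators alone, forces an element of $H$ into such a parabolic.
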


\begin{proof}
We use the universal-element construction of
\cite[Section~5]{StepanovUnipotents}, but extract the parameter
inside a two-root subgroup rather than the whole unipotent radical.

\smallskip
\noindent\emph{The two long-root directions.}
Let $\mathbf P$ be the standard absolute parabolic with Levi roots
$\{\pm a\}=\{\pm\alpha_2\}$. The long roots
$$
        e=\alpha_1+\alpha_2+\alpha_3+\alpha_4,\qquad
        f=e+\alpha_2
$$
are fixed by $\rho$. Their subgroup
$\mathbf V=\mathbf X_e\mathbf X_f\simeq\mathbb G_a^2$
is normalized by $\mathbf P$.
Indeed, the radical roots are the positive absolute roots other than
$a$; none has a root sum with $e$ or $f$. Thus the radical centralizes
$\mathbf V$, while the torus normalizes it and the rank-one Levi acts by
$$
\begin{aligned}
 x_a(r)x_e(p)x_f(q)x_a(-r)&=x_e(p)x_f(q+rp),\\
 x_{-a}(r)x_e(p)x_f(q)x_{-a}(-r)&=x_e(p+rq)x_f(q).
\end{aligned}
$$
These are integral root identities, so normalization holds over
every base ring. In particular,
$$
        [x_{-a}(1),x_e(p)x_f(q)]=x_e(q).
$$

\smallskip
\noindent\emph{Clearing denominators.}
The ring $C$ is a domain, since it is flat over $\mathbb Z[1/6]$
with integral generic fiber.
In the invariant cell
$\Omega=\mathbf U^+\mathbf T\mathbf U^-\dot w$, choose
$D(s_0)\subseteq\Omega$ with $s_0(\dot w)=1$.
Such an $s_0$ exists because the ideal of the complement restricts
to the unit ideal along the section $\dot w$.
Set $s=s_0\theta_C(s_0)\theta_C^2(s_0)$.
Then $s$ is fixed, $s(\dot w)=1$, and over $C_s$ the universal
point has a unique factorization
$$
        \mathbf g=utv\dot w,\qquad
        u\in\mathbf U^+(C_s),\quad t\in\mathbf T(C_s),\quad
        v\in\mathbf U^-(C_s).
$$
All three factors are twisting-fixed by uniqueness.
For sufficiently large $n$,
$$
        \xi=u x_a(s^n)u^{-1}\in\mathbf U^+(C).
$$
Indeed, the finitely many absolute coordinates of
$u x_a(X)u^{-1}$ are polynomials in $X$ over $C_s$ with zero
constant term, so substituting $X=s^n$ clears their denominators.
Twisting invariance descends along $C\hookrightarrow C_s$;
collecting coordinates by root orbits gives $\xi\in E(C)$.

\smallskip
\noindent\emph{Extracting the regular function.}
Put $\mathbf z=\mathbf g^{-1}\xi\mathbf g\xi^{-1}$.
Since $x_a(s^n)$ belongs to the Levi of $\mathbf P^-$ and
$\dot w^{-1}\mathbf P^-\dot w=\mathbf P$, over $C_s$ we have
$$
 \mathbf g^{-1}\xi\mathbf g
   =\dot w^{-1}v^{-1}t^{-1}x_a(s^n)tv\dot w\in\mathbf P(C_s).
$$
Also $\xi^{-1}\in\mathbf P(C)$. Closedness of $\mathbf P$ and
$C\hookrightarrow C_s$ therefore give $\mathbf z\in\mathbf P(C)$.
Normalization of $\mathbf V$ now gives the stronger two-factor identity
$$
        [\mathbf z,x_e(1)]=x_e(P)x_f(F),\qquad
        P,F\in C^{\theta_C}.
$$
The coordinates are regular over $C$, and fixed because the two
root maps and the commutator are twisting-fixed.
For $h\in H$, normality and $\xi(h)\in E(R)$ give
$\mathbf z(h)\in H$. A second commutator yields
$$
        [x_{-a}(1),[\mathbf z(h),x_e(1)]]=x_e(F(h))\in H.
$$
Long-root Weyl transport gives $x_f(F(h))\in H$.
The universal point itself need not be elementary: normality is
used only after evaluation, with conjugators in $E(R)$.

At $\dot w$, the cell factors and $s$ equal $1$, so
$\xi(\dot w)=x_a(1)$ and
$\mathbf z(\dot w)=x_{-a}(-1)x_a(-1)$.
The displayed rank-one action gives
$$
        [x_{-a}(-1)x_a(-1),x_e(1)]=x_e(1)x_f(-1),
$$
and hence $F(\dot w)=-1$.
\end{proof}

\begin{lemma}[Residual simplicity]
\label{lem:triality-residual-simplicity}
For every orbit $\mathcal O_i$, reduction $E(R)\to E(K_i)$ is
surjective and $E(K_i)$ is nontrivial and simple.
\end{lemma}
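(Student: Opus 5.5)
Surjectivity of reduction will come from lifting relative root parameters, as in Lemma~\ref{lem:surjectivity-reduction}. For an $A_1$ (long) root, lift a residue parameter arbitrarily and average by $\tfrac13\operatorname{Tr}_\theta$, which needs $3\in R^\times$. Short parameters lie in $R$ and lift arbitrarily. The argument in that lemma does not use $m=2$. Nontriviality follows because $x_b(1)\ne1$ over $K_i$, by injectivity of the root coordinates.

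For simplicity I split by orbit type. If $\mathcal O_i$ has three elements, then $K_i\simeq k^3$ with cyclic operator. Lemma~\ref{lem:split-product-identification} then identifies $E(K_i)$ with $E_{\mathrm{ad}}(D_4,k)$, which is simple over a field of characteristic $\ne2,3$. This is Tits simplicity for isotropic adjoint groups over fields: the elementary subgroup is perfect and modulo center it is simple, and the adjoint elementary group has trivial center by frame rigidity (Proposition~\ref{prop:rigidity-triality-frame}). If $\mathcal O_i=\{\mathfrak m\}$, then $K=R/\mathfrak m$ is a field with $\theta^3=1$. When $\theta\ne1$, $E(K)$ is the elementary subgroup of the quasi-split group ${}^3D_4(k)$, $k=K^\theta$. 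When $\theta=1$, it is the folded split group $G_2(K)$. Both are simple: apply Tits' theorem to the rational points generated by root subgroups, with trivial center again by frame rigidity (any central element centralizes the frame).

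To avoid citing field theory for the twisted form, I would instead give a direct argument using the tools already present. Let $1\ne H\lhd E(K)$. By Lemma~\ref{lem:triality-long-root-level}, if $H$ contains a nontrivial long-root element then $\mathfrak a_0(H)$ is a nonzero ideal of the field $K^\theta$. Hence $1\in\mathfrak a_0(H)$ and $H=E(K)$. To produce such an element, I pick $h\in H$ noncentral. By the Bruhat decomposition over a field, some conjugate of $h$ lies in the big cell, or the commutator of $h$ with a root element is a nontrivial element of a proper parabolic's unipotent radical. Then the universal function $F$ of Lemma~\ref{lem:triality-universal-long-root-parameter}, evaluated along conjugates, gives $x_f(F(h'))\in H$ with $F(h')\ne0$ for a suitable conjugate $h'$. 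The reason is that $F$ is a nonzero regular function ($F(\dot w)=-1$), and over an infinite field $E(K)$ is Zariski dense, so a nonzero function cannot vanish on the whole $E(K)$-conjugacy class of a noncentral $h$. For finite $K$ I would instead cite the classical simplicity of ${}^3D_4(q)$ and $G_2(q)$, $q$ prime to $6$.

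The main obstacle is the density and nonvanishing step. The conjugacy class of $h$ need not meet $D(F)$ for a given conjugation orbit, so I would replace $h$ by commutators $[h,x_\lambda(t)]$ to land in the unipotent radical, where $F$ can be computed explicitly. Failing that, I would fall back on Tits' simplicity theorem applied uniformly to the three field models.
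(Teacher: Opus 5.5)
Your main line is correct and is essentially the paper's proof: surjectivity from Lemma~\ref{lem:surjectivity-reduction} (its averaging step needs only $3\in R^\times$), nontriviality from a parameter-one root element, and simplicity by quoting the known simplicity of the three residual models (split $D_4$ through Lemma~\ref{lem:split-product-identification}, and ${}^3D_4$ or $G_2$ over a fixed field, with trivial center by frame rigidity), where the paper cites \cite[Lemma~6.1]{BuninaMakadiyaTriality} and \cite[Corollary~4.5]{StepanovUnipotents} in place of Tits' theorem. Drop the sketched ``direct'' argument via $F$ rather than offering it as an alternative: Zariski density of $E(K)$ says nothing about the conjugacy class of a given $h$, which is a proper subvariety on which nothing you prove keeps $F$ from vanishing, and producing $h'\in H$ with $F(h')\neq0$ is essentially the content of the simplicity theorem itself.
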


\begin{proof}
Surjectivity is Lemma~\ref{lem:surjectivity-reduction}.
For a fixed residue field, use
\cite[Lemma~6.1]{BuninaMakadiyaTriality} for split $G_2$ or
genuine triality. For a split residue algebra, projection identifies
the group with $E_{\mathrm{ad}}(D_4,k)$; apply
\cite[Corollary~4.5]{StepanovUnipotents}.
A root element of parameter $1$ proves nontriviality.
\end{proof}

\begin{theorem}[Maximal normal subgroups in triality]
\label{thm:triality-maximal-normal-subgroups}
Every proper normal subgroup of $E(R)$ is contained in some $N_i$.
The assignment $i\mapsto N_i$ is a bijection from $\mathcal I$ to
the maximal proper normal subgroups of $E(R)$.
\end{theorem}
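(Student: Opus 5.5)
We argue as the text before Lemma~\ref{lem:triality-long-root-level} indicates. Let $H\lhd E(R)$ be proper. By Lemma~\ref{lem:triality-long-root-level}, $R\mathfrak a_0(H)$ is a proper invariant ideal of $R$. By Lemma~\ref{lem:maximal-invariant-ideals} it lies in some $I_i$. Fix this $i$; we show $H\leq N_i$.

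Suppose not. Then $(q_i)_*(H)$ is a nontrivial normal subgroup of $E(K_i)$, since reduction is surjective. By Lemma~\ref{lem:triality-residual-simplicity}, $E(K_i)$ is simple, so $(q_i)_*(H)=E(K_i)$. The residue of $\dot w$ in $E(K_i)$ is the constant Weyl word. Hence some $h\in H$ satisfies $(q_i)_*(h)=(q_i)_*(\dot w)$, that is, $h\equiv\dot w$ modulo $I_i$.

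Now apply Lemma~\ref{lem:triality-universal-long-root-parameter}. It gives $x_f(F(h))\in H$. Long-root Weyl transport then puts $F(h)$ in $\mathfrak a_0(H)\subseteq I_i\cap R_0$. On the other hand, $F$ is a regular function on the adjoint scheme, and evaluation commutes with the equivariant reduction $q_i$. Therefore
$$
        q_i(F(h))=F\bigl((q_i)_*(h)\bigr)=F\bigl((q_i)_*(\dot w)\bigr)=q_i(F(\dot w))=-1.
$$
This says $-1\in I_i$, a contradiction. Hence $H\leq N_i$.

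\emph{Main obstacle.} The one point needing care is whether $F(\dot w)=-1$ at the universal level descends to every residue algebra. It does, because $F\in C^{\theta_C}$ is defined over $\mathbb Z[1/6]$ and $\dot w$ is a constant $\mathbb Z[1/6]$-point. So evaluation at the reduction of $\dot w$ is the image of $-1$, and this holds also when $K_i$ is a split product of three fields.

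\emph{Bijection.} Each $N_i$ is proper, because $x_{\lambda_0}(1)\notin N_i$. It is maximal normal, because $E(R)/N_i\simeq E(K_i)$ is simple by Lemma~\ref{lem:triality-residual-simplicity}. The $N_i$ are pairwise distinct, since $N_i\cap X_{\lambda_0}(R)=x_{\lambda_0}(I_i)$ as in Proposition~\ref{prop:maximal-normal-subgroups}, and the $I_i$ are distinct by Lemma~\ref{lem:maximal-invariant-ideals}. Finally, let $M$ be any maximal proper normal subgroup. By the containment just proved, $M\leq N_i$ for some $i$, and maximality gives $M=N_i$. So $i\mapsto N_i$ is surjective onto the maximal proper normal subgroups, completing the bijection.
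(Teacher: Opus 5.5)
Your proposal is correct and follows the paper's proof essentially step for step. The long-root level $R\mathfrak a_0(H)$ lies in some $I_i$. Residual simplicity then produces $h\in H$ reducing to $\dot w$, and the universal function $F$ forces both $F(h)\in I_i$ and $q_i(F(h))=-1$. Maximality, distinctness (via the short root $b$, your $\lambda_0$) and exhaustion of the maximal normal subgroups by the $N_i$ are obtained exactly as in the paper.
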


\begin{proof}
For proper $H\lhd E(R)$, the ideal $J=R\mathfrak a_0(H)$ is
proper and invariant. Choose a maximal ideal containing it;
its orbit intersection $I_i$ also contains $J$.
If $(q_i)_*(H)\ne1$, residual simplicity and surjectivity give
$(q_i)_*(H)=E(K_i)$. Take $h\in H$ reducing to $\dot w$.
The universal lemma and long-root Weyl transport give
$$
        F(h)\in\mathfrak a_0(H)\subseteq I_i,\qquad
        q_i(F(h))=F(\dot w)=-1,
$$
a contradiction. The second equality holds because $F$ is regular.
Thus $H\subseteq N_i$.
Each $N_i$ is maximal normal, since its quotient is nontrivial
simple, and the containment just proved exhausts the maximal
normal subgroups. Finally, $N_i\cap X_b(R)=x_b(I_i)$ and
Lemma~\ref{lem:maximal-invariant-ideals} give distinctness.
\end{proof}

This proves the triality case of
Proposition~\ref{prop:maximal-normal-subgroups} and supplies
$\varphi^\sharp$ and the residual isomorphisms of Section~2.
The argument uses neither residual standardness nor an automorphism
classification.

\subsection{Residual standardness}
\label{subsec:triality-residual-standardness}

The field isomorphism theorems give geometric conjugacy of the
constant frames. Frame rigidity descends the conjugator, and
coefficient recovery then determines the coefficient isomorphism.
No cubic orientation needs to be chosen beforehand.

\begin{proposition}[Residual triality standardness]
\label{prop:residual-triality-standardness}
For residue algebras $(K_1,\theta_1),(K_2,\theta_2)$, every
isomorphism $\alpha:E(K_1)\xrightarrow{\sim}E(K_2)$ has the form
$$
        \alpha=i_g\circ\Phi_\nu,\qquad
        g\in G(K_2),\qquad \nu:K_1\xrightarrow{\sim}K_2,
$$
where $\nu$ preserves the trace. The two residue algebras are either
both fixed or both split.
\end{proposition}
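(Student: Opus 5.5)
The plan follows the outline stated just before the proposition: use the field isomorphism theorems to get geometric conjugacy of frames, descend the conjugator by frame rigidity, then recover the coefficient isomorphism from Proposition~\ref{prop:global-triality-coefficient-recovery}.

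\emph{Distinguishing the residual models.} By the root formulas and Lemma~\ref{lem:split-product-identification}, $E(K)$ is the elementary subgroup of one of three adjoint groups over $k=K^\theta$:
\begin{itemize}
\item split $G_2$, when $\theta=\operatorname{id}$;
\item the genuine triality group ${}^3D_4(k)$, when $[K:k]=3$;
\item split $D_4$ over a factor field, when $K$ is a cyclic product of three fields.
\end{itemize}
The Steinberg (finite) and Borel--Tits (infinite) theorems, as in Section~\ref{sec:residual-standardness}, separate these three cases. They differ in absolute type ($G_2$ versus $D_4$) or in splitness. Hence $\alpha$ forces both algebras to be fixed, or both to be split.

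\emph{Geometric conjugacy of frames.} The same field theorems give
\[
\alpha=i_{h}\circ\gamma_*\circ(\nu_0)_*
\]
over the relevant base field $k_2$ (or a factor field), with $h$ in the adjoint group and $\gamma$ a diagram factor.
\begin{itemize}
\item In the split $D_4$ case, $\gamma$ is an arbitrary element of $S_3$. I would not try to absorb it into an equivariant map.
\item In every case, the constant frame is defined over $\mathbb Z[1/6]$ and is permuted by graph automorphisms only up to the relabelling built into the twisted coordinates. So after base change to an algebraic closure $\bar k$, the image frame $(\alpha(x_\lambda(1)))_\lambda$ is conjugate to the standard frame by some $c\in G_{\mathrm{ad}}(D_4,\bar k)$.
\item To check that the diagram factor respects the relative frame, use that the relative root subgroups are products over $\rho$-orbits. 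Any diagram automorphism normalizing $\langle\rho\rangle$ maps the orbit product $x_\lambda(1)$ to itself, because all parameters are $1$. The remaining elements of $S_3$ are handled by the coefficient permutation on a split algebra, which also fixes the constant frame.
\end{itemize}

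\emph{Descent of the conjugator.} The transporter scheme $\mathcal T_\alpha$ over $K_2$ has at most one point over every algebra, by Proposition~\ref{prop:rigidity-triality-frame}. A point over the faithfully flat extension $K_2\otimes\bar k$ (or over $\bar k$ factorwise) therefore descends to a unique $g\in G_{\mathrm{ad}}(D_4,K_2)$. Twisting the transport identities and invoking uniqueness again gives $\sigma(g)=g$, so $g\in G(K_2)$. Then $\psi=i_{g^{-1}}\circ\alpha$ fixes the frame. Proposition~\ref{prop:global-triality-coefficient-recovery} applied to $\psi$ gives a unique trace-preserving ring isomorphism $\nu$ with $\psi=\Phi_\nu$. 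No orientation of the cubic extension is chosen, which is why $\nu$ is only trace-preserving.

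\emph{Main obstacle.} The step I expect to be hardest is the geometric conjugacy. In particular, one must show that the diagram factor produced by the field theorem does not carry the frame to a non-conjugate one. The case at risk is split algebras with $\gamma$ of order two, where the twisted relabelling interacts with $\rho$. My first attempt would be to compute the image of the frame under each $\gamma\in S_3$ on $\Delta_\rho$-coordinates. I would show that it equals the frame after composing with a coefficient permutation of the three factors. Such a permutation is an automorphism of the coefficient ring, preserves $\operatorname{Tr}_\theta$ since the trace is symmetric in the factors, and fixes constant parameters.
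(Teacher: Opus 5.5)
This is essentially the paper's proof: you separate the three residual models, get a transporter of the constant frame over an algebraic closure from the field isomorphism theorems, descend it by frame rigidity and twisting uniqueness, and recover a trace-preserving $\nu$ from Proposition~\ref{prop:global-triality-coefficient-recovery}. The obstacle you anticipate does not arise, since $\langle\rho\rangle$ is normal in $S_3$, so your ``remaining elements'' case is empty; every pinned diagram permutation, transpositions included, fixes the signed simple frame $u_{\pm b}=x_{\pm\alpha_4}(1)x_{\pm\alpha_1}(1)x_{\pm\alpha_3}(1)$, $u_{\pm a}=x_{\pm\alpha_2}(1)$ because the outer factors commute, and hence fixes the whole frame, which is obtained from these by the same Weyl words --- this Weyl-word argument, not ``all parameters are $1$'', is what controls the signs a pinned map puts on non-simple root vectors, and no coefficient permutation or $\Delta_\rho$-computation is needed.
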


\begin{proof}
Put $F_i=K_i^{\theta_i}$. The groups are
$E(K_i)=\mathbf H_i(F_i)^+$, generated by rational unipotent
radicals, with the following possibilities:
$$
\begin{array}{c|c|c}
 (K_i,\theta_i)&\mathbf H_i&\operatorname{rank}_{F_i}\mathbf H_i\\\hline
 K_i=F_i,\ \theta_i=1&G_2\text{ split}&2\\
 {}[K_i:F_i]=3&{}^3D_4\text{ quasi-split}&2\\
 K_i=F_i^3&D_4\text{ split}&4.
\end{array}
$$
The middle extension is cyclic Galois and defines the pinned form;
the last identification is
Lemma~\ref{lem:split-product-identification}.
The two groups, and hence the fields $F_i$, are either both finite
or both infinite.

\smallskip
\noindent\emph{Geometric conjugacy.}
For finite fields, \cite[Theorem~37]{SteinbergLectures} separates
the three cases and determines the field size; no exceptional
isomorphism occurs in the allowed characteristics.
After the field identification, and a pinned transposition if cubic
orientations differ, Theorems~30 and~36 of that reference leave
an inner--diagonal factor, hence geometric conjugation.

For infinite fields, use the Borel--Tits theorem and its isomorphism
complement as in Section~\ref{sec:residual-standardness}; see
\cite[Theorem~1.3 and Section~3.9]{SteinbergAbstractHomomorphisms}.
They give an isomorphism $F_1\to F_2$ and a special isogeny of the
corresponding forms. The target is adjoint, the groups are absolutely
simple and isotropic, and the image is Zariski dense.
The central factor is trivial, and special isogenies in types
$D_4,G_2$ are central when $2,3$ are invertible
\cite[Section~2.5]{SteinbergAbstractHomomorphisms}.
Thus the adjoint forms are isomorphic; absolute type and relative
rank distinguish the rows.
Over an algebraic closure the algebraic isomorphism is inner
followed by a pinned diagram map
\cite[Proposition~7.1.6]{ConradReductive}.
For genuine triality, extend the recovered field map to an embedding
of the cubic splitting field into that closure.

Every pinned diagram permutation fixes the signed simple frame
$$
 u_{\pm b}=x_{\pm\alpha_4}(1)x_{\pm\alpha_1}(1)x_{\pm\alpha_3}(1),
 \qquad u_{\pm a}=x_{\pm\alpha_2}(1),
$$
since the outer factors commute. It therefore fixes the entire
frame, obtained by the same Weyl words. Field transfer fixes it
as well. For split $G_2$, the inner conjugator lies in the pinned
$G_2\subseteq G_{\mathrm{ad}}(D_4)$.
Consequently, all cases give a geometric transporter of the same
indexed frame; unlike in ${}^2D_4$, no distinct graph frames remain.

\smallskip
\noindent\emph{Descent and recovery.}
Let $k=K_2$ in the field case or $K_2=k^3$ in the split case,
and use inclusion or first projection
$p:G(K_2)\to G_{\mathrm{ad}}(D_4,k)$.
The transporter from the constant frame to
$(p(\alpha(x_\lambda(1))))_\lambda$ has a point over $\overline k$
and at most one point over every $k$-algebra by frame rigidity.
Its pullbacks to $\overline k\otimes_k\overline k$ agree, so
affine faithfully flat descent gives
$c\in G_{\mathrm{ad}}(D_4,k)$, without a perfectness assumption on $k$.
For a field target, twisting gives another transporter, hence
$c\in G(K_2)$; set $g=c$.
For a split target, set $g=\Delta_\rho(c)$.
Now $i_{g^{-1}}\alpha$ fixes the frame, and
Proposition~\ref{prop:global-triality-coefficient-recovery} gives
its trace-preserving coefficient isomorphism $\nu$.
Here $\Phi_\nu$ denotes the existing normalized isomorphism;
no converse construction has been assumed.
\end{proof}

\subsection{Relative local normalization and coefficient isomorphisms}
\label{subsec:triality-relative-local-results}

We combine the fixed-target theorem with the common fixed-to-split
lemma. The converse is stated for the isomorphisms between local
models needed to construct global coefficient automorphisms.

\begin{theorem}[Relative local triality theorem]
\label{thm:relative-local-triality-normal-form}
Let $S$ be an arbitrary coefficient pair, $A$ a triality local model,
and $\Psi:E(S)\to E(A)$ a homomorphism. Suppose that
$g_{\mathrm{res}}\in G(A_{\mathrm{res}})$ and a trace-preserving
unital $\nu_{\mathrm{res}}:S\to A_{\mathrm{res}}$ satisfy
$$
 (q_A)_*(\Psi(x_\lambda(t)))
   =g_{\mathrm{res}}x_\lambda(\nu_{\mathrm{res}}(t))g_{\mathrm{res}}^{-1}
       \qquad(\lambda\in\Phi_\rho,\ t\in S_\lambda).
$$
There are a unique frame transporter $g_A\in G(A)$ lifting
$g_{\mathrm{res}}$ and a unique trace-preserving unital
$\nu:S\to A$ lifting $\nu_{\mathrm{res}}$ such that
$$
        i_{g_A^{-1}}\Psi(x_\lambda(t))=x_\lambda(\nu(t)).
$$
We denote this normalized homomorphism by $\Phi_\nu$;
neither injectivity nor surjectivity of $\Psi$ is assumed.

Conversely, every trace-preserving ring isomorphism
$\nu:S\xrightarrow{\sim}A$ between local models induces an
elementary group isomorphism with these root rules in both the
adjoint and simply connected forms. These isomorphisms commute
with the central isogeny and respect composition.
\end{theorem}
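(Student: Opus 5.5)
We first treat the direct assertion and then the converse. Start with a fixed local model $A$. Cite the fixed-target relative normalization of \cite{BuninaMakadiyaTriality}, whose arguments use only source relations and prescribed residues and therefore allow an arbitrary source $S$. It supplies a lift $u\in G(A)$ of $g_{\mathrm{res}}$ by Bruhat lifting. It also produces a transporter $c\in G(A)$, reducing to $1$, of the constant frame onto the frame of $i_{u^{-1}}\Psi$. Set $g_A=uc$. Frame rigidity (Proposition~\ref{prop:rigidity-triality-frame}) gives uniqueness, and since both frames are twisting-fixed, it also places $g_A$ in $G(A)$.

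For a split model $A=B^3$ we use Lemma~\ref{lem:fixed-to-split-frame-normalization} with $m=3$. Its hypotheses need three checks. The fixed source $E_\rho(D_4,S^{\theta_S})$ has type $G_2$ and is perfect since $2,3$ are invertible. By Lemma~\ref{lem:triality-compatible-basic-properties}, a trace-preserving $\nu_{\mathrm{res}}$ carries $S^{\theta_S}$ into the fixed, hence diagonal, part of $A_{\mathrm{res}}=k^3$. Fixed-target normalization with trivial operator on the residue field is the case just treated. Once $g_A$ is found, $\Psi^0=i_{g_A^{-1}}\Psi$ fixes the frame. Proposition~\ref{prop:global-triality-coefficient-recovery}, which allows arbitrary targets, then gives a unique trace-preserving $\nu$ with $\Psi^0=\Phi_\nu$. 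Reducing modulo $J_A$ and applying the uniqueness part of that proposition over $A_{\mathrm{res}}$ shows that $\nu$ lifts $\nu_{\mathrm{res}}$. Uniqueness of the pair $(g_A,\nu)$ follows from frame rigidity together with injectivity of one short-root parameterization.

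For the converse, let $\nu:S\xrightarrow{\sim}A$ be a trace-preserving isomorphism of local models. By Lemma~\ref{lem:triality-orbit-localization}, $\kappa:=\nu\theta_S\nu^{-1}$ satisfies $\kappa=\theta_A^{\epsilon}$ with $\epsilon=\pm1$. We first check this on a split model and on a fixed model with residue of degree three. When $A$ is fixed with trivial residual operator, we must still show that $\kappa$ and $\theta_A$ agree up to inversion, or else handle the remaining case directly. If $\epsilon=1$, then $\nu$ is equivariant and $\Phi_\nu=\nu_*$ is the coefficientwise action, defined on both the adjoint and simply connected ambient groups. If $\epsilon=-1$, compose $\nu_*$ with the pinned transposition $\rho'$ of the Dynkin diagram satisfying $\rho'\rho\rho'^{-1}=\rho^{-1}$. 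Then $\rho'_*\nu_*$ intertwines $\rho\theta_S$ with $\rho\theta_A$, because $\rho'\rho^{-1}\rho'^{-1}=\rho$. This composite preserves the twisted groups and the frame, and after the cyclic coordinate changes of Subsection~\ref{subsec:triality-coefficient-automorphisms} it has root rule $t\mapsto\nu(t)$. Pinned maps commute with the central isogeny and with composition \cite[Theorem~6.1.17]{ConradReductive}, which gives compatibility for both forms and the composition law.

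The main obstacle is the fixed local case in which $\kappa$ is neither $\theta_A$ nor $\theta_A^{-1}$. This happens, for instance, over square-zero thickenings as in the example of Subsection~\ref{subsec:triality-maximal-normal-subgroups}. Here no ambient map realizes $\Phi_\nu$. I would first try to define $\Phi_\nu$ on the Steinberg-type presentation of $E$ by relative root generators. The triality relations can be written purely in terms of the ring, $\operatorname{Tr}$, $\#$, $N_\theta$ and $\times$, all of which $\nu$ preserves, and this requires verifying that those relations define $E(A)$ over a local ring. If that fails, I would fall back on a residually ordinary decomposition $\nu=\nu_0\circ\beta$. Here $\nu_0$ is equivariant, and $\beta$ is an automorphism of $S$ reducing to the identity, trace-preserving, and acting on $\ker\operatorname{Tr}$ inside the radical; its group action would be realized locally by an explicit radical deformation whose group relations are checked entrywise.
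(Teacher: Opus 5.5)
Your normalization half is sound and follows the paper. The fixed case uses the arbitrary-source lifting and transporter results of \cite{BuninaMakadiyaTriality}. The split case goes through Lemma~\ref{lem:fixed-to-split-frame-normalization} with $m=3$, using diagonality of $\nu_{\mathrm{res}}$ on $S^{\theta_S}$ and perfectness of the $G_2$ source. Then $\nu$ comes from Proposition~\ref{prop:global-triality-coefficient-recovery}.

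The split half of the converse is also fine. The paper writes $\nu$ as one isomorphism $f$ composed with a permutation $\pi$ of the primitive idempotents and uses the pinned map $d_\pi$; your $\epsilon=\pm1$ argument is the same idea. For a fixed model with cubic residue field, the $D,F$ argument of Lemma~\ref{lem:triality-orbit-localization}, run with an element separating $\theta$ from $\theta^{-1}$ on the residue field, does give $\kappa=\theta_A^{\pm1}$. One small correction: $\rho'_*\nu_*$ conjugates $\rho\theta_S$ to $(\rho\theta_A)^{-1}$, not to $\rho\theta_A$. This is harmless, since an operator of order dividing three and its inverse have the same fixed points.

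The genuine gap is the converse for a fixed local model whose residue field carries the trivial operator, which you identify but do not resolve. There, trace preservation imposes no constraint on how $\nu\theta_S\nu^{-1}$ acts on $\ker\operatorname{Tr}$ inside the radical; in the square-zero example every linear automorphism of $V$ preserves the trace. So $\nu$ need be neither equivariant nor anti-equivariant, and no ambient coefficient or pinned map has the required root rule. One must prove directly that $x_\lambda(t)\mapsto x_\lambda(\nu(t))$ respects all relations of $E(S)$.

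Neither of your fallbacks does this.
\begin{itemize}
\item A presentation of $E(A)$ by relative Steinberg relations written in terms of the ring, $\operatorname{Tr}$, $\#$, $N_\theta$ and $\times$ is a $K_2$-type statement. It is not available in the paper, and you do not prove it.
\item The factorization $\nu=\nu_0\circ\beta$ presupposes an equivariant isomorphism $\nu_0:S\to A$, whose existence you do not establish. Realizing $\beta$ by an ``explicit radical deformation'' is exactly the missing construction.
\end{itemize}

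The paper closes this case by \cite[Theorem~5.12]{BuninaMakadiyaTriality}, applied to $\nu$ and to $\nu^{-1}$ to obtain mutually inverse elementary homomorphisms with the prescribed root rules. It then transfers them to the simply connected form through the root-preserving elementary isomorphisms of the central isogeny in \cite[Proposition~6.7]{BuninaMakadiyaTriality}. Without such an input, the converse is unproved precisely where trace-preserving actions go beyond $\operatorname{Aut}(R,\theta)$. Consequently Proposition~\ref{prop:global-triality-coefficient-automorphisms}, which localizes a trace-preserving $\mu$ to exactly such fixed models, is also left unsupported.
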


\begin{proof}
\smallskip
\noindent\emph{Normalization.}
For a fixed target, the residual conjugator lifts by
\cite[Proposition~2.2]{BuninaMakadiyaTriality}.
Its removal prepares the residual frame, so Theorem~4.3 of that
paper gives the transporter and coefficient map for the arbitrary
source $S$.

For a split target $A=B^3$, put $k=B/\operatorname{Jac}(B)$ and
$S_0=S^{\theta_S}$. Trace preservation sends $S_0$ into the fixed
diagonal of $k^3$, so
$$
        \nu_{\mathrm{res}}(s)=(f(s),f(s),f(s))\qquad(s\in S_0)
$$
for a unital $f:S_0\to k$.
Restricting $\Psi$ along $E_0=E_\rho(D_4,S_0)\to E(S)$
therefore gives exactly the residual hypothesis of
Lemma~\ref{lem:fixed-to-split-frame-normalization} with $m=3$.
The fixed source $E_0$ is perfect: the long-root $A_2$-relations
put its long-root groups in $[E_0,E_0]$, and
Lemma~\ref{lem:triality-long-root-level} then gives $[E_0,E_0]=E_0$.
The fixed-target input was just established. The common lemma
supplies $g_A$, and
Proposition~\ref{prop:global-triality-coefficient-recovery} applied
to $i_{g_A^{-1}}\Psi$ supplies $\nu$.
In both cases, comparison on a short root gives
$q_A\nu=\nu_{\mathrm{res}}$.
Frame rigidity makes $g_A$ unique, and one short-root
parameterization makes $\nu$ unique.

\smallskip
\noindent\emph{The converse.}
An isomorphism of local models joins two fixed models or two split
models. For fixed models, apply
\cite[Theorem~5.12]{BuninaMakadiyaTriality} to $\nu$ and
$\nu^{-1}$. The resulting elementary homomorphisms have inverse
root rules. For the simply connected form, transfer them through
the root-preserving elementary isomorphisms of the central isogeny
in \cite[Proposition~6.7]{BuninaMakadiyaTriality}.

For split models $S=B_1^3$, $A=B_2^3$, the isomorphism permutes
primitive idempotents and carries the fixed diagonal onto itself.
It therefore has the form
$$
        \nu(t_0,t_1,t_2)
          =\bigl(f(t_{\pi(0)}),f(t_{\pi(1)}),f(t_{\pi(2)})\bigr)
$$
for one isomorphism $f:B_1\xrightarrow{\sim}B_2$ and $\pi\in S_3$.
In the coherent simple short-root chart,
$$
        \operatorname{pr}_0x_{\pm b}(t_0,t_1,t_2)
          =x_{\pm\alpha_4}(t_0)x_{\pm\alpha_3}(t_1)
                                      x_{\pm\alpha_1}(t_2).
$$
Put $(\beta_0,\beta_1,\beta_2)=(\alpha_4,\alpha_3,\alpha_1)$
and choose the pinned diagram map $d_\pi$ with
$d_\pi(\beta_{\pi(i)})=\beta_i$ and $d_\pi(\alpha_2)=\alpha_2$.
Then
$$
        (\operatorname{pr}_{0,A})^{-1}(d_\pi)_*f_*
                                      \operatorname{pr}_{0,S}
$$
has the prescribed signed simple root rules. It fixes the constant
Weyl words, so coherence gives every root rule.
The split identification works in both isogeny forms, and pinned
diagram maps preserve both the root and weight lattices.
Thus these constructions commute with the central isogeny;
composition is checked on root generators.
\end{proof}

\begin{corollary}[Base-change compatibility]
\label{cor:relative-local-triality-base-change}
For an equivariant $f:A\to T$ in the preceding theorem,
$f_*\Psi$ has the unique frame transporter $f_*(g_A)$ and
normalized root rule
$$
        f_*\Phi_\nu=\Phi_{f\nu}.
$$
No locality of $T$ is needed. The source remains $E(S)$ and the
right-hand side denotes the existing composite. This applies in
particular to equivariant localizations and faithfully flat pullbacks.
\end{corollary}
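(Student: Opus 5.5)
The plan is to use the uniqueness already built into the preceding theorem and to work directly with its normalized homomorphism. Write $\Psi=i_{g_A}\circ\Phi_\nu$ as in Theorem~\ref{thm:relative-local-triality-normal-form}, with $g_A$ the unique frame transporter lifting $g_{\mathrm{res}}$ and $\nu:S\to A$ the unique trace-preserving lift of $\nu_{\mathrm{res}}$.

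First, compose with $f$. For every $\lambda$ the identity $\Psi(x_\lambda(1))=g_Ax_\lambda(1)g_A^{-1}$ is an identity of points of the ambient group. Applying $f_*$ gives
$$
f_*\Psi(x_\lambda(1))=f_*(g_A)\,x_\lambda(1)\,f_*(g_A)^{-1},
$$
because $f_*$ carries the constant frame to the constant frame. Since $f$ is equivariant, $f_*(g_A)$ lies in $G(T)$. Thus $f_*(g_A)$ is a frame transporter for $f_*\Psi$. Its uniqueness needs only frame rigidity $\mathrm{(LR3)}$ over $T$, via Lemma~\ref{lem:uniqueness-transporter}, and that lemma holds over every commutative ring with $2,3$ invertible. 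No locality of $T$ enters at this point.

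Second, remove the transporter. We have
$$
i_{f_*(g_A)^{-1}}\circ f_*\Psi=f_*\circ i_{g_A^{-1}}\circ\Psi=f_*\circ\Phi_\nu,
$$
and this composite sends $x_\lambda(t)$ to $f_*(x_\lambda(\nu(t)))=x_\lambda(f\nu(t))$. In the Weyl-coherent coordinates this uses only that an equivariant map acts on each relative root parameter by applying $f$; that is the identity $f_*(x_\lambda(a))=x_\lambda(f_\lambda(a))$ of Subsection~\ref{subsec:relative-root-subgroups}, unaffected by the cyclic coordinate changes. Moreover $f\nu$ is trace-preserving, because $f$ commutes with $\theta$ and hence with $\operatorname{Tr}_\theta$. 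So the normalized homomorphism is $\Phi_{f\nu}$, understood as the existing composite, exactly as the statement asserts.

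Third, uniqueness of the normalized root rule. By Proposition~\ref{prop:global-triality-coefficient-recovery}, the frame-fixing map $f_*\Phi_\nu:E(S)\to E(T)$ determines its trace-preserving coefficient map uniquely over an arbitrary target $T$, and that map is $f\nu$. Localizations and faithfully flat pullbacks are special cases of equivariant $f$, so they are covered.

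The only point needing care is the equivariance hypothesis on $f$. It is what puts $f_*(g_A)$ in the twisted group $G(T)$ and makes $f\nu$ trace-preserving. I do not expect a genuine obstacle, since every step reduces to naturality of the root maps together with the arbitrary-base rigidity and recovery results already proved.
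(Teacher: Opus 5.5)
Your proposal is correct and follows the paper's own argument. You apply $f_*$ to the transport identities and root rules, note that $f\nu$ preserves the trace, and obtain uniqueness from frame rigidity over the arbitrary base $T$ together with injectivity of a short-root parameterization; your appeal to Proposition~\ref{prop:global-triality-coefficient-recovery} for the last step amounts to exactly this.
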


\begin{proof}
Apply $f_*$ to the transport identities and root rules.
The composite $f\nu$ preserves the trace; frame rigidity and
injectivity of a short-root parameterization give uniqueness.
\end{proof}

\subsection{Global coefficient actions and classification}
\label{subsec:global-triality-classification}

Local coefficient isomorphisms detect all relations in a proposed
global coefficient action. For an arbitrary group automorphism,
local normalization and unique affine gluing supply its frame
transporter, leaving the coefficient action just constructed.

\begin{proposition}[Global triality coefficient automorphisms]
\label{prop:global-triality-coefficient-automorphisms}
For every $\mu\in\operatorname{Aut}_{\mathrm{tr}}(R,\theta)$, the
root prescription defines an automorphism $\Phi_\mu$ of $E(R)$.
It fixes the frame and satisfies
$$
        \Phi_\mu\circ\Phi_\nu=\Phi_{\mu\circ\nu},\qquad
        \Phi_\mu^{-1}=\Phi_{\mu^{-1}}.
$$
\end{proposition}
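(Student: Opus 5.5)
The plan is to construct $\Phi_\mu$ by local detection, following the pattern of Proposition~\ref{prop:global-exceptional-automorphisms}. Let $\mu\in\operatorname{Aut}_{\mathrm{tr}}(R,\theta)$. Lemma~\ref{lem:triality-compatible-basic-properties} shows that $\mu$ maps $R_0$ onto $R_0$, so the prescription $x_\lambda(t)\mapsto x_\lambda(\mu(t))$ is well defined on each relative root subgroup. The task is to show that it respects every relation in $E(R)$. Take a word $w$ in relative root elements that represents $1$ in $E(R)$, and write $w^\mu$ for the word obtained by applying $\mu$ to all parameters. It suffices to prove $w^\mu=1$ in $G_{\mathrm{ad}}(D_4,R)$. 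By Lemma~\ref{lem:affine-descent-and-local-detection}(1), this can be checked after applying $(\iota_{i'})_*$ for every orbit index $i'$.

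Lemma~\ref{lem:triality-orbit-localization} supplies the local tool. For each $i$ it gives trace-preserving isomorphisms $\mu_i^A:A_i\xrightarrow{\sim}A_{\mu^\sharp(i)}$ satisfying $\mu_i^A\iota_i=\iota_{\mu^\sharp(i)}\mu$. By the converse part of Theorem~\ref{thm:relative-local-triality-normal-form}, each $\mu_i^A$ induces an elementary isomorphism
$$
\Phi_{\mu_i^A}:E(A_i)\longrightarrow E(A_{\mu^\sharp(i)})
$$
whose root rules are $x_\lambda(s)\mapsto x_\lambda(\mu_i^A(s))$. Fix $i'$ and put $i=(\mu^\sharp)^{-1}(i')$. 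Then $(\iota_{i'})_*(w^\mu)$ is the word with parameters $\iota_{i'}\mu(t)=\mu_i^A\iota_i(t)$, and this equals $\Phi_{\mu_i^A}\bigl((\iota_i)_*(w)\bigr)=\Phi_{\mu_i^A}(1)=1$. The prescription therefore defines an endomorphism $\Phi_\mu$ of $E(R)$. It fixes the frame because $\mu(1)=1$.

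The composition rules come from checking both sides on root generators. We have $\Phi_\mu\Phi_\nu(x_\lambda(t))=x_\lambda(\mu\nu(t))$, and this is the root rule of $\Phi_{\mu\nu}$. Taking $\nu=\mu^{-1}$, which again preserves the trace, and using $\Phi_{\mathrm{id}}=\mathrm{id}$, we find that $\Phi_\mu$ and $\Phi_{\mu^{-1}}$ are mutually inverse. Hence $\Phi_\mu$ is an automorphism, and it is unique because $E(R)$ is generated by root subgroups.

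The main obstacle is the compatibility step $\iota_{\mu^\sharp(i)}\mu=\mu_i^A\iota_i$ together with the requirement that the local root rules use the same Weyl-coherent coordinates as the global ones. On split orbits, $\mu_i^A$ conjugates $\theta$ to $\theta^{\epsilon}$ with $\epsilon=-1$ possible. In that case the converse construction uses a pinned diagram map $d_\pi$, and I must make sure that its signed root rules agree with the naive parameter substitution in the chosen charts. Theorem~\ref{thm:relative-local-triality-normal-form} asserts exactly this agreement, via coherence of the constant Weyl words, so I would rely on it. I would only verify that the orbit permutation of Lemma~\ref{lem:triality-orbit-localization} indexes the localizations correctly.
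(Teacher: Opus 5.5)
Your proposal is correct and follows the paper's proof essentially step for step. You transport each relation by the local trace-preserving isomorphisms of Lemma~\ref{lem:triality-orbit-localization}, use the converse of Theorem~\ref{thm:relative-local-triality-normal-form} to see that the transformed relation holds over every $A_i$, and conclude by affine local detection; you then obtain the composition law, the inverse $\Phi_{\mu^{-1}}$, and frame fixation on root generators, and, like the paper, you rely on that converse theorem for the coordinate compatibility on split orbits.
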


\begin{proof}
For a target orbit $i$, put $j=(\mu^\sharp)^{-1}(i)$.
Lemma~\ref{lem:triality-orbit-localization} gives a trace-preserving
isomorphism $\mu_j^A:A_j\xrightarrow{\sim}A_i$ with
$\mu_j^A\iota_j=\iota_i\mu$.
The converse of Theorem~\ref{thm:relative-local-triality-normal-form}
gives an actual elementary isomorphism $\Phi_{\mu_j^A}$.
Thus every relation in the global root generators, transformed by
$\mu$, holds over every $A_i$, and hence over $R$ by affine local
detection. The root prescription defines an endomorphism with
$$
        (\iota_i)_*\Phi_\mu=\Phi_{\mu_j^A}(\iota_j)_*.
$$
The same construction for $\mu^{-1}$ gives its inverse.
The composition law and fixation of the frame hold on generators.
\end{proof}

\begin{theorem}[Global frame normalization in triality]
\label{thm:global-triality-frame-normalization}
For every $\varphi\in\operatorname{Aut}E(R)$, there is a unique
$g\in G(R)$ with
$$
        \varphi(x_\lambda(1))=gx_\lambda(1)g^{-1}
        \qquad(\lambda\in\Phi_\rho).
$$
\end{theorem}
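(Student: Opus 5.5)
The proof will follow Theorem~\ref{thm:global-frame-normalization}, with the triality inputs already established in this section. Fix $\varphi\in\operatorname{Aut}E(R)$. Theorem~\ref{thm:triality-maximal-normal-subgroups} gives the permutation $\varphi^\sharp$ of $\mathcal I$ and the residual isomorphisms
$$
\varphi_i^{\mathrm{res}}:E(K_j)\xrightarrow{\sim}E(K_i),\qquad j=(\varphi^\sharp)^{-1}(i),
$$
exactly as in Subsection~\ref{subsec:congruence-spectrum}. Proposition~\ref{prop:residual-triality-standardness} then writes
$$
\varphi_i^{\mathrm{res}}=i_{h_i}\circ\Phi_{\nu_i},
$$
where $h_i\in G(K_i)$ and $\nu_i:K_j\to K_i$ is a trace-preserving isomorphism. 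It also shows that fixed and split orbits are preserved.

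Next I would build the local transporters. Put $\Psi_i=(\iota_i)_*\circ\varphi:E(R)\to E(A_i)$. Its reduction is
$$
(q_{A_i})_*\Psi_i=i_{h_i}\circ(\nu_iq_j)_*
$$
on root elements, and $\nu_iq_j:R\to K_i$ is unital and trace-preserving. This is exactly the residual hypothesis of Theorem~\ref{thm:relative-local-triality-normal-form}. The point to check is that trace preservation is inherited by composing $q_j$, which is equivariant, with $\nu_i$. That theorem, which covers both fixed and split targets through Lemma~\ref{lem:fixed-to-split-frame-normalization}, then supplies $g_i\in G(A_i)$ lifting $h_i$ with $g_ix_\lambda(1)g_i^{-1}=\Psi_i(x_\lambda(1))$. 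So $g_i$ is a point of the transporter scheme $\mathcal T_\varphi(A_i)$.

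Gluing is formal. The scheme $\mathcal T_\varphi$ is affine of finite presentation over $R$, since it is cut out by finitely many frame equations, and it needs no operator on test algebras. Frame rigidity for triality, Proposition~\ref{prop:rigidity-triality-frame}, holds over every ring with $2,3$ invertible. Lemma~\ref{lem:uniqueness-transporter} therefore gives at most one point over every $R$-algebra. Proposition~\ref{prop:unique-affine-gluing} yields a unique $g\in\mathcal T_\varphi(R)$ localizing to the $g_i$.

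Finally, both frames are $\sigma_R$-fixed, so $\sigma_R(g)$ is again a transporter. Uniqueness gives $\sigma_R(g)=g$, hence $g\in G(R)$, and uniqueness of $g$ is once more Lemma~\ref{lem:uniqueness-transporter}.

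The main obstacle I expect is confirming that the residual data fit the hypothesis of the relative local theorem. In triality $\nu_i$ need not be $\theta$-equivariant, only trace-preserving. One must therefore check that the residual identity really has the form $(q_A)_*\Psi(x_\lambda(t))=g_{\mathrm{res}}x_\lambda(\nu_{\mathrm{res}}(t))g_{\mathrm{res}}^{-1}$ in the Weyl-coherent coordinates. In particular, the residual $\Phi_{\nu_i}$ produced by recovery in Proposition~\ref{prop:residual-triality-standardness} must agree with applying $\nu_iq_j$ to root parameters. This holds because $\Phi_{\nu_i}$ was obtained from Proposition~\ref{prop:global-triality-coefficient-recovery} with exactly that root rule, and $q_j$ commutes with the root coordinates.
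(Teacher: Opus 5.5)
Your proposal is correct and follows the paper's proof essentially step for step: Theorem~\ref{thm:triality-maximal-normal-subgroups} gives $\varphi^\sharp$ and the residual isomorphisms, and residual triality standardness gives the trace-preserving residual data $(h_i,\nu_iq_j)$. The relative local triality theorem then supplies points of $\mathcal T_\varphi(A_i)$, and unique affine gluing together with uniqueness under twisting yields $g\in G(R)$; your check that only the transporter scheme of Section~\ref{sec:global-frame-normalization} is used, not its equivariant local hypotheses, matches the remark the paper makes at the same point.
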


\begin{proof}
Theorem~\ref{thm:triality-maximal-normal-subgroups} gives the
permutation $\varphi^\sharp$ of maximal congruence kernels.
For a target orbit $i$, put $j=(\varphi^\sharp)^{-1}(i)$.
Residual standardness gives
$$
        \varphi_i^{\mathrm{res}}=i_{h_i}\Phi_{\nu_i}:
        E(K_j)\xrightarrow{\sim}E(K_i),
$$
where $\nu_i:K_j\to K_i$ preserves the trace.
With $A_{i,\mathrm{res}}=K_i$, the localized homomorphism
$(\iota_i)_*\varphi$ has residual conjugator $h_i$ and coefficient
map $\nu_iq_j$. The relative local theorem supplies a point of
the ordinary affine frame transporter $\mathcal T_\varphi(A_i)$.
Only this scheme is used from Section~2, not its ordinary local
hypotheses requiring equivariant coefficients.
Frame rigidity and Proposition~\ref{prop:unique-affine-gluing}
give a unique global transporter $g\in G_{\mathrm{ad}}(D_4,R)$.
Twisting gives another point of the same transporter scheme,
so uniqueness forces $\rho\theta(g)=g$.
\end{proof}

\begin{theorem}[Automorphisms of triality type ${}^3D_4$]
\label{thm:automorphisms-triality}
Every automorphism of $E(R)$ has a unique decomposition
$$
        \varphi=i_g\circ\Phi_\mu,\qquad
        g\in G(R),\quad
        \mu\in\operatorname{Aut}_{\mathrm{tr}}(R,\theta).
$$
Conversely, every such composition is an automorphism.
\end{theorem}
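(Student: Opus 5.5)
The plan is to combine Theorem~\ref{thm:global-triality-frame-normalization} with the coefficient recovery of Proposition~\ref{prop:global-triality-coefficient-recovery}, following the pattern of Theorem~\ref{thm:local-global-automorphism}, with trace-preserving maps in place of equivariant ones.

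\emph{Existence.} Given $\varphi\in\operatorname{Aut}E(R)$, let $g\in G(R)$ be the unique global frame transporter of Theorem~\ref{thm:global-triality-frame-normalization}. Put $\psi:=i_{g^{-1}}\circ\varphi$. This is an automorphism of $E(R)$, because $G(R)$ normalizes $E(R)$. By construction it fixes every constant frame element $x_\lambda(1)$. Proposition~\ref{prop:global-triality-coefficient-recovery} applies directly to $\psi$, with $S=T=R$. It needs neither locality nor a residual hypothesis nor root-subgroup recognition, since that proposition recognizes root images over arbitrary targets. It therefore gives a unique trace-preserving unital ring homomorphism $\mu$ with $\psi(x_\lambda(t))=x_\lambda(\mu(t))$. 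Since $\psi$ is bijective, $\mu$ is an isomorphism, so $\mu\in\operatorname{Aut}_{\mathrm{tr}}(R,\theta)$. Because $\Phi_\mu$ is a genuine automorphism by Proposition~\ref{prop:global-triality-coefficient-automorphisms}, and $\psi$ agrees with it on root generators, we get $\psi=\Phi_\mu$. Hence $\varphi=i_g\circ\Phi_\mu$.

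\emph{Uniqueness.} Suppose $i_g\circ\Phi_\mu=i_h\circ\Phi_\nu$. Both $\Phi_\mu$ and $\Phi_\nu$ fix the frame, since $\mu(1)=\nu(1)=1$. Thus $g$ and $h$ transport the standard frame to the same family, and frame rigidity $\mathrm{(LR3)}$ (Proposition~\ref{prop:rigidity-triality-frame}) gives $g=h$. Cancelling, $\Phi_\mu=\Phi_\nu$. Comparing on one short root subgroup $X_b(R)$, whose parameterization is injective, gives $\mu=\nu$.

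\emph{Converse.} For $g\in G(R)$, $i_g$ is an automorphism of $E(R)$ by normality. For $\mu\in\operatorname{Aut}_{\mathrm{tr}}(R,\theta)$, $\Phi_\mu$ is an automorphism by Proposition~\ref{prop:global-triality-coefficient-automorphisms}. Their composite is therefore an automorphism.

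The substantive work has already been done: the maximal normal subgroup theorem, residual standardness, the relative local theorem, and the gluing step. The only point to check carefully is that Proposition~\ref{prop:global-triality-coefficient-recovery} requires only frame fixation, not preservation of the relative root subgroups as an extra hypothesis. Its cited Lemma~4.1 supplies recognition over arbitrary targets, so $\psi$ needs no separate localization argument. Unlike the order-two case, we must not invoke Proposition~\ref{prop:preservation-relative-root-subgroups} or $\mathrm{(LR4)}$, whose equivariance hypotheses may fail here.
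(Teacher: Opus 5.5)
Your proposal is correct and follows the paper's proof essentially step for step. You remove the global frame transporter of Theorem~\ref{thm:global-triality-frame-normalization}, apply the frame-only coefficient recovery of Proposition~\ref{prop:global-triality-coefficient-recovery}, get uniqueness from frame rigidity and one short-root parameterization, and obtain the converse from Proposition~\ref{prop:global-triality-coefficient-automorphisms} together with normality of $E(R)$ in $G(R)$.
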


\begin{proof}
Remove the global frame transporter and apply
Proposition~\ref{prop:global-triality-coefficient-recovery}.
Conversely, every trace-preserving coefficient automorphism now
has a group action, and $G(R)$ normalizes $E(R)$.
In any decomposition, $\Phi_\mu$ fixes the frame, so $g$ is its
unique transporter. A short-root parameterization then determines
$\mu$ uniquely.
\end{proof}

For equivariant $\mu$, the action is $\Phi_\mu=\mu_*$;
when $\mu\theta=\theta^{-1}\mu$, it is the orientation-reversing
graph--ring action in the coherent coordinates.
The square-zero example shows that trace preservation allows more.
There is no separate graph factor. Full-group extensions are
constructed in Proposition~\ref{prop:triality-full-coefficient-isomorphisms}.
\medskip


\section{Arbitrary isogeny types and full twisted groups}
\label{sec:arbitrary-isogeny-types-full-groups}

We complete Theorems~\ref{thm:main-elementary}
and~\ref{thm:main-full}. There are two different passages:
lifting an adjoint elementary automorphism through a central isogeny,
and extending an elementary automorphism to the full group.
Perfectness makes the first lift unique; the second may be obstructed
or may have a central ambiguity. We construct the required actions,
prove characteristicity of the elementary subgroup, and determine
the image and kernel of restriction.

Throughout, $\Lambda_r\subseteq\Lambda\subseteq\Lambda_w$ is
$\rho$-invariant and the standing assumptions hold. Write
$$
 E_\Lambda(U):=E_{\Lambda,\sigma_U}(\Phi,U),\qquad
 G_\Lambda(U):=G_{\Lambda,\sigma_U}(\Phi,U),\qquad
 \sigma_U=\rho\theta_U.
$$
As before, $E(U),G(U)$ denote the adjoint twisted groups, whereas
$G_{\mathrm{ad}}(\Phi,U)$ is the ambient split group.
The central isogeny
$$
        q_\Lambda:G_\Lambda(\Phi)\longrightarrow G_{\mathrm{ad}}(\Phi)
$$
commutes with twisting and base change. Its maps on twisted groups
are denoted by $q_{\Lambda,*}$, and we put
$$
 Z_\Lambda(R):=\ker\bigl(q_{\Lambda,*}:G_\Lambda(R)\to G(R)\bigr).
$$
This kernel is central. Surjectivity will be used on elementary
groups, not assumed on full-group points.

\subsection{Central quotients and the adjoint action}
\label{subsec:central-isogenies-elementary-groups}

The elementary central quotient and perfectness control both
uniqueness of lifts and preservation of the elementary subgroup.

\begin{proposition}[The adjoint central quotient]
\label{prop:adjoint-central-quotient}
The central isogeny gives an exact sequence
$$
 1\longrightarrow Z(E_\Lambda(R))\longrightarrow E_\Lambda(R)
 \xrightarrow{\ q_{\Lambda,*}\ }E(R)\longrightarrow1.
$$
\end{proposition}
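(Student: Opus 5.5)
We must show three things: the restriction of $q_{\Lambda,*}$ to $E_\Lambda(R)$ lands in $E(R)$ and is surjective, its kernel is central, and that kernel is exactly $Z(E_\Lambda(R))$.

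\emph{Surjectivity.} The central isogeny $q_\Lambda$ restricts to isomorphisms on the absolute root subgroups. It commutes with the twisting, so it carries each relative root element $x_\lambda(a)$ of $E_\Lambda(R)$ to the element $x_\lambda(a)$ of $E(R)$. The parameter groups $R_\lambda$ are the same in both forms, because they depend only on the root coordinates of Subsection~\ref{subsec:relative-root-subgroups}. The generators therefore correspond, which gives $q_{\Lambda,*}(E_\Lambda(R))=E(R)$.

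\emph{The kernel is central.} The kernel is $E_\Lambda(R)\cap Z_\Lambda(R)$, and $Z_\Lambda(R)$ consists of points of the central kernel group scheme of $q_\Lambda$. These points are central in $G_\Lambda(\Phi,R)$, hence in $E_\Lambda(R)$. This gives $\ker\subseteq Z(E_\Lambda(R))$.

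\emph{The reverse inclusion.} This is the main step. Let $z\in Z(E_\Lambda(R))$. Then $q_{\Lambda,*}(z)$ centralizes $q_{\Lambda,*}(E_\Lambda(R))=E(R)$, and in particular it centralizes the standard frame $\mathcal F_\Phi(R)\subseteq E(R)$. Frame rigidity $\mathrm{(LR3)}$, proved in Section~\ref{sec:rigidity-standard-frame} over every base with the standing invertibility, gives $C_{G_{\mathrm{ad}}(\Phi,R)}(\mathcal F_\Phi(R))=1$. Hence $q_{\Lambda,*}(z)=1$. Exactness then holds at both $Z(E_\Lambda(R))$ and $E_\Lambda(R)$. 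Injectivity of $Z(E_\Lambda(R))\to E_\Lambda(R)$ is just inclusion, and exactness at $E(R)$ is the surjectivity above.

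The only delicate points are that $q_\Lambda$ respects the relative root parametrizations and that frame rigidity applies in the ambient adjoint group. Both are already available, so I expect no genuine obstacle. In the nonadditive $A_2$ case one should still check that $q_\Lambda$ preserves the three-root chart, including the structure constant $N$, and that the Weyl-coherent triality coordinates agree in both isogeny forms. This is again immediate, since $q_\Lambda$ is an isomorphism on the unipotent radicals.
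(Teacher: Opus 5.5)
Your proposal is correct and follows essentially the same route as the paper. Both arguments lift relative root elements with the same parameter to get surjectivity and a central kernel, then apply frame rigidity $\mathrm{(LR3)}$ to the image of a central element to get the reverse inclusion; you simply spell out the details that the paper's two-line proof leaves implicit.
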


\begin{proof}
Every relative root element lifts with the same parameter, so the
map is surjective and has central kernel. Conversely, the image
of a central element centralizes the adjoint frame and is one by
Section~\ref{sec:rigidity-standard-frame}.
\end{proof}

\begin{lemma}
\label{lem:perfectness-arbitrary-isogeny}
The group $E_\Lambda(R)$ is perfect.
\end{lemma}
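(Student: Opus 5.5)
Since $E_\Lambda(R)$ is generated by the relative root subgroups $X_\lambda(R)$ of the $\Lambda$-form, the plan is to write every root element $x_\lambda(a)$, $a\in R_\lambda$, as a product of commutators of elementary elements. Perfectness then follows. The argument works in each isogeny form, since the root subgroups and Chevalley commutator formulas in $G_\Lambda(\Phi)$ coincide with the adjoint ones: the central isogeny $q_\Lambda$ restricts to isomorphisms on root subgroups, and the commutator formulas are identities between unipotent elements defined over $\mathbb Z[1/2]$ (or $\mathbb Z[1/6]$). Alternatively, Proposition~\ref{prop:adjoint-central-quotient} reduces $[E_\Lambda,E_\Lambda]\supseteq X_\lambda$ to exact identities. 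However, we cannot simply pass to the quotient: perfectness of $E(R)$ only gives $E_\Lambda(R)=[E_\Lambda(R),E_\Lambda(R)]\,Z(E_\Lambda(R))$. So we verify the identities upstairs, where they hold because they are root-subgroup identities.

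The main tool is a toral element. For each $\lambda$ we choose an elementary toral word $h=h_\mu(-1)$ (or a product of such words) built from a suitable root $\mu$ with $\langle\lambda,\mu^\vee\rangle$ odd, such that $h\,x_\lambda(t)h^{-1}=x_\lambda(-t)$. Such $h$ lies in $E_\Lambda(R)$, being a product of Weyl words $w_\mu(1)w_\mu(-1)$. This was already used in Proposition~\ref{prop:relative-local-rigidity-orthogonal} ("each root subgroup is inverted by a long-root toral element"). We then get
$$[h,x_\lambda(t/2)]=x_\lambda(-t/2)x_\lambda(-t/2)=x_\lambda(-t)$$
for additive classes, so every additive root element is a commutator, using $2\in R^\times$.

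For $A_2$-classes in ${}^2A_{2n}$ the parameter group is nonabelian, so we use the factorization $x_\lambda(t,u)=x_\lambda(t,t\overline t/2)\,x_\lambda(0,z)$ from Subsection~\ref{subsec:relative-root-subgroups}. The special factor is handled through the mixed identity $[x_1(a),q_2(t)]=q_3(at)x_4(-at\overline t/2)$ with $a=1$: the $X_4$-factor is already a commutator by the additive case, and Weyl transport by the fixed word $w_1$ moves $q_3$ back to $q_2$. The central factor $x_\beta(0,z)$ comes from $[x_{-1}(z),x_4(1/2)]=x_2(0,z)$, which uses $(z-\overline z)/2=z$.

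For triality, long roots are inverted by a toral element. Short roots come from $Q(t)=x_c(t)x_d(t^\#)$ being a commutator, as in Lemma~\ref{lem:triality-long-root-level}, together with the cancellation $Q(t/2)Q(-t/2)^{-1}=x_c(t)$ and Weyl transport.

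The main obstacle is checking that a single toral inversion exists for every root type in every twisted system and lies in the elementary group of the given $\Lambda$-form. This is straightforward for additive classes, by choosing $\mu$ in a relative $B_2$ or $G_2$ subsystem. The $A_2$-classes and the triality short roots, where no toral inversion is available, are treated instead by the rank-two commutator identities above.
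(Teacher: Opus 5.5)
Your overall plan is the paper's own route: write every relative root element as a product of commutators, using unipotent, Weyl and toral identities that are independent of the isogeny form. The paper simply cites rank-two commutator identities for this.

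\textbf{The gap.} The step you call straightforward fails. Not every additive class is inverted by an elementary toral word.

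\emph{Why no toral word works.} The long roots $\pm2e_i$ of $C_n$ ($n\geq2$, including $C_2=B_2$) lie in twice the weight lattice. Hence they pair evenly with every coroot of the reduced relative system, and no choice of $\mu$, in a $B_2$-subsystem or elsewhere, produces an inversion. This is exactly the situation of two families of classes.
\begin{enumerate}
\item The $A_1$-classes of ${}^2A_{2n-1}$. The $\rho$-fixed roots $\varepsilon_i-\varepsilon_{i^*}$ pair with one another by $0$ or $\pm2$. An $A_1^2$-word $h_\nu(-1)h_{\rho\nu}(-1)$ acts on a fixed root $\alpha$ by $(-1)^{2\langle\alpha,\nu^\vee\rangle}=1$.
\item The $A_1^2$-classes of ${}^2A_4$, where the relative system is $B_2$. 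The only odd candidate is the toral element of a divisible root, such as $\operatorname{diag}(-1,1,1,1,-1)$. This element need not be elementary, because its root group has only anti-invariant parameters.
\end{enumerate}

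\emph{Why no element at all works.} Take $R=\mathbb F_3$ and $\theta=\operatorname{id}_R$, which is allowed. The groups become $\mathrm{Sp}_{2n}(\mathbb F_3)$ or $\mathrm{PSp}_{2n}(\mathbb F_3)$, respectively $\Omega_5(\mathbb F_3)\simeq\mathrm{PSp}_4(\mathbb F_3)$. Their long root groups are groups of symplectic transvections with a fixed centre $v$. An element normalizing such a group sends $v$ to $av$ and rescales the parameter by $a^2=1$.

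\emph{Knock-on effect.} In ${}^2A_4$ this also breaks your $A_2$ step. That step needs the $X_4$-factor of $[x_1(1),q_2(t)]$ to be a commutator already, and it has not been shown to be one.

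\textbf{The repair.} Obtain these long roots from short--short commutators; this is precisely what the rank-two identities supply.
\begin{enumerate}
\item In ${}^2A_{2n-1}$, take the second formula in Subsection~\ref{subsec:common-parameter-map} with $t=1$ and $u=s/2$. It gives $[x_\beta(1),x_{\alpha+\beta}(s/2)]=x_{\alpha+2\beta}(\pm s)$ for every $s\in R_0$.
\item In ${}^2A_4$, one has $[X_\beta(R),X_{\alpha+\beta}(R)]\subseteq X_{\alpha+2\beta}(R)$. In the five-dimensional chart, $[x_\beta(1,1/2),x_{\alpha+\beta}(s,s\overline s/2)]$ is the long root element with parameter $\pm s$ or $\pm\overline s$, so these commutators exhaust $X_{\alpha+2\beta}(R)$.
\end{enumerate}
With these identities placed before the toral step, the rest of your argument goes through. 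That rest consists of toral inversion where it exists, the special--central factorization for $A_2$, the $Q(t)$ cancellation in triality, and Weyl transport.

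\textbf{A minor inaccuracy.} Triality short roots do admit a toral inversion. Since $\langle\alpha_j,\alpha_2^\vee\rangle=-1$ for $j=1,3,4$, the element $h_a(-1)=h_{\alpha_2}(-1)$ acts by $-1$ on $X_b(R)$. Your detour through $Q(t)$ is correct but unnecessary.
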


\begin{proof}
Use the rank-two commutator identities of
\cite[Corollary~6.6]{GargeMakadiyaNormal}. They hold in every
isogeny form under the standing assumptions, specializing to the
folded split identities when the coefficient operator is trivial.
This argument uses no ideal-level normal structure and applies to
triality as well.
\end{proof}

\begin{corollary}[Uniqueness over the adjoint quotient]
\label{cor:uniqueness-over-adjoint-quotient}
An adjoint elementary automorphism has at most one lift to
$E_\Lambda(R)$ compatible with $q_{\Lambda,*}$.
More generally, two homomorphisms from a perfect group to
$G_\Lambda(R)$ with the same composite with $q_{\Lambda,*}$ are equal.
\end{corollary}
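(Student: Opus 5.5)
The second assertion implies the first. Given $\varphi$ an adjoint elementary automorphism with two lifts $\widetilde\varphi_1,\widetilde\varphi_2$ to $E_\Lambda(R)$ compatible with $q_{\Lambda,*}$, we have $q_{\Lambda,*}\widetilde\varphi_1=\varphi\, q_{\Lambda,*}=q_{\Lambda,*}\widetilde\varphi_2$. The group $E_\Lambda(R)$ is perfect by Lemma~\ref{lem:perfectness-arbitrary-isogeny}, and both lifts map it into $G_\Lambda(R)$. So it suffices to prove the general statement for a perfect group $P$ and homomorphisms $f_1,f_2:P\to G_\Lambda(R)$ with $q_{\Lambda,*}f_1=q_{\Lambda,*}f_2$.

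For that, set $c(x):=f_1(x)f_2(x)^{-1}$. By hypothesis $c(x)\in Z_\Lambda(R)=\ker q_{\Lambda,*}$, which is central in $G_\Lambda(R)$, as recorded above. Centrality makes $c$ a homomorphism:
$$
 c(xy)=f_1(x)f_1(y)f_2(y)^{-1}f_2(x)^{-1}
      =f_1(x)c(y)f_2(x)^{-1}=c(x)c(y).
$$
Thus $c$ maps $P$ into an abelian group, so it is trivial on $[P,P]=P$. Hence $f_1=f_2$.

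The only point to check is that $Z_\Lambda(R)$ is genuinely central in $G_\Lambda(R)$, not merely in the image of $E_\Lambda(R)$. This is where I expect the care to be needed. It holds because $q_\Lambda$ is a central isogeny of group schemes: its scheme-theoretic kernel is a central subgroup scheme, and taking $\sigma$-fixed $R$-points preserves centrality. No surjectivity of $q_{\Lambda,*}$ on full groups is needed.
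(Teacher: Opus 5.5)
Your proposal is correct and is essentially the paper's argument: $x\mapsto f_1(x)f_2(x)^{-1}$ is a homomorphism into the central kernel $Z_\Lambda(R)$, so perfectness kills it, and the first assertion follows by applying this to $E_\Lambda(R)$ via Lemma~\ref{lem:perfectness-arbitrary-isogeny}. Your justification that $Z_\Lambda(R)$ is central in all of $G_\Lambda(R)$, because the scheme-theoretic kernel of the central isogeny is central, is just the fact the paper records as ``this kernel is central.''
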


\begin{proof}
For such homomorphisms $f_1,f_2$, the map
$x\mapsto f_1(x)f_2(x)^{-1}$ takes values in the central kernel
and is a homomorphism. Perfectness makes it trivial.
\end{proof}

\smallskip
\noindent\emph{The adjoint action.}
Conjugation on the split group scheme descends through its central
quotient to an action
$$
 a_\Lambda:G_{\mathrm{ad}}(\Phi)\times G_\Lambda(\Phi)
       \longrightarrow G_\Lambda(\Phi),\qquad
 a_\Lambda(q_\Lambda(h),x)=hxh^{-1};
$$
see \cite[Theorem~7.1.9]{ConradReductive}.
Write $a_g(x)=a_\Lambda(g,x)$. Naturality gives
$$
 a_ga_h=a_{gh},\qquad
 q_{\Lambda,*}(a_g(x))=gq_{\Lambda,*}(x)g^{-1},\qquad
 \sigma(a_g(x))=a_{\sigma(g)}(\sigma(x)).
$$
Thus $g\in G(R)$ acts on $G_\Lambda(R)$ without a lift of $g$
over $R$ or an extension of the coefficient operator.

To see that it preserves the elementary subgroup, observe that
$$
 q_{\Lambda,*}^{-1}(E(R))=E_\Lambda(R)Z_\Lambda(R),\qquad
 [q_{\Lambda,*}^{-1}(E(R)),q_{\Lambda,*}^{-1}(E(R))]=E_\Lambda(R).
$$
The first equality follows from elementary surjectivity, the second
from perfectness and centrality. Since $E(R)$ is normal in $G(R)$,
$a_g$ preserves this preimage and hence its commutator subgroup.
Denote the action on both groups by $i_g^\Lambda$.
It is faithful on $E_\Lambda(R)$ by adjoint frame rigidity.
In particular, $E_\Lambda(R)$ is normal in $G_\Lambda(R)$.

\smallskip
\noindent\emph{Strictly inner and diagonal factors.}
If $g=q_{\Lambda,*}(h)$ for $h\in G_\Lambda(R)$, the action
$i_g^\Lambda$ is conjugation by $h$, called \emph{strictly inner}.
The adjoint-induced and strictly inner automorphism groups are
$\operatorname{Inn}_{\mathrm{ad},\Lambda}(R,\theta)$ and
$\operatorname{Inn}_{\mathrm{str},\Lambda}(R,\theta)$.
The latter is normal in the former: conjugation by $i_g^\Lambda$
sends the action of $h$ to that of $a_g(h)$.
Consequently,
$$
 \operatorname{Diag}_\Lambda(R,\theta):=
 \operatorname{Inn}_{\mathrm{ad},\Lambda}(R,\theta)/
 \operatorname{Inn}_{\mathrm{str},\Lambda}(R,\theta)
 \simeq G(R)/q_{\Lambda,*}(G_\Lambda(R)).
$$
Choosing coset representatives separates $i_g^\Lambda$ into strictly
inner and diagonal factors, noncanonically. Keeping it as one factor
preserves uniqueness; in the adjoint form the diagonal quotient
is trivial. Ordinary equivariant coefficient automorphisms act on
all forms and commute with $q_{\Lambda,*}$.

\subsection{Lifting graph and coefficient factors}
\label{subsec:lifting-isogeny-factors}

Only the graph factor has a lattice obstruction. We first show that
an abstract elementary lift must satisfy the same condition as a
pinned group-scheme action, then construct the coefficient lifts.

\smallskip
\noindent\emph{Graph factors and the lattice.}
Put
$$
 \mathcal D_\Lambda:=
 \{\gamma\in\operatorname{Aut}(\operatorname{Dyn}(\Phi))
                                   \mid\gamma(\Lambda)=\Lambda\}.
$$

\begin{lemma}[Lifting graph automorphisms through a central isogeny]
\label{lem:lifting-graph-automorphisms-central-isogeny}
Let $p_\Lambda:G_{\mathrm{sc}}(\Phi)\to G_\Lambda(\Phi)$ and
$C_\Lambda=\ker p_\Lambda$.
\begin{enumerate}
\item
For a diagram automorphism $\gamma$,
$\gamma_{\mathrm{sc}}(C_\Lambda)=C_{\gamma(\Lambda)}$.
Its pinned action descends uniquely to $\gamma_\Lambda$ precisely
when $\gamma(\Lambda)=\Lambda$, and then
$q_\Lambda\gamma_\Lambda=\gamma_{\mathrm{ad}}q_\Lambda$.
\item
Let $\Phi=D_4$, let $S\ne0$ with $2\in S^\times$, and suppose
$\psi\in\operatorname{Aut}E_\Lambda(D_4,S)$ satisfies
$$
        q_{\Lambda,*}\psi=\gamma_{\mathrm{ad},*}q_{\Lambda,*}.
$$
Then $\gamma(\Lambda)=\Lambda$ and $\psi=(\gamma_\Lambda)_*$.
\end{enumerate}
\end{lemma}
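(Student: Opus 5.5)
For (1), I will work at the level of pinned split group schemes over $\mathbb Z[1/2]$. The pinned action $\gamma_{\mathrm{sc}}$ of \cite[Theorem~6.1.17]{ConradReductive} acts on the maximal torus $\mathbf T_{\mathrm{sc}}$ through the induced permutation of coroots. The center of $G_{\mathrm{sc}}$ is the diagonalizable group dual to $\Lambda_w/\Lambda_r$. The subgroup $C_\Lambda$ is the kernel of the characters in $\Lambda/\Lambda_r$. Transporting characters by $\gamma$ therefore gives $\gamma_{\mathrm{sc}}(C_\Lambda)=C_{\gamma(\Lambda)}$. Since $G_\Lambda=G_{\mathrm{sc}}/C_\Lambda$ as an fppf quotient, $\gamma_{\mathrm{sc}}$ descends to an automorphism of $G_\Lambda$ exactly when it maps $C_\Lambda$ into $C_\Lambda$. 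By duality this happens exactly when $\gamma(\Lambda)=\Lambda$. Uniqueness of the descent follows because $p_\Lambda$ is an epimorphism of sheaves. The identity $q_\Lambda\gamma_\Lambda=\gamma_{\mathrm{ad}}q_\Lambda$ holds after composing with $p_\Lambda$, since both sides equal the descent of $\gamma_{\mathrm{sc}}$ to the adjoint quotient. It therefore holds outright.

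For (2), the first step is to reduce to a statement about frames. Suppose $\gamma(\Lambda)=\Lambda$ is already known. Then $(\gamma_\Lambda)_*$ preserves the untwisted elementary group $E_\Lambda(D_4,S)$ and has the same composite with $q_{\Lambda,*}$ as $\psi$. Corollary~\ref{cor:uniqueness-over-adjoint-quotient} and perfectness (Lemma~\ref{lem:perfectness-arbitrary-isogeny}, in its split form) then give $\psi=(\gamma_\Lambda)_*$. Everything therefore rests on proving $\gamma(\Lambda)=\Lambda$, and this is the step I expect to be the main obstacle. Here $\psi$ is only abstract, so no scheme-theoretic descent is available.

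The plan for that step is to extract the lattice from central elements, as in Lemma~\ref{lem:separation-D4-graph-frames}. First I reduce at a maximal ideal of $S$ and pass to an algebraically closed field $k$ with $2\ne0$. The hypothesis is compatible with base change on root elements, so it descends to $k$ by uniqueness of lifts through the perfect group. For each simple root $\alpha$ I consider the elementary word $w_\alpha(1)^2=h_\alpha(-1)$ in $E_\Lambda(k)$, built from parameter-one root elements. Its image in the adjoint group is trivial exactly when $\langle\omega,\alpha^\vee\rangle$ is even for every weight $\omega$ of the lattice. More usefully, products $h_{\alpha_r}(-1)h_{\alpha_s}(-1)$ give the elements $z_{rs}$, and the trivial ones are precisely those lying in $C_\Lambda$. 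Since $\psi$ lifts $\gamma_{\mathrm{ad},*}$, it sends $x_\alpha(t)$ to $\gamma_{\mathrm{sc}}$-images up to central elements. Perfectness, via commutator expressions, makes the root-element images exact. So $\psi(x_\alpha(1))=x_{\gamma\alpha}(\pm1)$, and $\psi(h_\alpha(-1))=h_{\gamma\alpha}(-1)$. Because $\psi$ is injective, $z_{rs}=1$ in $E_\Lambda$ if and only if $z_{\gamma r,\gamma s}=1$. This says $\gamma$ preserves $C_\Lambda\cap\{z_{rs}\}=C_\Lambda(k)$. The kernel $C_\Lambda(k)$ is a subgroup of $\boldsymbol\mu_2^2(k)$ generated by these elements, and $2$ is invertible, so $\gamma_{\mathrm{sc}}(C_\Lambda)=C_\Lambda$, hence $\gamma(\Lambda)=\Lambda$ by part (1).

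The delicate point in this step is exactness of $\psi(x_\alpha(1))$. I would get it as in Proposition~\ref{prop:adjoint-central-quotient}: write $x_\alpha(1)$ as a commutator of root elements. Central discrepancies then cancel, so $\psi$ agrees with the abstract pinned rule on every root element, even before the lattice is known.
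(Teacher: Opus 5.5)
Your proof follows the paper's route. Part (1) uses the character description of $C_\Lambda$ inside the center of $G_{\mathrm{sc}}$. Part (2) uses a central-discrepancy argument on a perfect elementary group, then tests the central involutions $z_{13},z_{14},z_{34}$ against $C_\Lambda$. The paper gets exactness in one stroke by comparing the homomorphisms $\psi\,p_{\Lambda,*}$ and $p_{\Lambda,*}(\gamma_{\mathrm{sc}})_*$ on the perfect group $E_{\mathrm{sc}}(D_4,S)$. You do it root by root through commutator expressions; the two are equivalent.

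The one step that does not work as written is your opening reduction to an algebraically closed field $k$. An abstract automorphism $\psi$ of $E_\Lambda(D_4,S)$ does not descend to $E_\Lambda(D_4,k)$, for two reasons. It never sees root parameters outside the image of $S$. And making the pinned simply connected rule well defined on $E_\Lambda(D_4,k)$ is equivalent to $\gamma_{\mathrm{sc}}(C_\Lambda)=C_\Lambda$, which is what you are trying to prove, so the step is circular.

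It is also unnecessary. Your derivation of $\psi(p_{\Lambda,*}(z_{rs}))=p_{\Lambda,*}(z_{\gamma r,\gamma s})$ works verbatim over $S$. So if $z_{rs}\in C_\Lambda(S)$, then $p_{\Lambda,*}(z_{\gamma r,\gamma s})=\psi(1)=1$, i.e.\ $z_{\gamma r,\gamma s}\in C_\Lambda(S)$; injectivity of $\psi$ is not even needed for this direction. Only this single membership has to be base-changed to a residue field $k$ of $S\ne0$. The lattices $\Lambda_r,\Lambda_w$ are invariant anyway, so consider an intermediate lattice. There $C_\Lambda(k)=\{1,z_{rs}\}$, and the three involutions remain distinct over $k$, which forces $\{\gamma r,\gamma s\}=\{r,s\}$. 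Hence $\gamma_{\mathrm{sc}}(C_\Lambda)=C_\Lambda$, and part (1) gives $\gamma(\Lambda)=\Lambda$. This is exactly the paper's argument, which records that the $z_{rs}$ stay distinct over every residue field of $S$.
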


\begin{proof}
The characters trivial on $C_\Lambda$ form $\Lambda$ inside
$\Lambda_w$. The pinned action therefore gives (1), by the
root-datum description of central quotients; see
\cite[Exp.~XXIII]{SGA3}.

For (2), the two homomorphisms
$\psi p_{\Lambda,*}$ and $p_{\Lambda,*}(\gamma_{\mathrm{sc}})_*$
from the untwisted $E_{\mathrm{sc}}(D_4,S)$ have equal composites
with $q_{\Lambda,*}$. This source is perfect by the ordinary rank-two identities,
so the central-discrepancy argument of
Corollary~\ref{cor:uniqueness-over-adjoint-quotient} makes them equal.
In particular,
\begin{equation}
\label{eq:lifted-graph-action-on-root-elements}
 \psi(x_{\alpha,\Lambda}(t))
     =p_{\Lambda,*}\bigl((\gamma_{\mathrm{sc}})_*
                            (x_{\alpha,\mathrm{sc}}(t))\bigr).
\end{equation}
This comparison uses the pinned action on the simply connected
form and does not assume that it descends to $G_\Lambda$.

Only intermediate lattices require a check. Their kernels
$C_\Lambda\simeq\boldsymbol\mu_2$ are distinguished by the three
central involutions $z_{13},z_{14},z_{34}$ of
\eqref{eq:D4-frame-central-word}. These are elementary rank-one
words and remain distinct over every residue field of $S$.
The equality of the two homomorphisms sends the canonical
nonidentity element of $C_\Lambda$ back into $C_\Lambda$.
Since the pinned action permutes the three involutions, it must
preserve this order-two subgroup scheme. Part (1) gives lattice
preservation, and \eqref{eq:lifted-graph-action-on-root-elements}
then identifies $\psi$ with the descended action.
\end{proof}

For ${}^2D_4$, use the triples of
Section~\ref{sec:graph-automorphisms-2D4} and put
$$
 \mathcal C_{\rho,\Lambda}(R,\theta):=
 \{\mathbf e\in\mathcal C_\rho(R,\theta)\mid
                 e_j=0\text{ if }\tau^j(\Lambda)\ne\Lambda\}.
$$
These \emph{$\Lambda$-admissible} data form a subgroup under the
same cyclic convolution. Their componentwise pinned lifts define
$\Gamma_{\mathbf e,\Lambda}$ on the full twisted group, with
$$
 q_{\Lambda,*}\Gamma_{\mathbf e,\Lambda}
                  =\Gamma_{\mathbf e}q_{\Lambda,*}.
$$
They preserve $q_{\Lambda,*}^{-1}(E(R))$ and hence its commutator
subgroup $E_\Lambda(R)$. The individual factors $e_1R,e_2R$ are
exchanged by $\theta$; neither needs its own involution.

\begin{corollary}[Detection of admissible graph data]
\label{cor:detection-admissible-D4-graph-data}
In type ${}^2D_4$, let $\mathbf e\in\mathcal C_\rho(R,\theta)$.
If $\Psi\in\operatorname{Aut}E_\Lambda(R)$ satisfies
$q_{\Lambda,*}\Psi=\Gamma_{\mathbf e}q_{\Lambda,*}$, then
$\mathbf e\in\mathcal C_{\rho,\Lambda}(R,\theta)$.
\end{corollary}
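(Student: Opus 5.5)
We reduce the corollary to part~(2) of Lemma~\ref{lem:lifting-graph-automorphisms-central-isogeny}, applied factorwise on the idempotent decomposition of $R$ given by $\mathbf e$. We must show that $e_j=0$ whenever $\tau^j(\Lambda)\neq\Lambda$. Fix such a $j$; necessarily $j\neq0$. Since $\tau^{-j}(\Lambda)\ne\Lambda$ as well and $\theta(e_j)=e_{-j}$, it suffices to treat one such $j$ and assume $e_j\ne0$ for a contradiction.

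First localize the hypothesis to the factor $e_jR$. Put $p=e_j+e_{-j}$, a $\theta$-fixed idempotent. The decomposition $R=pR\times(1-p)R$ is equivariant, so $E_\Lambda(R)$ and $E(R)$ split as products. By naturality of $q_{\Lambda,*}$ and $\Gamma_{\mathbf e}$, the identity $q_{\Lambda,*}\Psi=\Gamma_{\mathbf e}q_{\Lambda,*}$ projects to the $pR$-component. The $pR$-component of $\Psi$ is an automorphism, because $\Psi$ commutes with the central projections. Indeed, $q_{\Lambda,*}\Psi$ respects the product decomposition of $E(R)$, and the central kernel is handled by the perfectness argument of Corollary~\ref{cor:uniqueness-over-adjoint-quotient}, which shows that the factor subgroups, being perfect and equal to commutators of preimages, are preserved.

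On $pR=e_jR\times e_{-j}R$, the involution exchanges the factors. Lemma~\ref{lem:split-product-identification}, applied to $E_\Lambda$ with first projection (it holds for any isogeny form), identifies the twisted group with the untwisted $E_\Lambda(D_4,S)$, where $S=e_jR\ne0$. Under this identification $\Gamma_{\mathbf e}$ becomes $(\tau^j)_{\mathrm{ad},*}$, exactly as in the proof of Proposition~\ref{prop:compatible-graph-automorphisms}.

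The projected automorphism $\psi$ of $E_\Lambda(D_4,S)$ then satisfies $q_{\Lambda,*}\psi=(\tau^j)_{\mathrm{ad},*}q_{\Lambda,*}$. Part~(2) of the lemma gives $\tau^j(\Lambda)=\Lambda$, a contradiction. Hence $e_j=0$, and $\mathbf e\in\mathcal C_{\rho,\Lambda}(R,\theta)$.

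The main point to verify carefully is that the restriction of $\Psi$ to the $pR$-component is a genuine automorphism of $E_\Lambda(D_4,S)$, rather than merely a map into the product. We handle this by perfectness: $E_\Lambda$ over the factor equals the commutator subgroup of the preimage of the adjoint factor, and $\Psi$ preserves that preimage since $\Gamma_{\mathbf e}$ preserves the factors.
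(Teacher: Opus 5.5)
Your proposal is correct and follows the paper's proof. You restrict $\Psi$ to the $\theta$-invariant factor $(e_1+e_2)R$ and use perfectness to show this restriction is an automorphism of $E_\Lambda$ over that factor; your commutator-of-preimage argument is equivalent to the paper's observation that the $(1-p)R$-component is a homomorphism from a perfect group into a central subgroup. You then identify it by first projection with an untwisted elementary automorphism lifting $(\tau^j)_*$ and apply part~(2) of Lemma~\ref{lem:lifting-graph-automorphisms-central-isogeny}. The only cosmetic difference is that the paper treats $j=1$ directly and deduces $\tau^2(\Lambda)=\Lambda$, whereas you argue by contradiction for each offending $j$.
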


\begin{proof}
Only the exchanged factors need checking. If $e_1\ne0$, put
$d=e_1+e_2$, an invariant idempotent.
For $x\in E_\Lambda(dR)$, the component of $\Psi(x)$ on
$(1-d)R$ has trivial adjoint image. It is therefore a homomorphism
from a perfect group to a central subgroup and is trivial.
Applying the same argument to $\Psi^{-1}$ shows that $\Psi$
restricts to an automorphism of $E_\Lambda(dR)$.
First projection for $dR\simeq e_1R\times e_1R$ identifies this
restriction with an untwisted elementary automorphism lifting $\tau_*$;
the root-coordinate proof of
Lemma~\ref{lem:split-product-identification} works in every form.
The preceding lemma gives $\tau(\Lambda)=\Lambda$, and hence
$\tau^2(\Lambda)=\Lambda$ as well. If $e_1=0$, compatibility
gives $e_2=0$ and there is nothing to prove.
\end{proof}

The diagram group permutes the three proper intermediate $D_4$
lattices as it permutes the three nonzero classes in
$\Lambda_w/\Lambda_r$. The involution $\rho$ fixes exactly one,
with stabilizer $\langle\rho\rangle$.
For this form of ${}^2D_4$, therefore,
$\mathcal C_{\rho,\Lambda}(R,\theta)=\{(1,0,0)\}$.
In triality, the three intermediate lattices are cyclically permuted,
so only $\Lambda_r,\Lambda_w$ are invariant.

\smallskip
\noindent\emph{Triality coefficient factors.}
The local coefficient isomorphisms of
Theorem~\ref{thm:relative-local-triality-normal-form} exist in both
forms. They detect relations in the global root prescription.

\begin{proposition}[Triality coefficients in either isogeny form]
\label{prop:triality-coefficient-arbitrary-isogeny}
\label{prop:triality-elementary-coefficient-action}
For $\Lambda\in\{\Lambda_r,\Lambda_w\}$, every trace-preserving
ring isomorphism $\nu:S\xrightarrow{\sim}T$ induces a unique
isomorphism
$$
 \Phi_{\nu,\Lambda}:E_\Lambda(S)\xrightarrow{\sim}E_\Lambda(T),
 \qquad x_\lambda(t)\longmapsto x_\lambda(\nu(t)).
$$
These maps respect composition and the central isogeny, and have
inverse $\Phi_{\nu^{-1},\Lambda}$. They are natural in coefficient
diagrams with equivariant vertical maps. For equivariant $\nu$,
they are coefficientwise; for
$\mu\in\operatorname{Aut}_{\mathrm{tr}}(R,\theta)$, they give
$\Phi_{\mu,\Lambda}\in\operatorname{Aut}E_\Lambda(R)$.
\end{proposition}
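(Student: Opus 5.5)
We define $\Phi_{\nu,\Lambda}$ by its root rule and check that it respects all relations by local detection, the same device used in Proposition~\ref{prop:global-triality-coefficient-automorphisms} for the adjoint form. For $\Lambda=\Lambda_r$ with $S=T=R$ the statement is that proposition, and the general adjoint case $\nu:S\to T$ is proved the same way. For $\nu$ we take orbit localizations of the target. Lemma~\ref{lem:triality-orbit-localization} is written for automorphisms, but its argument shows that $\nu$ sends the orbits of $S$ bijectively to those of $T$. It therefore gives trace-preserving isomorphisms $\nu_j^A:A_j(S)\to A_i(T)$ with $\nu_j^A\iota_j=\iota_i\nu$. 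The converse half of Theorem~\ref{thm:relative-local-triality-normal-form} supplies genuine elementary isomorphisms $\Phi_{\nu_j^A}$ in both isogeny forms, with the prescribed root rules.

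Let $w$ be a word in relative root elements that is trivial in $E_\Lambda(S)$. Transform it by $\nu$ to a word $w'$ over $T$. Over each $A_i(T)$ the word $w'$ equals $\Phi_{\nu_j^A}((\iota_j)_*w)=1$, so $w'=1$ in $G_\Lambda(T)$ by Lemma~\ref{lem:affine-descent-and-local-detection}(1). This uses only that the affine group scheme $G_\Lambda$ has points detected over the $A_i$. Hence the root rule defines a homomorphism $E_\Lambda(S)\to E_\Lambda(T)$. Applying the same construction to $\nu^{-1}$ gives a map which, together with the first, composes to the identity on generators, so both are isomorphisms. Uniqueness holds because the root elements generate $E_\Lambda(S)$.

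Several remaining properties are checked on root generators. Composition holds because both sides have the same root rule. Compatibility with $q_{\Lambda,*}$ holds because the isogeny sends $x_{\lambda,\Lambda}(t)$ to $x_\lambda(t)$, so both composites agree on generators. For naturality, take a square $f\nu=\nu'g$ with equivariant $f,g$; then $f_*\Phi_\nu$ and $\Phi_{\nu'}g_*$ agree on root elements. When $\nu$ is equivariant, $\nu_*$ restricted to elementary groups has the same root rule, so it coincides with $\Phi_{\nu,\Lambda}$. Taking $S=T=R$ and $\nu=\mu$ gives the automorphisms $\Phi_{\mu,\Lambda}$.

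The main obstacle is the orbit bookkeeping when $\nu$ is not equivariant. We must confirm that the localized maps $\nu_j^A$ exist as trace-preserving isomorphisms between local models, including the split case, where $\nu$ may conjugate $\theta$ to $\theta^{-1}$ only on some components. We also need the converse of Theorem~\ref{thm:relative-local-triality-normal-form} in the simply connected form. The plan is to reproduce the proof of Lemma~\ref{lem:triality-orbit-localization} verbatim with $\kappa=\nu\theta_S\nu^{-1}$ acting on $T$. Those arguments used only trace identities and the Chinese remainder theorem, so they should transfer directly.
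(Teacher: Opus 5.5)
Your proof is correct. Its core is the same as the paper's: local coefficient isomorphisms from the converse half of Theorem~\ref{thm:relative-local-triality-normal-form}, then affine local detection, with the remaining identities checked on root generators. That converse already covers the simply connected form, so that part of your stated obstacle is settled.

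The genuine difference is how you handle an isomorphism $\nu:S\to T$ between different rings. You redo Lemma~\ref{lem:triality-orbit-localization} with $\kappa=\nu\theta_S\nu^{-1}$ on $T$. This works, because that proof uses only:
\begin{itemize}
\item the additive identity $1+\kappa+\kappa^2=1+\theta_T+\theta_T^2$ on the target;
\item the Chinese remainder theorem;
\item preservation of $t^\#$ and $N_\theta$ (Lemma~\ref{lem:triality-compatible-basic-properties}).
\end{itemize}
The paper avoids this bookkeeping. It first proves the automorphism case, then applies it to the trace-preserving involution $\eta(s,t)=(\nu^{-1}(t),\nu(s))$ of $S\times T$ with the product operator. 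Since $E_\Lambda(S\times T)=E_\Lambda(S)\times E_\Lambda(T)$, the automorphism $\Phi_{\eta,\Lambda}$ exchanges the two factors. Its restrictions to the factors are $\Phi_{\nu,\Lambda}$ and $\Phi_{\nu^{-1},\Lambda}$, which are automatically mutually inverse because $\eta^2=\mathrm{id}$.

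The product trick lets the orbit lemma be cited verbatim and gives the isomorphism case in one line. Your route is more direct but requires re-verifying that lemma in the two-ring setting, including the split-orbit comparison of $\kappa$ with $\theta_T^{\pm1}$ that gives trace preservation of the localized maps.
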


\begin{proof}
For an automorphism $\mu$, repeat the relation-checking argument of
Proposition~\ref{prop:global-triality-coefficient-automorphisms}
in either form. For a target orbit $i$, put
$j=(\mu^\sharp)^{-1}(i)$.
The trace-preserving isomorphism $\mu_j^A:A_j\to A_i$ satisfies
$\mu_j^A\iota_j=\iota_i\mu$ and induces an actual elementary
isomorphism by Theorem~\ref{thm:relative-local-triality-normal-form}.
Thus every transformed relation holds over every $A_i$, hence over
$R$ by affine local detection. Applying the construction to $\mu^{-1}$
gives the inverse.

For $\nu:S\xrightarrow{\sim}T$, apply this result to the
trace-preserving automorphism
$$
        \eta(s,t)=(\nu^{-1}(t),\nu(s))
$$
of $S\times T$ with the product operator. Its elementary action
exchanges the factors and restricts to $\Phi_{\nu,\Lambda}$.
All remaining identities are checked on root generators.
\end{proof}

\smallskip
\noindent\emph{Exceptional unitary factors.}
For $n=3,4$ and $\delta\in\mathcal E_{\mathrm{glob}}(R,\theta)$,
interpret the elementary words of
Proposition~\ref{prop:global-exceptional-automorphisms} in form
$\Lambda$, using the signed coordinates of
Subsection~\ref{subsec:relative-root-subgroups}.
Denote their values by $X_{\lambda,\Lambda}^{(n),\delta}(t)$.
These words commute with central isogenies and equivariant
localization; at $\delta=0$ they equal $x_\lambda(t)$ in every form.

\begin{proposition}[Exceptional automorphisms in arbitrary isogeny type]
\label{prop:exceptional-arbitrary-isogeny}
The rule
$$
 \Theta_{\delta,\Lambda}^{(n)}(x_\lambda(t))
                         =X_{\lambda,\Lambda}^{(n),\delta}(t)
$$
defines a unique automorphism of $E_\Lambda(R)$ in type ${}^2A_n$.
It lifts $\Theta_\delta^{(n)}$, commutes with equivariant
localization, and satisfies
$$
 \Theta_{\varepsilon,\Lambda}^{(n)}\Theta_{\delta,\Lambda}^{(n)}
       =\Theta_{\delta+\varepsilon,\Lambda}^{(n)},\qquad
 (\Theta_{\delta,\Lambda}^{(n)})^{-1}
       =\Theta_{-\delta,\Lambda}^{(n)}.
$$
\end{proposition}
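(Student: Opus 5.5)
We follow the relation-checking pattern of Propositions~\ref{prop:global-exceptional-automorphisms} and~\ref{prop:triality-coefficient-arbitrary-isogeny}, carried out in the form $\Lambda$, and then use perfectness to identify the result with a lift. First, each value $X_{\lambda,\Lambda}^{(n),\delta}(t)$ is by construction an elementary word: relative root elements with admissible correcting parameters, together with torus and Weyl factors $h_1(1+r\delta)$, $w_2(1,(1-c)/2)$, $v_2$. The torus factors are products of elementary rank-one words $w(u)w(1)^{-1}$ in every isogeny form, because $1+r\delta$ is a unit. So all prescribed values lie in $E_\Lambda(R)$. Applying $q_{\Lambda,*}$ termwise sends them to $[X_\lambda^{(n),\delta}(t)]$, since the words commute with the central isogeny.

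To show the rule defines an endomorphism, take a word in relative root elements that is trivial in $E_\Lambda(R)$. Replace each factor by its prescribed value, and call the resulting element $y$. Then $y$ lies in $E_\Lambda(R)$, and $q_{\Lambda,*}(y)=1$ because $\Theta_\delta^{(n)}$ is a homomorphism. Proposition~\ref{prop:adjoint-central-quotient} therefore puts $y$ in $Z(E_\Lambda(R))$.

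To conclude $y=1$ we localize. By affine local detection (Lemma~\ref{lem:affine-descent-and-local-detection}) it suffices to check triviality over every $A_i$. On split orbits $\delta_i=0$, exactly as in the proof of Proposition~\ref{prop:global-exceptional-automorphisms}, so the prescribed values are ordinary root elements and $y$ maps to $1$. On fixed orbits we need the local exceptional automorphism of \cite[Proposition~3.6]{BuninaMakadiya2ASmall} in the form $\Lambda$. If that paper provides it in all isogeny forms, we are done. Otherwise we would construct it over the local ring $A_i$, where the full group and the center are explicit. I expect this local step to be the main obstacle.

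For the local step I would try the following. Regard the prescription as a map on free generators, so that $w\mapsto y(w)$ induces a homomorphism $c$ from the relation kernel of the presentation of $E_\Lambda(A_i)$ to its center. Then use that $\Theta$ is residually trivial and $\delta^2=0$: the discrepancy becomes linear in $\delta$, hence additive in the parameter. Combined with $\Theta_\delta\Theta_{-\delta}=\mathrm{id}$ at the adjoint level, this should show that $\Theta_{\delta,\Lambda}$ and $\Theta_{-\delta,\Lambda}$ compose to the identity on generators. Perfectness (Lemma~\ref{lem:perfectness-arbitrary-isogeny}) then kills the central cocycle on the commutator words that generate $E_\Lambda(A_i)$.

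If the local lift is not available in the literature, a fallback is to pass to $\mathrm{SU}_{n+1}$ for the simply connected form. There the words $X^{(n),\delta}$ have literal matrix lifts of determinant one, namely the given matrices with $D(c)$ replaced by its $\det$-normalized version. Unitarity and the relations can be checked by polynomial identities in $\delta$ with $\delta^2=3\delta=0$. Intermediate forms would then follow by pushing down along $p_\Lambda$.

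Once $y=1$, the rule defines an endomorphism lifting $\Theta_\delta^{(n)}$. By Corollary~\ref{cor:uniqueness-over-adjoint-quotient} it is the unique such lift and is determined by its root values. The addition law and the inverse formula hold after $q_{\Lambda,*}$ by Proposition~\ref{prop:global-exceptional-automorphisms}, so they hold on $E_\Lambda(R)$ by the same uniqueness. In particular $\Theta_{-\delta,\Lambda}^{(n)}$ is a two-sided inverse, so the map is an automorphism. Compatibility with equivariant localization is immediate, because the defining words commute with base change.
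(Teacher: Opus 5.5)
Your main line is the paper's proof. The paper checks every transformed relation over the $A_i$, using $\delta_i=0$ on split orbits and, on fixed orbits with $\delta_i\ne0$, exactly the arbitrary-isogeny local action of \cite[Subsections~3.2--3.3]{BuninaMakadiya2ASmall} you were unsure of, and then concludes by affine local detection. Your derivation of the lift property and of the addition and inverse laws from Corollary~\ref{cor:uniqueness-over-adjoint-quotient}, rather than by local detection, is an equally valid variant.

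Your cocycle fallback would not close as written. Perfectness gives uniqueness of lifts, not existence: a central cocycle on the relation kernel $K$ of the free group $F$ vanishes on $[F,K]$, but it can survive on $(K\cap[F,F])/[F,K]$. Your square-zero idea does finish the argument directly over $R$, once you use $3\delta=0$ instead of perfectness:
\begin{itemize}
\item Your central element $y$ maps to $1$ modulo the invariant ideal $I=R\delta$, because the words at parameter $0$ are the ordinary root elements. This ideal satisfies $I^2=3I=0$.
\item The element $y$ lies in $(\ker q_\Lambda)(R)\simeq\boldsymbol\mu_d(R)$ with $d=|\Lambda/\Lambda_r|$ dividing $n+1\in\{4,5\}$, hence prime to $3$.
\item Writing $y=1+z$ with $z\in I$ gives $dz=3z=0$, so $y=1$.
\end{itemize}
This needs neither localization nor the local paper.
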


\begin{proof}
By Proposition~\ref{prop:global-exceptional-automorphisms},
$\delta_i$ is admissible on fixed local models and zero on split
ones. If $\delta_i\ne0$, the local involution is nontrivial and
\cite[Subsections~3.2--3.3]{BuninaMakadiya2ASmall} gives the
arbitrary-isogeny action. Otherwise the words give the identity.
Every transformed relation therefore holds over all $A_i$ and,
by affine local detection, over $R$. The same detection gives the
local addition and inverse laws globally. Compatibility follows
from the words, and root generation gives uniqueness.
\end{proof}

\subsection{The elementary classification}
\label{subsec:complete-adjoint-elementary-theorem}
\label{subsec:automorphisms-arbitrary-elementary-groups}

The preceding constructions lift the adjoint normal forms. The given
automorphism itself forces admissibility of its graph data.

\begin{proof}[Proof of Theorem~\ref{thm:main-elementary}]
An automorphism $\varphi$ induces $\overline\varphi$ on
$E_\Lambda(R)/Z(E_\Lambda(R))\simeq E(R)$.
Its unique decomposition is given by
Theorems~\ref{thm:local-global-automorphism},
\ref{thm:exceptional-unitary-automorphisms},
\ref{thm:automorphisms-type-2D4}, and~\ref{thm:automorphisms-triality},
with the ordinary local inputs verified in
Sections~\ref{sec:residual-standardness}
and~\ref{sec:relative-local-rigidity}.
All factors lift as above. In ${}^2D_4$, first write
$\overline\varphi=i_g\Gamma_{\mathbf e}\mu_*$.
The automorphism $(i_g^\Lambda)^{-1}\varphi\mu_*^{-1}$ lifts
$\Gamma_{\mathbf e}$, so
Corollary~\ref{cor:detection-admissible-D4-graph-data}
makes $\mathbf e$ admissible.

The product of the lifted factors has the same adjoint action
as $\varphi$, hence equals it by
Corollary~\ref{cor:uniqueness-over-adjoint-quotient}.
Conversely, all listed factors are automorphisms.
Uniqueness of the adjoint data and of their lifts gives uniqueness
in every isogeny form.
\end{proof}

\begin{corollary}[The classical elementary decomposition]
\label{cor:classical-elementary-decomposition}
Separating the adjoint-induced factor into strictly inner and diagonal
factors, and adjoining the stated coefficient, exceptional, or graph
factors, gives all elementary automorphisms.
No nontrivial central factor occurs. The diagonal factor is absent
in the adjoint form, and the nonneutral graph factor is absent for
the proper intermediate invariant lattice of ${}^2D_4$.
\end{corollary}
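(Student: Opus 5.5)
The corollary is a repackaging of Theorem~\ref{thm:main-elementary}, which the paper has just proved, so the plan is to unwind its normal forms.

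\emph{Step 1 (splitting the adjoint factor).} Start from the unique decomposition $\varphi=i_g^\Lambda\circ\Xi\circ\mu_*$ (or $i_g^\Lambda\circ\Phi_{\mu,\Lambda}$ in triality). Here $\Xi$ is the identity, an exceptional factor $\Theta^{(n)}_{\delta,\Lambda}$, or a graph factor $\Gamma_{\mathbf e,\Lambda}$. Using the isomorphism
$$
\operatorname{Diag}_\Lambda(R,\theta)\simeq G(R)/q_{\Lambda,*}(G_\Lambda(R))
$$
from Subsection~\ref{subsec:central-isogenies-elementary-groups}, fix a set of coset representatives $\{d\}$. Then write $g=q_{\Lambda,*}(h)\,d$, so that $i_g^\Lambda=i_h\circ i_d^\Lambda$ is a strictly inner factor followed by a diagonal one. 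This choice is noncanonical but always available, so every automorphism is a composition of the displayed factors. Conversely, each factor is an automorphism: strictly inner and diagonal factors are instances of $i_g^\Lambda$, and the remaining factors are covered by the converse part of Theorem~\ref{thm:main-elementary}.

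\emph{Step 2 (no central factor).} If $c(x)=\zeta(x)x$ with $\zeta\colon E_\Lambda(R)\to Z(E_\Lambda(R))$ a homomorphism, then $\zeta$ is a homomorphism from a perfect group to an abelian group by Lemma~\ref{lem:perfectness-arbitrary-isogeny}. Hence $\zeta$ is trivial, and no nontrivial central automorphism occurs.

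\emph{Step 3 (absence statements).} In the adjoint form, $q_{\Lambda,*}$ is the identity, so the diagonal quotient is trivial. For the proper intermediate $\rho$-invariant lattice of ${}^2D_4$, the discussion after Corollary~\ref{cor:detection-admissible-D4-graph-data} shows that its stabilizer in $\mathcal D$ is $\langle\rho\rangle$. Thus $\tau^{\pm1}(\Lambda)\neq\Lambda$, which forces $e_1=e_2=0$, so $\mathcal C_{\rho,\Lambda}(R,\theta)=\{(1,0,0)\}$ and $\Gamma_{\mathbf e,\Lambda}=\mathrm{id}$.

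The only mildly delicate point is Step~1: one must check that conjugation by $h\in G_\Lambda(R)$ coincides with $i^\Lambda_{q_{\Lambda,*}(h)}$. This follows from the defining property $a_\Lambda(q_\Lambda(h),x)=hxh^{-1}$ of the adjoint action. Uniqueness is not claimed after this separation, so nothing further is needed.
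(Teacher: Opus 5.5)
Your proposal is correct and follows the paper's own argument: you split $i_g^\Lambda$ using coset representatives for the diagonal quotient $G(R)/q_{\Lambda,*}(G_\Lambda(R))$, use perfectness of $E_\Lambda(R)$ to kill any central factor, and use the stabilizer $\langle\rho\rangle$ of the intermediate $D_4$ lattice to force $e_1=e_2=0$. Your verification that $i^\Lambda_{q_{\Lambda,*}(h)d}=i_h\circ i^\Lambda_d$ via $a_ga_h=a_{gh}$ is the only point the paper leaves implicit.
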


\begin{proof}
Use the diagonal quotient, perfectness, and the lattice stabilizer
above. Only the separation of the adjoint-induced factor is
noncanonical; the normalized data remain unique.
\end{proof}

In type ${}^2D_4$, the composition law of
Corollary~\ref{cor:automorphism-group-2D4} gives, in every form,
$$
 \operatorname{Aut}E_\Lambda(R)\simeq
 G(R)\rtimes\bigl(\mathcal C_{\rho,\Lambda}(R,\theta)
                         \rtimes\operatorname{Aut}(R,\theta)\bigr).
$$
The actions are $\mu\cdot\mathbf e=(\mu(e_j))_j$ and
$(\mathbf e,\mu)\cdot g=\Gamma_{\mathbf e}(\mu_*(g))$.
The group $G(R)$ here is adjoint, independently of $\Lambda$.

\subsection{Restriction to the elementary subgroup}
\label{subsec:passage-to-full-twisted-groups}

We prove characteristicity uniformly in all types, using the
maximal-normal-subgroup result already established. The key chain is
perfectness, an idempotent support ideal, and full support forced by
the centralizer. Restriction then leaves exactly a central ambiguity.

\begin{proposition}
\label{prop:characteristic-elementary-and-centralizer}
The subgroup $E_\Lambda(R)$ is characteristic in $G_\Lambda(R)$, and
$$
 C_{G_\Lambda(R)}(E_\Lambda(R))=Z(G_\Lambda(R))=Z_\Lambda(R).
$$
\end{proposition}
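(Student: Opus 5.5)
The plan is to prove the centralizer statement first and then use it to get characteristicity. For the centralizer, let $c\in G_\Lambda(R)$ centralize $E_\Lambda(R)$. Its adjoint image $q_{\Lambda,*}(c)\in G(R)$ centralizes $q_{\Lambda,*}(E_\Lambda(R))=E(R)$, which contains the standard frame. Frame rigidity $\mathrm{(LR3)}$ from Section~\ref{sec:rigidity-standard-frame} therefore gives $q_{\Lambda,*}(c)=1$, so $c\in Z_\Lambda(R)$. Conversely, $Z_\Lambda(R)$ is the kernel of a central isogeny on scheme points, hence central in $G_\Lambda(R)$. Since $Z(G_\Lambda(R))\subseteq C_{G_\Lambda(R)}(E_\Lambda(R))$ trivially, the three groups coincide.

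For characteristicity, let $\varphi\in\operatorname{Aut}G_\Lambda(R)$ and put $H=\varphi(E_\Lambda(R))$. Since $E_\Lambda(R)$ is normal in $G_\Lambda(R)$, so is $H$. By Lemma~\ref{lem:perfectness-arbitrary-isogeny}, $H$ is perfect, and it is isomorphic to $E_\Lambda(R)$. The goal is to prove $H\subseteq E_\Lambda(R)$; applying the same argument to $\varphi^{-1}$ then gives equality. First I will show that $G_\Lambda(R)/E_\Lambda(R)Z_\Lambda(R)$ is solvable, or at least that every perfect subgroup of it is trivial. This quotient embeds in $G(R)/E(R)$. To handle it, I would use local detection: over each local model $A_i$, the full adjoint twisted group is $E(A_i)\,\mathbf T(A_i)^{\sigma}$ by the Bruhat/big-cell argument used in Lemma~\ref{lem:fixed-to-split-frame-normalization}. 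So the image of $H$ in $G(A_i)/E(A_i)$ is abelian, and since $H$ is perfect it is trivial. This gives $(\iota_i)_*q_{\Lambda,*}(H)\subseteq E(A_i)$ for every $i$.

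The main obstacle is passing from this local statement to the global one, because membership in $E(R)$ is not detected locally in general; $K_1$-type obstructions can occur. The route I would try first avoids local detection. The subgroup $HE_\Lambda(R)$ is normal. Its commutator $[H,E_\Lambda(R)]$ lies in $H\cap E_\Lambda(R)$, and $H\cap E_\Lambda(R)$ is normal in $E_\Lambda(R)$. If this intersection were proper in $H$, project to adjoint: $q_{\Lambda,*}(H\cap E_\Lambda(R))$ is a normal subgroup of $E(R)$. If that image is proper, then by Proposition~\ref{prop:maximal-normal-subgroups} and Theorem~\ref{thm:triality-maximal-normal-subgroups} it lies in some $N_i$. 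Then $[H,E_\Lambda(R)]$ maps into $N_i$, so the image of $H$ centralizes $E(K_i)$ after reduction. The next step is to reduce $H$ at $K_i$: it lands in $G(K_i)$, it centralizes $E(K_i)$, and by field-level frame rigidity it is trivial there. So $H$ reduces trivially at $i$, which makes $H$ the kind of subgroup described by the supports of the normal structure.

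To turn this into a contradiction, I would use the "idempotent support ideal" mentioned in the paper. Let $\mathfrak s$ be the invariant ideal generated by root parameters of $[H,E_\Lambda(R)]\cap X_\lambda$. If $\mathfrak s=R$, then $[H,E_\Lambda(R)]$ contains all root subgroups, hence equals $E_\Lambda(R)$. In that case $E_\Lambda(R)=[H,E_\Lambda(R)]\subseteq H$, and symmetry under $\varphi^{-1}$ finishes the proof. If $\mathfrak s\ne R$, it lies in some $I_i$. The previous paragraph then shows $H$ acts trivially modulo $I_i$, and applying $\varphi^{-1}$ means $E_\Lambda(R)$ maps into the centralizer of $\varphi^{-1}$ of the congruence kernel. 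That contradicts the fact that $E(K_i)$ is nontrivial, since $E$ does not centralize its own quotient.

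I expect the most delicate point to be making this support ideal well-defined and idempotent across all types, including triality, where ideal levels fail. There I would work only with long-root levels, as in Lemma~\ref{lem:triality-long-root-level}.
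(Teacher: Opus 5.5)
Your centralizer argument is correct and is the paper's. The characteristicity part has a genuine gap at its last step. Your dichotomy is fine. Put $H=\varphi(E_\Lambda(R))$. Either $q_{\Lambda,*}(H\cap E_\Lambda(R))=E(R)$, and perfectness gives $E_\Lambda(R)\subseteq H$. Or this image lies in some $N_i$; then $[H,E_\Lambda(R)]\subseteq H\cap E_\Lambda(R)$, surjectivity of reduction and frame rigidity over $K_i$ put $H$ in the congruence kernel $\mathcal K_i=\ker\bigl(G_\Lambda(R)\to G(K_i)\bigr)$.

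Nothing in the proposal excludes the second alternative. From $H\subseteq\mathcal K_i$ you only get $E_\Lambda(R)\subseteq\varphi^{-1}(\mathcal K_i)$. Since $\varphi$ is not known to preserve congruence kernels, no centralizer statement follows, and the remark that $E$ does not centralize its own quotient is a non sequitur. The centralizer equality alone cannot do the job, because proper congruence subgroups typically have central centralizers too. To make your final step work you would need something like solvability of $G_\Lambda(R)/E_\Lambda(R)$. Then $G_\Lambda(R)/\varphi^{-1}(\mathcal K_i)\simeq G_\Lambda(R)/\mathcal K_i\supseteq E(K_i)$ would be solvable, a contradiction. But that is exactly the $K_1$-type input you rightly flagged as unavailable. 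Your ideal $\mathfrak s$ does not help either. It is a level of $[H,E_\Lambda(R)]$, recording which root elements that subgroup contains, whereas what is needed is an ideal modulo which $H$ becomes trivial. Its idempotency is neither proved nor used.

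The missing idea is the paper's full-support argument, which uses that $H$ is perfect and finitely normally generated.
\begin{enumerate}
\item $E_\Lambda(R)$ equals the normal closure $M$ in $E_\Lambda(R)$ of its constant frame. Indeed, $q_{\Lambda,*}(M)$ lies in no $N_i$ because constant root elements have nontrivial residues, so $q_{\Lambda,*}(M)=E(R)$ by Proposition~\ref{prop:maximal-normal-subgroups}. Perfectness then gives $E_\Lambda(R)=[M,M]\subseteq M$.
\item Put $k_\lambda=\varphi(x_\lambda(1))$ and fix a faithful representation $\jmath$ of the ambient adjoint group. Let $\mathfrak a$ be the invariant ideal generated by the entries of $\jmath(q_{\Lambda,*}(k_\lambda))-I_N$ and their $\theta$-translates. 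Since $H$ is normally generated by the $k_\lambda$, every element of $H$ is congruent to the identity modulo $\mathfrak a$.
\item Such matrices form an abelian group modulo $\mathfrak a^2$, so perfectness of $H$ gives $\mathfrak a=\mathfrak a^2$. The finitely generated ideal $\mathfrak a$ is therefore $\epsilon R$ for a $\theta$-fixed idempotent $\epsilon$.
\item Over $(1-\epsilon)R$ the image of $H$ is central, so $E_\Lambda((1-\epsilon)R)$ centralizes $H$. Since $C_{G_\Lambda(R)}(H)=Z(G_\Lambda(R))$ by the part you already proved, this forces $\epsilon=1$, that is, $\mathfrak a=R$.
\end{enumerate}
Now $H\subseteq\mathcal K_i$ would put every generator of $\mathfrak a$ into $I_i$. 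So the second alternative is impossible, and your first alternative together with the symmetric argument for $\varphi^{-1}$ finishes the proof.
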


\begin{proof}
The adjoint image of an element centralizing $E_\Lambda(R)$
centralizes the frame and is one. Conversely, $Z_\Lambda(R)$ is
central. This proves both centralizer equalities.
Assume $R\ne0$ and abbreviate
$\mathcal G=G_\Lambda(R)$, $\mathcal E=E_\Lambda(R)$, and
$q=q_{\Lambda,*}$.

\smallskip
\noindent\emph{Finite normal generation.}
Let $M$ be the normal closure in $\mathcal E$ of its constant frame
$(u_\lambda)_\lambda$. Its adjoint image contains the adjoint frame
and cannot be proper: Proposition~\ref{prop:maximal-normal-subgroups}
would put it in some $N_i$, whereas the constant root elements
have nontrivial reduction. Thus
$\mathcal E=M\ker(q|_{\mathcal E})$.
The kernel is central, so perfectness gives
$\mathcal E=[M,M]\subseteq M$.
For $\alpha\in\operatorname{Aut}\mathcal G$, put
$$
        K=\alpha(\mathcal E),\qquad k_\lambda=\alpha(u_\lambda).
$$
Then $K$ is normal, perfect, and normally generated by the finitely
many $k_\lambda$, with $C_{\mathcal G}(K)=Z(\mathcal G)$.

\smallskip
\noindent\emph{Full support.}
Choose a faithful closed representation $\jmath$ of the ambient
split adjoint group over $\mathbb Z$, and put
$M_x=\jmath_R(q(x))\in\mathrm{GL}_N(R)$.
Let $\mathfrak a$ be the invariant ideal generated by the entries
of all $M_{k_\lambda}-I_N$ and their coefficient translates.
Normal generation gives $M_x\equiv I_N\pmod{\mathfrak a}$
for every $x\in K$.
Modulo $\mathfrak a^2$, matrices congruent to one modulo
$\mathfrak a$ form an abelian group. Perfectness therefore gives
$M_x\equiv I_N\pmod{\mathfrak a^2}$ for every $x\in K$.
Applying this to the generators and their translates yields
$\mathfrak a=\mathfrak a^2$.

This finitely generated ideal is $\epsilon R$ for an idempotent
$\epsilon$, fixed by $\theta$ because the ideal is invariant.
On $(1-\epsilon)R$, the image of $K$ has trivial adjoint image
and is central. Hence $E_\Lambda((1-\epsilon)R)$ centralizes $K$.
If this ring were nonzero, its elementary group would be nontrivial
and perfect, hence nonabelian, contradicting
$C_{\mathcal G}(K)=Z(\mathcal G)$.
Thus $\epsilon=1$ and $\mathfrak a=R$.

\smallskip
\noindent\emph{Containment and characteristicity.}
Put $L=q(K\cap\mathcal E)\lhd E(R)$.
If $L$ were proper, Proposition~\ref{prop:maximal-normal-subgroups}
would give $L\subseteq N_i$ for some $i$.
For $x\in\mathcal E$, the commutator $[k_\lambda,x]$ belongs
to $K\cap\mathcal E$. Surjectivity of the elementary quotient
and reduction therefore makes the residue of each $q(k_\lambda)$
centralize $E(K_i)$. Frame rigidity makes these residues one.
Every generator of $\mathfrak a$ then lies in the invariant ideal
$I_i$, contradicting $\mathfrak a=R$.
Consequently,
$$
 \mathcal E=(K\cap\mathcal E)\ker(q|_{\mathcal E}),\qquad
 \mathcal E=[K\cap\mathcal E,K\cap\mathcal E]\subseteq K.
$$
Applying the same argument to $\alpha^{-1}$ gives the reverse
inclusion, so $\alpha(\mathcal E)=\mathcal E$.
\end{proof}

\begin{definition}
\label{def:central-automorphism-full-group}
A \emph{central automorphism} of $G_\Lambda(R)$ is a bijective map
$$
 c_\zeta(x)=\zeta(x)x,\qquad
 \zeta:G_\Lambda(R)\longrightarrow Z(G_\Lambda(R))
$$
with $\zeta$ a group homomorphism.
\end{definition}

Centrality makes $c_\zeta$ a homomorphism; bijectivity is an additional
condition.

\begin{proposition}[The kernel of restriction]
\label{prop:kernel-restriction-full-group}
The restriction homomorphism
$$
 \operatorname{res}_\Lambda:\operatorname{Aut}G_\Lambda(R)
                  \longrightarrow\operatorname{Aut}E_\Lambda(R)
$$
has exactly the central automorphisms as its kernel.
Thus two extensions of the same elementary automorphism differ
by a central automorphism.
\end{proposition}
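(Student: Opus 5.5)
The restriction map is well defined because Proposition~\ref{prop:characteristic-elementary-and-centralizer} makes $E_\Lambda(R)$ characteristic in $G_\Lambda(R)$. Hence every $\varphi\in\operatorname{Aut}G_\Lambda(R)$ restricts to an automorphism of $E_\Lambda(R)$. The map $\operatorname{res}_\Lambda$ is a homomorphism, so it remains to identify its kernel with the set of central automorphisms. The final sentence of the statement then follows formally: if $\varphi_1,\varphi_2$ have the same restriction, then $\varphi_2^{-1}\varphi_1$ lies in the kernel.

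\emph{Central automorphisms lie in the kernel.} Let $c_\zeta$ be a central automorphism. The map $\zeta$ is a homomorphism from $G_\Lambda(R)$ to the abelian group $Z(G_\Lambda(R))$. By Lemma~\ref{lem:perfectness-arbitrary-isogeny}, $E_\Lambda(R)$ is perfect, so $\zeta$ is trivial on $E_\Lambda(R)=[E_\Lambda(R),E_\Lambda(R)]$. Therefore $c_\zeta$ fixes $E_\Lambda(R)$ pointwise.

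\emph{The kernel consists of central automorphisms.} Let $\varphi$ fix $E_\Lambda(R)$ pointwise, and put $\zeta(x)=\varphi(x)x^{-1}$. For $y\in E_\Lambda(R)$, normality of $E_\Lambda(R)$ gives
$$
\varphi(x)y\varphi(x)^{-1}=\varphi(xyx^{-1})=xyx^{-1}.
$$
Thus $x^{-1}\varphi(x)$ centralizes $E_\Lambda(R)$. By Proposition~\ref{prop:characteristic-elementary-and-centralizer}, $x^{-1}\varphi(x)\in Z(G_\Lambda(R))$. A central element commutes with $x$, so $\zeta(x)=\varphi(x)x^{-1}=x^{-1}\varphi(x)$ is also central.

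It remains to check that $\zeta$ is a homomorphism. Since $\zeta(y)$ is central,
$$
\zeta(xy)=\varphi(x)\varphi(y)y^{-1}x^{-1}=\varphi(x)\zeta(y)x^{-1}=\zeta(x)\zeta(y).
$$
This gives $\varphi=c_\zeta$, and $c_\zeta$ is bijective because $\varphi$ is.

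The only substantive inputs are the two results cited above: the centralizer equality $C_{G_\Lambda(R)}(E_\Lambda(R))=Z(G_\Lambda(R))$ and perfectness of $E_\Lambda(R)$. With these assumed, neither direction presents a real obstacle.
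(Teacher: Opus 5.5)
Your proof is correct and follows the paper's argument essentially step for step. Normality together with the centralizer equality $C_{G_\Lambda(R)}(E_\Lambda(R))=Z(G_\Lambda(R))$ makes $\zeta(x)=\varphi(x)x^{-1}$ a central-valued homomorphism for any $\varphi$ in the kernel, and perfectness of $E_\Lambda(R)$ shows that every central automorphism restricts to the identity. You also write out the homomorphism check for $\zeta$ and the well-definedness of restriction via characteristicity, both of which the paper leaves implicit.
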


\begin{proof}
If $\psi$ fixes $E_\Lambda(R)$ pointwise, normality gives
$$
 \psi(x)e\psi(x)^{-1}=\psi(xex^{-1})=xex^{-1}
       \quad(x\in G_\Lambda(R),\ e\in E_\Lambda(R)).
$$
Hence $x^{-1}\psi(x)$ centralizes $E_\Lambda(R)$ and is central.
The map $\zeta(x)=\psi(x)x^{-1}$ is therefore a homomorphism to
the center, giving $\psi=c_\zeta$.
Conversely, every homomorphism to the center kills the perfect
$E_\Lambda(R)$, so every central automorphism lies in the kernel.
In particular, $\zeta$ factors through $G_\Lambda(R)/E_\Lambda(R)$.
\end{proof}

This identifies the ambiguity of extension, not its existence.
The adjoint, ordinary coefficient, and admissible graph actions
already exist on full groups. The two remaining constructions
are trace-preserving triality actions and exceptional unitary actions.

\subsection{Extension of triality coefficient actions}
\label{subsec:triality-full-extension}

Let $\Phi=D_4$, $\operatorname{ord}(\rho)=3$, and
$\Lambda\in\{\Lambda_r,\Lambda_w\}$. Put
$$
 H_\Lambda(U):=\mathbf T_\Lambda(U)^{\rho\theta_U},\qquad
 L_\Lambda(U):=E_\Lambda(U)H_\Lambda(U).
$$
The fixed torus normalizes the relative root groups, so $L_\Lambda(U)$
is a subgroup. We extend the action to this subgroup first, then
use finite invariant covers to glue values of arbitrary group elements.
No global equality $G_\Lambda(U)=L_\Lambda(U)$ is assumed.

\begin{lemma}[Factorization over an orbit neighborhood]
\label{lem:triality-EH-orbit-neighborhood}
If $U$ is semilocal and $\theta_U$ acts transitively on its maximal
ideals, then $G_\Lambda(U)=L_\Lambda(U)$.
In the simply connected form, $H_{\Lambda_w}(V)\subseteq E_{\Lambda_w}(V)$
over every coefficient pair $V$; in particular,
$G_{\Lambda_w}(U)=E_{\Lambda_w}(U)$.
\end{lemma}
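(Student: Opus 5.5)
For the first assertion, I would reduce to a big-cell argument over the semilocal ring $U$. First I would show that $G_\Lambda(U)$ is covered by Weyl translates of the absolute big cell $\mathbf U^+\mathbf T_\Lambda\mathbf U^-$ over $U$. Since the maximal ideals of $U$ form a single $\theta_U$-orbit of size one or three, I would invoke the residue description of Lemma~\ref{lem:orbit-neighborhoods-general}: the residue algebra $U/\operatorname{Jac}(U)$ is either a field or a cyclic product $k^3$.

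Over such a residue algebra, the twisted group is a quasi-split or split group over a field, by the table in Proposition~\ref{prop:residual-triality-standardness}. Hence it has a relative Bruhat decomposition with $\rho$-fixed Weyl representatives, which are words in the frame and so lie in $E_\Lambda$. Given $x\in G_\Lambda(U)$, I would multiply its residue by such a representative and by residual root elements, lifted via Lemma~\ref{lem:surjectivity-reduction}, so that it lies in the residual big cell. Being in the cell is an open condition detected on the residue algebra, because $U$ is semilocal. So the corrected element $x'$ lies in $\mathbf U^+\mathbf T_\Lambda\mathbf U^-(U)$, and its factorization is unique.

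By uniqueness, each factor is $\rho\theta_U$-fixed. Collecting absolute unipotent coordinates by relative classes, as in the proof of Lemma~\ref{lem:fixed-to-split-frame-normalization}, places the unipotent factors in $E_\Lambda(U)$ and the torus factor in $H_\Lambda(U)$. This gives $x\in L_\Lambda(U)$. The only care needed is that in the split case transitivity lets one treat all three factors at once, which Lemma~\ref{lem:split-product-identification} handles.

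For the simply connected statement, I would use that $\mathbf T_{\mathrm{sc}}$ is the product of the coroot tori $h_{\alpha_i}$. A $\rho\theta_V$-fixed point is a tuple $(h_{\alpha_i}(c_i))$ with $c_{\rho(i)}=\theta(c_i)$. Hence it equals $h_{\alpha_2}(c_2)$ times the $\rho$-orbit product $h_{\alpha_1}(c)h_{\alpha_3}(\theta c)h_{\alpha_4}(\theta^2c)$ (with the fixed ordering of roots), where $c_2\in (V^{\theta})^\times$ and $c\in V^\times$. Each of these is a relative torus element $h_\lambda(c)$. In the simply connected form, $h_\lambda(c)$ is the standard word $w_\lambda(c)w_\lambda(1)^{-1}$ in relative root elements; for the short class this uses that the three absolute rank-one groups commute. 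This shows $H_{\Lambda_w}(V)\subseteq E_{\Lambda_w}(V)$, and combining with the first part gives $G_{\Lambda_w}(U)=E_{\Lambda_w}(U)$.

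The main obstacle is the first step, namely ensuring that every element of $G_\Lambda(U)$ can be moved into the big cell by elementary twisting-fixed elements, with the Bruhat data lifting along all three factors compatibly. I would try the residual Bruhat decomposition and lifting with averaging first.
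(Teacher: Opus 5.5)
Your proposal is correct and follows essentially the paper's argument: factor the residue over $U/\operatorname{Jac}(U)$, lift elementary factors, use the open big cell to get a unique factorization over $U$ whose factors are twisting-fixed, and for the simply connected torus write the fixed points as the coroot words $h_a(s)\,h_b(r)$. The only variation is in the first step. You use the residual Bruhat decomposition to push the residue into the big cell with lifted elementary elements alone, so the torus factor comes out of the big-cell factorization over $U$. The paper instead lifts a full residual $E_\Lambda H_\Lambda$ factorization, including torus units via character/cocharacter bases and averaging, and then factors an element reducing to $1$. One small point: in the split residual case your Weyl representatives lie in $E_\Lambda$ through the split identification, not as words in the frame.
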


\begin{proof}
\smallskip
\noindent\emph{The simply connected torus.}
Simple coroots form a cocharacter basis, so a fixed torus point
is $h_b(r)h_a(s)$, where $r\in V^\times$,
$s\in(V^{\theta_V})^\times$, and
$$
\begin{aligned}
 h_b(r)&=h_{\alpha_4}(r)h_{\alpha_1}(\theta_V(r))
                                h_{\alpha_3}(\theta_V^2(r)),\\
 h_a(s)&=h_{\alpha_2}(s).
\end{aligned}
$$
Both are elementary rank-one words:
$$
 w_\gamma(v)=x_\gamma(v)x_{-\gamma}(-v^{-1})x_\gamma(v),\qquad
 h_\gamma(v)=w_\gamma(v)w_\gamma(1)^{-1}.
$$
For a short root, this is the product of three commuting absolute
rank-one identities.

\smallskip
\noindent\emph{Semilocal factorization.}
For $J=\operatorname{Jac}(U)$, the residue algebra is a field with
an operator or a cyclic product of three fields.
The fixed-field factorization of
\cite[Proposition~2.2]{BuninaMakadiyaTriality} and the split
identification give
$$
        G_\Lambda(U/J)=E_\Lambda(U/J)H_\Lambda(U/J).
$$
The latter identification works in both forms by the same root formulas.
Root parameters lift, averaging fixed parameters as in
Lemma~\ref{lem:surjectivity-reduction}.
Torus parameters lift using the simple-root character basis in the
adjoint form and the simple-coroot cocharacter basis in the simply
connected form. Units lift modulo $J$; averaging a fixed unit's lift
preserves its invertible residue.

For $x\in G_\Lambda(U)$, choose $e_0h_0\in L_\Lambda(U)$
with the same residue. Then $z=(e_0h_0)^{-1}x$ reduces to one,
so its inverse image of the absolute big cell
$\mathbf U^+\mathbf T_\Lambda\mathbf U^-$ contains all maximal
points, hence all of $\operatorname{Spec}U$.
The big cell is an open immersion
\cite[Theorem~5.1.13]{ConradReductive}; its factorization of $z$
is unique and twisting-fixed. Collecting the unipotent coordinates
by triality orbits puts them in $E_\Lambda(U)$, so
$z\in L_\Lambda(U)$. The simply connected torus assertion then
gives the last equality.
\end{proof}

\begin{proposition}[Full triality coefficient isomorphisms]
\label{prop:triality-full-coefficient-isomorphisms}
\label{prop:global-extension-triality-coefficient-full-group}
Every trace-preserving isomorphism $\nu:S\xrightarrow{\sim}T$ has
a canonical extension of its elementary action
$$
 \widehat\Phi_{\nu,\Lambda}:G_\Lambda(S)\xrightarrow{\sim}G_\Lambda(T).
$$
These extensions commute with the central isogeny and satisfy
$$
 \widehat\Phi_{\kappa,\Lambda}\widehat\Phi_{\nu,\Lambda}
       =\widehat\Phi_{\kappa\nu,\Lambda},\qquad
 \widehat\Phi_{\nu,\Lambda}^{-1}
       =\widehat\Phi_{\nu^{-1},\Lambda}.
$$
If $f:S\to S'$ and $k:T\to T'$ are equivariant and
$\nu':S'\xrightarrow{\sim}T'$ preserves the traces with $k\nu=\nu'f$,
then
$$
        k_*\widehat\Phi_{\nu,\Lambda}
                     =\widehat\Phi_{\nu',\Lambda}f_*.
$$
For equivariant $\nu$, the extension is $\nu_*$.
\end{proposition}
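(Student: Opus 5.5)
We construct $\widehat\Phi_{\nu,\Lambda}$ locally, where the full group has a clean factorization, and glue by uniqueness. The key input is Lemma~\ref{lem:triality-EH-orbit-neighborhood}.

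\emph{Local construction.} Over an orbit neighborhood, $G_\Lambda=L_\Lambda=E_\Lambda H_\Lambda$. On the fixed torus $H_\Lambda(U)$ we define the action directly in coordinates. In the simply connected form a fixed torus point is $h_b(r)h_a(s)$, which is an elementary word, so we simply apply $\Phi_{\nu,\Lambda}$. In the adjoint form we use the simple-root character basis: a torus point is given by characters $\chi_b\in U^\times$ and $\chi_a\in (U^{\theta})^\times$, in coherent coordinates. We send it to the point with characters $\nu(\chi_b),\nu(\chi_a)$. This is admissible by Lemma~\ref{lem:triality-compatible-basic-properties}, since fixed elements go to fixed elements.

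\emph{Compatibility checks.} This torus action must be compatible with $E_\Lambda$ in two ways. First, conjugation $h x_\lambda(t)h^{-1}=x_\lambda(\lambda(h)t)$ must be respected. This reduces to multiplicativity of $\nu$, together with the observation that the short character on $x_b$ is $\chi_b$ in the coherent chart. Second, the action must agree with $\Phi_{\nu,\Lambda}$ on $E_\Lambda\cap H_\Lambda$. For this I would use the big-cell uniqueness: an element of $E\cap H$ is determined by its torus coordinate, and $\Phi_{\nu,\Lambda}$ sends $h_\gamma(v)=w_\gamma(v)w_\gamma(1)^{-1}$ to $h_\gamma(\nu(v))$. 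These two facts together yield a well-defined homomorphism on $E_\Lambda(U)\rtimes H_\Lambda(U)$ modulo the antidiagonal $E\cap H$, i.e. on $L_\Lambda(U)$.

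\emph{Globalization.} For global $S$, take $x\in G_\Lambda(S)$ and a target orbit $i$ with $j=(\nu^\sharp)^{-1}(i)$. Lemma~\ref{lem:triality-orbit-localization}, applied to $S\times T$ with the automorphism $\eta$ as in Proposition~\ref{prop:triality-coefficient-arbitrary-isogeny}, gives local isomorphisms $\nu^B_j:B_j\to B_i$. Over $B_i$ we therefore have a value $y_i=\widehat\Phi_{\nu^B_j}((\iota_j)_*x)$. Writing $x$ in $L_\Lambda(B_j)$ involves finitely many coordinates, so finite presentation spreads $y_i$ to $G_\Lambda(T_{s_i})$ with fixed $s_i$. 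On overlaps $D(s_is_{i'})$ the values must agree.

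\emph{Main obstacle: agreement on overlaps.} The difficulty is that two $EH$-factorizations over $T_{s_is_{i'}}$ need not be comparable, because that ring is not semilocal. I would handle it by proving agreement over every local ring of $T_{s_is_{i'}}$. There both values are computed by the same local construction, which is well defined independent of the factorization by the previous step and natural under localization. Agreement then holds by Lemma~\ref{lem:affine-descent-and-local-detection}(1). Gluing (Lemma~\ref{lem:affine-descent-and-local-detection}(4)) gives $\widehat\Phi_{\nu,\Lambda}(x)$, and twisting-invariance is checked locally.

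\emph{Remaining properties.} The homomorphism property, the composition law, compatibility with $q_{\Lambda}$, and base-change naturality $k_*\widehat\Phi_\nu=\widehat\Phi_{\nu'}f_*$ are all identities of affine points. We check them locally, where they hold on $E$ and on $H$ by construction. Taking the inverse from $\nu^{-1}$ gives bijectivity. For equivariant $\nu$, the map $\nu_*$ satisfies the same local rules, so uniqueness of the local values gives $\widehat\Phi_{\nu,\Lambda}=\nu_*$.
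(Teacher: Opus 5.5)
Your outline follows the paper's construction: a torus rule $t_S(u,v)\mapsto t_T(\nu(u),\nu(v))$ in the simple-root character basis, a well-defined map $\Xi_{\nu,\Lambda}$ on $L_\Lambda=E_\Lambda H_\Lambda$, then spreading finite $EH$-factorizations from orbit neighborhoods and gluing. Reducing conjugation compatibility to multiplicativity, by checking only on the generators $x_{\pm b},x_{\pm a}$, is fine; the paper checks all six multipliers instead.

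The step that does not go through as written is agreement on $E_\Lambda\cap H_\Lambda$. The formula $\Phi_{\nu,\Lambda}(h_\gamma(v))=h_\gamma(\nu(v))$ covers only the subgroup generated by the $h_\gamma(v)$. ``Big-cell uniqueness'' does not show that an arbitrary $h\in E(S)\cap H(S)$, given as some root word, lies in that subgroup, and in the adjoint form this is precisely the nontrivial point. It can be proved by lifting $h$ to $E_{\Lambda_w}(S)$: the preimage of the torus under the central isogeny is the torus, and $H_{\Lambda_w}$ consists of the words $h_b(r)h_a(s)$. You would then also need norm preservation to match the $\alpha_2$-coordinate $N_\theta(r)^{-1}$ of $h_b(r)$. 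The paper's argument is shorter. For $h\in E(S)\cap H(S)$, both $\Phi_{\nu,\Lambda_r}(h)$ and $\nu_H(h)$ induce the same conjugation on $\Phi_{\nu,\Lambda_r}(E(S))=E(T)$, by the compatibility you already have. Their quotient therefore centralizes the frame and is $1$ by frame rigidity.

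This repair also dissolves your ``main obstacle''. The frame-rigidity argument works over every coefficient pair, so $\Xi_{\nu,\Lambda}$ is factorization-independent on $L_\Lambda(U)$ for every $U$, not only semilocal ones, and it is natural under localization. The paper spreads the source factorization to $x_{s_i}\in L_\Lambda(S_{s_i})$ with $s_i$ fixed, and applies $\Xi$ along the trace-preserving localization $S_{s_i}\to T_{\nu(s_i)}$. On $D(s_is_j)$ the two values are then $\Xi$ of two factorizations of the same element of $L_\Lambda(S_{s_is_j})$, hence equal. No orbit permutation $\nu^\sharp$ or passage through $S\times T$ is needed.

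If you keep your local route instead, note that $T_{\mathfrak m}$ carries no operator when $\mathfrak m$ is not fixed. The ``same local construction'' can therefore only be run over orbit localizations, not over single-prime local rings.
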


\begin{proof}
We extend the elementary action of
Proposition~\ref{prop:triality-elementary-coefficient-action}
by the root-and-torus formulas of
\cite[Proposition~6.8]{BuninaMakadiyaTriality}, then glue its
values on finite local factorizations.

\smallskip
\noindent\emph{The torus rule.}
In the adjoint form, a fixed torus point is uniquely $t_U(u,v)$,
with $u\in U^\times$, $v\in(U^{\theta_U})^\times$, and
$$
\begin{aligned}
 \alpha_4(t_U(u,v))&=u,&\alpha_1(t_U(u,v))&=\theta_U(u),\\
 \alpha_3(t_U(u,v))&=\theta_U^2(u),&\alpha_2(t_U(u,v))&=v.
\end{aligned}
$$
In the coherent coordinates of Section~\ref{sec:triality}, its
positive-root conjugation multipliers are
$$
\begin{array}{c|cccccc}
 \gamma&b&a&c&d&e&f\\ \hline
 m_\gamma(t_U(u,v))&u&v&uv&u^\#v&N_{\theta_U}(u)v&N_{\theta_U}(u)v^2.
\end{array}
$$
For negative roots take inverses. These character formulas hold over
arbitrary coefficient rings; see
\cite[Subsection~6.3]{BuninaMakadiyaTriality}.
Define $\nu_H(t_S(u,v))=t_T(\nu(u),\nu(v))$.
Since $\nu$ preserves fixed coefficients, $\#$, and the norm, it
preserves all the multipliers. Hence
$$
 \Phi_{\nu,\Lambda_r}(hxh^{-1})
   =\nu_H(h)\Phi_{\nu,\Lambda_r}(x)\nu_H(h)^{-1}
       \quad(h\in H_{\Lambda_r}(S),\ x\in E(S)).
$$
Thus trace preservation, rather than commutation with $\theta$,
is exactly what these formulas require.

\smallskip
\noindent\emph{Agreement on $E\cap H$.}
For $h\in E(S)\cap H_{\Lambda_r}(S)$, the two values
$\Phi_{\nu,\Lambda_r}(h)$ and $\nu_H(h)$ induce the same
conjugation on $E(T)$ by surjectivity of $\Phi_{\nu,\Lambda_r}$.
Their quotient is therefore one by frame rigidity.
It follows that
$$
 \Xi_{\nu,\Lambda_r}(eh):=\Phi_{\nu,\Lambda_r}(e)\nu_H(h)
$$
is independent of factorization: if $eh=e'h'$, the discrepancy
$e'^{-1}e=h'h^{-1}$ belongs to the intersection, where the rules
agree. Conjugation compatibility proves multiplicativity, and
$\nu^{-1}$ gives the inverse.
For the simply connected form, $L_{\Lambda_w}=E_{\Lambda_w}$,
so set $\Xi_{\nu,\Lambda_w}=\Phi_{\nu,\Lambda_w}$.
Both constructions respect composition, central isogenies, and the
coefficient diagrams in the statement, directly on their generators:
$$
        k_*\Xi_{\nu,\Lambda}=\Xi_{\nu',\Lambda}f_*
                       \quad\text{on }L_\Lambda(S).
$$
In particular, their localization compatibility is established
before defining any full-group map.

\smallskip
\noindent\emph{Finite local factorizations and gluing.}
Fix $x\in G_\Lambda(S)$.
Over each orbit neighborhood $B_i=\Sigma_i^{-1}S$, the preceding
lemma factors $x$ into a finite root word and a fixed torus element.
Fixed denominators are cofinal: inverting
$N_{\theta_S}(t)\in\Sigma_i\cap S^{\theta_S}$ also inverts
$t\in\Sigma_i$.
Clear the finitely many denominators in the parameters and inverse
units, enlarging the denominator to retain the fixedness relations
and the factorization equality. Finite presentation gives
$$
        x_{s_i}\in L_\Lambda(S_{s_i}),\qquad
        s_i\in\Sigma_i\cap S^{\theta_S}.
$$
These opens cover every maximal ideal and hence the spectrum;
choose a finite subcover.

The images $\nu(s_i)$ are fixed, and the localized isomorphisms
$\nu_{s_i}:S_{s_i}\to T_{\nu(s_i)}$ preserve the trace.
Define
$$
        y_i=\Xi_{\nu_{s_i},\Lambda}(x_{s_i})
                            \in G_\Lambda(T_{\nu(s_i)}).
$$
On every intersection both values equal
$\Xi_{\nu_{s_is_j},\Lambda}(x_{s_is_j})$ by localization
compatibility. They glue to a unique absolute group point, whose
local twisting invariance places it in $G_\Lambda(T)$.
This is $\widehat\Phi_{\nu,\Lambda}(x)$.
Common refinements make it independent of the cover and factorizations.
Only finite root-and-torus data are spread out; no abstract local
full-group homomorphism is descended.

\smallskip
\noindent\emph{Compatibility of the glued maps.}
For two group elements, use a common cover on which both are in
$L_\Lambda$. Multiplicativity there gives global multiplicativity.
For an elementary element the one-set cover gives the original action.
Composition is checked on the image cover under the first coefficient
isomorphism, and $\nu^{-1}$ supplies the inverse.
Central-isogeny compatibility follows on the same factorizations.
In a coefficient diagram, the images $f(s_i)$ still give a cover,
so naturality of $\Xi$ proves naturality after gluing.
For equivariant $\nu$, the root and torus rules are coefficientwise,
hence so is the extension.
\end{proof}

Here \emph{canonical} specifies the root-and-torus construction.
An adjoint extension is unique over every ring by frame rigidity.
In the simply connected form, we only assert uniqueness over
local models.

\begin{corollary}[Local extension to the full triality group]
\label{cor:local-triality-full-extension}
For local models $A,B$, every trace-preserving isomorphism
$\nu:A\xrightarrow{\sim}B$ has a unique full-group extension
$\widehat\Phi_{\nu,\Lambda}$ of its elementary action in either
form, with the compatibilities above.
\end{corollary}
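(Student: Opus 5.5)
Existence is immediate from Proposition~\ref{prop:triality-full-coefficient-isomorphisms}: its canonical $\widehat\Phi_{\nu,\Lambda}$ applies to any trace-preserving isomorphism, in particular between local models. It extends the elementary action $\Phi_{\nu,\Lambda}$ and satisfies the composition, inverse, isogeny, and base-change compatibilities stated there. So the whole content is uniqueness, which I would prove separately in the two isogeny forms.

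For the adjoint form, I would argue by frame rigidity. Suppose $\alpha,\beta:G(A)\to G(B)$ are two isomorphisms extending $\Phi_\nu$. Then $\gamma=\beta^{-1}\alpha$ is an automorphism of $G(A)$ fixing $E(A)$ pointwise. For $x\in G(A)$ and $e\in E(A)$, normality of $E(A)$ gives $\gamma(x)e\gamma(x)^{-1}=xex^{-1}$. Hence $x^{-1}\gamma(x)$ centralizes $E(A)$, in particular the frame, and is therefore trivial by $\mathrm{(LR3)}$. This is the argument of Proposition~\ref{prop:kernel-restriction-full-group}, combined with $Z_{\Lambda_r}(A)=1$ from Proposition~\ref{prop:characteristic-elementary-and-centralizer}. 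It needs no locality.

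For the simply connected form, locality enters through Lemma~\ref{lem:triality-EH-orbit-neighborhood}. A local model is semilocal with $\theta$ acting transitively on its maximal ideals: a fixed model has one maximal ideal, and a split model $B^3$ has three maximal ideals permuted cyclically. The lemma therefore gives $G_{\Lambda_w}(A)=E_{\Lambda_w}(A)$, and likewise over $B$. An extension of the elementary action is then the elementary action itself, so it is unique. Equivalently, the central automorphisms of Proposition~\ref{prop:kernel-restriction-full-group} have $\zeta$ factoring through $G/E=1$.

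The compatibilities need no separate argument, since the extension is the canonical one from the proposition. I expect no real obstacle. The only point to verify is that the split local model satisfies the transitivity hypothesis of Lemma~\ref{lem:triality-EH-orbit-neighborhood}, which holds because $\theta$ cyclically permutes the three factors.
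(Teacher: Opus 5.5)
Your proposal is correct and follows the paper's proof. Existence is the canonical extension of Proposition~\ref{prop:triality-full-coefficient-isomorphisms}, adjoint uniqueness comes from the trivial centralizer of the elementary subgroup, and simply connected uniqueness comes from $G_{\Lambda_w}=E_{\Lambda_w}$ over local models by Lemma~\ref{lem:triality-EH-orbit-neighborhood}. For the adjoint case the paper compares the two extensions' conjugation actions on the target $E(B)$, while you compare them in the source via $\beta^{-1}\alpha$; this is the same argument.
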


\begin{proof}
For the adjoint form, two extensions induce the same conjugation
on the target elementary group, whose centralizer is trivial.
For the simply connected form, full and elementary groups coincide
over both models by Lemma~\ref{lem:triality-EH-orbit-neighborhood}.
\end{proof}

\begin{proposition}[Extension of the non-exceptional factors]
\label{prop:extension-standard-factors-full-group}
The adjoint-induced inner, ordinary equivariant coefficient,
compatible $\Lambda$-admissible graph, and triality coefficient
factors extend canonically to the full twisted groups, preserving
the elementary subgroups and commuting with central quotients.
\end{proposition}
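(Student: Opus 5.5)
The plan is to treat the four kinds of factor separately, each time building a full-group map on $G_\Lambda(R)$ that restricts to the given elementary action. Compatibility with $q_{\Lambda,*}$ will be checked on the full groups. Preservation of $E_\Lambda(R)$ then follows from Proposition~\ref{prop:characteristic-elementary-and-centralizer}, or directly from the identity $E_\Lambda(R)=[q_{\Lambda,*}^{-1}(E(R)),q_{\Lambda,*}^{-1}(E(R))]$ recorded in Subsection~\ref{subsec:central-isogenies-elementary-groups}.

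\emph{Inner factors.} For $g\in G(R)$ I will use the scheme action $a_g$ of Subsection~\ref{subsec:central-isogenies-elementary-groups}. It is defined on all of $G_\Lambda(\Phi,R)$ and commutes with $\sigma$ when $\sigma(g)=g$, so it preserves $G_\Lambda(R)$. It satisfies $q_{\Lambda,*}a_g=i_g q_{\Lambda,*}$ and $a_ga_h=a_{gh}$, and restricts to $i_g^\Lambda$.

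\emph{Ordinary coefficient factors.} For $\mu\in\operatorname{Aut}(R,\theta)$, coefficientwise action on group-scheme points commutes with $\rho_*\theta_*$ because $\mu\theta=\theta\mu$. It is natural for the central isogeny, and it agrees with $\mu_*$ on root elements.

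\emph{Graph factors.} For $\mathbf e\in\mathcal C_{\rho,\Lambda}(R,\theta)$, I will take $\Gamma_{\mathbf e,\Lambda}$ to be the componentwise pinned action $(\tau^j)_{\Lambda,*}$ on $e_jR$. This exists on the scheme $G_\Lambda(D_4)$ by Lemma~\ref{lem:lifting-graph-automorphisms-central-isogeny}(1), since $\tau^j(\Lambda)=\Lambda$ whenever $e_j\ne0$. It commutes with $\sigma_R$ by the computation of Proposition~\ref{prop:compatible-graph-automorphisms}, which uses $\theta(e_j)=e_{-j}$, so it acts on $G_\Lambda(R)$ and lies over $\Gamma_{\mathbf e}$. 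I expect the only point needing care here to be that the pinned lattice-preserving actions compose as in Lemma~\ref{lem:composition-idempotent-graph-data}. This is checked on each orthogonal component $e_if_jR$.

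\emph{Triality factors.} These are given by Proposition~\ref{prop:triality-full-coefficient-isomorphisms} applied to $\nu=\mu\in\operatorname{Aut}_{\mathrm{tr}}(R,\theta)$. That proposition already provides compatibility with the central isogeny and restriction to $\Phi_{\mu,\Lambda}$.

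Canonicity means that each extension is specified by a construction independent of choices. For the adjoint form, uniqueness of each extension follows from Proposition~\ref{prop:kernel-restriction-full-group}, because the center $Z_\Lambda(R)$ is trivial there. In general the extensions are unique only up to central automorphisms, and canonicity refers to the constructions above.

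The main obstacle I anticipate is the graph case in an intermediate form, where the pinned action must be shown to preserve the twisted fixed points of the quotient scheme rather than just elementary words. I will handle it by working on the invariant decomposition $R=e_0R\times(e_1+e_2)R$. On the second factor, Lemma~\ref{lem:split-product-identification}, applied in form $\Lambda$ with the same proof, identifies $G_\Lambda(R)$ with untwisted $G_\Lambda(D_4,e_1R)$, and the action becomes $(\tau)_{\Lambda,*}$, which clearly exists.
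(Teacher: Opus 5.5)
Your proposal is correct and follows the paper's route. The paper's proof just cites the already-constructed actions: $a_g=i_g^\Lambda$, coefficientwise $\mu_*$, the componentwise pinned lifts $\Gamma_{\mathbf e,\Lambda}$ lying over $\Gamma_{\mathbf e}$ and preserving $E_\Lambda(R)$ as the commutator subgroup of $q_{\Lambda,*}^{-1}(E(R))$, and Proposition~\ref{prop:triality-full-coefficient-isomorphisms} for triality. The obstacle you anticipate is vacuous: commutation of the scheme automorphism with $\sigma_R$ already preserves twisted points, and for the proper intermediate $\rho$-invariant $D_4$ lattice only the trivial datum $(1,0,0)$ is admissible.
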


\begin{proof}
The first three actions were constructed above; the fourth is
Proposition~\ref{prop:triality-full-coefficient-isomorphisms}.
\end{proof}

\subsection{Exceptional unitary full groups}
\label{subsec:exceptional-unitary-full-groups}

It remains to extend the exceptional factor in ${}^2A_n$, $n=3,4$.
For simply connected ${}^2A_3$, it acts trivially away from its
parameter's support, and full-group elements are locally elementary
near that support. In the other forms, a sign-changing torus element
violates the conjugation compatibility required of an extension.

Write $G_{\mathrm{sc}}^{(3)},E_{\mathrm{sc}}^{(3)}$ for the
simply connected groups of type ${}^2A_3$. Their full group is
$$
 G_{\mathrm{sc}}^{(3)}(R)=\operatorname{SU}_4(R,\theta;Q_4)
   =\{x\in\operatorname{SL}_4(R)\mid xQ_4\overline x^{\,t}=Q_4\}.
$$

\begin{lemma}[Local exceptional images in $\operatorname{SU}_4$]
\label{lem:zariski-gluing-exceptional-SU4}
For $\delta\in\mathcal E_{\mathrm{glob}}(R,\theta)$ and
$x\in G_{\mathrm{sc}}^{(3)}(R)$, there is a finite principal cover
$\operatorname{Spec}R=\bigcup_iD(s_i)$, with $s_i\in R^\theta$,
such that on each member
$$
 \delta_{s_i}=0\quad\text{or}\quad
 x_{s_i}\in E_{\mathrm{sc}}^{(3)}(R_{s_i}).
$$
The local values
$$
 y_i=\begin{cases}
 \Theta_{\delta_{s_i},\Lambda_w}^{(3)}(x_{s_i}),
                    &x_{s_i}\in E_{\mathrm{sc}}^{(3)}(R_{s_i}),\\
 x_{s_i},&\delta_{s_i}=0
 \end{cases}
$$
are consistent and glue uniquely in $G_{\mathrm{sc}}^{(3)}(R)$,
independently of the cover.
\end{lemma}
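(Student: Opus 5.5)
Two things must be shown: the cover exists, and the local values glue. For the cover, we use the support of $\delta$. Lemma~\ref{lem:exceptional-support-localization} and Proposition~\ref{prop:exceptional-support-local-model} imply $\iota_i(\delta)=0$ on every split orbit model. It also vanishes on every fixed model whose residue field is not $\mathbb F_3$ with trivial involution. On such a model $\delta/1=0$ in $R_{\mathfrak m}$, so some fixed $s\notin\mathfrak m$ has $s\delta=0$. Replacing $s$ by $N(s)$ keeps it fixed, and then $\delta_s=0$ on $D(s)$.

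The remaining points are the fixed maximal ideals $\mathfrak m$ with $R/\mathfrak m\simeq\mathbb F_3$. There $A_i=R_{\mathfrak m}$ is a fixed local model. Over such a local unitary ring, the simply connected full group is elementary. This is the ${}^2A_3$ analogue of the local factorization in Lemma~\ref{lem:triality-EH-orbit-neighborhood}. The argument is the same: reduce modulo the radical, use $G=EH$ over the residue field from the Bruhat decomposition, lift root and torus parameters with averaging, and absorb the kernel of reduction into the big cell. The simply connected torus consists of products of $h_\gamma(v)$, which are elementary rank-one words. So $x/1\in E_{\mathrm{sc}}^{(3)}(R_{\mathfrak m})$ is a finite product of root elements. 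Clearing denominators by a fixed $s\notin\mathfrak m$, with finite presentation as in Lemma~\ref{lem:affine-descent-and-local-detection}(3), gives $x_s\in E_{\mathrm{sc}}^{(3)}(R_s)$. Quasi-compactness then yields a finite subcover.

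For consistency, fix an overlap $D(s_is_j)$. If both members are of the elementary kind, the two values agree because $\Theta^{(3)}_{\delta,\Lambda_w}$ commutes with equivariant localization (Proposition~\ref{prop:exceptional-arbitrary-isogeny}). If both have $\delta=0$, there is nothing to check. The mixed case is the main obstacle. There $\delta_{s_is_j}=0$ and $x_{s_is_j}$ is elementary, and we need
$$\Theta^{(3)}_{\delta_{s_is_j},\Lambda_w}(x_{s_is_j})=x_{s_is_j}.$$
Localization compatibility reduces this to $\Theta^{(3)}_{0,\Lambda_w}=\mathrm{id}$. That holds because the words $X^{(3),0}_{\lambda,\Lambda_w}(t)$ equal $x_\lambda(t)$ and root generation does the rest. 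It is essential here to apply $\Theta$ over $R_{s_is_j}$ with the localized parameter, and not to an element that is only elementary after localization.

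The $y_i$ therefore agree on overlaps as points of the affine scheme $\mathrm{SL}_4$. By Lemma~\ref{lem:affine-descent-and-local-detection}(4) they glue uniquely to a point $y$ over $R$. The equations $\det=1$ and $yQ_4\overline y^{\,t}=Q_4$ hold locally, since each $y_i$ lies in $G_{\mathrm{sc}}^{(3)}(R_{s_i})$ and the $s_i$ are fixed. By local detection they hold over $R$, so $y\in G_{\mathrm{sc}}^{(3)}(R)$.

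Finally, independence of the cover. Any two admissible covers have a common refinement by products $D(s_is'_j)$. Each member of the refinement again satisfies one of the two alternatives, inherited by localization. By the consistency just proved, both families of values restrict to the same values there. Uniqueness of gluing then gives the same $y$.
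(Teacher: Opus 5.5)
Your proposal is correct and follows the paper's proof: the same two-kind principal cover by fixed denominators, consistency on overlaps from localization naturality of $\Theta^{(3)}_{\delta,\Lambda_w}$ together with $\Theta^{(3)}_{0,\Lambda_w}=\operatorname{id}$, affine gluing with local detection of the unitary equations, and common refinements for independence of the cover. The only deviations are in sub-steps. You locate the support of $\delta$ through the local-model computation instead of $\operatorname{Ann}_R(\delta)$ and prime avoidance, and you justify $G_{\mathrm{sc}}^{(3)}(R_{\mathfrak m})=E_{\mathrm{sc}}^{(3)}(R_{\mathfrak m})$ by the big-cell argument of Lemma~\ref{lem:triality-EH-orbit-neighborhood} rather than citing the factorization behind \cite[Theorem~5.4]{BuninaMakadiya2ASmall}, which spares you the paper's use of $\delta_{\mathfrak m}\neq0$ to make the localized involution nontrivial.
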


\begin{proof}
Put $\mathfrak a=\operatorname{Ann}_R(\delta)$ and
$Z=\operatorname{Supp}_R(R\delta)=V(\mathfrak a)$.
The ideal $\mathfrak a$ is invariant.
For $\mathfrak p\notin Z$, finite prime avoidance gives
$a\in\mathfrak a\setminus(\mathfrak p\cup\theta(\mathfrak p))$.
Then $s=a\overline a$ is fixed, avoids $\mathfrak p$, and kills
$\delta$ after localization.

For $\mathfrak p\in Z$, the inclusion
$\mathfrak b_\theta(R)\subseteq\mathfrak a\subseteq\mathfrak p$
and Subsection~\ref{subsec:global-exceptional-parameters} give
$R/\mathfrak p\simeq\mathbb F_3$ and
$\theta(\mathfrak p)=\mathfrak p$.
Since $\delta_{\mathfrak p}\ne0$, the localized involution is
nontrivial, and the factorization in the proof of
\cite[Theorem~5.4]{BuninaMakadiya2ASmall} gives
$$
 G_{\mathrm{sc}}^{(3)}(R_{\mathfrak p})
                    =E_{\mathrm{sc}}^{(3)}(R_{\mathfrak p}).
$$
Spread the finite root factorization of $x_{\mathfrak p}$,
including its parameter relations and its equality with $x$, to a
principal neighborhood. Replacing the denominator by its product
with its conjugate gives fixed $s\notin\mathfrak p$ with $x_s$
elementary. These neighborhoods cover the spectrum; choose a finite
subcover.

For fixed denominators $s,t$, the elementary root formulas give
\begin{equation}
\label{eq:exceptional-localization-naturality}
 (\iota_{s,st})_*\Theta_{\delta_s,\Lambda_w}^{(3)}
     =\Theta_{\delta_{st},\Lambda_w}^{(3)}(\iota_{s,st})_*.
\end{equation}
Thus two elementary descriptions agree on intersections; if one
parameter vanishes, both descriptions there are the identity rule.
Affine gluing gives one absolute group point, locally twisting-fixed
and hence in $G_{\mathrm{sc}}^{(3)}(R)$.
Common refinements prove independence of the cover.
\end{proof}

\begin{proposition}[The exceptional extension to $\operatorname{SU}_4$]
\label{prop:exceptional-extension-simply-connected-A3}
The preceding rule defines a canonical extension
$\widehat\Theta_{\delta,\mathrm{sc}}^{(3)}$ of
$\Theta_{\delta,\Lambda_w}^{(3)}$ to $G_{\mathrm{sc}}^{(3)}(R)$.
It is an automorphism and satisfies
$$
 \widehat\Theta_{\varepsilon,\mathrm{sc}}^{(3)}
       \widehat\Theta_{\delta,\mathrm{sc}}^{(3)}
       =\widehat\Theta_{\delta+\varepsilon,\mathrm{sc}}^{(3)},\qquad
 (\widehat\Theta_{\delta,\mathrm{sc}}^{(3)})^{-1}
       =\widehat\Theta_{-\delta,\mathrm{sc}}^{(3)}.
$$
For $\mu\in\operatorname{Aut}(R,\theta)$,
$$
 \mu_*\widehat\Theta_{\delta,\mathrm{sc}}^{(3)}\mu_*^{-1}
                  =\widehat\Theta_{\mu(\delta),\mathrm{sc}}^{(3)}.
$$
\end{proposition}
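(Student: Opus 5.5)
We define $\widehat\Theta_{\delta,\mathrm{sc}}^{(3)}(x)$ as the glued element of Lemma~\ref{lem:zariski-gluing-exceptional-SU4} and check its properties locally. Every identity below is an equality of absolute group points over $R$. By affine local detection it suffices to verify each one on a finite principal cover $D(s_i)$ with $s_i\in R^\theta$, refined so that on each member the relevant parameter vanishes or all elements involved are elementary. Such common refinements exist: intersect the covers supplied by the lemma for finitely many elements $x,x'$ and parameters $\delta,\varepsilon$, and use that on $D(s)$ with $\delta_s=0$ the rule is the identity. Compatibility with further localization is equation~\eqref{eq:exceptional-localization-naturality}.

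The extension property comes first. For $x\in E_{\mathrm{sc}}^{(3)}(R)$ take the one-set cover. Then $y=\Theta_{\delta,\Lambda_w}^{(3)}(x)$, and by the independence of the cover $\widehat\Theta$ restricts to $\Theta_{\delta,\Lambda_w}^{(3)}$. Next, multiplicativity: given $x,x'$, choose a common cover. On members with $\delta_s=0$ both sides are $x_sx'_s$. On the remaining members $x_s,x'_s$ are elementary, and multiplicativity of the elementary automorphism $\Theta_{\delta_s,\Lambda_w}^{(3)}$ from Proposition~\ref{prop:exceptional-arbitrary-isogeny} gives the equality. Gluing then yields $\widehat\Theta(xx')=\widehat\Theta(x)\widehat\Theta(x')$.

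For the composition law, take a cover adapted simultaneously to $\delta$, $\varepsilon$, $\delta+\varepsilon$ and $x$. The point to check is that $\widehat\Theta_{\delta}(x)$ is again locally elementary, or locally in the zero case, on the same members, so that applying $\widehat\Theta_\varepsilon$ can be computed by the same cover. Where $x_s$ is elementary, its image is elementary. Where $\delta_s=0$, the image is $x_s$ itself, which is covered by the $\varepsilon$-lemma after refinement. On each member the additive law of Proposition~\ref{prop:exceptional-arbitrary-isogeny}, or triviality when parameters vanish, gives $\Theta_{\varepsilon_s}\Theta_{\delta_s}=\Theta_{\delta_s+\varepsilon_s}$. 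A member where $\delta_s=0$ but $\varepsilon_s\ne0$ and $x_s$ is not elementary needs care. There we refine by the lemma for $\varepsilon$ (and for $\delta+\varepsilon=\varepsilon$ locally), and both sides reduce to $\widehat\Theta_\varepsilon(x)$ locally. Taking $\varepsilon=-\delta$ together with $\widehat\Theta_0=\operatorname{id}$ gives the inverse formula, so $\widehat\Theta$ is bijective and hence an automorphism.

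For conjugation by $\mu\in\operatorname{Aut}(R,\theta)$, apply $\mu$ to a cover $\{D(s_i)\}$ adapted to $x$ and $\delta$. The result is a cover $\{D(\mu(s_i))\}$ adapted to $\mu_*(x)$ and $\mu(\delta)$: $\mu$ carries the conditions $\delta_s=0$ and elementarity to the corresponding conditions. On elementary members, Proposition~\ref{prop:global-exceptional-automorphisms}, applied to the localized isomorphism $R_{s}\to R_{\mu(s)}$ in the simply connected form, gives $\mu_*\Theta_{\delta_s}=\Theta_{\mu(\delta)_{\mu(s)}}\mu_*$. On the other members both sides are the identity rule.

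The main obstacle is the well-definedness bookkeeping in the composition step, namely ensuring that the cover adapted to $x$ also controls $\widehat\Theta_\delta(x)$. Everything else follows formally from local detection and the elementary identities already proved.
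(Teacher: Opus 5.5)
Your proposal is correct and is essentially the paper's argument: the extension is the glued rule of Lemma~\ref{lem:zariski-gluing-exceptional-SU4}, and multiplicativity, restriction to $E_{\mathrm{sc}}^{(3)}(R)$, the addition and inverse laws, and the $\mu$-conjugation identity are all checked on common refinements on which the parameters vanish or the element (hence its exceptional images) is elementary. The delicate case you isolate in the composition step cannot survive intersection with the $\varepsilon$-cover, which is exactly the paper's observation that on a refined cover either both parameters vanish or $x$ is elementary. For the $\mu$-step, the identity you need in the simply connected form follows from transporting the elementary root words, as the paper states, rather than from the adjoint statement you cite.
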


\begin{proof}
For two elements, refine their covers for the same $\delta$.
On each member the parameter vanishes or both elements are
elementary, so local multiplicativity gives a group homomorphism.
The one-set cover for a global elementary element gives the
prescribed restriction.
For two parameters and one element, refine the covers for both:
either both parameters vanish or the element and its exceptional
images are elementary. The elementary addition law therefore gives
the global addition law, and $-\delta$ supplies the inverse.
An equivariant ring automorphism transports the covers and formulas,
proving the conjugation identity. The construction specifies the
extension; any other one differs by a central automorphism.
\end{proof}

\smallskip
\noindent\emph{The obstruction in the other forms.}
An extendible elementary automorphism must normalize the subgroup
of $\operatorname{Aut}E_\Lambda(R)$ formed by conjugations from
$G_\Lambda(R)$: an extension sends conjugation by $t$ to conjugation
by its image. The following torus elements test this necessary
compatibility.

For $A_3$, put $\Lambda_{\mathrm{mid}}=\Lambda_r+\mathbb Z\omega_2$
and $T_0=\operatorname{diag}(1,1,-1,-1)\in\operatorname{SL}_4(R)$.
Although $\sigma(T_0)=-T_0$, its image $t_\Lambda^{(3)}$ is fixed
for $\Lambda=\Lambda_{\mathrm{mid}},\Lambda_r$, because $-I_4$
is killed by those central quotients.
For $A_4$, $T=\operatorname{diag}(1,-1,1,-1,1)$ is itself a
simply connected twisted element; denote its image in either form
by $t_\Lambda^{(4)}$.

\begin{lemma}[Change of sign of the exceptional parameter]
\label{lem:sign-change-exceptional-full-groups}
For
$$
 (n,\Lambda)\in
 \{(3,\Lambda_{\mathrm{mid}}),(3,\Lambda_r),
                         (4,\Lambda_w),(4,\Lambda_r)\},
$$
conjugation by $t=t_\Lambda^{(n)}$ satisfies
$$
 i_t\Theta_{\delta,\Lambda}^{(n)}i_t^{-1}
                         =\Theta_{-\delta,\Lambda}^{(n)}
                         \quad\text{on }E_\Lambda(R).
$$
\end{lemma}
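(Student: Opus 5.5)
Since both sides are automorphisms of $E_\Lambda(R)$ and $E_\Lambda(R)$ is generated by relative root elements, it suffices to compare them on $x_\lambda(a)$ for all $\lambda\in\Phi_\rho$. Because the exceptional words are defined uniformly for every $\delta$, the identity is equivalent to
$$
 i_t\bigl(X^{(n),\delta}_{\lambda,\Lambda}(a')\bigr)
   =X^{(n),-\delta}_{\lambda,\Lambda}(a)\qquad(a'=\text{the parameter of }i_t^{-1}(x_\lambda(a))).
$$
I would first reduce to the adjoint form. Both $i_t\Theta_{\delta,\Lambda}i_t^{-1}$ and $\Theta_{-\delta,\Lambda}$ lift the corresponding adjoint automorphisms through $q_{\Lambda,*}$: the exceptional actions commute with the central isogeny by Proposition~\ref{prop:exceptional-arbitrary-isogeny}, and $i_t$ lifts conjugation by $q_{\Lambda,*}(t)$. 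By Corollary~\ref{cor:uniqueness-over-adjoint-quotient} it is therefore enough to prove
$$
 i_{\bar t}\,\Theta^{(n)}_\delta\, i_{\bar t}^{-1}=\Theta^{(n)}_{-\delta}
$$
on $E^{(n)}(R)$, where $\bar t$ is the adjoint class of $T_0$ (for $n=3$) or of $T$ (for $n=4$).

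The adjoint identity can then be checked by local detection. Both sides commute with orbit localization, so by Lemma~\ref{lem:affine-descent-and-local-detection} it suffices to verify equality of images of root elements over each $A_i$. At split orbits and at fixed orbits with $\delta_i=0$ both sides are the identity, so only fixed local models with residue field $\mathbb F_3$ remain. Even there I expect a direct matrix computation over $R$ to suffice. In the adjoint form the torus elements act on root coordinates by signs: $T_0$ (resp. $T$) conjugates $E_{pq}$ by $\epsilon_p\epsilon_q$. So $i_{\bar t}^{-1}(x_\lambda(a))=x_\lambda(\pm a)$, with a sign depending only on $\lambda$. I would then conjugate the explicit matrices \eqref{eq:exceptional-A3-x1-general}--\eqref{eq:exceptional-A4-x2-general} entrywise by the diagonal matrix.

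For $n=3$, $T_0=\operatorname{diag}(1,1,-1,-1)$:
\begin{itemize}
\item On $X_1^{(3),\delta}(b)$ the sign pattern multiplies entries $(1,2)$ and $(3,4)$ by $+1$ and the entries $\pm b^+\delta$ and $(b^+)^2\delta$ in rows/columns crossing the blocks by $-1$, giving exactly $X_1^{(3),-\delta}(b)$.
\item $X_2^{(3),\delta}(s)$ has an off-diagonal entry only at $(2,3)$, which changes sign. Conjugation thus gives the matrix with parameter $-s$ and diagonal $1+2s\delta,\ldots$. This equals $X_2^{(3),-\delta}(-s)$ because $(-s)(-\delta)=s\delta$, as required, since $i_{\bar t}^{-1}$ sends $x_2(s)$ to $x_2(-s)$.
\end{itemize}
For $n=4$, $T=\operatorname{diag}(1,-1,1,-1,1)$:
\begin{itemize}
\item $D(c)$ is diagonal and fixed. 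The $x_1$ part picks up a sign, so $X_1^{(4),\delta}(b)\mapsto D(b^+\delta)x_1(-b)=X_1^{(4),-\delta}(-b)$.
\item For $X_2^{(4),\delta}(t,u)$, the entries $t,\overline t,u$ at $(2,3),(3,4),(2,4)$ get signs $-,-,+$, matching the parameter $(-t,u)$. The correction entries at $(1,3),(3,1),(3,5),(5,3)$ are unchanged, while those at $(1,4),(2,1),(2,5),(5,4)$ change sign. With $p\mapsto -p$ and $\delta\mapsto-\delta$, the entries $pd$ are unchanged and the entries $p^2d$ and $p(p^2-1)d$ flip sign.
\end{itemize}
The remaining roots follow by Weyl conjugation. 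The torus elements normalize the fixed Weyl words up to toral factors, which are themselves fixed or inverted in a controlled way.

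The step I expect to be the main obstacle is this last bookkeeping: reconciling the signs on the Weyl-conjugated frame elements (especially the $A_2$ root $X_4^{(4)}$, defined via $v_2$) and confirming that the negative charts with the change $u\mapsto\overline u$ behave consistently. If the direct check on the Weyl conjugates gets messy, I would instead note that both sides are automorphisms agreeing on the generating roots $x_1,x_2$ together with the fixed Weyl words $w_1,w_2$ (or $w_1,w_4,v_2$). Lemma~\ref{lem:exceptional-one-sided-coefficients} then shows that the frame values together with these Weyl words determine all root images, which reduces the verification to the two computations above.
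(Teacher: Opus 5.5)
Your proposal follows the paper's proof: you conjugate the exceptional root matrices by $T_0$ or $T$ to get the adjoint identity, and you pass to the non-adjoint forms by uniqueness of lifts of homomorphisms from the perfect group $E_\Lambda(R)$ through $q_{\Lambda,*}$. The remaining roots follow because the two sides agree on $x_1,x_2$ and on the Weyl words, which $t$ conjugates to $w^{\pm1}$; the detour through local detection and Lemma~\ref{lem:exceptional-one-sided-coefficients} is unnecessary. One arithmetic slip: under $(p,\delta)\mapsto(-p,-\delta)$ the term $p(p^2-1)\delta$ is unchanged, not sign-reversed, which is exactly what the $T$-fixed $(2,4)$-entry of $X_2^{(4),\delta}(t,u)$ requires (the term vanishes anyway, since $\delta(r^3-r)=0$).
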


\begin{proof}
Conjugate the root formulas by $T_0$ or $T$, as in
\cite[Lemma~5.2]{BuninaMakadiya2ASmall}.
The polynomial identities use the annihilator relations of $\delta$
and hold over $R$. They prove the adjoint identity; uniqueness of
an elementary lift gives it in the other forms.
\end{proof}

\begin{proposition}[The exceptional extension obstruction]
\label{prop:obstruction-exceptional-extension-full-groups}
For any of these four pairs, $\Theta_{\delta,\Lambda}^{(n)}$
extends to an automorphism of $G_\Lambda(R)$ only if $\delta=0$.
\end{proposition}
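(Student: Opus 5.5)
Suppose $\widehat\Theta$ is an automorphism of $G_\Lambda(R)$ extending $\Theta=\Theta_{\delta,\Lambda}^{(n)}$, and put $t=t_\Lambda^{(n)}\in G_\Lambda(R)$. The plan is to compare two descriptions of $\widehat\Theta\, i_t\,\widehat\Theta^{-1}$ on $E_\Lambda(R)$ and then apply the uniqueness part of Theorem~\ref{thm:main-elementary}. By Proposition~\ref{prop:characteristic-elementary-and-centralizer}, $E_\Lambda(R)$ is characteristic. Since $\widehat\Theta$ is an automorphism of the full group, $\widehat\Theta\, i_t\,\widehat\Theta^{-1}=i_{t'}$ with $t'=\widehat\Theta(t)\in G_\Lambda(R)$. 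Restricting to $E_\Lambda(R)$ gives
$$
 \Theta\, i_t\,\Theta^{-1}=i_{t'}\quad\text{on }E_\Lambda(R).
$$
Lemma~\ref{lem:sign-change-exceptional-full-groups} gives $i_t\Theta i_t^{-1}=\Theta_{-\delta,\Lambda}^{(n)}$, hence $\Theta i_t=i_t\Theta_{-\delta,\Lambda}^{(n)}$. Using the additive law of Proposition~\ref{prop:exceptional-arbitrary-isogeny}, we obtain
$$
 i_{t'}=\Theta i_t\Theta^{-1}=i_t\,\Theta_{-\delta,\Lambda}^{(n)}\Theta_{-\delta,\Lambda}^{(n)}=i_t\,\Theta_{-2\delta,\Lambda}^{(n)} .
$$

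Rearranging gives $\Theta_{-2\delta,\Lambda}^{(n)}=i_t^{-1}i_{t'}=i^{\Lambda}_{g}$, where $g=q_{\Lambda,*}(t^{-1}t')\in G_{\mathrm{ad},\sigma}(\Phi,R)$. This automorphism of $E_\Lambda(R)$ therefore has two normal forms in the sense of Theorem~\ref{thm:main-elementary}:
$$
 i^\Lambda_1\circ\Theta_{-2\delta,\Lambda}^{(n)}\circ\mathrm{id}_*
 \qquad\text{and}\qquad
 i^\Lambda_g\circ\Theta_{0,\Lambda}^{(n)}\circ\mathrm{id}_* .
$$
Uniqueness of the displayed data gives $-2\delta=0$, and since $2\in R^\times$ this yields $\delta=0$. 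One could also argue directly: pass to the adjoint quotient through Proposition~\ref{prop:adjoint-central-quotient}, and then apply Proposition~\ref{prop:uniqueness-exceptional-transporter} to the frames. There $\Theta_{-2\delta}^{(n)}$ transports $\mathcal F_\Phi$ to $\mathcal F^{(n)}_{-2\delta}$, while $i_g$ transports $\mathcal F_\Phi=\mathcal F_0^{(n)}$, so $-2\delta=0$.

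The delicate step is the first one: $t'$ must be an element of $G_\Lambda(R)$ acting through the canonical adjoint action, so that $i_{t'}$ is of the form $i^\Lambda_g$. This holds because $\widehat\Theta(t)$ lies in the full group and conjugation by a full-group element is the adjoint action of its image, as explained after Corollary~\ref{cor:uniqueness-over-adjoint-quotient}. We also have to check that the relevant $t$ really lies in $G_\Lambda(R)$ for each of the four pairs, which is recorded before Lemma~\ref{lem:sign-change-exceptional-full-groups}. Everything else is formal.
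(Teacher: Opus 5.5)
Your proposal is correct and is essentially the paper's proof. An extension turns $\Theta_{\delta,\Lambda}^{(n)}\,i_t\,(\Theta_{\delta,\Lambda}^{(n)})^{-1}$ into conjugation by the image of $t$; the sign-change lemma then exhibits a strictly inner automorphism equal to a pure exceptional factor (the paper writes $i_{st^{-1}}=\Theta_{2\delta,\Lambda}^{(n)}$, you write $i_{t^{-1}t'}=\Theta_{-2\delta,\Lambda}^{(n)}$); and uniqueness in Theorem~\ref{thm:main-elementary} together with $2\in R^\times$ forces $\delta=0$, while your alternative frame check via Proposition~\ref{prop:uniqueness-exceptional-transporter} is just that uniqueness argument unwound.
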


\begin{proof}
Suppose an extension exists, and let $s$ be its value at
$t=t_\Lambda^{(n)}$. On the elementary subgroup, conjugation
compatibility and the sign-change lemma give
$$
 i_{st^{-1}}
   =\Theta_{\delta,\Lambda}^{(n)}i_t
                \Theta_{-\delta,\Lambda}^{(n)}i_t^{-1}
   =\Theta_{2\delta,\Lambda}^{(n)}.
$$
The left side is strictly inner and the right side a pure exceptional
factor. Uniqueness in Theorem~\ref{thm:main-elementary} forces
$2\delta=0$, hence $\delta=0$.
\end{proof}

\subsection{The full-group classification}
\label{subsec:final-automorphism-theorem}

The constructions above determine which elementary normal forms
extend. The kernel of restriction supplies the remaining factor.

\begin{proof}[Proof of Theorem~\ref{thm:main-full}]
For $\varphi\in\operatorname{Aut}G_\Lambda(R)$, characteristicity
gives $\alpha=\varphi|_{E_\Lambda(R)}\in\operatorname{Aut}E_\Lambda(R)$.
Apply Theorem~\ref{thm:main-elementary} to $\alpha$.
Outside the exceptional unitary types all its factors extend by
Proposition~\ref{prop:extension-standard-factors-full-group}.
In type ${}^2A_n$, write
$\alpha=i_g^\Lambda\Theta_{\delta,\Lambda}^{(n)}\mu_*$.
Then $(i_g^\Lambda)^{-1}\varphi\mu_*^{-1}$ extends
$\Theta_{\delta,\Lambda}^{(n)}$.
Such an extension exists for every $\delta$ in simply connected
${}^2A_3$ by
Proposition~\ref{prop:exceptional-extension-simply-connected-A3},
and requires $\delta=0$ in the other forms by
Proposition~\ref{prop:obstruction-exceptional-extension-full-groups}.
Conversely, composing the specified extensions of the factors gives
an extension whenever this criterion holds.

For an extendible $\alpha$, fix this product $\widehat\alpha$ in
the order of Theorem~\ref{thm:main-elementary}.
The automorphism $\widehat\alpha^{-1}\varphi$ fixes the elementary
subgroup pointwise, so
Proposition~\ref{prop:kernel-restriction-full-group} gives
$$
        \varphi=\widehat\alpha\circ c_\zeta.
$$
Every such product is an automorphism. Restriction determines
$\alpha$ and all its normalized data; once its specified extension
is fixed, $c_\zeta$ is unique. For the adjoint form, $Z(G(R))=1$,
so this factor is the identity.
\end{proof}

\begin{corollary}
\label{cor:full-adjoint-groups-nonexceptional}
For $\Lambda=\Lambda_r$, restriction is injective.
Outside ${}^2A_3,{}^2A_4$, it is an isomorphism
$\operatorname{Aut}G(R)\xrightarrow{\sim}\operatorname{Aut}E(R)$.
Thus the full adjoint group has the same normalized decomposition
as its elementary subgroup.
\end{corollary}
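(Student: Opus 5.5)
The statement to prove is Corollary~\ref{cor:full-adjoint-groups-nonexceptional}. I would derive it directly from Theorem~\ref{thm:main-full} and Proposition~\ref{prop:kernel-restriction-full-group}, specialized to $\Lambda=\Lambda_r$.

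\emph{Injectivity.} By Proposition~\ref{prop:kernel-restriction-full-group}, the kernel of $\operatorname{res}_{\Lambda_r}$ consists of the central automorphisms $c_\zeta$ with $\zeta:G(R)\to Z(G(R))$. Proposition~\ref{prop:characteristic-elementary-and-centralizer} gives $Z(G(R))=Z_{\Lambda_r}(R)$. This is the kernel of $q_{\Lambda_r,*}$, which is the identity map on the adjoint group, so $Z(G(R))=1$. Consequently every such $\zeta$ is trivial and restriction is injective. This step does not depend on the type, so it also covers ${}^2A_3$ and ${}^2A_4$.

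\emph{Surjectivity outside ${}^2A_3,{}^2A_4$.} Let $\alpha\in\operatorname{Aut}E(R)$. By Theorem~\ref{thm:main-elementary}, $\alpha$ is one of
\[
i_g\circ\mu_*,\qquad i_g\circ\Gamma_{\mathbf e}\circ\mu_*,\qquad i_g\circ\Phi_\mu ,
\]
according to the type. None of these contains an exceptional factor. Theorem~\ref{thm:main-full} therefore shows that $\alpha$ extends; concretely, Proposition~\ref{prop:extension-standard-factors-full-group} provides the extension $\widehat\alpha$ to $G(R)$, using the conjugation by $g\in G(R)$, the ordinary coefficient and graph actions, and $\widehat\Phi_{\mu,\Lambda_r}$ from Proposition~\ref{prop:triality-full-coefficient-isomorphisms}. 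Characteristicity of $E(R)$, from Proposition~\ref{prop:characteristic-elementary-and-centralizer}, makes restriction a well-defined homomorphism. Since restriction is injective and surjective, it is an isomorphism.

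\emph{The normalized decomposition.} Transport the unique decomposition of the elementary group through this isomorphism. For an automorphism $\varphi$ of $G(R)$, the formula $\varphi=\widehat\alpha\circ c_\zeta$ of Theorem~\ref{thm:main-full} has trivial central factor, so $\varphi=\widehat\alpha$. Its data $(g,\mathbf e,\mu)$ are unique because they are unique for $\alpha=\varphi|_{E(R)}$. Every step is a direct appeal to an earlier result, so I expect no real obstacle. The only point needing care is the adjoint center computation $Z(G(R))=Z_{\Lambda_r}(R)=1$, which follows from frame rigidity through Proposition~\ref{prop:characteristic-elementary-and-centralizer}.
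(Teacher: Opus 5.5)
Your proposal is correct and follows the paper's own argument. The kernel of restriction consists of central automorphisms, which are trivial because $Z(G(R))=Z_{\Lambda_r}(R)=1$ in the adjoint form. Outside ${}^2A_3,{}^2A_4$, every elementary factor extends by Proposition~\ref{prop:extension-standard-factors-full-group}, so restriction is also surjective.
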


\begin{proof}
The kernel consists of central automorphisms and is trivial.
Outside the two exceptional types all elementary factors extend.
\end{proof}

In particular, every automorphism of
$\operatorname{SU}_4(R,\theta;Q_4)$ is uniquely
$$
        \varphi=i_g^{\Lambda_w}
          \circ\widehat\Theta_{\delta,\mathrm{sc}}^{(3)}
          \circ\mu_*\circ c_\zeta
$$
with the normalized data and specified extension of the two main
theorems. Its exceptional family is nontrivial precisely when
$\mathcal E_{\mathrm{glob}}(R,\theta)\ne0$.
The central factor records the ambiguity of full-group extension;
only a further separation of $i_g^{\Lambda_w}$ into strictly inner
and diagonal factors requires an additional choice.

\medskip


\section*{Use of AI-assisted tools}

The authors used ChatGPT (OpenAI) during the research and preparation
of this paper to assist with the development and review of mathematical
arguments, selected computational checks, the preparation of code for
symbolic calculations, and the organization and editing of the manuscript.
The authors retain full responsibility for the mathematical results and
proofs, the computations, and the final text.

\end{document}